\newcommand{\imod}[1]{\allowbreak\mkern4mu({\operator@font mod}\,\,#1)}
\renewcommand{\a}{\alpha}
\renewcommand{\b}{\beta}
 \renewcommand{\L}{\Lambda}
\renewcommand{\l}{\lambda} 
 \renewcommand{\to}{\rightarrow}
 \newcommand{\s}{\sigma}
\renewcommand{\o}{\omega} 
 \newcommand{\C}{\mathcal{C}}
\newcommand{\leqs}{\leqslant}
\newcommand{\geqs}{\geqslant}
 \newcommand{\vs}{\vspace{3mm}}
\newcommand{\la}{\langle}
\newcommand{\ra}{\rangle}
\newtheorem{theorem}{Theorem}
\newtheorem{corol}{Corollary}
\newtheorem{hyp}{Hypothesis}
\newtheorem{thm}{Theorem}[section]
\newtheorem{prop}[thm]{Proposition}
\newtheorem{lem}[thm]{Lemma}
\theoremstyle{definition}
\newtheorem{defn}[thm]{Definition}
\newtheorem{remk}[thm]{Remark}
\newtheorem{definition}{Definition}
\newtheorem{remark}{Remark}
\begin{document}

\author{Timothy C. Burness}
 \address{T.C. Burness, School of Mathematics, University of Bristol, Bristol BS8 1TW, UK}
 \email{t.burness@bristol.ac.uk}
 
\author{Claude Marion}
\address{C. Marion, Section de Math\'ematiques, Station 8, \'{E}cole Polytechnique F\'{e}d\'{e}rale de Lausanne, CH-1015 Lausanne, Switzerland}
\email{claude.marion@epfl.ch}

\author{Donna M. Testerman}
\address{D.M. Testerman, Section de Math\'ematiques, Station 8, \'{E}cole Polytechnique F\'{e}d\'{e}rale de Lausanne, CH-1015 Lausanne, Switzerland}
\email{\texttt{donna.testerman@epfl.ch}}

\title[On irreducible subgroups of simple algebraic groups]{On irreducible subgroups of simple algebraic groups}

\subjclass[2010]{Primary 20G05; Secondary 20E28, 20E32}

\thanks{Burness was supported by EPSRC grant EP/I019545/1, and he thanks the Section de Math\'{e}matiques at EPFL for their generous hospitality. The second and third authors were  supported by the Fonds National Suisse de la Recherche Scientifique, grant number 200021-153543. We thank Paul Levy, Gunter Malle and an anonymous referee for helpful comments.}

\begin{abstract}
Let $G$ be a simple algebraic group over an algebraically closed field $K$ of characteristic $p\geqs 0$, let $H$ be a proper closed subgroup of $G$ and let $V$ be a nontrivial irreducible
$KG$-module, which is $p$-restricted, tensor indecomposable and rational. Assume that the restriction of $V$ to $H$ is irreducible. In this paper, we study the triples $(G,H,V)$ of this form when $G$ is a classical group and $H$ is positive-dimensional. Combined with earlier work of Dynkin, Seitz, Testerman and others, our main theorem reduces the problem of classifying the triples $(G,H,V)$ to the case where $G$ is an orthogonal group, $V$ is a spin module and $H$ normalizes an orthogonal decomposition of the natural $KG$-module.
\end{abstract}

\date{\today}
\maketitle


\section{Introduction}\label{s:intro}

Let $G$ be a simple algebraic group defined over an algebraically closed field $K$, let $H$ be a closed positive-dimensional subgroup of $G$, and let $V$ be a nontrivial rational irreducible $KG$-module. We say that $(G,H,V)$ is an \emph{irreducible triple} if $V$ is irreducible as a $KH$-module. Triples of this form arise naturally in the investigation of maximal subgroups of classical algebraic groups, and their study can be traced back to work of Dynkin \cite{Dynkin1}  in the 1950s, who considered the special case where $H$ is connected and $K = \mathbb{C}$. In the 1980s, Seitz \cite{Seitz2} and Testerman \cite{Test1} extended the analysis to arbitrary algebraically closed fields (still assuming $H$ is connected), and more recent work of Ghandour \cite{g_paper} has completed the classification of irreducible triples for exceptional groups. Therefore, in this paper we focus on classical groups and their disconnected subgroups.  

In \cite{Ford1, Ford2}, Ford determines the irreducible triples $(G,H,V)$ where $G$ is a classical group, $H$ is disconnected, $H^0$ is simple and the composition factors of $V|_{H^0}$ are $p$-restricted. Our main aim is to extend Ford's analysis by removing the restrictive conditions on the structure of $H^0$ and the composition factors of $V|_{H^0}$. The cases for which $V|_{H^0}$ is irreducible are easily deduced from the work of Seitz \cite{Seitz2}, so we focus on the situation where $V|_{H}$ is irreducible, but $V|_{H^0}$ is reducible. By Clifford theory, the highest weights of the $KH^0$-composition factors of $V$ are $H$-conjugate and this severely restricts the possibilities for $V$. Since the triples with $H$ maximal have recently been determined in \cite{BGMT,BGT}, in this paper we will adopt the following hypothesis:

\begin{hyp}\label{h:our}
\emph{The group $G$ is a simply connected cover of a simple classical algebraic group defined over an algebraically closed field $K$ of characteristic $p \geqs 0$, $H$ is a closed positive-dimensional subgroup of $G$, and $V$ is a nontrivial $p$-restricted irreducible tensor indecomposable rational $KG$-module such that the following conditions hold:
\begin{itemize}\addtolength{\itemsep}{0.2\baselineskip}
\item[H1.] $V \neq W^{\tau}$ for any automorphism $\tau$ of $G$, where $W$ is the natural module;
\item[H2.] $HZ(G)/Z(G)$ is disconnected and non-maximal in $G/Z(G)$.
\end{itemize}}
\end{hyp}
 
Let $G$ be a classical group as in Hypothesis \ref{h:our}, let $n$ denote the rank of $G$ and let $\{\l_1, \ldots, \l_n\}$ be a set of fundamental dominant weights for $G$ (we adopt the standard labelling given in Bourbaki \cite{Bou}). We will write $V_G(\l)$ for the irreducible $KG$-module with highest weight $\l$. 

\begin{remark}\label{r:hyp}
Condition H1 in Hypothesis \ref{h:our} is equivalent to assuming $V \neq W,W^*$, and also $V \neq V_G(\l_3), \, V_G(\l_4)$ if $G=D_4$. This hypothesis is unavoidable. For example, we cannot feasibly determine all the almost simple subgroups of $G$ that act irreducibly on $W$ or $W^*$ (indeed, even the dimensions of the irreducible modules for simple groups are not known, in general). In particular, H1 is a condition adopted in \cite{BGMT} and \cite{Seitz2}. 
\end{remark} 
 
Suppose $G$ is of type $B_n$ or $D_n$, and let $R(W)=R$ be the radical of the corresponding bilinear form on $W$ (recall that either $R=0$, or $p=2$, $\dim W$ is odd and $\dim R=1$). An \emph{orthogonal decomposition} of $W$ is a decomposition of the form 
$$W = W_1 + \cdots + W_t,$$ 
where the $W_i$ are pairwise orthogonal subspaces of $W$. Note that if $W = W_1 + \cdots + W_t$ is such a decomposition, then the $W_i$ are non-degenerate spaces such that $W_i \cap (\sum_{j \neq i}W_j) \subseteq R$ for each $i$ (in particular, if 
$p \neq 2$ then $W = W_1 + \cdots + W_t$ is a direct sum). We say that a subgroup $H$ of $G$ \emph{normalizes} such a decomposition if it permutes the $W_i$.

\begin{definition}
Let $G$ be a classical group of type $B_n$ or $D_n$, as in Hypothesis \ref{h:our}. A closed subgroup $H$ of $G$ is a \emph{decomposition subgroup} if one of the following holds:
\begin{itemize}\addtolength{\itemsep}{0.2\baselineskip}
\item[(a)] $H$ normalizes an orthogonal decomposition $W = W_1 + \cdots + W_t$; or
\item[(b)] $(G,p)=(D_n,2)$, $H$ stabilizes a $1$-dimensional non-singular subspace $U$ of $W$, and $H$ normalizes an orthogonal decomposition of the natural module for the stabilizer $G_U=B_{n-1}$. 
\end{itemize}
Similarly, if $(G,p)=(C_n,2)$ then $H$ is a decomposition subgroup of $G$ if it is the image of a decomposition subgroup of the dual group $\widetilde{G}=B_n$ with respect to a bijective morphism $\varphi:\widetilde{G} \to G$. 
\end{definition}

\begin{theorem}\label{t:main}
Suppose $G$, $H$ and $V = V_G(\l)$ satisfy the conditions in Hypothesis \ref{h:our}, and assume that $V|_{H^0}$ is reducible. Then $V|_{H}$ is irreducible only if one of the following holds: 
\begin{itemize}\addtolength{\itemsep}{0.2\baselineskip}
\item[{\rm (a)}] $(G,H,V)$ is one of the cases in Table \ref{tab:main}; or
\item[{\rm (b)}] $G$ is of type $B_n$ or $D_n$ (or type $C_n$ if $p=2$), $V$ is a spin module and $H$ is a decomposition subgroup.
\end{itemize}
Moreover, if (a) holds then $V|_{H}$ is irreducible.
\end{theorem}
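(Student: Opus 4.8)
The plan is to reduce the problem to the setting of a maximal positive-dimensional overgroup and then to descend. Since every closed positive-dimensional subgroup of $G$ lies in a maximal one, fix a maximal closed positive-dimensional subgroup $M$ of $G$ with $H \leqslant M$. As any $KM$-submodule of $V$ is in particular a $KH$-submodule, the irreducibility of $V|_H$ forces $V|_M$ to be irreducible as well. Now $M$ cannot be a parabolic subgroup: its unipotent radical $R_u(M)$ is normal in $M$, so the nonzero fixed space $C_V(R_u(M))$ is a $KM$-submodule and hence equals $V$, which would place the nontrivial unipotent group $R_u(M)$ in the (central, hence finite) kernel of the representation of $G$ on $V$, a contradiction. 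Thus $M$ is reductive, and appealing to the classification of maximal positive-dimensional subgroups of classical groups together with the determination of the irreducible triples with maximal factor in \cite{Seitz2, BGMT, BGT}, the triple $(G,M,V)$ lies in an explicit finite list, with $M$ either a subgroup of maximal rank, a subspace stabilizer, an imprimitive subgroup, a tensor-product or tensor-induced subgroup, a classical-type subgroup, or an almost simple subgroup, and with $W|_{M^0}$ (and hence $V|_{M^0}$) known explicitly.

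The reducibility of $V|_{H^0}$ is exploited through Clifford theory. Put $X = H^0$; then the $KX$-composition factors of $V$ are permuted transitively by the finite group $H/X$, so they all have the same dimension and their highest weights lie in a single orbit of the group of automorphisms of $X$ induced by $H$, which acts on dominant weights through graph automorphisms of $X$. (We may assume $X$ is reductive: were $R_u(X) \neq 1$, then $R_u(X) \lhd H$ would act trivially on $V$, again contradicting near-faithfulness and H1.) Hence either $X$ admits a nontrivial graph automorphism realized inside $H$ and $V|_X$ has at least two non-isomorphic composition factors, or $V|_X$ is homogeneous of multiplicity at least $2$; in both cases the highest weight $\l$ is strongly constrained. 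Feeding in the known restriction $V|_{M^0}$, together with the compatible embedding $X \leqslant M^0$, leaves only a short list of candidate configurations to examine.

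The bulk of the work is the case-by-case analysis over the type of $M$. When $M$ is a subgroup of maximal rank, a subspace stabilizer, a tensor-product, tensor-induced, or classical-type subgroup, or an almost simple subgroup, one shows by combining the explicit module restrictions, Clifford theory applied successively along the chain $X \leqslant M^0 \leqslant G$, and dimension estimates, that either $V|_{H^0}$ is in fact irreducible (so the case does not occur) or $(G,H,V)$ appears in Table~\ref{tab:main}. The one family that cannot be pushed to a complete answer is the imprimitive one in the orthogonal case: here $M$ stabilizes an orthogonal decomposition $W = W_1 + \cdots + W_t$ (or the variant occurring when $p = 2$), $V$ is forced to be a spin module, $V|_{M^0}$ is the tensor product of the spin modules of the orthogonal factors, and a subgroup $H \leqslant M$ acts irreducibly on $V$ with $V|_{H^0}$ reducible essentially when $H$ permutes the $W_i$, that is, when $H$ is a decomposition subgroup; classifying such $H$ precisely is a self-referential instance of the original problem, which is why this possibility is recorded as conclusion (b). I expect this spin-module analysis — and the checking that no other imprimitive or tensor-type configuration evades it — to be the main obstacle.

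It remains to prove the converse statement in the theorem. For each entry of Table~\ref{tab:main} one verifies directly that $V|_H$ is irreducible: using the highest weights listed there one writes down the composition factors of $V|_{H^0}$ and checks, via Clifford theory, that the finite quotient $H/H^0$ fuses them into a single irreducible $KH$-module, equivalently that no proper $KH^0$-submodule of $V$ is $H$-stable. This is a finite and explicit computation.
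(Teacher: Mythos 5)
Your overall strategy coincides with the paper's: pass to a maximal positive-dimensional overgroup $M$ of $H$, note that $V|_{M}$ is irreducible, read off the possibilities for $(G,M,V)$ from \cite{Seitz2,BGMT,BGT}, analyse the embeddings of $H$ in $M$ via Clifford theory, and set aside the spin-module/orthogonal-decomposition situation as conclusion (b). However, the proposal leaves genuine gaps precisely where the work lies, and the tools you name (explicit restrictions, Clifford theory, ``dimension estimates'') are not sufficient for the hardest configurations. The most serious omission concerns the maximal subgroups $M$ whose connected component is a central product of isomorphic classical factors ($A_m^2.2$, $C_m^2.2$, $(D_m^2.2).2$, and the $\C_4(ii)$ subgroups $C_1^t.{\rm Sym}_t$, $C_2^2.2$, $B_1^2.2$). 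There one must pin down how $H^0$ sits inside $M^0$: after lifting to the simply connected cover (necessary because $M^0$ is only a central product), one needs the subdirect-product/diagonal-embedding analysis of Lemma \ref{l:basecase} and Proposition \ref{p:useful}, Steinberg's description of bijective morphisms (Lemma \ref{l:steinberg}) -- which allows Frobenius-twisted diagonal subgroups that are not isomorphisms of algebraic groups and have exactly the same dimension as untwisted ones -- and then a homogeneity argument (via $N_G(H^0)$, Lemma \ref{l:ten} and Lemma \ref{l:mu}) to exclude the twists and the untwisted diagonals alike. Dimension counting cannot distinguish a $\sigma_q$-twisted diagonal from an untwisted one, so this step would fail as described.

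Two further steps are missing. First, when $H\not\leqslant M^0$ and $M=D_m.2$, descending to a maximal subgroup $J$ of $M$ with $H\leqslant J$ is not covered by the classifications you cite, which treat simple groups (modulo scalars); the paper has to extend the Liebeck--Seitz framework to ${\rm GO}(W)$ and prove separately that $V$ restricts reducibly to each geometric maximal subgroup of $D_m.2$ (Propositions \ref{p:dn2}, \ref{p:dn2_1} and \ref{p:dn2_2}) -- this is needed already to handle case (iii) of Table \ref{tab:geom2} and the $(C_n,D_n.2)$ case with $p=2$. Second, your Clifford-theoretic frame does not address the torus-normalizer case $(A_n,N_G(T),\l_k)$, which produces row (i) of Table \ref{tab:main}: there one must show that $H^0$ is forced to be a regular maximal torus, and the key point (Proposition \ref{p:geom1}) is that for a proper subtorus $S$ the group $N_G(S)/C_G(S)$ normalizes the pointwise stabilizer of $S^{\perp}_{\mathbb{R}}$, an intransitive parabolic subgroup of ${\rm Sym}_{n+1}$, and hence can never be $2$-transitive; nothing in your outline substitutes for this. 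Finally, a small imprecision: the decomposition subgroups of conclusion (b) arise not only from the imprimitive ($\C_2$) family but also from non-degenerate subspace stabilizers in $\C_1$ (orthogonal decompositions with $t=2$, including the $p=2$ variants), so the case division in your third paragraph should be framed accordingly.
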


\renewcommand{\arraystretch}{1.1}
\begin{table}[h]
\begin{center}
$$\begin{array}{lllll} \hline
& G & H & \l & \mbox{Conditions} \\ \hline
{\rm (i)} & A_n & T_n.X & \l_k,\, 1<k<n & \mbox{$X < {\rm Sym}_{n+1}$ is $\ell$-transitive, $\ell = \min\{k,n+1-k\}$} \\
{\rm (ii)} & B_4 & B_1^3.X & \l_3 & \mbox{$p=2$, $X=Z_3$ or ${\rm Sym}_3$} \\
{\rm (iii)} & C_4 & C_1^3.X & \l_3 & \mbox{$p=2$, $X=Z_3$ or ${\rm Sym}_3$} \\
{\rm (iv)} & C_4 & C_1^3.Z_3 & \l_2,\, \l_3 & p \neq 2 \;\; \mbox{($p \neq 2,3$ if $\l=\l_3$)} \\ 
{\rm (v)} & D_4 & C_1^3.Z_3 & \l_1+\l_4,\, \l_3+\l_4 & p=2 \\ 
{\rm (vi)} & D_8 & C_1^4.X & \l_7 & \mbox{$p \neq 3$, $X<{\rm Sym}_4$ is transitive} \\ \hline
\end{array}$$
\caption{The irreducible triples $(G,H,V)$ in Theorem \ref{t:main}}
\label{tab:main}
\end{center}
\end{table}
\renewcommand{\arraystretch}{1}

\begin{remark}\label{r:main}
In case (i) of Table \ref{tab:main}, $T_n$ denotes a maximal torus of $G$. In all cases, $H^0$ is the connected component of a maximal subgroup of $G$, with the exception of the cases labelled (ii) and (iii), where $H$ is contained in a subgroup $D_4<G$. Also note that in cases (iii) to (vi), $W|_{H^0}$ is the tensor product of the natural modules of the simple components of $H^0$. In case (ii), $H$ is the image of a subgroup $C_1^3.X<C_4$ as in (iii), under an isogeny $\varphi:C_4 \to B_4$. In cases (v) and (vi), we record $H$ and $V$ up to ${\rm Aut}(G)$-conjugacy (so in case (vi) for example, if $\tilde{H}$ is the image of $H$ under a graph automorphism of $G$, then $V_G(\l_8)$ is an irreducible $K\tilde{H}$-module). Finally, let us note that the situation in part (b) of Theorem \ref{t:main} is very special and we refer the reader to Section \ref{s:spin} for further details.
\end{remark}

\begin{remark}
If $V|_{H^0}$ is irreducible, then $(G,H^0,V)$ is one of the cases in \cite[Table 1]{Seitz2} and
$$H^0 \leqs H \leqs N_G(H^0) \mbox{ and } C_G(H^0) \leqs Z(G).$$
More precisely, we are either in the situation described in part (b) of Theorem \ref{t:main}, or 
in terms of Seitz's notation in \cite[Table 1]{Seitz2}, one of the following holds (modulo scalars):
\begin{itemize}\addtolength{\itemsep}{0.2\baselineskip}
\item[(a)] $N_G(H^0) = H^0$;
\item[(b)] $N_G(H^0) = H^0.2$ and $(G,H^0,V)$ is one of the cases labelled I$_4$, I$_5$, I$_6$ (with $n=3$ in the notation of \cite[Table 1]{Seitz2}), II$_1$, S$_1$, S$_7$, MR$_1$, MR$_4$;
\item[(c)] $N_G(H^0) = H^0.{\rm Sym}_3=D_4.{\rm Sym}_3$ and $(G,H^0,V)$ is the case labelled S$_8$.
\end{itemize}
Here we refer the reader to the proof of \cite[Theorem 2.5.1]{BGMT} to see that $N_G(H^0) = H^0.2$ in the case labelled S$_7$, and that $N_G(H^0)=H^0.{\rm Sym}_3$ in case S$_8$.
\end{remark}

Let us briefly describe our approach to the proof of Theorem \ref{t:main}. Suppose $V|_{H^0}$ is reducible. If $H$ is a maximal subgroup of $G$ then $(G,H,V)$ can be read off from the main theorems of \cite{BGMT,BGT}, so let us assume $H < M < G$ with $M$ maximal. Since $V|_{H}$ is irreducible, it follows that $V|_{M}$ is also irreducible and we can consider the possibilities for the irreducible triple $(G,M,V)$, which are determined in \cite{Seitz2} (if $M$ is connected) and \cite{BGMT,BGT} (if $M$ is disconnected). We can then proceed by studying the possible embeddings of $H$ in $M$. 

Our next result is a combination of the main theorems in \cite{Seitz2, BGMT, BGT}, together with Theorem \ref{t:main}. Note that we assume $n \geqs 3$ if $G=B_n$, $n \geqs 2$ if $G=C_n$, and $n \geqs 4$ if $G=D_n$.

\begin{corol}\label{t:main4}
Let $G$ be a simply connected cover of a simple classical algebraic group over an algebraically closed field $K$ of characteristic $p \geqs 0$. Let $H$ be a positive-dimensional closed subgroup of $G$, and let $V=V_G(\l)$ be a nontrivial $p$-restricted irreducible tensor indecomposable rational $KG$-module such that $V|_{H}$ is irreducible. Then one of the following holds:
\begin{itemize}\addtolength{\itemsep}{0.2\baselineskip}
\item[{\rm (a)}] $V=W^{\tau}$ for some automorphism $\tau$ of $G$, where $W$ is the natural module;
\item[{\rm (b)}] $G$ is of type $B_n$ or $D_n$ (or type $C_n$ if $p=2$) and $V$ is a spin module;
\item[{\rm (c)}] $(G,H,V)$ is recorded in Table \ref{t:maint3}.
\end{itemize}
\end{corol}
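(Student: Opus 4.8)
The plan is to obtain Corollary~\ref{t:main4} by combining Theorem~\ref{t:main} with the existing classifications of irreducible triples, organised by a dichotomy on the subgroup $H$. First I would dispose of part~(a): if $V = W^{\tau}$ for some $\tau \in {\rm Aut}(G)$ we are done, so assume otherwise, which by Remark~\ref{r:hyp} is exactly condition~H1 of Hypothesis~\ref{h:our}. Since $G$ is simply connected, $Z(G)$ is finite and acts on $V$ by scalars, so restricting $V$ to $H$, to $HZ(G)$, or (when $HZ(G) = H^0Z(G)$) to $H^0$ are all equivalent for the purpose of irreducibility. Set $\bar{H} = HZ(G)/Z(G) \leqs \bar{G} = G/Z(G)$, and split into the cases $\bar{H}$ connected and $\bar{H}$ disconnected.

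If $\bar{H}$ is connected then $HZ(G) = H^0Z(G)$, so $V|_{H^0}$ is irreducible; by \cite{Seitz2} (see also the remarks following Theorem~\ref{t:main}) the triple $(G,H^0,V)$ then occurs in \cite[Table~1]{Seitz2}, with $H^0 \leqs H \leqs N_G(H^0)$ and $C_G(H^0) \leqs Z(G)$. Each entry of that table compatible with~H1 either has $G$ of type $B_n$ or $D_n$ (or, through the exceptional isogeny in characteristic $2$, type $C_n$) with $V$ a spin module, giving part~(b), or else I would record it, together with the finitely many admissible intermediate subgroups $H$, as an entry of Table~\ref{t:maint3}, giving part~(c).

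If $\bar{H}$ is disconnected, I split further on whether $\bar{H}$ is maximal in $\bar{G}$. If it is, then $H$ is maximal in $G$ modulo $Z(G)$, and $(G,H,V)$ is read off from the main theorems of \cite{BGMT, BGT}; as before each outcome is either a spin-module case (part~(b)) or contributes an entry to Table~\ref{t:maint3} (part~(c)). If $\bar{H}$ is non-maximal, then H1 and H2 both hold, so all of Hypothesis~\ref{h:our} is in force: when $V|_{H^0}$ is irreducible I argue as in the connected case via \cite{Seitz2}, and when $V|_{H^0}$ is reducible Theorem~\ref{t:main} applies directly, yielding either part~(b) (a spin module, with $H$ a decomposition subgroup) or one of the six families of Table~\ref{tab:main}, which I absorb into Table~\ref{t:maint3} as part~(c). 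This exhausts the cases.

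The substantive work (bookkeeping rather than new mathematics, and where I expect the care to be needed) lies in merging \cite[Table~1]{Seitz2}, the tables of \cite{BGMT, BGT}, and Table~\ref{tab:main} into the single Table~\ref{t:maint3} without omission or spurious repetition. One must keep all descriptions compatible with the fixed choice of $G$ as a simply connected cover, so that half-spin modules and the distinction between the orthogonal group ${\rm SO}$ and its simply connected cover ${\rm Spin}$ are treated uniformly; record triples only up to ${\rm Aut}(G)$-conjugacy, which for $D_n$ means identifying cases related by the graph automorphism interchanging $\l_{n-1}$ and $\l_n$, and for $n=4$ by the triality automorphisms permuting $\l_1,\l_3,\l_4$, following the conventions of Remark~\ref{r:main}; handle the duality between $B_n$ and $C_n$ in characteristic $2$ coherently, so that a case and its dual are not both listed; and fix the boundary between parts~(b) and~(c), namely that any triple with $V$ a spin module is assigned to~(b). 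Note that~(b) and~(c) need not be mutually exclusive: for instance $(D_8, C_1^4.X, \l_7)$ has $V = V_G(\l_7)$ a half-spin module while $H$ is not a decomposition subgroup, so Table~\ref{t:maint3} is to be read as supplying one valid description of $(G,H,V)$ when~(b) does not already apply. Once these conventions are fixed, assembling the outputs of the three source classifications into Table~\ref{t:maint3} completes the proof.
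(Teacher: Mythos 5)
Your proposal is correct and follows essentially the same route as the paper, which presents Corollary \ref{t:main4} precisely as a combination of the main theorems of \cite{Seitz2,BGMT,BGT} with Theorem \ref{t:main}; your case split (connected via \cite{Seitz2}, disconnected maximal via \cite{BGMT,BGT}, disconnected non-maximal via \cite{Seitz2} or Theorem \ref{t:main} according to whether $V|_{H^0}$ is irreducible) is exactly the intended decomposition. Your remarks on the bookkeeping conventions (conjugacy under ${\rm Aut}(G)$, the $B_n$/$C_n$ duality when $p=2$, and the non-exclusivity of parts (b) and (c)) match the paper's treatment.
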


\begin{remark}\label{r:main4}
Let us make some comments on the cases in Table \ref{t:maint3}:
\begin{itemize}\addtolength{\itemsep}{0.2\baselineskip}
\item[{\rm (i)}] In Table \ref{t:maint3}, we write $X$ for ${\rm Sym}_3$ or $Z_3$, $Z$ for $Z_2$ or $1$, and $Y$ denotes any $k$-transitive subgroup of ${\rm Sym}_{n+1}$.
\item[{\rm (ii)}] For $G$ of type $A_n$, the highest weight $\l$ is recorded up to conjugacy by a graph automorphism of $G$ (this is consistent with \cite[Table 1]{Seitz2}). For instance, in the first row of the table, we have $G = A_n$, $H=T_n.Y$ and $\l=\l_k$ with $2 \leqs k \leqs (n+1)/2$. By applying a suitable graph automorphism, we see that $V_G(\l_k)|_{H}$ is irreducible for all $2 \leqs k \leqs n-1$. 
\item[{\rm (iii)}] In the fourth column, we describe the restriction of $\l$ to a suitable maximal torus of the derived subgroup $[H^0,H^0]$ (this is denoted by $\l|_{H^0}$), in terms of highest weights for the simple components. The one exception is the case $(G,H)=(A_n,T_n.Y)$, where 
$H^0=T_n$ is a maximal torus of $G$ and thus $[H^0,H^0]=1$.
\item[{\rm (iv)}] In the fifth column, $\kappa$ denotes the number of $KH^0$-composition factors of $V|_{H^0}$.
\item[{\rm (v)}] In the final column, we record various conditions on $G$, $H$ and $\l$ that are necessary and sufficient for the irreducibility of $V|_{H}$. In a few cases, the conditions for irreducibility are rather complicated and so we record them here (the example in case (b) was discovered by Ford in \cite{Ford1}): 

\vspace{1mm}

\begin{itemize}\addtolength{\itemsep}{0.2\baselineskip}
\item[(a)] $(G,H,\l) = (A_{2m-1},C_m,a\l_k+b\l_{k+1})$: $1 \leqs k < m$; $a+b=p-1$; $p \ne 2$; $a \ne 0$ if $k=m-1$.
\item[(b)] $(G,H,\l) = (B_n,D_n.2,\sum_{i}a_i\l_i)$: $p \ne 2$; $a_n=1$; if $a_i,a_j \neq 0$, where $i<j<n$ and $a_k=0$ for all $i<k<j$, then $a_i+a_j \equiv i-j \imod{p}$; if $i<n$ is maximal such that $a_i \neq 0$, then $2a_i \equiv -2(n-i)-1 \imod{p}$.
\item[(c)] $(G,H,\l) = (C_{2m},C_m^2.2,\l_{2m-1}+a\l_{2m})$: $0 \leqs a < p$; $(m,a) \ne (1,0)$; $2a+3 \equiv 0 \imod{p}$.
\item[(d)] $(G,H,\l) = (D_n,B_{n-1}, a\l_k+b\l_{n-i})$: $1 \leqs k \leqs n-2$; $i \in \{0,1\}$; $ab \ne 0$; $a+b+n-k-1 \equiv 0 \imod{p}$. 
\end{itemize}
\end{itemize}
\end{remark}

\renewcommand{\arraystretch}{1.1}
\footnotesize
\begin{table}
\begin{center}
$$\begin{array}{llllcl} \hline
G & H & \l & \l|_{H^0} & \kappa & \mbox{Conditions} \\ \hline
A_n & T_n.Y & \l_k & - & \binom{n+1}{k} & 2 \leqs k \leqs (n+1)/2 \\
A_{2m-1} & C_{m} & k\l_1 & k\o_1 & 1 & m \geqs 2, \, k \geqs 2 \\
A_{2m-1} & C_{m} & a\l_k+b\l_{k+1} & a\o_k+b\o_{k+1} & 1 & \mbox{See Remark \ref{r:main4}(v)(a)} \\
A_{2m} & B_{m} & \l_k & \o_k & 1 & 2 \leqs k < m, \, p \ne 2 \\
A_{2m} & B_{m} & \l_{m} & 2\o_{m} & 1 & m \geqs 2, \, p \ne 2 \\ 
A_{2m-1} & D_{m}.Z & \l_k & \o_k & 1 & 2 \leqs k <m-1, \, p \ne 2 \\
A_{2m-1} & D_{m}.Z & \l_{m-1} & \o_{m-1}+\o_{m} & 1 & m \geqs 4, \, p \ne 2 \\
A_{2m-1} & D_{m}.2 & \l_{m} & 2\o_{m} & 2 & m \geqs 2, \, p \ne 2 \\
A_{m(m+2)} & A_m^2.2 & \l_2 & \o_{2} \otimes 2\o_{1} & 2 & m \geqs 2, \, p \ne 2 \\ 
A_{(m^2+m-2)/2} & A_m & \l_2 & \o_1+\o_3 & 1 & m \geqs 3, \, p \ne 2 \\
A_{m(m+3)/2} & A_m & \l_2 & 2\o_1+\o_2 & 1 & m \geqs 2, \, p \ne 2 \\
A_{26} & E_6 & \l_2 & \o_3 & 1 & p \ne 2 \\
A_{26} & E_6 & \l_3 & \o_4 & 1 & p \ne 2,3 \\
A_{26} & E_6 & \l_4 & \o_2+\o_5 & 1 & p \ne 2,3 \\
A_{15} & D_5 & \l_2 & \o_3 & 1 & p \ne 2 \\
A_{15} & D_5 & \l_3 & \o_2+\o_4 & 1 & p \ne 2,3 \\
& & & & & \\
B_n & D_n.Z & \sum_{i=1}^{n-1}a_i\l_i & \sum_{i=1}^{n-2}a_i\o_i + a_{n-1}(\o_{n-1}+\o_n) & 1 & n \geqs 3, \, p = 2 \\
B_n & D_n.2 & \sum_{i=1}^n a_i\l_i &\sum_{i=1}^{n-1}a_i\o_i + (a_{n-1}+1)\o_n & 2 & \mbox{See Remark \ref{r:main4}(v)(b)} \\
B_{12} & F_4 & 2\l_1 & 2\o_4 & 1 & p = 3 \\
B_6 & C_3 & 2\l_1 & 2\o_2 & 1 & p = 3 \\
B_4 & B_1^3.X & \l_3 & \o_{1} \otimes \o_{1} \otimes 3\o_{1} & 3 & p = 2 \\
B_3 & G_2 & \l_2 & \o_2 & 1 & p = 2 \\
B_3 & G_2 & \l_1+\l_2 & \o_1+\o_2 & 1 & p = 2 \\
B_3 & G_2 & k\l_1 & k\o_1 & 1 & k \geqs 2, \, p \ne 2 \\
B_3 & G_2 & a\l_2+b\l_3 & b\o_1+a\o_2 & 1 & ab \ne 0, \, 2a+b+2 \equiv 0 \, (p) \\
B_3 & G_2 & a\l_1+b\l_2 & a\o_1 + b\o_2 & 1 & a \geqs 2,\, b \geqs 1,\, a+b+1 \equiv 0 \, (p) \\
B_3 & A_2.Z & 2\l_1 & 2\o_1+2\o_2 & 1 & p = 3 \\
& & & & & \\
C_n & D_n.Z & \sum_{i=1}^{n-1}a_i\l_i & \sum_{i=1}^{n-2}a_i\o_i + a_{n-1}(\o_{n-1}+\o_n) & 1 & n \geqs 3, \, p = 2 \\
C_{2m} & C_m^2.2 & \l_{2m-1}+a\l_{2m} & (a+1)\o_{m}\oplus (\o_{m-1}+a\o_{m}) & 2 & \mbox{See Remark \ref{r:main4}(v)(c)} \\ 
C_{28} & E_7 &  \l_2 & \o_6 & 1 & p \ne 2 \\
C_{28} & E_7 &  \l_3 & \o_5 & 1 & p \ne 2,3 \\
C_{28} & E_7 &  \l_4 & \o_4 & 1 & p \ne 2,3 \\
C_{28} & E_7 &  \l_5 & \o_2+\o_3 & 1 & p \ne 2,3,5 \\
C_{16} & D_6 & \l_2 & \o_4 & 1 & p \ne 2 \\
C_{16} & D_6 & \l_3 & \o_3 + \o_5 & 1 & p \ne 2,3 \\
C_{10} & A_5.Z & \l_2 & \o_2+\o_4 & 1 & p \ne 2 \\
C_{10} & A_5.2 & \l_3 & \o_1+2\o_4 & 2 & p \ne 2,3 \\
C_7  & C_3 & \l_2 & 2\o_2 &  1 & p \ne 2,7 \\
C_7 & C_3 & \l_3 & \o_1+2\o_2 & 1 & p \ne 2,3 \\
C_4 & C_1^3.X & \l_2 & 0 \otimes 2\o_{1} \otimes 2\o_{1} & 3 & p \ne 2 \\
C_4 & C_1^3.X & \l_3 & \o_{1} \otimes \o_{1} \otimes 3\o_{1} & 3 & p \ne 3 \\
C_3 & G_2 & \l_2 & \o_2 & 1 & p = 2 \\
C_3 & G_2 & \l_1+\l_2 & \o_1+\o_2 & 1 & p = 2 \\
& & & & & \\
D_n & B_{n-1} & k\l_{n-i} & k\o_{n-i} & 1 & \mbox{$k \geqs 2$, $i \in \{0,1\}$} \\
D_n & B_{n-1} & a\l_k+b\l_{n-i} & a\o_k+b\o_{n-i} & 1 & \mbox{See Remark \ref{r:main4}(v)(d)} \\
D_{2m} & (D_m^2.2).2 & \l_1+\l_{n-i} & (\o_{1}+\o_m) \oplus \o_{m-i} & 4 & \mbox{$m \geqs 3$ odd, $i \in \{0,1\}$, $p=2$} \\
D_4 & C_1^3.X & \l_i+\l_4 & \o_{1} \otimes \o_{1} \otimes 3\o_{1} & 3 & \mbox{$i \in \{1,3\}$, $p=2$} \\ \hline
\end{array}$$
\caption{Positive-dimensional irreducible subgroups of classical groups}
\label{t:maint3}
\end{center}
\end{table}
\renewcommand{\arraystretch}{1}
\normalsize

\vs

Our final result concerns chains of irreducibly acting subgroups. Let $G$ and $V$ be given as in Hypothesis \ref{h:our} and write $\ell=\ell(G,V)$ for the length of the longest chain of closed positive-dimensional subgroups
$$H_{\ell}<H_{\ell-1} < \cdots < H_2 < H_1 = G$$
such that $V|_{H_{\ell}}$ is irreducible. We call such a sequence of subgroups an \emph{irreducible chain}. If $G$ is an orthogonal group (or a symplectic group with $p=2$) and $V$ is a spin module, then $\ell(G,V)$ can be arbitrarily large, and it is easy to see that the same is true if $V=W$ or $W^*$.  

\begin{theorem}\label{t:chains}
Suppose $G$ and $V$ satisfy the conditions in Hypothesis \ref{h:our} and assume $V$ is not a spin module. Then either $\ell(G,V) \leqs 5$, or $G=A_n$ and $\l \in \{\l_2, \l_3, \l_{n-2}, \l_{n-1}\}$.
\end{theorem}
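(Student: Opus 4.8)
The plan is to bound the length $\ell(G,V)$ of an irreducible chain $H_\ell < \cdots < H_1 = G$ by iterating the structural information encoded in Corollary \ref{t:main4} (equivalently, the combination of Seitz's tables with Theorem \ref{t:main}). The key observation is that if $V|_{H_i}$ is irreducible then so is $V|_{H_j}$ for every $j < i$, so each consecutive pair $(H_j, H_{j+1})$ must itself appear as a triple $(G', H', V')$ in the classification after restricting $V$ to the appropriate subquotient; moreover, the weight $\l$ (or its image under the ambient embeddings) is constrained at each step. So the strategy is: (i) show that after at most one step we reach a situation where $H_2^0$ is reductive and $V|_{H_2^0}$ retains enough structure to re-enter the classification; (ii) track how the pair $(\text{rank}, \text{highest weight})$ evolves down the chain; and (iii) observe that, except in a short list of configurations, each descent forces a strict drop in some nonnegative integer invariant, so the chain cannot be long.

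First I would reduce to the case where $H_2$ is a maximal subgroup of $G$ with $V|_{H_2}$ irreducible; since $V$ is not a spin module (and $V \neq W, W^*$ by H1), Corollary \ref{t:main4}(c) together with Remark \ref{r:main4} tells us precisely which $(G, H_2, V)$ can occur, and in all of them $H_2^0$ is a known reductive group with a short list of possible highest weights of $V|_{H_2^0}$ (almost always $\kappa = 1$, i.e.\ $V|_{H_2^0}$ is irreducible, so we may pass to the connected simple group and re-apply Seitz's classification). The crucial point is that in Table \ref{t:maint3} the highest weight $\l$ is almost always of a very restricted type: either $\l = \l_k$ with small $k$ (the first row, $A_n$ with a torus normalizer), or $\l$ has small ``size'' $\sum a_i$. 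For the non-torus cases, the embedding $H_2^0 \le G$ typically arises from a small representation of $H_2^0$, and iterating pushes us toward smaller and smaller groups; one shows by inspection of the tables that at most a bounded number (at most $4$ further steps, giving $\ell \le 5$) of such strict descents is possible before we hit a group that has no proper positive-dimensional irreducibly-acting subgroup for the surviving weight. The genuine exception is the first row: when $G = A_n$ and $\l \in \{\l_2, \l_3, \l_{n-2}, \l_{n-1}\}$, the module is (up to duality and small corrections) a wedge power of small degree, and $H_1 = A_n$ already admits a maximal-torus-type subgroup $T_n.Y$ acting irreducibly for $Y$ any $k$-transitive group; iterating within symmetric groups and their subgroups then produces arbitrarily long chains (since there are arbitrarily long chains of $k$-transitive groups for $k \le 3$), which is exactly why this family must be excluded from the bound $\ell \le 5$.

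The main steps, in order, are: (1) use H1 and the spin hypothesis to invoke Corollary \ref{t:main4}, reducing the possible $(G, H_2, V)$ to the finite list of Table \ref{t:maint3}; (2) for each such triple, identify the connected group $H_2^0$ (or its derived group) and the possible highest weights of its composition factors on $V$, and recognize that unless $\l$ is one of the four exceptional weights for type $A_n$, the weight $\l|_{H_2^0}$ is again ``small'' in the sense needed to re-enter the classification with a strictly smaller ambient group; (3) set up a descent: define an invariant (e.g.\ the rank of $G$, or $\dim V$, or the sum of coefficients of $\l$ weighted appropriately) that strictly decreases at each step of a chain avoiding the exceptional $A_n$ weights, and check case-by-case against Table \ref{t:maint3} that the number of strict drops is at most $4$; (4) handle the boundary/overlap cases (e.g.\ $B_3 \cong C_3$ coincidences, the $D_4$ triality cases, the decomposition-subgroup cases in Theorem \ref{t:main}(b) which are excluded since $V$ is not a spin module) separately; (5) finally, exhibit the long chains in the excluded $A_n$ family to show the exception is genuine, and verify $\ell \le 5$ is attained (or not) to confirm sharpness is not claimed beyond the stated bound.

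The hard part will be step (3): there is no single clean monovariant that works uniformly, because the tables mix torus normalizers, tensor-decomposition subgroups $C_1^k.X$, and genuine reductive maximal subgroups like $E_6 < A_{26}$, and the way $\l$ transforms under each embedding is different in each family. I expect the bulk of the argument to be a finite but somewhat delicate case analysis: for each row of Table \ref{t:maint3} one must determine which rows (if any) can appear immediately below it, build the resulting directed graph on configurations, and check that the longest directed path avoiding the four-weight $A_n$ vertex has length at most $5$. A secondary subtlety is making sure that when $\kappa > 1$ (the rows with $\kappa = 2, 3, 4$) the Clifford-theoretic constraints on the $H$-conjugacy of composition-factor highest weights are correctly propagated down the chain; these cases are few, so they can be dispatched individually, but they must not be overlooked.
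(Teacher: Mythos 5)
Your overall strategy diverges from the paper's, and the divergence is where the gap lies. The paper's proof does not recurse by restricting $V$ to $H_2^0$ and ``re-entering the classification with a smaller ambient group''; it exploits the fact that Theorem \ref{t:main} together with \cite{Seitz2,BGMT,BGT} (i.e.\ Corollary \ref{t:main4}) classifies the irreducible triples $(G,H,V)$ for \emph{every} positive-dimensional closed subgroup $H$, maximal or not. Hence every term $H_i$ of an irreducible chain is constrained with the \emph{same} ambient pair $(G,V)$: one assumes $\ell \geqs 4$, looks at the bottom term $H_4$ directly as a subgroup of $G$, reads off the possibilities from the classification (splitting into $H_4$ connected, or disconnected with $V|_{H_4^0}$ reducible/irreducible), and then checks which of the listed subgroups can sit inside one another. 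Your restriction--recursion, by contrast, needs the restricted configuration $(H_2^0, V|_{H_2^0})$ to satisfy the hypotheses under which the classification is stated, and this fails in many rows of Table \ref{t:maint3}: $H_2^0$ may be a maximal torus, a product of simple factors ($C_1^3$, $C_m^2$, $D_m^2$, $A_m^2$) on which the module is a tensor product, or an exceptional group ($G_2$, $F_4$, $E_6$, $E_7$), none of which fall under Hypothesis \ref{h:our}; moreover the later terms of the chain lie in $H_2$, not necessarily in $H_2^0$, so Clifford theory must be re-threaded at each level. You acknowledge that step (3) --- the descent invariant --- has no clean formulation, and indeed the paper never needs one; as written, the central inductive step of your argument is not established, and this is a genuine gap rather than a presentational difference.

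A second concrete omission: for $G=A_n$ and $\l=\l_k$ with $4 \leqs k \leqs (n+1)/2$ (which your proposal does not exclude), the irreducibly acting subgroups are torus normalizers $T_n.Y$ with $Y$ a $k$-transitive subgroup of ${\rm Sym}_{n+1}$, and bounding chains of such subgroups requires the CFSG-based fact that the only $4$-transitive finite groups are symmetric, alternating and Mathieu groups (and only ${\rm M}_{24}$ beyond ${\rm Sym}$/${\rm Alt}$ for $5$-transitivity). This input is what produces the length-$4$ chains through ${\rm M}_{n+1} < {\rm Alt}_{n+1} < {\rm Sym}_{n+1}$ in Table \ref{tab:chains}, and it is absent from your outline; without it you cannot rule out long chains of highly transitive groups for these weights. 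Your treatment of the excluded family $\l \in \{\l_2,\l_3,\l_{n-2},\l_{n-1}\}$ (arbitrarily long chains of $2$- and $3$-transitive groups) does match Remark \ref{r:3trans}, but the positive bound $\ell(G,V) \leqs 5$ is asserted rather than derived in your step (3).
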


The upper bound in this theorem is best possible. In fact, if we exclude the exceptional cases then either $\ell(G,V) \leqs 4$, or $G \in \{B_4,C_4\}$, $\l=\l_3$, $p=2$ and 
$$A_1^3.Z_3 < A_1^3.{\rm Sym}_3 < D_4 < D_4.2 < G$$
is an irreducible chain of length $5$ (see Theorem \ref{t:chai} for a more precise statement). The exceptions with $G=A_n$ are genuine in the strong sense that $\ell(G,V)$ can be arbitrarily large. We refer the reader to Section \ref{s:chains} for further details.

\section{Preliminaries}\label{s:prel}

\subsection{Notation and terminology}\label{ss:nota}

Most of our notation is fairly standard. As in Hypothesis \ref{h:our}, let $G$ be a simply connected cover of a simple classical algebraic group, which is defined over an algebraically closed field $K$ of characteristic $p \geqs 0$. Fix a Borel subgroup $B=UT$ of $G$, where $T$ is a maximal torus of $G$ and $U$ is the unipotent radical of $B$. Let $\Pi(G) =\{\a_1, \ldots, \a_{n}\}$ be the corresponding base of the root system $\Sigma(G)$ of $G$, where $n$ denotes the rank of $G$. Let 
$\{\l_1, \ldots, \l_{n}\}$ be the fundamental dominant weights for $T$ corresponding to $\Pi(G)$.

There is a bijection between the set of dominant weights of $G$ and the set of isomorphism classes of irreducible $KG$-modules; if $\l$ is a dominant weight then we use $V_G(\l)$  to denote the unique irreducible $KG$-module with highest weight $\l$.
We also recall that if $p>0$ then a dominant weight $\l = \sum_{i}a_i\l_i$ is $p$-\emph{restricted} if $a_i<p$ for all $i$. By Steinberg's tensor product theorem, every irreducible $KG$-module decomposes in a unique way as a tensor product $V_0 \otimes V_1^{\s_p} \otimes \cdots \otimes V_r^{\s_{p^r}}$, where $V_i$ is a $p$-restricted irreducible $KG$-module, $\s_{p^i}:G \to G$ is a standard Frobenius morphism (with $\s_{p^i}=1$ if $p=0$), and $V_i^{\s_{p^i}}$ (which we will also denote by $V_i^{(p^i)}$) is the $KG$-module obtained by preceding the action of $G$ on $V_i$ by the endomorphism $\s_{p^i}$. It is convenient to say that every dominant weight is $p$-restricted if $p=0$.

In addition, ${\rm Lie}(G)$ denotes the Lie algebra of $G$, and $U_{\a} = \{x_{\a}(t) \mid t \in K\}$ is the root subgroup of $G$ corresponding to a root $\a \in \Sigma(G)$. If $x \in G$ then $t_x : G \to G$ is the inner automorphism of $G$ induced by conjugation by $x$, so $t_x(g) = xgx^{-1}$ for all $g \in G$. We write $T_i$ for an $i$-dimensional torus. If $H$ is a closed positive-dimensional subgroup of $G$ and $T_{H^0}$ is a maximal torus of $[H^0,H^0]$ contained in $T$, then we abuse notation by writing $\mu|_{H^0}$ to denote the restriction of a $T$-weight $\mu$ to the subtorus $T_{H^0}$.  We define a partial order $\preccurlyeq$ on the set of weights for $T$, where $\mu \preccurlyeq \nu$ if and only if $\mu = \nu - \sum_{i=1}^{n}c_i\a_i$ for some non-negative integers $c_i$ (in this situation, we say that $\mu$ is \emph{under} $\nu$). Finally, we set $\mathbb{N}_0 = \mathbb{N} \cup \{0\}$, we write ${\rm Sym}_{n}$ and ${\rm Alt}_{n}$ for the symmetric and alternating groups of degree $n$, and we denote a cyclic group of order $m$ by $Z_m$ (or just $m$). 

Recall that a map $\varphi:G_1 \to G_2$ of algebraic groups is a \emph{morphism} if it is a group homomorphism that is also a morphism of the underlying varieties. In particular, it is important to note that an injective morphism does not necessarily induce an isomorphism $G_1 \cong \varphi(G_1)$ of algebraic groups. If $G_1$ is a semisimple algebraic group with root system $\Phi$, then we will say that $G_1$ is of \emph{type} $\Phi$ (and we will sometimes denote this by writing $G_1 = \Phi$). For example, ${\rm SL}_{2}(K)$ and ${\rm PGL}_{2}(K)$ are both simple algebraic groups of type $A_1 = B_1 = C_1$, and ${\rm PSp}_{4}(K)$ and ${\rm SO}_{5}(K)$ are both of type $B_2=C_2$. Finally, note that if $H$ is a closed positive-dimensional subgroup of an algebraic group $G$, and $\varphi:H \to G$ is the inclusion map, then the differential $d\varphi:{\rm Lie}(H) \to {\rm Lie}(G)$ is an injective Lie algebra homomorphism (since $\varphi:H \to \varphi(H)$ is an isomorphism of algebraic groups).

\subsection{Diagonal embeddings}\label{ss:diag}

Let $G/Z$ be a central product, where $G = G_1 \times \cdots \times G_t$ and $Z \leqs Z(G)$. A subgroup $H/Z$ of $G/Z$ is a \emph{subdirect product} if each of the projection maps $\pi_i:H \to G_i$ is surjective. In the context of algebraic groups, the related notion of a diagonally embedded subgroup is defined as follows:

\begin{defn}\label{d:de}
Let $H$ be a closed subgroup of $G = G_1\times \cdots \times G_t$ where the $G_i$ are isomorphic simply connected simple algebraic groups. We say that $H$ is \emph{diagonally embedded} in $G$ if each projection $\pi_i: H \to G_i$ is a bijective morphism. Note that we do not require each projection map $\pi_i$ to induce an isomorphism $H \cong \pi_i(H)$ of algebraic groups.
\end{defn} 

The next lemma is a well known result of Steinberg (see \cite[Theorem 30]{steinberg} and \cite[10.13]{steinberg2}), which describes the bijective endomorphisms of a simple algebraic group. Here $t_x$ and $\sigma_q$ are defined as in Section \ref{ss:nota}, and we adopt Steinberg's definition of a \emph{graph automorphism} of a simple algebraic group $G$ (see \cite[Section 10]{steinberg}). In particular, a graph automorphism is an isomorphism of algebraic groups unless $(G,p) = (C_2,2)$, $(G_2,3)$ or $(F_4,2)$.

\begin{lem}\label{l:steinberg}
Let $G$ be a simple algebraic group over an algebraically closed field of characteristic $p \geqs 0$. Let $\varphi:G \to G$ be a bijective morphism. Then $\varphi = t_{x}\sigma_q\gamma^k$ for some $x \in G$, $p$-power $q$ and integer $k \in \{0,1\}$, where $\gamma$ is a graph automorphism of $G$. Moreover, if $G$ is classical and $(G,p) \neq (C_2,2)$, then $\varphi$ is an isomorphism of algebraic groups if and only if $\s_q=1$.
\end{lem}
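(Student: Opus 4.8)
The plan is to deduce both parts of Lemma~\ref{l:steinberg} from the two cited results of Steinberg, filling in only the final sharpening for classical groups. First I would invoke \cite[Theorem~30]{steinberg} (together with \cite[10.13]{steinberg2}) to write an arbitrary bijective morphism $\varphi:G\to G$ as a composite of an inner automorphism $t_x$, a standard Frobenius endomorphism $\sigma_q$ for some $p$-power $q$, and a graph automorphism $\gamma^k$ with $k\in\{0,1\}$; this is the structural statement and requires no new argument beyond quoting Steinberg and observing that for the simple algebraic groups under consideration the only ``extra'' bijective morphisms beyond automorphisms are the Frobenius maps. Here one should be slightly careful about the case $(G,p)=(C_2,2)$, $(G_2,3)$, $(F_4,2)$, where the exceptional isogeny $\gamma$ is a bijective morphism that is not an isomorphism of algebraic groups; the statement of the lemma already flags this, so no further work is needed there.

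Next I would prove the ``moreover'' clause. Assume $G$ is classical and $(G,p)\neq(C_2,2)$, so that the graph automorphism $\gamma$ (when it exists, i.e.\ for types $A_n$, $D_n$) is an isomorphism of algebraic groups; likewise $t_x$ is always an isomorphism of algebraic groups. Hence $\varphi=t_x\sigma_q\gamma^k$ is an isomorphism of algebraic groups if and only if $\sigma_q$ is. If $q=1$ then $\sigma_q=\mathrm{id}$ and $\varphi$ is a composite of isomorphisms of algebraic groups, hence an isomorphism of algebraic groups. Conversely, if $q>1$, then $\sigma_q$ is a standard Frobenius morphism: it is bijective on points but its differential $d\sigma_q$ is zero, so it is not an isomorphism of algebraic groups (equivalently, it is purely inseparable of degree $>1$, or one sees directly on a root subgroup that $x_\alpha(t)\mapsto x_\alpha(t^q)$ is not an isomorphism of the underlying varieties). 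Since $t_x$ and $\gamma^k$ are isomorphisms of algebraic groups, $\varphi=t_x\sigma_q\gamma^k$ cannot be one either. This gives the equivalence $\varphi$ is an isomorphism of algebraic groups $\iff \sigma_q=1$.

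The one genuinely delicate point is the well-definedness of the invariant $q$: a priori the decomposition $\varphi=t_x\sigma_q\gamma^k$ might not be unique, so one must check that whether or not $\sigma_q=1$ is independent of the chosen decomposition. I would handle this by noting that $d\varphi$ is nonzero precisely when $q=1$ (since $d t_x$ and $d\gamma^k$ are isomorphisms and $d\sigma_q=0$ for $q>1$), and $d\varphi$ is intrinsic to $\varphi$; alternatively, one compares the degree of the field extension $K(G)/\varphi^*K(G)$, which is $q^{\dim G}$ (times $1$ or the inseparable degree of $\gamma$ in the exceptional cases, which are excluded here). Either invariant shows that ``$\sigma_q=1$'' is a property of $\varphi$ alone. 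This is essentially the only place where more than bookkeeping is required; the rest is assembling Steinberg's two theorems and the elementary observation that Frobenius morphisms are exactly the obstruction to being an isomorphism of algebraic groups among the bijective morphisms of a classical group outside the small exceptional characteristic cases.
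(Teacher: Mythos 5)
Your proposal is correct and takes essentially the same route as the paper: the paper states this lemma without proof, simply quoting \cite[Theorem 30]{steinberg} and \cite[10.13]{steinberg2} for the decomposition $\varphi = t_x\sigma_q\gamma^k$, exactly as you do. Your verification of the ``moreover'' clause --- that $t_x$ and $\gamma^k$ are isomorphisms of algebraic groups for classical $G$ with $(G,p)\neq(C_2,2)$, while $d\sigma_q=0$ for $q>1$, so that $d\varphi\neq 0$ detects $\sigma_q=1$ intrinsically and independently of the chosen decomposition --- is the intended routine argument and is correct.
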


\begin{lem}\label{l:neww}
Let $\varphi:H \to G$ be a surjective morphism of algebraic groups and let $d\varphi:{\rm Lie}(H) \to {\rm Lie}(G)$ be the corresponding differential map. Then $d\varphi({\rm Lie}(H))$ is a $KG$-submodule of ${\rm Lie}(G)$, and hence also an ideal of ${\rm Lie}(G)$.
\end{lem}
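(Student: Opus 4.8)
The plan is to use that $\varphi$ intertwines conjugation in $H$ with conjugation in $G$, deduce from this that $d\varphi({\rm Lie}(H))$ is stable under the adjoint action of $G$ --- which is precisely what it means for a subspace of ${\rm Lie}(G)$ to be a $KG$-submodule --- and then invoke the standard fact that an ${\rm Ad}(G)$-stable subspace of ${\rm Lie}(G)$ is automatically a Lie ideal.

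First I would fix $h \in H$. Since $\varphi$ is a homomorphism of algebraic groups, $\varphi \circ t_h = t_{\varphi(h)}\circ\varphi$ as morphisms $H \to G$; differentiating at the identity gives $d\varphi \circ {\rm Ad}(h) = {\rm Ad}(\varphi(h))\circ d\varphi$ as linear maps ${\rm Lie}(H) \to {\rm Lie}(G)$, using that the differential of $t_h$ is ${\rm Ad}(h)$ on ${\rm Lie}(H)$ and the differential of $t_{\varphi(h)}$ is ${\rm Ad}(\varphi(h))$ on ${\rm Lie}(G)$. As $t_h$ is an automorphism of $H$, the map ${\rm Ad}(h)$ is a bijection of ${\rm Lie}(H)$, so evaluating both sides on ${\rm Lie}(H)$ yields ${\rm Ad}(\varphi(h))(W) = W$, where $W := d\varphi({\rm Lie}(H))$. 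Since $\varphi$ is surjective, the elements $\varphi(h)$ with $h \in H$ exhaust $G$, and hence $W$ is an ${\rm Ad}(G)$-invariant subspace of ${\rm Lie}(G)$, that is, a $KG$-submodule.

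It remains to show that any ${\rm Ad}(G)$-stable subspace $W$ of $V := {\rm Lie}(G)$ is an ideal. Let $P = \{g \in {\rm GL}(V) : g(W) = W\}$ be the stabilizer of $W$ in ${\rm GL}(V)$; this is a closed subgroup with ${\rm Lie}(P) = \{x \in \mathfrak{gl}(V) : x(W) \subseteq W\}$. By the previous step ${\rm Ad}(G) \leqs P$, so ${\rm Lie}({\rm Ad}(G)) \leqs {\rm Lie}(P)$; combining this with the always-valid inclusion $d{\rm Ad}({\rm Lie}(G)) \subseteq {\rm Lie}({\rm Ad}(G))$ (the differential of a morphism of algebraic groups maps into the Lie algebra of the image) and the identity $d{\rm Ad} = {\rm ad}$, we obtain ${\rm ad}(X)(W) \subseteq W$ for every $X \in {\rm Lie}(G)$, i.e. $[{\rm Lie}(G), W] \subseteq W$. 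Thus $W$ is an ideal of ${\rm Lie}(G)$, which completes the argument.

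I do not expect a genuine obstacle here; the only point requiring care is the passage from ${\rm Ad}(G)$-invariance of $W$ to ${\rm ad}({\rm Lie}(G))$-invariance. In characteristic $0$ one could simply differentiate the orbit map $t \mapsto {\rm Ad}(\gamma(t))w$ along a curve $\gamma$ in $G$ through the identity with tangent vector $X$; in positive characteristic not every element of ${\rm Lie}(G)$ arises as such a tangent vector, so this step must instead be routed through the inclusion ${\rm Ad}(G) \leqs P$ together with the description of ${\rm Lie}(P)$, rather than through an equality of Lie algebras.
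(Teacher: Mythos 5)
Your argument is correct and follows essentially the same route as the paper: the intertwining relation $\varphi\circ t_h = t_{\varphi(h)}\circ\varphi$, differentiated and combined with surjectivity of $\varphi$, gives ${\rm Ad}_G$-invariance of $d\varphi({\rm Lie}(H))$, and then one passes from $G$-invariance to ${\rm ad}({\rm Lie}(G))$-invariance. The only difference is that the paper simply cites the standard fact that a $G$-invariant subspace of a rational $KG$-module is stable under $d\rho({\rm Lie}(G))$, whereas you prove it via the stabilizer of $W$ in ${\rm GL}(V)$ and the identity $d\,{\rm Ad}={\rm ad}$, which is a correct (and characteristic-free) way to justify that step.
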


\begin{proof}
Let ${\rm Ad}_G: G \to {\rm GL}({\rm Lie}(G))$ be the adjoint representation of $G$. 
We must consider ${\rm Ad}_G(g)(d\varphi(X))$, for $g\in G$ and $X \in {\rm Lie}(H)$. As above, let $t_g:G\to G$ denote conjugation by $g$. Then ${\rm Ad}_G(g)(d\varphi(X)) = (dt_g\circ d\varphi)(X) = d(t_g\circ\varphi)(X)$. Since $\varphi$ is surjective, $g = \varphi(h)$ for some $h\in H$, so we have 
$${\rm Ad}_G(g)(d\varphi(X))=d(t_{\varphi(h)}\circ\varphi)(X) = d(\varphi\circ t_h)(X) = 
d\varphi\circ {\rm Ad}_H(h)(X)\in d\varphi({\rm Lie}(H)).$$ 
Therefore, $d\varphi({\rm Lie}(H))$ is ${\rm Ad}_G$-invariant and hence a $KG$-submodule of ${\rm Lie}(G)$.

Finally, let $V$ be a $KG$-module with corresponding representation $\rho:G\to {\rm GL}(V)$, and let $S$ be a $G$-invariant subspace of $V$. Then $S$ is invariant under the action of $d\rho({\rm Lie}(G))$. We conclude that $d\varphi({\rm Lie}(H))$ is an ideal of ${\rm Lie}(G)$.
\end{proof}
 
Recall that a morphism $\varphi:H \to G$ of algebraic groups is an \emph{isogeny} if it is surjective with finite kernel. If such a map exists, we say that $H$ is \emph{isogenous} to $G$ (this is not a symmetric relation).

\begin{lem}\label{l:isog}
Let $G$ be a simply connected simple classical algebraic group of rank $m$ over an algebraically closed field $K$ of characteristic $p \geqs 0$, let $H$ be a connected algebraic group and let $\varphi:H \to G$ be an isogeny. Then $\varphi$ is a bijection. Moreover, if $d\varphi \neq 0$ then either 
$\varphi$ is an isomorphism of algebraic groups, or one of the following holds:
\begin{itemize}\addtolength{\itemsep}{0.2\baselineskip}
\item[{\rm (i)}] $G$ and $H$ are both of type $A_m$, with $p$ dividing $m+1$; 
\item[{\rm (ii)}] $G$ and $H$ are both of type $B_m$, $C_m$ or $D_m$, with $p=2$;
\item[{\rm (iii)}] $(G,H)$ is of type $(B_m,C_m)$ or $(C_m,B_m)$, with $p=2$.
\end{itemize}
\end{lem}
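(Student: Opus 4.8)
The plan is to analyze the isogeny $\varphi:H \to G$ using the structure theory of semisimple algebraic groups, reducing quickly to the simply connected case and then invoking Steinberg's classification of isogenies.

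First I would observe that since $H$ is connected and admits an isogeny onto the simple group $G$, the group $H$ is itself simple (a connected group with a surjection onto a simple group, with finite kernel, can have no positive-dimensional proper normal subgroup and no central torus). Composing $\varphi$ with the simply connected cover $\pi:\widehat{H} \to H$ gives an isogeny $\psi = \varphi \circ \pi : \widehat{H} \to G$ from a simply connected simple group onto a simply connected simple group. By the classification of isogenies between semisimple groups (Steinberg), $\psi$ must induce an isomorphism of root data up to a power of the characteristic, which in particular forces $\widehat{H}$ and $G$ to have the same root system type, except precisely in the exceptional situations where a \emph{special isogeny} exists: these occur only in characteristic $2$ between $B_m$ and $C_m$ (in either direction), in characteristic $2$ within types $B_m$, $C_m$, $D_m$ (the Frobenius-like maps that are not Frobenius), and in characteristic $p \mid m+1$ for type $A_m$. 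This is where cases (i), (ii), (iii) come from. Since $G$ is simply connected, the kernel of $\psi$ must in fact be trivial on the identity component structure in the relevant sense, and because $\widehat{H}$ is simply connected and $G$ is simply connected of the same type (or connected by a special isogeny), one deduces $\ker\pi$ is trivial, i.e. $H = \widehat{H}$ is simply connected, and then $\varphi = \psi$.

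For the claim that $\varphi$ is a bijection: surjectivity is given, so I need injectivity. Here I would use Lemma \ref{l:steinberg}-type reasoning — or directly the fact that a simply connected simple group has no proper isogeny \emph{onto} a group with the same (simply connected) fundamental group other than via Steinberg endomorphisms, which are bijective as maps of abstract groups. Concretely: since $G$ is simply connected, the only isogenies $G \to G$ are the bijective ones of Lemma \ref{l:steinberg}; and any isogeny $H \to G$ with $H$ simply connected of the same type factors as an isomorphism followed by such an endomorphism, hence is bijective. The $d\varphi \neq 0$ hypothesis then separates the genuinely non-separable cases: if $d\varphi = 0$ we would be looking at pure Frobenius-type maps with no constraint, while $d\varphi \neq 0$ means the isogeny is either separable (an isomorphism of algebraic groups) or is one of the special isogenies listed, each of which has a one-dimensional-over-$K$ "defect" and occurs only in the stated characteristics. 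Quoting the classification of special isogenies (Steinberg, or Borel–Tits) then yields exactly (i)–(iii).

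The main obstacle I anticipate is bookkeeping about which isogeny has nonzero differential and matching this precisely against the list of special isogenies: one must be careful that a special isogeny $B_m \to C_m$ in characteristic $2$ does have nonzero differential (it is not a Frobenius), whereas a standard Frobenius $\sigma_q$ has $d\sigma_q = 0$; and one must confirm that composition of a special isogeny with a Frobenius still lands us in case (ii) or (iii) only when the Frobenius part is trivial, i.e. that the hypothesis $d\varphi \neq 0$ rules out any residual Frobenius twist. Handling the type $A_m$ case requires noting that the exceptional isogeny there is the one whose existence depends on $p \mid m+1$ (related to the isogeny ${\rm SL}_{m+1} \to {\rm PGL}_{m+1}$ behaving badly), and checking it has nonzero differential. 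Once these identifications are made carefully, the statement follows directly from Steinberg's structure theory, and no further computation is needed.
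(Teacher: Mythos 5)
Your opening reduction (connectedness plus an isogeny onto the simple group $G$ forces $H$ simple) is fine and agrees with the paper, but the step your whole argument pivots on -- that $\ker\pi$ is trivial, so $H=\widehat H$ is simply connected and $\varphi=\psi$ -- is unjustified and in fact false in general. The lemma deliberately does not assert that $H$ is simply connected, and the exceptional cases (i)--(iii) exist precisely because $H$ need not be isomorphic to $G$ as an algebraic group: for example, in characteristic $2$ the classical special isogeny ${\rm SO}_{2m+1}\to {\rm Sp}_{2m}$ is an isogeny from the \emph{adjoint} group of type $B_m$ onto the simply connected group of type $C_m$, so case (iii) already defeats the claim. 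Since both your proof of bijectivity (``any isogeny $H\to G$ with $H$ simply connected of the same type factors as an isomorphism followed by a Steinberg endomorphism'') and your final dichotomy are routed through this claim, the gap is load-bearing; moreover, even granting it, asserting that every isogeny $G\to G$ is bijective for simply connected $G$ is essentially the point to be proved. The paper instead proves injectivity directly: $\ker\varphi\leqslant Z(H)$, and after first pinning down the root system of $H$ (same as $G$ except for the $B_m/C_m$ swap when $p=2$) one passes to $\psi=\varphi\circ\pi$ on $H_{\rm sc}\cong G$ and compares centers, $Z(H_{\rm sc}/L)\cong Z(G)\cong Z(H_{\rm sc})$, forcing $L=1$; in characteristic $2$ for types $B_m,C_m$ the abstract center is already trivial, so the claim is immediate there.

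The second problem is that the classification you invoke is not what you state. Among classical types, Steinberg's special (non-central) isogenies exist only between $B_m$ and $C_m$ in characteristic $2$; there are no special isogenies ``within'' type $A_m$ or type $D_m$, and nothing of this kind is tied to $p\mid m+1$. Cases (i) and (ii) of the lemma do not arise from special isogenies at all: they are exactly the configurations in which ${\rm Lie}(H)$ fails to be simple, so that a bijective purely inseparable isogeny can have nonzero yet non-injective differential, and the implication ``$d\varphi\neq 0\Rightarrow\varphi$ separable'' breaks down. What must actually be shown is that \emph{outside} (i)--(iii) a bijective isogeny with $d\varphi\neq 0$ is an isomorphism of algebraic groups, and your proposal never supplies that step. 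The paper gets it in one line: outside (i)--(iii) the Lie algebra of $H$ is simple (Hogeweij), $\ker(d\varphi)$ is a proper ideal, hence zero, so $d\varphi$ is an isomorphism of Lie algebras and the bijective morphism $\varphi$ is an isomorphism of algebraic groups. If you prefer to argue via Lemma \ref{l:steinberg}, you must first know $H\cong G$ as algebraic groups before you can write $\varphi$ in the form $t_x\sigma_q\gamma^k$ and use $d\varphi\neq 0$ to force $\sigma_q=1$ -- and establishing that isomorphism is precisely what is at stake in the excluded cases.
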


\begin{proof}
First we claim that $H$ is also a simple group of rank $m$. Clearly, if $N$ is a proper nontrivial connected normal subgroup of $H$, then $\varphi(N)$ is a  
proper nontrivial connected normal subgroup of $G$, which is not possible since $G$ is simple. Therefore, $H$ is simple. If $T_H$ is a maximal torus of $H$, then $\varphi(T_H)$ is a maximal torus of $G$ (see \cite[Proposition 11.14]{Borel}, for example), and $\dim T_H = \dim \varphi(T_H)$. Therefore, $H$ has rank $m$. Now, by comparing dimensions, we deduce that $G$ and $H$ have the same root system, unless $p=2$ and $(G,H) = (B_m,C_m)$ or $(C_m,B_m)$. Note that if $p \neq 2$ and $G=B_m$ then $H$ is also of type $B_m$ because an isogeny from $B_m$ to $C_m$ only exists when $p=2$. Similarly, if $p \neq 2$ and $G=C_m$ then $H$ is of type $C_m$. 

To see that $\varphi$ is a bijection (of abstract groups), first observe that $\ker(\varphi) \leqs Z(H)$ since $H$ is simple, so the claim is trivial if $p=2$ and $H= B_m$ or $C_m$. Now assume $p \neq 2$ if $H=B_m$ or $C_m$. As above, $G$ and $H$ have the same root system. In particular, if $H_{{\rm sc}}$ denotes the simply connected group with the same root system as $H$, then $H_{{\rm sc}}$ and $G$ are isomorphic algebraic groups (this follows from the classification of simple algebraic groups over $K$, using the fact that $G$ is simply connected). Set $\psi = \varphi \circ \pi$, where $\pi:H_{{\rm sc}} \to H$ is the natural isogeny. Then $\psi: H_{{\rm sc}} \to G$ is an isogeny with kernel $L \leqs Z(H_{{\rm sc}})$, so $H_{{\rm sc}}/L \cong G$ as abstract groups. In particular, $Z(H_{{\rm sc}}/L) \cong Z(G) \cong Z(H_{{\rm sc}})$, so $L=1$ is the only possibility. Therefore $\psi$ is injective, and thus $\varphi$ is also injective. We conclude that $\varphi$ is a bijection.   

To complete the proof, we may assume that $d\varphi \ne 0$ and $(G,H,p)$ is not one of the cases labelled (i) -- (iii) in the statement of the lemma. As above, $G$ and $H$ are both simple groups of the same type and rank. Since ${\rm Lie}(H)$ is simple (see \cite[Table 1]{Hoge}), it follows that $d\varphi$ is an isomorphism of Lie algebras and thus $\varphi$ is an isomorphism of algebraic groups.
\end{proof}

\begin{lem}\label{l:basecase}
Let $J$ be a closed connected subdirect product of $G_1\times G_2$, where $G_1$ and $G_2$ are isomorphic simply connected simple classical algebraic groups. Then $J$ is diagonally embedded in $G_1 \times G_2$, and either $J=G_1 \times G_2$ or $J \cong G_1$ as algebraic groups.
\end{lem}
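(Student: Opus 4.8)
The plan is to analyze the two projection maps $\pi_1:J\to G_1$ and $\pi_2:J\to G_2$, both of which are surjective morphisms by hypothesis since $J$ is a subdirect product. First I would show that each $\pi_i$ has finite (indeed central) kernel, so that each is an isogeny; then Lemma \ref{l:isog} forces each $\pi_i$ to be a \emph{bijective} morphism, which is exactly the statement that $J$ is diagonally embedded. To control $\ker\pi_1$, note that $\ker\pi_1 = J\cap (1\times G_2)$ is a closed normal subgroup of $J$, and its image under $\pi_2$ is a closed normal subgroup of $G_2$. Since $G_2$ is simple, this image is either finite central or all of $G_2$. In the latter case I would show $J$ actually contains $1\times G_2$: indeed $\pi_2$ restricted to $\ker\pi_1$ would be a surjection onto $G_2$ with finite kernel, and since both $\ker\pi_1$ and $G_2$ sit inside $1\times G_2$ with the same dimension, $\ker\pi_1$ has finite index in the connected group $1\times G_2$, hence equals it. A symmetric argument applies to $\ker\pi_2$.

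This splits the analysis into cases according to whether each $\ker\pi_i$ is finite or everything. If $\ker\pi_1 = 1\times G_2$, then $J \supseteq 1\times G_2$; combined with surjectivity of $\pi_1$ this gives $J = G_1\times G_2$ (using $\dim J = \dim G_1 + \dim(1\times G_2)$ and connectedness, or directly that $J$ contains $1\times G_2$ and surjects onto $G_1$). The symmetric case $\ker\pi_2 = G_1\times 1$ is identical. In the remaining case, both $\ker\pi_1$ and $\ker\pi_2$ are finite central, so $\pi_1$ and $\pi_2$ are both isogenies of connected groups onto simply connected simple classical groups; by Lemma \ref{l:isog} each is a bijective morphism. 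This is precisely the assertion that $J$ is diagonally embedded in $G_1\times G_2$ in the sense of Definition \ref{d:de}, and since $\pi_1:J\to G_1$ is then a bijective morphism of algebraic groups with $G_1$ simply connected, one concludes $J\cong G_1$ as algebraic groups.

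For that last identification I would use Lemma \ref{l:isog} (or Lemma \ref{l:steinberg} applied to $\pi_1$ followed by an isomorphism $G_1\cong G_2$): a bijective morphism $J\to G_1$ need not a priori be an isomorphism of algebraic groups, but the composite bijective morphism $G_1 \cong G_2 \xrightarrow{\ \sim\ } J \to G_1$ is a bijective endomorphism of the simply connected simple group $G_1$, and Steinberg's description shows $J$ is abstractly isomorphic to $G_1$; more carefully, $J$ is connected with the same root system and, being a closed subgroup of $G_1\times G_2$ admitting an isogeny from the simply connected group $G_1$, must itself be simply connected, hence $J\cong G_1$.

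The main obstacle is the bookkeeping in the mixed intermediate cases — ruling out, for instance, the scenario where $\ker\pi_1$ is finite but $\ker\pi_2$ is all of $G_1$, which would be contradictory since then $J$ would contain $G_1\times 1$ yet $\pi_1$ would be injective. The cleanest way around this is to observe at the outset that $\ker\pi_1$ and $\ker\pi_2$ intersect trivially (their intersection is $J\cap(1\times G_2)\cap(G_1\times 1) = 1$), so they cannot both be positive-dimensional; this immediately reduces to the two cases above (one factor full, or both kernels finite), and there is no genuinely mixed case to worry about. The rest is a routine application of Lemmas \ref{l:isog} and \ref{l:steinberg}.
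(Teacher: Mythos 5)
Your reduction is correct as far as it goes, and it follows the same route as the paper's first half: the kernels of the two projections intersect trivially, so either some $\ker(\pi_i)$ is infinite, in which case $J$ contains a full factor and hence $J=G_1\times G_2$, or both kernels are finite, in which case both $\pi_i$ are isogenies, Lemma \ref{l:isog} makes them bijective, and $J$ is diagonally embedded. The gap is in your last step, the identification $J\cong G_1$ \emph{as algebraic groups}, which is in fact the substantive content of the lemma. First, there is no morphism $G_2\to J$: the inverse of the bijective morphism $\pi_2$ need not be a morphism (this is exactly the phenomenon at issue), so your "composite bijective endomorphism of $G_1$" does not exist, and Steinberg's theorem would in any case only control the abstract group structure, which bijectivity of $\pi_1$ already gives. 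Second, the claim that $J$ "must itself be simply connected" because it admits an isogeny to (or from) a simply connected group is false in positive characteristic: for example, in characteristic $p$ the Frobenius of ${\rm SL}_{p}(K)$ factors through ${\rm PGL}_{p}(K)$, yielding a bijective isogeny ${\rm PGL}_{p}(K)\to {\rm SL}_{p}(K)$; and in characteristic $2$ there are bijective exceptional isogenies between ${\rm Spin}_{2m+1}(K)$ and ${\rm Sp}_{2m}(K)$ in both directions, so $J$ need not even have the same root system as $G_1$ a priori. These are precisely the exceptional cases (i)--(iii) allowed by Lemma \ref{l:isog}, and your argument never excludes them for a closed diagonal subgroup.

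The missing ingredient, which is where the paper does its real work, is a Lie algebra argument. If some $\ker(d\pi_i)=0$ then, since $\dim J=\dim G_i$, the differential is an isomorphism and the bijective $\pi_i$ is an isomorphism of algebraic groups, so $J\cong G_1$. Otherwise both $\ker(d\pi_i)$ are nonzero, so by Lemma \ref{l:neww} each ${\rm im}(d\pi_i)$ is a proper nonzero ideal of ${\rm Lie}(G_i)$; since $J$ is a \emph{closed} subgroup of $G_1\times G_2$ one has $\ker(d\pi_1)\cap\ker(d\pi_2)=0$ (the Lie-algebra analogue of your observation about the group-theoretic kernels), and this is played off against the known ideal and socle structure of ${\rm Lie}(J)$ in the exceptional characteristics (type $A_m$ with $p\mid m+1$, types $B_m$, $C_m$, $D_m$ with $p=2$), as described in \cite{DS} and \cite{Hoge}, to reach a contradiction. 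Without an argument of this kind, your proof establishes only that $J$ is diagonally embedded and abstractly isomorphic to $G_1$, not the stated isomorphism of algebraic groups.
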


\begin{proof}
Let $\pi_i:J \to G_i$ be the $i$-th projection map and set $L = \ker(d\pi_1) \cap \ker(d\pi_2)$.
Note that $L = 0$ since $J$ is a closed positive-dimensional subgroup of $G_1 \times G_2$. Without loss of generality, we will assume that $d\pi_1 \neq 0$. 

First assume $\ker(\pi_1)$ is infinite. Since $\pi_2$ is injective on $\ker(\pi_1)$, we have
$\dim \ker(\pi_1)  = \dim \pi_2(\ker(\pi_1))$. Moreover, the surjectivity of $\pi_2$ implies that 
$\pi_2(\ker(\pi_1))$ is an infinite normal subgroup of $G_2$, so the simplicity of $G_2$ implies that $\pi_2(\ker(\pi_1)) = G_2$ and thus $\dim \ker(\pi_1) = \dim G_2$. Therefore, $\dim J = \dim G_1+  \dim G_2$ and we conclude that $J=G_1\times G_2$. 

For the remainder, we may assume that $\ker(\pi_1)$ is finite, so $\pi_1$ is an isogeny and Lemma \ref{l:isog} implies that $\pi_1$ is a bijection and either $J\cong G_1$, or $(G_1,J)$ is one of the cases labelled (i) -- (iii). In particular, $J$ is simple and $\ker(\pi_2)$ is finite. By a further application of Lemma \ref{l:isog}, we see that $\pi_2$ is also a bijection and thus $J$ is diagonally embedded. To complete the proof it remains to show that $J \cong G_1$ as algebraic groups. Seeking a contradiction, let us assume that $\ker(d\pi_i) \ne 0$ for $i = 1,2$, so ${\rm im}(d\pi_i)$ is a proper non-zero ideal of ${\rm Lie}(G_i)$ (see Lemma \ref{l:neww}).

First assume $p=2$ and $(G_1,J)$ is of type $(B_m,B_m)$, $(C_m,C_m)$, $(B_m,C_m)$ or $(C_m,B_m)$. For $m \geqs 2$, the ideal structure of ${\rm Lie}(J)$ is described in \cite[Section 5]{DS}. Excluding the case where $J = C_m$ is simply connected and $m \geqs 3$ is odd, we observe that ${\rm Lie}(J)$ has an irreducible socle $S$ (as a $KJ$-module), which immediately implies that $L$ contains $S$. This is a contradiction, since $L=0$. Now assume $J = C_m$ is simply connected and $m \geqs 3$ is odd. If $G_1=C_m$ then $J \cong G_1$ since $G_1$ is simply connected, so let us assume $G_1=B_m$. The socle of ${\rm Lie}(J)$ is of the form $Z \oplus M$, where $Z=Z({\rm Lie}(J))$ is $1$-dimensional and $M$ is a nontrivial irreducible module. Without loss of generality, we may assume that $M$ is not contained in $\ker(d\pi_1)$, which implies that $\ker(d\pi_1) = Z$. Therefore ${\rm im}(d\pi_1)$ is an ideal of ${\rm Lie}(G_1)$ of codimension $1$, but this is not compatible with the ideal structure described in \cite{DS}. Therefore, once again we have reached a contradiction. Finally, if  $J = B_1=C_1$ is adjoint then ${\rm Lie}(J)$ has an irreducible socle and we can repeat the argument given above.

Next suppose that $G_1$ and $J$ are both of type $A_m$, where $p$ divides $m+1$. We may assume that $m \geqs 2$. Seeking a contradiction, suppose that $J$ is not simply connected. By inspecting \cite[Table 1]{Hoge} we deduce that ${\rm im}(d\pi_i) = Z({\rm Lie}(G_i))$ and ${\rm ker}(d\pi_i)$ is the commutator subalgebra of ${\rm Lie}(J)$ for $i=1,2$. But this implies that $L\ne 0$, which is a contradiction. To complete the proof, we may assume that $G_1$ and $J$ are both of type $D_m$, with $p=2$. If $m$ is odd then we can repeat the previous argument, using \cite[Table 1]{Hoge}, so let us assume $m$ is even. If $J$ is not simply connected then ${\rm Lie}(J)$ has an irreducible socle $S$ (see \cite[Section 5]{DS}), which must be contained in $L$. Once again, this is a contradiction. 
\end{proof}

The next result is a natural generalization of Lemma \ref{l:basecase}.

\begin{prop}\label{p:useful}
Let $J$ be a closed connected subdirect product of $G_1\times \dots \times G_t$, where the $G_i$ are isomorphic simply connected simple classical algebraic groups.
Then the following hold:
\begin{itemize}\addtolength{\itemsep}{0.2\baselineskip}
\item[{\rm (i)}] $J$ is semisimple.
\item[{\rm (ii)}] There exists a positive integer $r\leqs t$ such that $J=J_1\cdots J_r$, 
where each $J_i$ is isomorphic to $G_1$.
\item[{\rm (iii)}] There exist integers $0=t_0<  t_1 < t_2 < \dots< t_r = t$ 
such that $J_{i}$ is diagonally embedded in $G_{t_{i-1}+1}\times \cdots \times G_{t_i}$.
\end{itemize}
\end{prop}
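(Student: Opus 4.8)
First I would dispatch part~(i), which needs no induction. Let $R$ be the radical of the connected group $J$. For each $i$, the image $\pi_i(R)$ is a closed connected solvable normal subgroup of the simple group $G_i$, so $\pi_i(R)=1$; hence $R\leqs\bigcap_i\ker\pi_i$, which is trivial because $J\leqs G_1\times\cdots\times G_t$. Thus $J$ is semisimple.

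For parts~(ii) and~(iii), I would write $J=J_1\cdots J_r$ as the commuting product of the simple factors of $J$ (its minimal closed connected normal subgroups). For each $i$ and $k$, the subgroup $\pi_i(J_k)$ is closed, connected and normal in $G_i$, so equals $1$ or $G_i$; since the $\pi_i(J_k)$ pairwise commute and generate $\pi_i(J)=G_i$, and no two of them can both equal the non-abelian group $G_i$, exactly one index $k=k(i)$ satisfies $\pi_i(J_{k(i)})=G_i$. Hence the sets $S_k=\{\,i:k(i)=k\,\}$ partition $\{1,\dots,t\}$ — none is empty, as otherwise $J_k\leqs\bigcap_i\ker\pi_i=1$ — so $r\leqs t$, and each $J_k$ is a closed connected \emph{simple} subdirect product of $\prod_{i\in S_k}G_i$. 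After permuting the factors so that each $S_k$ is an interval $\{t_{k-1}+1,\dots,t_k\}$, parts~(ii) and~(iii) follow from the assertion $(\star)$: \emph{a closed connected simple subdirect product $J$ of a product of $m$ isomorphic simply connected simple classical algebraic groups is diagonally embedded and isomorphic to the common factor.}

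I would prove $(\star)$ by induction on $m$, the case $m\leqs 2$ being Lemma~\ref{l:basecase} (as $J$ is simple, the alternative $J=G_1\times G_2$ cannot occur). For $m\geqs 3$, write the ambient group as $G_1\times\cdots\times G_m$ and let $\rho\colon J\to G_1\times\cdots\times G_{m-1}$ forget the last coordinate. Its kernel is a proper closed normal subgroup of the simple group $J$, hence finite and central, so $\rho(J)\cong J/\ker\rho$ is again a closed connected simple subdirect product, now of $G_1,\dots,G_{m-1}$; by induction $\rho(J)$ is diagonally embedded in $G_1\times\cdots\times G_{m-1}$ and isomorphic to $G_1$. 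Likewise $\pi_m\colon J\to G_m$ is an isogeny. Since $\rho(J)\cong G_1$ and $G_m$ are simply connected simple classical groups, Lemma~\ref{l:isog} shows $\rho$ and $\pi_m$ are bijective; as $\pi_i$ factors for $i<m$ as the $i$th coordinate projection of $\rho(J)$ (bijective, by the diagonal embedding of $\rho(J)$) composed with $\rho$, every $\pi_i\colon J\to G_i$ is bijective, so $J$ is diagonally embedded in $G_1\times\cdots\times G_m$.

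The remaining point — that $J\cong G_1$ as algebraic groups — is where I expect the real work to lie, and it is the main obstacle. If some $\pi_i$ is an isomorphism of algebraic groups, we are done; otherwise Lemma~\ref{l:isog} shows that each $\pi_i$ is an isogeny with ${\rm im}(d\pi_i)$ a proper ideal of ${\rm Lie}(G_i)$ (using Lemma~\ref{l:neww}), so each $\ker(d\pi_i)$ is a nonzero $KJ$-submodule of ${\rm Lie}(J)$, while $\bigcap_i\ker(d\pi_i)=0$ since $J$ is diagonally embedded. If ${\rm Lie}(J)$ has irreducible socle this is immediately contradictory; otherwise $(J,p)$ lies in the short list of cases in which ${\rm Lie}(J)$ has a reducible socle, and — exactly as in the proof of Lemma~\ref{l:basecase}, but intersecting $\ker(d\pi_i)$ over all $m$ projections rather than two — the description of the ideal structure of ${\rm Lie}(J)$ in \cite{DS} and \cite{Hoge} forces a contradiction. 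This completes the induction, hence the proof; the genuinely delicate step is reprising this characteristic-$2$ (and type $A_m$ with $p\mid m+1$) Lie-algebra analysis with the common kernel taken over all the projection maps, everything else being routine manipulation of the decomposition $J=J_1\cdots J_r$ together with Lemmas~\ref{l:neww} and~\ref{l:isog}.
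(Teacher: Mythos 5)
Your proposal is correct, but it is organized quite differently from the paper's proof. The paper argues by induction on $t$ applied to the whole subdirect product $J$: it first handles the case where some $\ker(\pi_i)$ is finite by re-running the argument of Lemma \ref{l:basecase} with $t$ factors, establishes reductivity and semisimplicity only in the remaining case, and then splits according to whether the kernel of $\sigma\colon J\to G_2\times\cdots\times G_t$ is finite or infinite, extracting one simple factor at a time after relabelling. You instead prove (i) outright via the radical (cleaner than the paper's route), use the pairwise commutation of the simple factors $J_k$ to show that each coordinate projection is surjective on exactly one $J_k$, so the coordinates are partitioned and everything reduces to the statement $(\star)$ for a \emph{simple} subdirect product; that case you settle by induction on the number of ambient factors via the ``forget the last coordinate'' map, with Lemmas \ref{l:isog} and \ref{l:basecase} giving bijectivity of all projections. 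Both routes ultimately rest on the same ingredients (Lemmas \ref{l:neww}, \ref{l:isog}, \ref{l:basecase} and the ideal-structure results of \cite{DS,Hoge}), and both, at some point, must redo the Lie-algebra analysis of Lemma \ref{l:basecase} with more than two projections, so your assessment of where the work lies matches the paper. Two small refinements to your last step: the case analysis does not always end in a contradiction (when $J$ is simply connected of the same type as $G_1$ one concludes $J\cong G_1$ directly, exactly as in Lemma \ref{l:basecase}), and one should allow $d\pi_i=0$, in which case $\ker(d\pi_i)={\rm Lie}(J)$ and the intersection argument still applies; moreover you can avoid redoing the analysis over all $m$ projections, since $\rho\colon J\to\rho(J)\cong G_1$ and $\pi_m\colon J\to G_m$ are two isogenies onto copies of $G_1$ with $\ker(d\rho)\cap\ker(d\pi_m)=0$, so the closing argument of Lemma \ref{l:basecase} applies with only notational changes. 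What the paper's route buys is that it never needs the simple-factor decomposition in advance; what yours buys is a cleaner separation of assertions (i)--(iii) and a shorter case analysis.
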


\begin{proof}
We use induction on $t$, noting that the case $t=1$ is trivial, and Lemma \ref{l:basecase} handles the case $t=2$. Let us assume $t \geqs 3$, and let $\pi_i:J \to G_i$ be the $i$-th projection map. As in the proof of the previous lemma, we may assume that $d\pi_1 \neq 0$.

If $\ker(\pi_i)$ is finite for any $i$, then $\dim J = \dim G_1$ and thus $\ker(\pi_1)$ is also finite. Then by arguing as in the proof of Lemma \ref{l:basecase}, we deduce that $J$ is diagonally embedded in $G_1 \times \cdots \times G_t$ and $J \cong G_1$. For the remainder, we may assume that $\ker(\pi_i)$ is infinite for all $i$. Since $\pi_i(R_u(J))$ is a proper normal subgroup of $\pi_i(J) = G_i$ for each $i$, it follows that $J$ is reductive. Similarly, by considering $\pi_i(Z(J))$, we deduce that $Z(J)$ is finite and thus $J$ is semisimple. In particular, we may write
$$J = J_1 \cdots J_r,$$
where each $J_i$ is simple. Note that $\pi_i(J_j)$ is a connected normal subgroup of $G_i$ for all $i,j$, so $\pi_i(J_j)=1$ or $G_i$. Let $\sigma$ be the projection map 
$$\sigma:J\rightarrow G_2\times \cdots \times G_t.$$
We now consider two cases.

\vs

\noindent \emph{Case 1.} $\ker(\sigma)$ is infinite.

\vs

First assume $\ker(\sigma)$ is infinite, so $\ker(\sigma)^0$ is a connected positive-dimensional normal subgroup of $J$. By relabelling the $J_i$, if necessary, we may assume that   
$$\ker(\sigma)^0 = J_1 \cdots J_a$$
for some $a \in \{1, \ldots, r\}$ (see \cite[Theorem 27.5(c)]{Humphreys}, for example). Now $\pi_1(J_i) = G_1$ for all $1 \leqs i \leqs a$, so the injectivity of $\pi_1$ on $\ker(\sigma)^0$, together with the simplicity of $G_1$, implies that $a=1$ and $J_1$ is of type $G_1$. In particular, $r>1$ since we are assuming that $\ker(\pi_1)$ is infinite. Also note that $\pi_i(J_2 \cdots J_r) = G_i$ for all $i \geqs 2$.

We claim that $\pi_1(J_i)=1$ for all $2 \leqs i \leqs r$, so $J_2 \cdots J_r \leqs G_2 \times \cdots \times G_t$ is a subdirect product and the result follows by induction. To justify the claim, let $i \in \{2, \ldots, r\}$ and consider $\pi_1|_{J_1J_i}: J_1J_i \to G_1$. Since $\pi_1(J_1)=G_1$, we have $\pi_1(J_1J_i)=G_1$.  Now $\ker(\pi_1|_{J_1J_i})^0$ is a connected normal subgroup of $J_1J_i$, so $\ker(\pi_1|_{J_1J_i})^0=1, J_1, J_i$ or $J_1J_i$. It is easy to see that $\ker(\pi_1|_{J_1J_i})^0=J_i$ is the only possibility, so $\pi_1(J_i)=1$ as claimed.

\vs

\noindent \emph{Case 2.} $\ker(\sigma)$ is finite.

\vs

To complete the proof, we may assume that $\ker(\sigma)$ is finite. Now $J/\ker(\sigma)$ is connected and reductive, and it is isomorphic to a subdirect product of $G_2 \times \cdots \times G_t$. By induction, there exists $s \in \{1, \ldots, t-1\}$ such that $J/\ker(\sigma) = L_1\cdots L_{s}$ and $L_i \cong G_1$ for each $i$. But $J=J_1 \cdots J_r$ and the $J_i$ are simple, so $r=s$ and $\dim J_i = \dim G_1$ for all $i$ (indeed, $J_i$ is isogenous to $G_1$), hence $\dim J= r \dim G_1$. Note that $r>1$ since $\ker(\pi_1)$ is infinite. 

Since $\pi_1: J \to G_1$ is surjective, we have $\dim \ker(\pi_1) = (r-1)\dim G_1$. Moreover, by relabelling the $J_i$ if necessary, we may assume that $\ker(\pi_1)^0= J_2\cdots J_r$ (see \cite[Theorem 27.5(c)]{Humphreys}). Therefore, $\pi_1(J_1) = G_1$ and $J_2 \cdots J_r \leqs G_2\times \cdots \times G_t$. By a further relabelling, we may assume that there exists an integer $b \in \{1, \ldots, t\}$ such that $\pi_i(J_1)=G_i$ for $1\leqs i \leqs b$, and $\pi_i(J_1)=1$ if $b<i\leqs t$. 

First we claim that $b<t$. Seeking a contradiction, suppose that $b=t$, so $\pi_i(J_1)=G_i$ for all $i$. Let $j \in \{2, \ldots, r\}$ and consider $\pi_i|_{J_1J_j}: J_1J_j \to G_i$. By arguing as above, we deduce that $\ker(\pi_i|_{J_1J_j})^0=J_j$, so $\pi_i(J_j) = 1$ for all $i$. Therefore $J_j=1$ and thus $r=1$, which is a contradiction.

Since $b<t$ and $J_1 \leqs G_1 \times \cdots \times G_b$ is a subdirect product, by induction we deduce that $J_1$ is diagonally embedded in $G_1 \times \cdots \times G_b$ and  $J_1 \cong G_1$. If we fix $i \in \{1, \ldots, b\}$ and $j \in \{2, \ldots, r\}$, then $\ker(\pi_i|_{J_1J_j})^0=J_j$ and thus $J_2 \cdots J_r \leqs \ker(\pi_i)$. Therefore, $J_2 \cdots J_r \leqs G_{b+1} \times \cdots \times G_t$ is a subdirect product (since $\pi_i(J)=G_i$ and $\pi_i(J_1) = 1$ for all $i>b$) and the result follows by induction. 
\end{proof}

\subsection{Irreducible triples}\label{ss:irred}

Define $G,H$ and $V$ as in Hypothesis \ref{h:our}. The next result records a basic observation (see \cite[Remark 2]{BGT}).

\begin{lem}\label{l:hred}
If $V|_{H}$ is irreducible, then $H$ does not normalize a nontrivial connected unipotent subgroup of $G$. In particular, $H^0$ is reductive.
\end{lem}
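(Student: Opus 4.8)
The plan is to argue by contradiction via the Borel–Tits theorem. Suppose $H$ normalizes a nontrivial connected unipotent subgroup $Q$ of $G$. By the Borel–Tits theorem, there is a proper parabolic subgroup $P$ of $G$ with $Q \leqs R_u(P)$ and $N_G(Q) \leqs P$; in particular $H \leqs P$, and moreover $H$ normalizes $R_u(P)$ (indeed $N_G(R_u(P)) = P$). So it suffices to treat the case where $H$ is contained in a proper parabolic subgroup $P = QL$ of $G$, with $Q = R_u(P)$ and $L$ a Levi complement, and to derive a contradiction with the irreducibility of $V|_H$.

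The key step is to exhibit a proper nonzero $KH$-submodule of $V$. Here I would use the standard weight-space filtration argument: since $Q$ is unipotent, it fixes a nonzero vector on $V$, so the fixed space $V^Q = C_V(Q)$ is a nonzero subspace of $V$, and it is proper because $G = \la Q, Q^- \ra$ for the opposite unipotent radical $Q^-$ (or simply because $Q$ acts nontrivially on the highest weight line's $G$-orbit — $V$ is a nontrivial irreducible module, so $Q$ cannot act trivially on all of $V$). Since $H$ normalizes $Q$, the subspace $V^Q$ is $H$-invariant: for $h \in H$, $v \in V^Q$ and $x \in Q$ we have $x(hv) = h(h^{-1}xh)v = hv$ because $h^{-1}xh \in Q$. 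Thus $0 \neq V^Q \subsetneq V$ is a proper nonzero $KH$-submodule, contradicting the irreducibility of $V|_H$. This proves the first assertion.

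For the "in particular" clause: $H^0$ is a closed connected subgroup of $G$, and $H$ normalizes $R_u(H^0)$ (the unipotent radical of $H^0$ is characteristic in $H^0$, which is normal in $H$). If $R_u(H^0)$ were nontrivial it would be a nontrivial connected unipotent subgroup of $G$ normalized by $H$, contradicting what we have just shown. Hence $R_u(H^0) = 1$, i.e. $H^0$ is reductive.

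The only genuine subtlety is the appeal to Borel–Tits — one must make sure that a subgroup normalizing a nontrivial connected unipotent subgroup actually lies in a proper parabolic normalizing its own unipotent radical; since this is exactly the content of the Borel–Tits theorem (and is the cited \cite[Remark 2]{BGT}), the argument is essentially immediate. The rest is the elementary observation that the $Q$-fixed space of a nontrivial irreducible module is a proper nonzero $H$-submodule.
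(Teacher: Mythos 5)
Your proof is correct and is essentially the argument behind the paper's treatment of this lemma, which gives no proof of its own but simply cites \cite[Remark 2]{BGT}: the fixed-point space $V^Q$ of a nontrivial connected unipotent subgroup $Q$ normalized by $H$ is a nonzero proper $KH$-submodule (nonzero since $Q$ is unipotent, proper since $Q$ is positive-dimensional and hence not in the finite kernel of the nontrivial irreducible representation), and reductivity of $H^0$ follows by applying this to $R_u(H^0)$. The Borel--Tits reduction in your first paragraph is superfluous, since the fixed-space argument applies directly to $Q$ itself, but it does no harm.
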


Suppose $V|_{H}$ is irreducible, but $V|_{H^0}$ is reducible. Then Clifford theory implies that 
\begin{equation}\label{e:vx}
V|_{H^0}=V_1 \oplus \cdots \oplus V_m,
\end{equation}
where $m \geqs 2$ and the $V_i$ are irreducible $KH^0$-modules that are transitively permuted under the induced action of $H/H^0$. 

\begin{remk}\label{r:semisimple}
Since the irreducibility of $V|_{H}$ implies that $H^0$ is reductive, we have $H^0 = JZ(H^0)$ where $J = [H^0,H^0]$ is the derived subgroup of $H^0$. Now $Z(H^0)$ acts as scalars on the $V_i$ so that they are also irreducible on restriction to $J$. In particular, the irreducibility of $V|_{H}$ implies that the $KJ$-composition factors of $V|_{J}$ are transitively permuted under the induced action of $H/H^0$.
\end{remk}

If the $V_i$ in \eqref{e:vx} are isomorphic as $KH^0$-modules, then $V|_{H^0}$ is said to be \emph{homogeneous}. For example, $V|_{H^0}$ is homogeneous if $N_G(H^0) = H^0C_G(H^0)$. The following result is \cite[Proposition 2.6.2]{BGMT}.

\begin{prop}\label{p:niso}
If $H$ is a cyclic extension of $H^0$, then the irreducible $KH^0$-modules $V_i$ in \eqref{e:vx} are pairwise non-isomorphic. In particular, $V|_{H^0}$ is not homogeneous.
\end{prop}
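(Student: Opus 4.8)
The plan is to argue by contradiction, reducing to the homogeneous case and then exploiting the cyclicity of $H/H^0$ together with the structure of the automorphism group of a matrix algebra. Suppose that two of the summands in \eqref{e:vx} are isomorphic, so that some $KH^0$-homogeneous component of $V|_{H^0}$ has multiplicity $k\geqs 2$. After reordering the $V_i$, let $W$ denote this homogeneous component, so that $W\cong V_1^{\oplus k}$ as a $KH^0$-module, and set $S=\mathrm{Stab}_H(W)$, a subgroup of $H$ with $H^0\leqs S$. Since $V|_H$ is irreducible and $H^0\normal H$, standard Clifford theory (the ``going down'' step) shows that $W$ is an irreducible $KS$-module, and of course $W|_{H^0}$ is still homogeneous. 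The crucial observation is that $S/H^0$, being a subgroup of the cyclic group $H/H^0$, is itself cyclic; write $C=S/H^0$. (Here $H/H^0$ is finite, being the component group of the algebraic group $H$, so the usual Clifford theory for finite-index normal subgroups applies to the finite-dimensional module $V$.)

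Next I would analyse the endomorphism algebra $D=\mathrm{End}_{KH^0}(W)$. Since $K$ is algebraically closed and $V_1$ is absolutely irreducible, Schur's lemma gives $D\cong M_k(K)$. Because $H^0\normal S$, conjugation by elements of $S$ preserves $D$, and this action is trivial on $H^0$; it therefore induces an action of the cyclic group $C$ on $D$ by $K$-algebra automorphisms, whose fixed subalgebra is $D^{C}=\mathrm{End}_{KS}(W)$. As $W$ is an irreducible $KS$-module over the algebraically closed field $K$, Schur's lemma again yields $\mathrm{End}_{KS}(W)=K$, that is, $\dim_K D^{C}=1$.

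Finally I would invoke the Skolem--Noether theorem: every $K$-algebra automorphism of $M_k(K)$ is inner, so the action of the cyclic group $C$ on $D\cong M_k(K)$ is by conjugation by the powers of a single element $g\in\mathrm{GL}_k(K)$. Hence $D^{C}=C_{M_k(K)}(g)\supseteq K[g]$, and since $\dim_K K[g]$ equals the degree of the minimal polynomial of $g$, the equality $\dim_K D^{C}=1$ forces $g$ to be a scalar matrix. But then $C$ acts trivially on $D$, so $D^{C}=D=M_k(K)$ and thus $k=1$, a contradiction. Therefore the $V_i$ in \eqref{e:vx} are pairwise non-isomorphic, and since $m\geqs 2$ this shows in particular that $V|_{H^0}$ is not homogeneous. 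The only step requiring any real care is the Clifford-theoretic identification of $W$ as an irreducible $KS$-module (which ultimately rests on the irreducibility of $V|_H$); the remainder is elementary linear algebra together with the standard fact that automorphisms of a full matrix algebra are inner.
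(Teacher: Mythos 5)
Your argument is correct and complete: the reduction to a homogeneous component $W$ with stabilizer $S$, the identification $\mathrm{End}_{KH^0}(W)\cong M_k(K)$, the observation that conjugation gives an action of the cyclic group $S/H^0$ on this algebra with fixed subalgebra $\mathrm{End}_{KS}(W)=K$, and the Skolem--Noether step forcing $k=1$ all go through (the one point needing care, that the action really factors through $S/H^0$, holds precisely because elements of $\mathrm{End}_{KH^0}(W)$ commute with $\rho(H^0)$, as you implicitly use). Note that the paper itself gives no proof of this statement: it is quoted from \cite[Proposition 2.6.2]{BGMT}. Your route is the same Clifford-theoretic strategy as in that source (pass to a homogeneous component over its stabilizer and exploit cyclicity of the component quotient), but you finish differently: rather than fixing an intertwiner realizing the action of a generator on the multiplicity space $\mathrm{Hom}_{KH^0}(V_1,W)$ and extracting an eigenvector (equivalently, extending $V_1$ to $S$ and writing $W\cong V_1\otimes M$ with $M$ an irreducible module for the cyclic, hence abelian, quotient, which must be one-dimensional), you package the cyclicity through the centralizer of a single element of $M_k(K)$. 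The two finishes are essentially equivalent in content; yours is pleasantly self-contained and avoids choosing an intertwiner or invoking the extension of $V_1$ to $S$, while the intertwiner/extension version yields the extra structural information $W\cong V_1\otimes M$ that is sometimes useful elsewhere.
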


We will also need the following lemma.

\begin{lem}\label{l:ten}
Let $V_1$ and $V_2$ be $p$-restricted irreducible $KG$-modules and set $V= V_1 \otimes V_2$. Then one of the following holds:
\begin{itemize}\addtolength{\itemsep}{0.2\baselineskip}
\item[{\rm (i)}] $V$ is irreducible, $G = B_n$ or $C_n$, $p = 2$ and $V_1$ and $V_2$ can be arranged so that $V_i = V_G(\mu_i)$ and $\mu_1$ (respectively $\mu_2$) has support on the short (respectively, long) roots;
\item[{\rm (ii)}] $V$ has non-isomorphic composition factors.
\end{itemize}
\end{lem}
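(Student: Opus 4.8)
## Proof proposal for Lemma \ref{l:ten}

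The plan is to analyze the structure of $V = V_1 \otimes V_2$ via its weight spaces, focusing on the highest weight $\mu = \mu_1 + \mu_2$ (writing $V_i = V_G(\mu_i)$) and comparing the dimension of the $\mu$-weight space of $V$ with what would be forced if $V$ were to have two isomorphic composition factors. First I would record the standard fact that $V$ always has $V_G(\mu_1+\mu_2)$ as a composition factor (the highest weight of $V$ is $\mu_1+\mu_2$ with multiplicity one in $V$, since the $\mu_i$ occur with multiplicity one in $V_i$). So it suffices to show that if $V$ is \emph{not} irreducible, then no composition factor of $V$ is repeated — equivalently, that $V_G(\mu_1+\mu_2)$ is \emph{not} a composition factor of the kernel of the natural surjection onto it, \emph{unless} we are in case (i). The natural tool here is a result of Seitz and Cavallin–Testerman on tensor products of restricted modules, or more elementarily the classical observation (going back to Steinberg) that $V_1 \otimes V_2$ is multiplicity-free / irreducible precisely in very restricted circumstances.

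The key steps, in order, would be: (1) Reduce to the case $G$ is of type $B_n$, $C_n$, or $D_n$ in characteristic $2$, or else to showing $V$ has distinct composition factors outright. The point is that in characteristic $0$, or whenever $V_1$, $V_2$ are modules for a group where no exceptional isogeny phenomena occur, $V_1\otimes V_2$ is a tensor product of two modules with \emph{distinct} highest weights appearing, and one shows by a weight-multiplicity count that a composition factor cannot repeat. (2) For the generic analysis, compare $\dim V_\mu$ where $\mu = \mu_1+\mu_2$: this equals $1$, so $V_G(\mu)$ occurs with multiplicity exactly one as a composition factor. More usefully, examine weights just below $\mu$. If $V_G(\mu)$ occurred twice, there would be a subquotient isomorphic to $V_G(\mu)$ whose highest weight vector is \emph{not} the obvious tensor $v_1\otimes v_2$, forcing the $\mu$-weight space to have dimension $\geq 2$ in an appropriate section, which contradicts $\dim V_\mu = 1$ unless the two copies are "stacked" — and that stacking is exactly what the special characteristic-$2$ phenomenon in (i) rules in. (3) Identify precisely when $V = V_1\otimes V_2$ is irreducible: invoke the known classification (Seitz's "tensor product theorem" / the work underlying \cite{Seitz2}) that for a simple algebraic group, a tensor product of two nontrivial $p$-restricted irreducibles is irreducible only for $B_n$ or $C_n$ with $p=2$, with one factor supported on short roots and the other on long roots (this is the spin $\otimes$ natural-type phenomenon coming from the exceptional isogeny $B_n \leftrightarrow C_n$). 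When $V$ \emph{is} irreducible we are in (i); when it is not, the weight-count of step (2) shows all composition factors are distinct, giving (ii).

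The main obstacle I expect is step (2): controlling the composition multiplicities of $V_1\otimes V_2$ well enough to exclude a repeated factor without simply citing a heavy black-box classification. The cleanest route is probably to argue contrapositively — suppose some $V_G(\nu)$ occurs with multiplicity $\geq 2$ in $V$; take $\nu$ maximal with this property; then $\nu = \mu_1+\mu_2$ by maximality (any lower repeated factor would, after quotienting, still leave a repeated top factor), and then $\dim V_\mu = 1$ forces the two copies of $V_G(\mu)$ into a nonsplit uniserial configuration, i.e. a nonzero class in $\mathrm{Ext}^1_G(V_G(\mu),V_G(\mu))$ realized inside $V$; but self-extensions of an irreducible are severely constrained (trivial in characteristic $0$, and in characteristic $p$ they force $p \mid$ something and pin down $G$ and $p$), and chasing this through lands exactly on the $(B_n,C_n)$, $p=2$ exception, where moreover the module turns out to be genuinely irreducible, not a self-extension. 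So the exceptional case of (i) and the constraint excluding repeats in (ii) have the same source, and the argument should close up. Alternatively, if a self-contained treatment proves too long, one simply cites the relevant statement from \cite{Seitz2} (or \cite{BGMT}) on irreducibility of tensor products, from which (i) and (ii) follow immediately.
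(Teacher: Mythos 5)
Your proposal's usable core coincides with the paper's proof: part (i) is exactly Seitz's irreducibility criterion for tensor products of $p$-restricted irreducibles (\cite[(1.6)]{Seitz2}), and for part (ii) one only needs that the weight $\mu=\mu_1+\mu_2$ has multiplicity one in $V$, so $V_G(\mu)$ occurs exactly once as a composition factor, while any other composition factor has highest weight $\nu$ with $\nu\neq\mu$ (and $\nu\preccurlyeq\mu$), hence is not isomorphic to $V_G(\mu)$. Since $V$ is reducible outside case (i), such a $\nu$ exists, so $V$ has two non-isomorphic composition factors — which is all that (ii) asserts; it is applied later precisely to contradict homogeneity of restrictions such as $V|_{H^0}$.

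However, you set yourself a strictly stronger target, and the argument you sketch for it would fail. Statement (ii) does not say that $V$ is multiplicity-free, and the claim ``no composition factor of $V$ is repeated'' is false in general: already for $G=A_2$ in characteristic $0$, the module $V_G(\lambda_1+\lambda_2)$ occurs with multiplicity two in $V_G(\lambda_1+\lambda_2)\otimes V_G(\lambda_1+\lambda_2)$. Correspondingly, your contrapositive step ``take $\nu$ maximal with multiplicity $\geqslant 2$; then $\nu=\mu_1+\mu_2$ by maximality'' is not valid: a repeated factor can occur strictly below the top weight, and passing to quotients does not transfer the repetition to the highest weight. The ${\rm Ext}^1$/self-extension discussion is likewise unnecessary, since the multiplicity of $V_G(\mu)$ as a composition factor equals $\dim V_{\mu}=1$ regardless of how the factors are glued (no other composition factor can contribute to the $\mu$-weight space, as all weights of $V$ lie under $\mu$). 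Once (ii) is read as ``$V$ is not homogeneous'', the short argument in your step (2), combined with the citation of \cite[(1.6)]{Seitz2} for (i), is exactly the paper's proof; the extra machinery should simply be dropped.
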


\begin{proof}
By \cite[(1.6)]{Seitz2}, $V$ is irreducible if and only if $G$ and $V$ satisfy the conditions in (i), so let us assume $V$ is reducible. Write $V_i = V_G(\mu_i)$ and note that $V$ has a composition factor of highest weight $\mu = \mu_1+\mu_2$ occurring with multiplicity $1$. Then any other composition factor has highest weight $\nu \ne \mu$ and the result follows.
\end{proof}

\section{Subgroup structure}\label{s:ss}

\subsection{A reduction theorem}\label{ss:rt}

Let $G$ be a simple classical algebraic group with natural module $W$. Following \cite[Section 1]{LS}, we introduce six natural, or \emph{geometric}, collections of closed subgroups, labelled $\C_i$ for $1 \leqs i \leqs 6$, and we set $\C=\bigcup_{i}\C_i$. These subgroups are defined in terms of the underlying geometry of $W$, and a rough description of the subgroups in each $\C_i$ collection is given in Table \ref{t:subs} (note that the subgroups in the collection $\C_5$ are finite). There are two types of subgroups in the $\C_4$ collection (indicated by the two rows in Table \ref{t:subs}); following \cite{BGT}, we write $\C_4 = \C_4(i) \cup \C_4(ii)$ accordingly. The following result is \cite[Theorem 1]{LS} (we use the term \emph{non-geometric} for the subgroups arising in part (ii)).

\renewcommand{\arraystretch}{1.1}
\begin{table}
\begin{center}
\begin{tabular}{cl} \hline
 & Rough description \\ \hline
$\C_1$ & Stabilizers of subspaces of $W$ \\
$\C_2$ & Stabilizers of orthogonal decompositions $W=\bigoplus_{i}W_i$, $\dim W_i=a$ \\
$\C_3$ & Stabilizers of totally singular decompositions $W=W_1 \oplus W_2$ \\
$\C_4$ & Stabilizers of tensor product decompositions $W=W_1 \otimes W_2$ \\
 & Stabilizers of tensor product decompositions $W=\bigotimes_i W_i$, $\dim W_i=a$ \\
$\C_5$ & Normalizers of symplectic-type $r$-groups, $r \neq p $ prime  \\
$\C_6$ & Classical subgroups \\ \hline
\end{tabular}
\caption{The $\C_i$ collections}
\label{t:subs}
\end{center}
\end{table}
\renewcommand{\arraystretch}{1}

\begin{thm}\label{t:ls}
Let $G$ be a simple classical algebraic group with natural module $W$, and let $H$ be a closed subgroup of $G$. Then one of the following holds:
\begin{itemize}\addtolength{\itemsep}{0.2\baselineskip}
\item[{\rm (i)}] $H$ is contained in a member of $\C$;
\item[{\rm (ii)}] modulo scalars, $H$ is almost simple and $E(H)$ (the unique quasisimple normal subgroup of $H$) is irreducible on $W$. Further, if $G={\rm SL}(W)$ then $E(H)$ does not fix a non-degenerate form on $W$. In addition, if $H$ is infinite then $E(H)=H^0$ is 
tensor indecomposable on $W$.
\end{itemize}
\end{thm}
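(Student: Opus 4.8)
The plan is to prove Theorem~\ref{t:ls} as a consequence of the Liebeck--Seitz reduction theorem \cite[Theorem 1]{LS}, since the statement here is essentially a repackaging of that result adapted to our setting. First I would invoke \cite[Theorem 1]{LS} directly: given a closed subgroup $H$ of the simple classical algebraic group $G$ with natural module $W$, either $H$ lies in a member of one of the geometric collections $\C_1, \ldots, \C_6$ (so we are in case (i)), or, modulo scalars, $H$ is almost simple with $E(H)$ acting irreducibly on $W$ and, when $G = {\rm SL}(W)$, the subgroup $E(H)$ fixes no non-degenerate form on $W$. This disposes of everything except the final assertions in (ii) concerning the case where $H$ is infinite, namely that $E(H) = H^0$ and that $E(H)$ acts tensor indecomposably on $W$.

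Next I would handle the claim $E(H) = H^0$ when $H$ is infinite. Here $E(H)$ is by definition the unique quasisimple normal subgroup of $H$ guaranteed in case (ii), so $E(H)$ is a closed connected normal subgroup of $H$, whence $E(H) \leqs H^0$. For the reverse inclusion, since $H$ is almost simple modulo scalars with socle (modulo scalars) the image of $E(H)$, the quotient $H/(E(H)Z)$ embeds in the outer automorphism group of the simple group $E(H)/(E(H)\cap Z)$; as $E(H)$ is a connected simple algebraic group, its outer automorphism group is finite (the graph automorphisms), so $H/(E(H)Z(G))$ is finite, forcing $H^0 \leqs E(H)Z(G)$. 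Since $Z(G)$ is finite, $H^0 \leqs E(H)$ as well, and hence $E(H) = H^0$. (One should be slightly careful to phrase this so that it is valid modulo scalars, but the argument is the same.)

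Finally I would address tensor indecomposability of $E(H) = H^0$ on $W$ when $H$ is infinite. Suppose for contradiction that $W|_{H^0} = W_1 \otimes W_2$ with $\dim W_i \geqs 2$, where the $W_i$ are modules for a central product of two positive-dimensional subgroups; then $H^0$ preserves this tensor decomposition, and so $H$ normalizes the tensor product structure on $W$ up to possibly swapping the factors (when $\dim W_1 = \dim W_2$). This would place $H$ inside a member of the $\C_4$ collection, contradicting the fact that in case (ii) of \cite[Theorem 1]{LS} the subgroup $H$ is \emph{not} contained in any geometric subgroup. Hence $W|_{H^0}$ is tensor indecomposable. I expect the main obstacle to be bookkeeping rather than mathematical depth: one must be careful about the ``modulo scalars'' phrasing throughout (the scalars $Z(G)$ and the distinction between $H$, $HZ(G)/Z(G)$, and its socle), and one must check that a tensor decomposition of $W$ as an $H^0$-module genuinely yields an honest $\C_4$-subgroup containing $H$ rather than merely $H^0$ — this requires observing that the set of tensor decompositions realizing $W|_{H^0}$ is permuted by $H$ and that the stabilizer of the whole structure, which contains $H$, is the relevant $\C_4$-member. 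Since all of these points are standard and already implicit in \cite{LS}, the proof is essentially a matter of extracting and restating the relevant portion of their theorem.
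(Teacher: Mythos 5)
The paper does not prove this statement at all: Theorem \ref{t:ls} is quoted verbatim from \cite[Theorem 1]{LS}, including the final clauses that for $G={\rm SL}(W)$ no form is fixed and that, when $H$ is infinite, $E(H)=H^0$ is tensor indecomposable on $W$ — so your first step, invoking that theorem, is exactly the paper's approach, and the supplementary arguments you supply are redundant. If you do keep them, two points need care: the \emph{abstract} outer automorphism group of a simple algebraic group is not finite (field automorphisms of $K$ intervene), so to get $H^0 \leqs E(H)Z(G)$ you should instead use that conjugation by elements of the closed subgroup $H \leqs G$ induces \emph{algebraic} automorphisms of $E(H)$, whose outer part is finite (and the connectedness of $E(H)$, which you assert, itself requires a short argument); and your tensor-indecomposability step implicitly treats (i) and (ii) as mutually exclusive — the correct framing is to assume $H$ lies in no member of $\C$ and derive (ii), checking moreover, when $G$ is symplectic or orthogonal, that the tensor decomposition can be chosen compatibly with the form so that its stabilizer is genuinely a $\C_4$-subgroup containing $H$.
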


\subsection{Geometric subgroups of ${\rm GO}(W)$}\label{ss:dn2}

In our inductive proof of Theorem \ref{t:main}, we will need to consider the subgroup structure of $G = {\rm GO}(W)$, which is the full isometry group of a non-degenerate quadratic form on $W$. Here $\dim W = 2n \geqs 6$ and thus $G^0 = {\rm SO}(W)$ is a simple group of type $D_n$. The notion of a \emph{geometric subgroup} extends naturally to $G$ and we can define the subgroup collections $\C_1, \ldots, \C_6$ as above. It is straightforward to check that the proof of the main theorem of \cite{LS} extends to this slightly more general situation (see \cite[Theorem 1$'$]{LS}), and thus Theorem \ref{t:ls} holds. In particular, any subgroup of $G$ that is not contained in a geometric subgroup is said to be \emph{non-geometric}, and these subgroups satisfy the conditions described in part (ii) of Theorem \ref{t:ls}. 

In the proofs of Propositions \ref{p:geom3} and \ref{p:geom6}, we need information on the maximal non-parabolic geometric subgroups of $G$. This is given in the following proposition.

\begin{prop}\label{p:dn2}
Let $M$ be a positive-dimensional non-parabolic geometric subgroup of $G={\rm GO}(W) = D_n.2$, which is not contained in $G^0$. Then the possibilities for $M$ are recorded in Table \ref{tab:dn2}.
\end{prop}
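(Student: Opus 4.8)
The plan is to reduce Proposition~\ref{p:dn2} to the known classification of maximal subgroups of the simple group $G^0 = D_n$ (as in \cite{LS} and the tables therein), keeping careful track of which subgroups of $G^0$ extend to the full isometry group $G = \mathrm{GO}(W) = D_n.2$ and which collapse or fuse under the graph automorphism. Concretely, let $M$ be a positive-dimensional non-parabolic geometric subgroup of $G$ not contained in $G^0$, so $M = (M \cap G^0).2$ with $M^0 = (M\cap G^0)^0$ a geometric subgroup of $G^0$. First I would run through the collections $\C_1,\dots,\C_6$ one at a time, using the extended version of Theorem~\ref{t:ls} (i.e.\ \cite[Theorem~1$'$]{LS}) and the standard description of geometric maximal subgroups of $D_n$.

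For the $\C_1$ collection, non-parabolic subspace stabilizers in an orthogonal group are the stabilizers $G_U$ of non-degenerate subspaces $U$ of $W$; since we require $M \not\leqs G^0$ we keep only those $U$ for which the stabilizer meets the non-identity coset, which (because an isometry reversing orientation exists exactly when $\dim U$ is odd, or by an explicit reflection argument) gives $M = (O_m \times O_{2n-m}) \cap G$ of the appropriate form. For $\C_2$ I would list the orthogonal decomposition stabilizers $W = \perp_i W_i$ with $\dim W_i = a$ and $W = \perp_i W_i$ totally singular pairs for $\C_3$; again the condition $M \not\leqs G^0$ determines which imprimitive and $\mathrm{GL}_n$-type subgroups survive, and standard facts about the action of the graph automorphism on the two classes of $G^0$-conjugates pin down the structure (e.g.\ the two $D_n$-classes of $\mathrm{GL}_n.2$ subgroups fuse in $G$). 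The collections $\C_4$ (tensor product decompositions $W = W_1 \otimes W_2$, both factors orthogonal or both symplectic, and the iterated version) and $\C_6$ (classical subgroups, essentially $\mathrm{Sp}_{2a}\otimes\mathrm{Sp}_{2b}$ type and the like) are handled the same way, and $\C_5$ is excluded since those subgroups are finite hence not positive-dimensional. Assembling the survivors produces Table~\ref{tab:dn2}.

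The main obstacle I expect is bookkeeping around the action of the graph automorphism $\gamma$ of $D_n$ on $G^0$-conjugacy classes of geometric subgroups: several of the relevant subgroups come in two $G^0$-classes swapped by $\gamma$, so they contribute a single class in $G$ of the form $M^0.2$, while others are $\gamma$-stable as classes, and one must determine in each case whether the normalizer in $G$ actually realizes the extension by an element outside $G^0$ (equivalently whether some outer element normalizes a chosen representative). This is exactly the kind of computation carried out in the proof of \cite[Theorem~1$'$]{LS} and in the constructions of \cite{LS}, so I would cite those results and the explicit generator descriptions there, checking in the few ambiguous cases (typically the $\mathrm{GL}_n.2$, the $O_a\wr S_t$, and the tensor-type subgroups) that an orientation-reversing isometry can be chosen inside the normalizer. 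A secondary, more routine point is verifying non-maximality is not being claimed—we only assert these are the possibilities for $M$, so no maximality argument is needed beyond ensuring the list is exhaustive, which follows from the completeness of the $\C_i$ description in Theorem~\ref{t:ls}.
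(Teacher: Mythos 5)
Your proposal takes essentially the same route as the paper: run through the geometric collections $\C_1$--$\C_4$ (with $\C_5$ discarded as finite), read off the structure of each non-parabolic stabilizer from its geometric description, and decide case by case whether it contains an element of ${\rm GO}(W)\setminus {\rm SO}(W)$ via explicit reflection/graph-type elements, assembling the survivors into Table \ref{tab:dn2}. The only cosmetic slip is that the $\mathrm{Sp}\otimes\mathrm{Sp}$-type subgroups you place in $\C_6$ are really $\C_4$-subgroups (which you also list there), and in fact orthogonal groups have no $\C_6$-subgroups at all, so nothing is lost.
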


\begin{proof}
Here $M$ is a disconnected $\C_i$-subgroup of $G$, where $i \in \{1,2,3,4\}$ (recall that the subgroups in $\C_5$ are finite, and there are no $\C_6$-subgroups in orthogonal groups). The structure of $M$ is easily determined from the geometric description of $M$ (see \cite[Section 2.5]{BGT}, for example), and it is straightforward to determine whether or not $M$ is contained in $G^0$.

For example, if $M \in \C_1$ then $M = G_U$ is the stabilizer of a subspace $U$ of $W$ (the natural $KG$-module), and one of the following holds (recall that $M$ is non-parabolic, so $U$ is not totally singular):
\begin{itemize}\addtolength{\itemsep}{0.2\baselineskip}
\item[(a)] $U$ is non-degenerate and $\dim U$ is even;
\item[(b)] $U$ is non-degenerate, $\dim U$ is odd and $p \neq 2$;
\item[(c)] $U$ is non-singular, $\dim U = 1$ and $p=2$.
\end{itemize}
In (a) and (b), $M = {\rm GO}(U) \times {\rm GO}(U^{\perp})$ is not contained in $G^0$. Similarly, in (c), $M = B_{n-1} \times 2$ (up to isomorphism) is not in $G^0$. These are the cases labelled (i), (ii) and (iii) in Table \ref{tab:dn2}. 

Next suppose $M$ is a $\C_4(i)$ tensor product subgroup of type ${\rm SO}(W_1) \otimes {\rm SO}(W_2)$, where $W= W_1 \otimes W_2$, $\dim W_i=a_i$, $a_1 \neq a_2$ and $p \neq 2$. Note that 
$$M = {\rm GO}(W_1) \circ {\rm GO}(W_2) = ({\rm SO}(W_1) \circ {\rm SO}(W_2)).\la x_1, x_2 \ra$$
is a central product and the $x_i$ are certain involutions. More precisely, if $a_1$ and $a_2$ are both even, then we may assume that $x_1$ acts as a reflection on $W_1$ and centralizes $W_2$ (and vice versa for $x_2$). Therefore, $x_1, x_2 \in {\rm SO}(W)$ and thus $M<G^0$. On the other hand, if $a_1$ is odd (so $a_2$ is even) then we can choose $x_1$ so that it acts as $-1$ on $W_1$ and centralizes $W_2$, and $x_2$ is defined as above. Here $x_1 \in G^0$ but $x_2 \not\in G^0$, whence $M$ is not contained in $G^0$. This is the case labelled (vii) in Table \ref{tab:dn2}. 

The other cases are similar. For instance, suppose $M$ is a $\C_3$-subgroup. Geometrically, $M$ is the stabilizer of a decomposition $W = U_1 \oplus U_2$, where $U_1$ and $U_2$ are maximal totally singular subspaces of $W$, so $\dim U_1=n$ and $M = {\rm GL}(U_1).2$. Now $G^0$ contains an element interchanging $U_1$ and $U_2$ if and only if $n$ is even, so $M$ is contained in $G^0$ if and only if $n$ is even, and this explains the $n$ odd condition recorded in Table \ref{tab:dn2} (see case (vi)).
\end{proof}

\renewcommand{\arraystretch}{1.1}
\begin{table}
\begin{center}
\begin{tabular}{llcl} \hline
& $M$ & Collection & Conditions \\ \hline
(i) & $D_lD_{n-l}.2^2$ & $\mathcal{C}_1$ &  $1 \leqs l < n/2$ \\ 
(ii) & $B_{l}B_{n-l-1} \times 2^2$ & $\mathcal{C}_1$ & $0 \leqs l < n/2$, $p \neq 2$ \\
(iii) & $B_{n-1} \times 2$ & $\C_1$ & $p=2$ \\
(iv) & $(2^t \times B_l^t).{\rm Sym}_t$ & $\C_2$ & $2n=(2l+1)t$, $l \geqs 1$, $t \geqs 2$ even, $p \neq 2$ \\
(v) & $(D_l^t.2^t).{\rm Sym}_t$ & $\C_2$ & $n=lt$, $l \geqs 1$, $t \geqs 2$ \\
(vi) & $A_{n-1}T_1.2$ & $\C_3$ & $n$ odd \\
(vii) & $B_aD_b.2$ & $\C_4(i)$ & $n=(2a+1)b$, $a \geqs 1$, $b \geqs 2$, $p \neq 2$ \\ \hline
\end{tabular}
\caption{The non-parabolic geometric subgroups $M<D_n.2$ with $M \not\leqs D_n$}
\label{tab:dn2}
\end{center}
\end{table} 
\renewcommand{\arraystretch}{1}

\begin{prop}\label{p:dn2_1}
Let $M$ be one of the subgroups of $G=D_n.2$ listed in Table \ref{tab:dn2}, and assume $n \geqs 3$ and $p \neq 2$. Set $V=V_{G^0}(\l)$, where one of the following holds:
\begin{itemize}\addtolength{\itemsep}{0.2\baselineskip}
\item[{\rm (a)}] $\l = \l_{n-1}+\l_{n}$;
\item[{\rm (b)}] $\l = \l_{k}$, where $1<k<n-1$.
\end{itemize}
Then $V$ extends to a representation of $G$, and $V|_{M}$ is reducible.
\end{prop}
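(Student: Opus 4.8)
The plan is to prove the two assertions in turn. For the extension, I would observe that the graph automorphism $\gamma$ of $G^0=D_n$ interchanges the fundamental weights $\l_{n-1}$ and $\l_n$ and fixes $\l_1,\dots,\l_{n-2}$; hence in both case~(a), $\l=\l_{n-1}+\l_n$, and case~(b), $\l=\l_k$ with $1<k<n-1$, we have $\l^\gamma=\l$ and therefore $V^\gamma\cong V$ as $KG^0$-modules. Since $G/G^0=\la\gamma G^0\ra$ is cyclic of order $2$ and $K$ is algebraically closed, the standard argument (extend an isomorphism $V^\gamma\to V$ to an action of $\gamma$, rescaled so that its square is trivial) shows that $V$ extends to a $KG$-module. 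Alternatively, one may note that $G={\rm GO}(W)={\rm O}(W)$ and that $V$ is cut out of $\wedge^k W$ (respectively $\wedge^{n-1}W$) by ${\rm O}(W)$-equivariant contractions with the invariant bilinear form, so $V$ is automatically an ${\rm O}(W)$-module.

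For the reducibility, by Clifford theory (see \eqref{e:vx}) it suffices to exhibit a proper nonzero $M$-submodule of $V$. Since $p\neq 2$, case~(iii) of Table~\ref{tab:dn2} does not occur, and each remaining $M$ normalizes one of three structures on $W$: an orthogonal decomposition $W=W_1\perp\cdots\perp W_s$ with $s\geqs 2$ (cases (i), (ii), (iv), (v)); a decomposition $W=U_1\oplus U_2$ into complementary maximal totally singular subspaces, with $M^0=A_{n-1}T_1$ and $M/M^0$ interchanging $U_1$ and $U_2$ (case (vi)); or a tensor decomposition $W=W_1\otimes W_2$ with $\dim W_1=2a+1\neq 2b=\dim W_2$, $b\geqs 2$, and $M/M^0$ preserving each tensor factor (case (vii)). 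I would then pass to $\wedge^j W$, where $j=k$ or $j=n-1$, so that $1<j<\dim W$, and use the decomposition it inherits from each of these structures: $\wedge^j W|_{M^0}=\bigoplus_{j_1+\cdots+j_s=j}\bigotimes_i\wedge^{j_i}W_i$, with $M$ permuting the summands through its action on the blocks $W_i$; the $T_1$-weight grading $\wedge^j W=\bigoplus_a\wedge^a U_1\otimes\wedge^{j-a}U_2$, with $M/M^0$ reversing the grading; and the Cauchy decomposition $\wedge^j(W_1\otimes W_2)=\bigoplus_\mu S^\mu W_1\otimes S^{\mu'}W_2$, with $M/M^0$ preserving each term. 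In every case, since $1<j<\dim W$ (and $s\geqs 2$ in the first case), a single $M$-orbit of summands is a proper nonzero $M$-submodule of $\wedge^j W$ — for example the summand of "greediest" shape, or the middle graded piece (or the sum of the two innermost pieces) in the totally singular case. When $\wedge^k W$ is irreducible as a $KG^0$-module we have $V=\wedge^k W$ and this already proves the claim.

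The main obstacle is the transfer of this conclusion to $V$ in the cases where $\wedge^k W$ fails to be irreducible, that is, for the few small primes $p$ (depending on $n$ and $k$) for which $V$ is a proper ${\rm O}(W)$-section of $\wedge^k W$; one must then check that the decomposition above does not collapse to a single $M$-orbit. Here I would use that the ${\rm O}(W)$-submodule of $\wedge^k W$ being factored out is the image of the maps $\wedge^{k-2}W\to\wedge^k W$ given by wedging with the invariant $2$-form $\sigma$, and that $\sigma$ decomposes compatibly with each of the three structures ($\sigma$ preserves the $T_1$-grading in case (vi), and in the orthogonal and tensor cases it shifts the partition-shape by adding $2$ to a single block). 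Consequently the factored-out submodule respects the shape decomposition in a bounded way, and at least two $M$-orbits of constituents survive in $V$ whenever $1<k<n-1$; carrying this out uniformly across Table~\ref{tab:dn2}, together with the handful of edge cases in which some block $W_i$ is very small, is the bulk of the work. The case $\l=\l_{n-1}+\l_n$ is handled in exactly the same way using $\wedge^{n-1}W$ in place of $\wedge^k W$.
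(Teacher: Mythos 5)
Your route is genuinely different from the paper's, but as written it has a real gap, and it sits exactly where you say the bulk of the work is. You propose to handle the possibility that $V$ is a proper section of $\wedge^k W$ by analysing ``the image of $\wedge^{k-2}W\to\wedge^k W$ given by wedging with the invariant $2$-form $\sigma$''. No such map exists in this setting: $G^0$ is an orthogonal group, so the invariant form is symmetric, i.e.\ an element of $S^2W^*$ and not of $\wedge^2W$, and there is no ${\rm O}(W)$-equivariant insertion $\wedge^{k-2}W\to\wedge^kW$ (nor a nonzero contraction the other way) -- that mechanism belongs to the symplectic group. So the submodule you intend to ``factor out compatibly with the shape decomposition'' is not there, and the sketched analysis cannot be carried out. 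The correct resolution is that the problematic case is vacuous: for $p\neq 2$ the restriction of $\wedge^kW$ from ${\rm SL}(W)$ to ${\rm SO}(W)$ is irreducible, with highest weight $\l_k$ for $1<k<n-1$ and $\l_{n-1}+\l_n$ for $k=n-1$ (cases I$_4$ and I$_5$ of \cite[Table 1]{Seitz2}; the present paper quotes exactly these facts in the proof of Proposition \ref{p:geom3}). Hence $V\cong\wedge^jW$ as $KG^0$-modules, and any two extensions to $G=G^0.2$ differ only by the sign character, which does not affect reducibility on restriction to $M$. You need this identification for your argument to say anything about $V$, but you neither prove nor cite it, and the substitute you offer is based on a false premise.

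With that fact supplied, the rest of your plan does work and is arguably more uniform than the paper's proof: the extension to $G$ is the same observation the paper makes (the weight is $\gamma$-stable), and for each non-parabolic $M$ in Table \ref{tab:dn2} the decomposition $\wedge^jW=\bigoplus\wedge^{j_1}W_1\otimes\cdots\otimes\wedge^{j_s}W_s$, respectively the $T_1$-grading in case (vi), exhibits at least two $M$-orbits of nonzero summands whenever $1<j<2n-1$, so a single orbit spans a proper nonzero $M$-submodule. One caveat in case (vii): in characteristic $p$ the Cauchy decomposition of $\wedge^j(W_1\otimes W_2)$ is only a ${\rm GL}(W_1)\times{\rm GL}(W_2)$-stable filtration, not a direct sum with each term preserved; but a filtration with at least two nonzero layers already forces reducibility, so this is harmless. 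By contrast, the paper argues by contradiction: assuming $V|_M$ irreducible, Clifford theory forces the highest weights of the $KM^0$-composition factors to be $M$-conjugate, and explicit restrictions of weights of the form $\l-\sum c_i\a_i$ to $M^0$ (together with arguments imported from \cite{BGT}) rule this out case by case. Your approach trades that case-by-case weight bookkeeping for the single input that $\wedge^jW$ is $G^0$-irreducible when $p\neq2$; the paper's approach avoids that input but is longer and more computational.
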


\begin{proof}
First observe that $\l$ is fixed under the induced action of an involutory graph automorphism of $G^0$ on the set of $T$-weights of $G$ (where $T$ is a maximal torus of $G^0$), so the representation $V=V_{G^0}(\l)$ does indeed extend to a representation of $G=G^0.2$. We will deal in turn with each of the relevant cases in Table \ref{tab:dn2} (note that case (iii) is not applicable, since we are assuming that $p \neq 2$). Seeking a contradiction, let us assume that $V|_{M}$ is irreducible. By Clifford theory (see Section \ref{ss:irred}), the $KM^0$-composition factors of $V$ are transitively permuted under the induced action of $M/M^0$.

\vs

\noindent \emph{Case 1.} $M$ is a $\C_1$-subgroup of type $D_lD_{n-l}.2^2$:

\vs

Here $1 \leqs l < n/2$ and $M^0=M_1M_2$, where $M_1 = D_l$ and $M_2 = D_{n-l}$. We will inspect the proof of \cite[Lemma 3.2.3]{BGT}. By \cite{Seitz2}, $V|_{M^0}$ is reducible and thus the Clifford theory implies that there are either two or four $KM^0$-composition factors (since $|M:M^0|=4$).

First assume $l=1$, so $M = M^0\la \tau_1,\tau_2\ra$ where $\tau_1$ is an involution inverting the $1$-dimensional torus $M_1$, and 
$\tau_2$ is an involutory graph automorphism of $M_2 = D_{n-1}$. We may assume that $M_2 = \la U_{\pm \a_2}, \ldots, U_{\pm \a_n}\ra$. Note that $M_1$ acts as scalars on the $KM^0$-composition factors of $V$, each of which is an irreducible $KM_2$-module. If there are exactly two $KM^0$-composition factors of $V$ then the argument in the proof of \cite[Lemma 3.2.3]{BGT} goes through unchanged (the details are given in the proof of \cite[Lemma 3.2.2]{BGT}), and the result follows immediately. 

Similar reasoning applies if there are four composition factors. By Clifford theory, if $\nu$ is the highest weight of a $KM^0$-composition factor, then $\nu|_{M_2} = \l|_{M_2}$ or $(\tau_2 \cdot \l)|_{M_2}$ (here $\nu|_{M_2}$ denotes the restriction of $\nu$ to a suitable maximal torus of $M_2$ contained in $T$, and similarly for $\l|_{M_2}$ and $(\tau_2 \cdot \l)|_{M_2}$). However, 
$\mu = \lambda-\a_1-\a_2-\cdots-\a_{n-1} = -\l_1+2\l_{n}$
affords the highest weight of a $KM^0$-composition factor in case (a), but clearly $\mu|_{M_2}$ is not conjugate to $\l|_{M_2}$. Case (b) is entirely similar, using $\mu = \lambda-\a_1-\cdots-\a_k$. Therefore, in both cases we have reached a contradiction.

Now assume $l \geqs 2$. As noted in the proof of \cite[Lemma 3.2.3]{BGT}, up to conjugacy we have
\begin{equation}\label{e:m1m2}
\begin{split}
M_{1} & =\la U_{\pm \a_{1}},\ldots,U_{\pm \a_{l-1}},U_{\pm (\a_{l-1}+2(\a_l+\cdots+\a_{n-2})+\a_{n-1}+\a_{n})} \ra \\
M_{2} & =\la U_{\pm \a_{l+1}},\ldots,U_{\pm \a_{n}} \ra
\end{split}
\end{equation}
and $M=M^0\la \tau_1,\tau_2 \ra$,
where $\tau_1$ and $\tau_2$ act as involutory graph automorphisms on $M_1$ and $M_2$, respectively. Let $\{\o_{1,1},\ldots,\o_{1,l}\}$ and $\{\o_{2,1},\ldots,\o_{2,n-l}\}$ be the fundamental dominant weights corresponding to the above bases of the root systems of $M_1$ and $M_2$, respectively (here $\tau_1$ acts as a transposition on $\{\o_{1,1},\ldots,\o_{1,l}\}$,  interchanging $\o_{1,l-1}$ and $\o_{1,l}$, and similarly $\tau_2$ acts on $\{\o_{2,1},\ldots,\o_{2,n-l}\}$ by interchanging the weights $\o_{2,n-l-1}$ and $\o_{2,n-l}$). Note that if $\mu = \sum_{i=1}^{n}{b_i\l_i}$ is a weight for $T$ then
\begin{equation}\label{e:weq}
\mu|_{M^0} = \sum_{i=1}^{l-1} b_{i}\o_{1,i}+(b_{l-1}+2b_{l}+\dots+2b_{n-2}+b_{n-1}+b_{n})\o_{1,l}+\sum_{i=1}^{n-l} b_{l+i}\o_{2,i}.
\end{equation}

Consider case (a). Here $\mu = \l - \a_{l}-\a_{l+1}-\cdots - \a_{n-2}-\a_{n-1}$ affords the highest weight of a $KM^0$-composition factor (see the proof of \cite[Lemma 3.2.3]{BGT}) and we calculate that
$$\mu|_{M^0} = \o_{1,l-1}+\o_{1,l}+2\o_{2,n-l},\;\; 
\l|_{M^0} = 2\o_{1,l}+\o_{2,n-l-1}+\o_{2,n-l}.$$
In particular, we observe that 
$$\mu|_{M^0} \not\in \{\l|_{M^0},(\tau_1 \cdot \l)|_{M^0},(\tau_2 \cdot \l)|_{M^0}, (\tau_1\tau_2 \cdot \l)|_{M^0}\},$$
so $\mu|_{M^0}$ is not $M$-conjugate to $\l|_{M^0}$. This is a contradiction.

In case (b) we have $\l=\l_k$, where $1<k<n-1$, and thus
$$\l|_{M^0} = \left\{\begin{array}{ll}
\o_{1,k} & 1<k<l-1 \\
\o_{1,l-1}+\o_{1,l} & k=l-1 \\
2\o_{1,l} & k=l \\
2\o_{1,l}+\o_{2,k-l} & l<k<n-1. 
\end{array}\right.$$
Set 
$$\mu = \left\{\begin{array}{ll}
\l-\a_{k} - \a_{k+1} - \cdots - \a_{l} & 1<k<l \\
\l-\a_{l} & k=l \\
\l-\a_{l} - \a_{l+1} - \cdots - \a_{k} & l<k<n-1.
\end{array}\right.$$
Then $\mu$ affords the highest weight of a $KM^0$-composition factor, and in each case it is easy to check that $\mu|_{M^0}$ is not $M$-conjugate to $\l|_{M^0}$. For example, suppose $1<k<l-1$. Then $\mu = \l_{k-1}-\l_{l}+\l_{l+1}$ and thus $\mu|_{M^0} = \o_{1,k-1}+\o_{2,1}$, which is not conjugate to $\l|_{M^0} = \o_{1,k}$. The other cases are similar. 

\vs

\noindent \emph{Case 2.} $M$ is a $\C_1$-subgroup of type $B_lB_{n-l-1} \times 2^2$:

\vs

Here $0\leqs l<n/2$ and $M^0=M_1M_2$, where $M_1=B_{l}$ and 
$M_2=B_{n-l-1}$.  Let $\{\beta_1,\ldots,\beta_l\}$ and $\{\gamma_1,\ldots,\gamma_{n-l-1}\}$ be bases of
the root systems of $M_1$ and $M_2$, respectively, and let $\{\eta_1,\dots,\eta_l\}$ and $\{\nu_1,\dots,\nu_{n-l-1}\}$ be the corresponding fundamental
dominant weights. Then up to conjugacy, we may assume that the simple root elements of $M_1$ and $M_2$ are as follows
$$x_{\beta_i}(t) = \left\{\begin{array}{ll}
x_{\alpha_i}(t) & 1\leqs i<l \\
x_{\delta}(t)x_{\epsilon}(t) & i = l 
\end{array}\right.$$
$$ 
x_{\gamma_j}(t) = \left\{\begin{array}{ll}
x_{\alpha_{l+j}}(t) & 1\leqs j< n-l-1 \\
x_{\alpha_{n-1}}(t)x_{\alpha_n}(t) & j = n-l-1
\end{array}\right.$$
for all $t \in K$, where 
$\delta = \alpha_l+\alpha_{l+1}+\cdots+\alpha_{n-1}$ and $\epsilon =  \alpha_l+\alpha_{l+1}+\cdots+\alpha_{n-2}+\alpha_n$ (see \cite[Claim 8]{Test2}, for example). Note that $V|_{M^0}$ is homogeneous. 

First consider (a). In terms of the above notation, we calculate that $\lambda|_{M^0}=2\eta_l+2\nu_{n-l-1}$. By considering the restrictions $\a_i|_{M^0}$, we see that $\lambda-\alpha_{n-1}$ 
and $\lambda-\alpha_n$ both restrict to the weight  
$\lambda|_{M^0}-\gamma_{n-l-1}$. This weight has multiplicity $1$ in the 
$KM^0$-composition factor of $V$ afforded by $\lambda$. Moreover, one checks that $\l$ is the only $T$-weight $\mu$ in $V$ such that $\l|_{M^0}-\gamma_{n-l-1} \preccurlyeq \mu|_{M^0}$ and $\l|_{M^0}-\gamma_{n-l-1} \neq \mu|_{M^0}$, so there must be a $KM^0$-composition factor with highest weight $\lambda|_{M^0}-\gamma_{n-l-1}$. However, this contradicts the homogeneity of $V|_{M^0}$.

Now consider (b). Suppose
$k\leqs l$, so $l>0$ and $\l|_{M_2}$ is trivial. Now the weight $\mu = \lambda-\alpha_k-\cdots-\alpha_l$ 
affords the highest weight of a $KM^0$-composition factor of $V$, but $\mu|_{M_2}$ is nontrivial and this contradicts 
the homogeneity of $V|_{M^0}$. Now assume $k>l$. Here $\lambda|_{M^0}=2\eta_l+\nu_{k-l+1}$. 
However, the weight $\mu = \lambda-\alpha_l-\alpha_{l+1}-\cdots-\alpha_k$ affords the highest weight of a $KM^0$-composition factor of $V$ and 
$$\mu|_{M^0} = \left\{\begin{array}{ll}
\eta_{l-1}+\nu_{k-l+2} & k<n-2 \\
\eta_{l-1}+2\nu_{n-l-1} & k=n-2
\end{array}\right.$$
Once again, this contradicts 
the homogeneity of $V|_{M^0}$.
 
\vs

\noindent \emph{Case 3.} $M$ is a $\C_2$-subgroup of type $(2^t \times B_l^t).{\rm Sym}_t$:

\vs

Here $2n=(2l+1)t$, $l \geqs 1$ and $t \geqs 2$ is even. Note that the conclusion to \cite[Lemma 4.2.1]{BGT} still applies in this situation.

First consider (a). If $l=1$ then the argument in the third paragraph on \cite[p.48]{BGT} applies, and we reach a contradiction via \cite[Lemma 4.2.1]{BGT}. Next suppose $(l,t)=(2,2)$, so $n=5$. Here we argue as in the third to last paragraph on \cite[p.50]{BGT}. (Alternatively, in the notation of \cite[Lemma 4.3.8]{BGT}, note that $\dim V = \dim V_{D_5}(\l) = 210$ and $\l|_{M^0} = 2\o_{1,2}+2\o_{2,2}$, so the $KM^0$-composition factor afforded by $\l$ has dimension $10^2=100$, which does not divide $\dim V$.) Finally, if $l \geqs 2$ and $(l,t) \neq (2,2)$ then we can argue as in the third to last paragraph on \cite[p.52]{BGT} (again, we get a contradiction via \cite[Lemma 4.2.1]{BGT}). Case (b) is similar and we omit the details.

\vs

\noindent \emph{Case 4.} $M$ is a $\C_2$-subgroup of type $(D_l^t.2^t).{\rm Sym}_t$:

\vs

Here $n=lt$, $l \geqs 1$ and $t \geqs 2$. As in the previous case, note that the conclusion to \cite[Lemma 4.2.1]{BGT} still applies.

If $l=1$ then $M = N_{G}(T)$ is the normalizer of a maximal torus and this case is ruled out as in the first paragraph in the proof of \cite[Lemma 4.3.8]{BGT}. Now assume $l \geqs 2$. Consider case (a). If $l \geqs 3$ then we can argue as on \cite[p.56]{BGT} to rule out this case, and as noted in the penultimate paragraph on \cite[p.57]{BGT}, the same argument also applies if $l=2$. Case (b) is very similar: if $l \geqs 3$ then we argue as on \cite[p.55]{BGT} (we repeatedly apply \cite[Lemma 4.2.1]{BGT}), and for $l=2$ 
we note that the argument on \cite[p.57]{BGT} goes through unchanged.

\vs

\noindent \emph{Case 5.} $M$ is a $\C_3$-subgroup of type $A_{n-1}T_1.2$:

\vs

Here $n \geqs 3$ is odd. Set $L = (M^0)'=A_{n-1}$ and note that $V|_{L}$ has exactly two composition factors. We may assume that $L = \la U_{\pm \a_1}, \ldots, U_{\pm \a_{n-1}}\ra$. As in the proof of \cite[Lemma 3.2.2]{BGT}, let $V_j$ be the sum of the $T$-weight spaces ($T$ a maximal torus of $G^0=D_n$) in $V=V_{G^0}(\l)$ of the form $\l - \sum_{i=1}^{n-1}c_i\a_i - j \a_{n}$, $j \in \mathbb{N}_0$. Since $V_j$ is $L$-stable, every $T$-weight of $V$ is of this form, with $j=0$ or $1$ (by \cite[Theorem 1]{Premet} and saturation; see \cite[Section 13.4]{Humphreyslie}). In particular, $w_0\l = -(\tau \cdot \l)$ is of this form (where $w_0$ is the longest word in the Weyl group of $G^0$, and 
$\tau$ is an involutory graph automorphism of $G^0$ that interchanges the weights $\l_{n-1}$ and $\l_n$). Therefore, if we write $\l = \sum_{i=1}^{n}a_i\l_i$ then  
$$2\l - (a_{n-1}-a_n)(\l_{n-1}-\l_n) = \sum_{i=1}^{n}c_i\a_i$$
and $c_n \in \{0,1\}$. By expressing the $\l_i$ in terms of the $\a_i$, we deduce that either $\l=\l_1$, or $n=3$ and $\l = \l_2$ or $\l_3$. This immediately eliminates cases (a) and (b). 

\vs

\noindent \emph{Case 6.} $M$ is a $\C_4(i)$-subgroup of type $B_aD_b.2$:

\vs

Here $n = (2a+1)b$ and $M^0=M_1M_2$ is semisimple, where $M_1$ is of type $B_a$ and $M_2$ is of type $D_b$. Note that the embedding of $M$ in $G$ is via a tensor product action on the natural $KG$-module $W$. Write $M=M^0\la \s\ra$, where $\s$ induces a graph automorphism on $M_2$ and centralizes $M_1$. 

Let $\Pi(M_1)=\{\b_1, \ldots,\b_a\}$ and $\Pi(M_2)=\{\gamma_1,\ldots,\gamma_b\}$
be bases of the root systems $\Sigma(M_1)$ and $\Sigma(M_2)$, respectively. Now $W$ restricts to $M_1$ as $2b$ copies of the natural module for 
$M_1$, and hence up to conjugacy, we may assume that $M_1$ lies in the subgroup 
$$\langle U_{\pm\a_i}\mid (2a+1)j+1\leqs i < (2a+1)(j+1), \, 
0\leqs j < b \rangle,$$
the derived subgroup of an $A_{2a}\times\cdots\times A_{2a}$ ($b$ copies) Levi subgroup 
of $G$. The projection of $M_1$ into each of the factors of this group
is the natural embedding of a group of type $B_a$ in $A_{2a}$. 
We may assume that 
$$\a_{(2a+1)j+i}|_{M^0}=\b_i \; \mbox{for all $0\leqs j < b$, $1\leqs i\leqs a$}$$
and
$$\a_{(2a+1)j+a+i}|_{M^0}=\b_{a-i+1} \; \mbox{for all $0\leqs j< b$, $1\leqs i\leqs a$,}$$
where $\a|_{M^0}$ denotes the restriction of $\a$ to a maximal torus $T_{M^0} < T$ of $M^0$. 

Let $P$ be the parabolic subgroup of $M^0$, which contains the opposite Borel subgroup, with Levi factor $M_1T_{M^0}$. We may assume that $P$ is contained in the parabolic subgroup of $G$, which contains the opposite Borel
subgroup of $G$, whose Levi factor has derived subgroup as given above. By comparing the flags of commutator subspaces of $W$ with respect to the two unipotent radicals, we are able to determine the restrictions of sufficiently many $T$-weights to $T_{M^0}$ in order to deduce the restrictions of the remaining simple roots. We get
$$\a_{(2a+1)j}|_{M^0}=\gamma_j-\b_0,\;\; \a_{(2a+1)b}|_{M^0}= \gamma_b-\gamma_{b-1}-(\b_0-\b_1),$$
where $1\leqs j< b$ and $\b_0=2\sum_{i=1}^a \b_i$. 
 
By \cite{Seitz2}, $V|_{M^0}$ is reducible, so the Clifford theory implies that $V$ has precisely two $KM^0$-composition factors, with highest weights
$\l|_{M^0}$ and $(\sigma \cdot \l)|_{M^0}$. In particular, if we set  $T_{M_1}=T_{M^0} \cap M_1$, then every $KM_1$-composition factor of $V$ has highest weight $\l|_{M_1}$, whence
every $T_{M_1}$-weight of $V$ is of the form
\begin{equation}\label{restriction1}
\l|_{M_1}-\sum_{j=1}^{a}n_j\b_j,\; \mbox{for some $n_j\in\mathbb{N}_0$.}
\end{equation}
However, in case (a) we find that the weight $\lambda-\alpha_n$ restricts to 
$\l|_{M_1} + \b_0-\b_1$, which contradicts \eqref{restriction1}. In (b), choose $1\leqs i<b$ such that $|(2a+1)i-k|$ is minimal, and set $\mu = \l -a_k-\a_{k+1}-\cdots-\a_{(2a+1)i}$ if $k\leqs (2a+1)i$, otherwise set
$\mu = \lambda-\a_{(2a+1)i}-\a_{(2a+1)i+1}-\cdots-\a_k$. Then 
$$\mu|_{M_1} = (\l - r-(\gamma_i-\b_0))| _{M_1}= (\l - r+\b_0)|_{M_1},$$
where either $r=0$ or $r$ is a positive root of $M_1$. Once again, this contradicts \eqref{restriction1}.

\vs

This completes the proof of Proposition \ref{p:dn2_1}.
\end{proof}

\begin{prop}\label{p:dn2_2}
Let $M$ be one of the subgroups of $G=D_n.2$ listed in Table \ref{tab:dn2}, and assume $p= 2$. Set $V=V_{G^0}(\l)$, where 
$$\l = \sum_{i=1}^{n-2}a_i\l_i+a_{n-1}(\l_{n-1}+\l_n)$$ 
is $p$-restricted. Assume that $V$ is nontrivial and $V \neq W,W^*$. Then $V$ extends to a representation of $G$, and $V|_{M}$ is reducible.
\end{prop}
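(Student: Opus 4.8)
My plan is to mimic the case-by-case structure of the proof of Proposition~\ref{p:dn2_1}, adjusting for $p=2$. The extension claim is immediate: since $\l_{n-1}$ and $\l_n$ occur in $\l$ with the same coefficient $a_{n-1}$, while $\l_1,\dots,\l_{n-2}$ are fixed by the order-$2$ graph automorphism $\tau$ of $G^0=D_n$, we have $\tau(\l)=\l$, so $V_{G^0}(\l)^{\tau}\cong V_{G^0}(\l)$ and $V$ extends to a representation of $G=G^0.\langle\tau\rangle$, exactly as in the proof of Proposition~\ref{p:dn2_1}. For the rest, I would first note that cases (ii), (iv) and (vii) of Table~\ref{tab:dn2} require $p\ne 2$, so $M$ is one of the subgroups in cases (i), (iii), (v) and (vi), and in each case assume for a contradiction that $V|_M$ is irreducible. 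By Clifford theory (Section~\ref{ss:irred}) the $KM^0$-composition factors of $V$ are transitively permuted by $M/M^0$, and by~\cite{Seitz2} the restriction $V|_{M^0}$ is reducible, since $\l$ (which is not $\l_1$, as $V\ne W$) is not among the highest weights recorded in \cite[Table~1]{Seitz2} for the relevant connected subgroup of $D_n$ (for case (v) with $l=1$ this is trivial, since then $M^0=T$).

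Cases (i), (v) and (vi) are the $p=2$ counterparts of Cases 1, 4 and 5 in the proof of Proposition~\ref{p:dn2_1}, and I would run the same arguments; the only change is that $\l$ is now an arbitrary $p$-restricted weight of the stated form rather than one of the two specific weights treated there. In cases (i) and (v) this amounts to using the explicit restrictions $\a_i|_{M^0}$ and the action of $M/M^0$ on the fundamental weights of the simple factors of $M^0$ to exhibit a $T$-weight $\mu$ of $V$, obtained from $\l$ by subtracting a suitable chain of simple roots joining the factors of $M^0$, such that $\mu|_{M^0}$ affords the highest weight of a $KM^0$-composition factor but is not $M$-conjugate to $\l|_{M^0}$, a contradiction; when $l=1$ in case (v), $M=N_G(T)$ and one argues as in \cite[Lemma~4.3.8]{BGT} that the $T$-weights of $V$ cannot all lie in a single orbit of the Weyl group of type $B_n$ unless $V=W$. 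In case (vi), where $n$ is odd, one puts $L=(M^0)'=A_{n-1}$ and lets $V_j$ be the sum of the $T$-weight spaces of $V$ of the form $\l-\sum_{i=1}^{n-1}c_i\a_i-j\a_n$; each $V_j$ is $L$-stable, and since $V|_{M^0}$ is reducible with $|M:M^0|=2$ it has exactly two composition factors, hence so does $V|_L$, so by \cite[Theorem~1]{Premet} and saturation (see \cite[Section~13.4]{Humphreyslie}) every $T$-weight of $V$ has $j\in\{0,1\}$. Applying this to the lowest weight $w_0\l=-(\tau\cdot\l)=-\l$ yields $2\l=\sum_i c_i\a_i$ with $c_n\in\{0,1\}$; expanding the $\l_i$ in the basis of simple roots gives $c_n=\sum_{i=1}^{n-2}i\,a_i+(n-1)a_{n-1}$, so $c_n\in\{0,1\}$ forces $\l=0$ or $\l=\l_1$, both of which are excluded.

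The genuinely new case, and the one I expect to be the main obstacle, is case (iii), which has no analogue in Proposition~\ref{p:dn2_1}. Here $M=B_{n-1}\times 2$ has the form $M^0\times\langle z\rangle$ with $z$ an involution centralizing $M^0$; since $M\not\leqs G^0$ we have $z\in G\setminus G^0$, and as $p=2$ the element $z$ acts unipotently on $V$, so $\ker(z-1)$ is a nonzero $M$-submodule of $V$. Irreducibility of $V|_M$ then forces $z$ to act trivially and $V|_M=V|_{M^0}$ to be irreducible, so $(G^0,M^0,V)$ would be an irreducible triple in which $M^0=B_{n-1}$ is the connected stabilizer of a non-singular $1$-space of $W$. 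Consulting \cite{Seitz2} (equivalently, the rows of Table~\ref{t:maint3} with $G=D_n$ and $H=B_{n-1}$), for $p=2$ the only possibilities have $\l=\l_k+\l_{n-i}$ with $1\leqs k\leqs n-2$ and $i\in\{0,1\}$, and since the coefficients of $\l_{n-1}$ and $\l_n$ in such a weight are unequal, $\l$ is not of the stated form, a final contradiction. The delicate points here are getting the $p=2$ geometry of the non-singular $1$-space stabilizer right, so that $M^0$ and its action on $W$ are identified correctly, and checking that the borderline weight $\l=\l_{n-1}+\l_n$ and the low-rank cases (notably $n=3$, where $D_3\cong A_3$) are genuinely covered by the appeal to \cite{Seitz2}.
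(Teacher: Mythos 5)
Your proposal is correct and, for the cases (i), (v) and (vi) of Table \ref{tab:dn2}, it follows essentially the same route as the paper: the extension of $V$ to $G$ via $\tau$-invariance of $\l$, the elimination of cases (ii), (iv), (vii) by the $p\neq 2$ conditions, the weight-restriction arguments modelled on \cite[Lemmas 3.2.3 and 4.3.8]{BGT} for the $\C_1$- and $\C_2$-subgroups, and the Premet/saturation argument for $A_{n-1}T_1.2$ (your explicit computation $c_n=\sum_i ia_i+(n-1)a_{n-1}$, which uses $a_{n-1}=a_n$ to get $w_0\l=-\l$, is a correct specialisation of the paper's more general computation in Proposition \ref{p:dn2_1}). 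The one genuine divergence is case (iii), $M=B_{n-1}\times\langle z\rangle$: the paper invokes Clifford theory together with \cite{Seitz2} to get $V|_{M^0}=V_1\oplus V_2$, notes that centrality of $z$ forces $V_1\cong V_2$, and contradicts Proposition \ref{p:niso}; you instead use that in characteristic $2$ the involution $z$ acts unipotently, so its nonzero fixed space is an $M$-submodule, forcing $z$ to act trivially and $V|_{M^0}$ to be irreducible, which is then excluded by Seitz's classification of irreducible triples $(D_n,B_{n-1},V)$ (the only $p=2$ entries being $\l_k+\l_{n-i}$, never of the symmetric form). Both arguments rest on the same external input from \cite{Seitz2}, and yours is arguably a little cleaner; the only caution is that your parenthetical appeal to Table \ref{t:maint3} would be circular, since that table depends on Theorem \ref{t:main} and hence on this proposition, so the citation must be to \cite[Table 1]{Seitz2} directly, as you in fact make it.
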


\begin{proof}
As before, $\l$ is fixed by an involutory graph automorphism of $G^0$, so $V$ extends to a representation of $G=G^0.2$. Seeking a contradiction, let us assume that $V|_{M}$ is irreducible. There are four cases to consider.

\vs

\noindent \emph{Case 1.} $M$ is a $\C_1$-subgroup of type $D_lD_{n-l}.2^2$:

\vs

Here $M^0=M_1M_2$, where $M_1=D_l$, $M_2=D_{n-l}$ and $1 \leqs l < n/2$. First assume $l=1$. As in the proof of the previous proposition, we may assume that $M_2=\la U_{\pm \a_{2}},\ldots,U_{\pm \a_{n}} \ra$, and by arguing as in the proof of \cite[Lemma 3.2.3]{BGT} we quickly reduce to the case where $V|_{M^0}$ has exactly four composition factors. Let $k$ be minimal such that $a_k\ne 0$. Then $\lambda-\a_1-\cdots-\a_k$ affords the highest weight of a $KM_2$-composition factor of $V$, which is not conjugate (via a graph automorphism of $M_2$) to the composition factor afforded by $\lambda$. This contradiction eliminates the case $l=1$.

Now assume $l \geqs 2$. As before, $M=M^0\la \gamma_1,\gamma_2 \ra$, where $\gamma_1$ and $\gamma_2$ act as involutory graph automorphisms on $M_1$ and $M_2$, respectively, and we may assume that 
$M_1$ and $M_2$ are as given in \eqref{e:m1m2}.  Let $\{\o_{1,1},\ldots,\o_{1,l}\}$ and $\{\o_{2,1},\ldots,\o_{2,n-l}\}$ be the fundamental dominant weights corresponding to the bases $\Pi(M_1)$ and $\Pi(M_2)$ in \eqref{e:m1m2}, respectively. In view of \eqref{e:weq}, it is easy to see that $\l|_{M^0} = (\gamma_2\cdot \l)|_{M^0}$ and
$$(\gamma_1\cdot \l)|_{M^0} = (\gamma_1\gamma_2\cdot \l)|_{M^0} = \l|_{M^0}+\left(2(a_{l}+ \cdots +a_{n-2})+a_{n-1}+a_{n}\right)(\o_{1,l-1}-\o_{1,l}).$$
Now, by arguing as in the proof of \cite[Lemma 3.2.3]{BGT} we quickly reduce to the case $\l = a_{n-1}(\l_{n-1}+\l_{n})$. By inspecting \eqref{e:m1m2}, we see that $\mu = \l - \a_{l} - \a_{l+1} - \cdots - \a_{n-2} - \a_{n-1}$ affords the highest weight of a $KM^0$-composition factor, but this is not conjugate to $\l|_{M^0}$ since
$\mu|_{M^0} = \l|_{M^0} + \o_{1,l-1} - \o_{1,l} - \o_{2,n-l-1} + \o_{2,n-l}$.

\vs

\noindent \emph{Case 2.} $M$ is a $\C_2$-subgroup of type $B_{n-1} \times 2$:

\vs

Write $M = M^0 \times \la z \ra$. By \cite{Seitz2}, $V|_{M^0}$ is reducible and thus $V|_{M^0} = V_1 \oplus V_2$, where $V_1$ and $V_2$ are irreducible $KM^0$-modules. Since $z$ is central, it follows that $V|_{M^0}$ is homogeneous, but this is ruled out by Proposition \ref{p:niso}.

\vs

\noindent \emph{Case 3.} $M$ is a $\C_2$-subgroup of type $(D_l^t.2^t).{\rm Sym}_t$:

\vs

Here $n=lt$, where $l \geqs 1$ and $t \geqs 2$. The case $l=1$ can be ruled out by arguing as in the first paragraph in the proof of \cite[Lemma 4.3.8]{BGT}. If $l \geqs 3$ then by arguing as in the proof of \cite[Lemma 4.3.8]{BGT} (see \cite[p.55]{BGT}) we reduce to the case $\l = \l_{n-1}+\l_n$, and this possibility is ruled out by the argument in the penultimate paragraph on \cite[p.56]{BGT}. Finally, if $l=2$ then we quickly get down to the cases $\l \in \{\l_1+\l_{n-1}+\l_n, \l_{n-1}+\l_n\}$. The case $\l = \l_{n-1}+\l_n$ is ruled out as in \emph{loc. cit.}, and the other case is eliminated by arguing as in the final paragraph in the proof of \cite[Lemma 4.3.8]{BGT}.

\vs

\noindent \emph{Case 4.} $M$ is a $\C_3$-subgroup of type $A_{n-1}T_1.2$:

\vs

Here $n \geqs 3$ is odd and the argument given in the analysis of Case 5 in the proof of Proposition \ref{p:dn2_1} can be applied.

\vs

This completes the proof of Proposition \ref{p:dn2_2}.
\end{proof}

\section{Geometric subgroups: The connected case}\label{s:geom1}

Suppose $G$, $H$ and $V=V_G(\l)$ satisfy the conditions in Hypothesis \ref{h:our}. In this section and the next, we will establish Theorem \ref{t:main} when $H$ is contained in a geometric maximal subgroup $M$ of $G$, 
excluding the special situation described in part (b) of Theorem \ref{t:main}.
Assume $V|_{H}$ is irreducible, so $H^0$ is reductive by Lemma \ref{l:hred}. Our first task is to determine the possibilities for the irreducible triple $(G,M,V)$. If $M$ is connected then we can read off the relevant cases by applying \cite[Theorem 1]{Seitz2}; the cases that arise are recorded in Table \ref{tab:geom1}. Similarly, if $M$ is disconnected, we can appeal to the main theorem of \cite{BGT}, which yields the list of cases given in Table \ref{tab:geom2} (in the first line of the table, $T$ denotes a maximal torus of $G$). 

\begin{remk}
It is worth noting that the cases listed in \cite[Table 1]{Seitz2} are recorded in terms of the image of the underlying representation $\varphi:G \to {\rm GL}(V)$, so Seitz's table gives $(\varphi(G),\varphi(M),V)$, rather than $(G,M,V)$. For instance, at the level of subgroups, the cases labelled S$_3$ and S$_4$ in \cite[Table 1]{Seitz2} correspond to irreducible triples with $(G,M) = (C_3,G_2)$ or $(B_3,G_2)$, but only the former possibility is listed in \cite[Table 1]{Seitz2} because in both cases the image $\varphi(G)$ is of type $C_3$. We also observe that there are certain maximal rank configurations arising in \cite[Theorem 4.1]{Seitz2} which are not listed \cite[Table 1]{Seitz2}. For example, referring to the case labelled MR$_1$ in \cite[Table 1]{Seitz2} (so $p=3$), Theorem 4.1 of \cite{Seitz2} implies that the short (respectively, long) root $A_2$ in $G_2$ acts irreducibly on any $p$-restricted $G_2$-module whose highest weight has support on the short (respectively, long) roots, but only the former is listed in the table. In addition, we also note that the highest weights in \cite[Table 1]{Seitz2} are only given up to conjugacy by a graph automorphism, and we adopt the same convention in Table \ref{tab:geom1}. Note that if the graph automorphism introduces a Frobenius twist on the module, then we will list the irreducible action on the corresponding $p$-restricted module.
\end{remk}

\begin{remk}
As noted in Remark \ref{r:hyp}, if $W$ denotes the natural $KG$-module then any irreducible triple $(G,M,V)$ with $V = W^{\tau}$ (for some $\tau \in {\rm Aut}(G)$) is also excluded in \cite[Table 1]{Seitz2}. In particular, Seitz does not list the cases $(G,M) = (D_4,A_2)$ (with $p \neq 3$) and $(B_2,A_1)$ ($p \ne 2,3$), with $V$ a spin module for $G$. For example, the spin module for $B_2$ is $4$-dimensional, and it corresponds to the natural symplectic representation of $C_2$. 
\end{remk}

\begin{remk}
The triples of the form $(G,M,V)$, where $(G,p) = (B_n,2)$ and $M$ is a disconnected geometric maximal subgroup, are not stated explicitly in \cite{BGT}, but they are easily determined from the relevant list of cases in \cite[Table 1]{BGT} for the corresponding dual group of type $C_n$. The only possibilities are $M=B_l^t.{\rm Sym}_t$ (a $\C_2$-subgroup) with $\l=\l_n$, or $M=D_n.2$ and $\l=\l_n$ or $\sum_{i<n}a_i\l_i$. Note that $G$ acts reducibly on $W$, so there are no triples involving non-geometric subgroups.
\end{remk}

\renewcommand{\arraystretch}{1.1}
\begin{table}
\begin{center}
\begin{tabular}{lllcll} \hline
&  $G$ & $M$ & Collection & $\l$ & Conditions \\ \hline
(i) & $A_n$ & $C_m$ & $\C_6$ & $k\l_1$, $k \geqs 2$ & $n=2m-1$, $m \geqs 2$, $p \neq 2$ \\ 
(ii) & & $C_m$ & $\C_6$ & See Remark \ref{r:geom}(a) & $n=2m-1$, $m \geqs 2$ \\
(iii) & & $B_m$ & $\C_6$ & $\l_k$, $1<k<m$ & $n=2m$, $m \geqs 3$, $p \neq 2$ \\
(iv) & & $B_m$ & $\C_6$ & $\l_m$ & $n=2m$, $m \geqs 2$, $p \neq 2$ \\
(v) & $D_n$ & $B_{m}$ & $\C_1$ & $k\lambda_{n-1}$, $k\l_n$, $k \geqs 2$ & $n=m+1$, $m \geqs 3$, $p \neq 2$ \\
(vi) & & $B_{m}$ & $\C_1$ & See Remark \ref{r:geom}(c) & $n=m+1$, $m \geqs 3$, $p \neq 2$ \\ \hline
\end{tabular}
\caption{$M$ is a connected geometric subgroup}
\label{tab:geom1}
\end{center}
\end{table} 
\renewcommand{\arraystretch}{1}

\renewcommand{\arraystretch}{1.1}
\begin{table}
\begin{center}
\begin{tabular}{lllcll} \hline
& $G$ & $M$ & Collection & $\l$ & Conditions \\ \hline
(i) & $A_n$ & $N_G(T)$ & $\C_2$ & $\l_k$, $1<k<n$ &  \\
(ii) & & $A_{m}^2.2$ & $\C_4(ii)$ & $\l_2,\l_{n-1}$ & $n=m(m+2)$, $p \neq 2$, $m \geqs 2$ \\
(iii) & & $D_m.2$ & $\C_6$ & $\l_k$, $1<k<n$ & $n=2m-1$, $p \neq 2$ \\
(iv) & $B_n$ & $D_n.2$ & $\C_1$ & See Remark \ref{r:geom}(e) & \\
(v) &  & $D_n.2$ & $\C_1$ & $\sum_{i=1}^{n-1}a_i\l_i$ & $p=2$ \\  
(vi) & $C_n$ & $C_m^2.2$ & $\C_2$ & See Remark \ref{r:geom}(f) & $n=2m$ \\
(vii) & & $D_n.2$ & $\C_6$ & $\sum_{i=1}^{n-1}a_i\l_i$ & $p=2$ \\
(viii) & $D_n$ & $(D_m^2.2).2$ & $\C_2$ & $\l_1+\l_{n-1}, \l_{1}+\l_n$ & $n=2m$, $m \geqs 3$ odd, $p=2$ \\
& & & & & \\
(ix) & $B_4$ & $B_1^2.2$ & $\C_4(ii)$ & $\l_4$ & $p \neq 3$ \\
(x) & $C_4$ & $C_1^3.{\rm Sym}_3$ & $\C_4(ii)$ & $\l_2, \l_3$ & $p \neq 2$ ($p \neq 2,3$ if $\l=\l_3$)\\
(xi) & $D_4$ & $C_1^3.{\rm Sym}_3$ & $\C_4(ii)$ & $\l_1+\l_4, \l_3+\l_4$ & $p = 2$ \\
(xii) & $D_8$ & $C_1^4.{\rm Sym}_4$ & $\C_4(ii)$ & $\l_7$ & $p \neq 3$ \\
(xiii) & & $C_2^2.2$ & $\C_4(ii)$ & $\l_7$ & $p \neq 5$ \\ \hline
\end{tabular}
\caption{$M$ is a disconnected geometric subgroup}
\label{tab:geom2}
\end{center}
\end{table} 
\renewcommand{\arraystretch}{1}

\begin{remk}\label{r:geom}
Let us make a few comments on the cases in Tables \ref{tab:geom1} and \ref{tab:geom2}:
\begin{itemize}\addtolength{\itemsep}{0.2\baselineskip}
\item[(a)] In case (ii) in Table \ref{tab:geom1} we have $G=A_n$ and $M=C_m$, where $n=2m-1$ and $m \geqs 2$. Moreover, $\l=a\l_k+b\l_{k+1}$, where $1 \leqs k < m$, $a+b=p-1>1$ and $a \neq 0$ if $k=m-1$. In particular, $p \neq 2$.

\item[(b)] Note that $H$ is a decomposition subgroup in case (v) of Table \ref{tab:geom1}, so we may assume that $k \geqs 2$ (if $k=1$ then $V$ is a spin module).

\item[(c)] In case (vi) in Table \ref{tab:geom1} we have $G=D_n$, $M=B_{m}$ (with $n=m+1 \geqs 4$, $p \neq 2$) and $\l=b\l_k+a\l_{n-1}$, where $1 \leqs k < n-1$, $ab \neq 0$ and $a+b+n-1-k \equiv 0 \imod{p}$.

\item[(d)] In Table \ref{tab:geom1}, following \cite[Table 1]{Seitz2}, for $G$ of type $A_n$ we record the highest weight $\l$ up to conjugacy by a graph automorphism.

\item[(e)] Consider case (iv) in Table \ref{tab:geom2}, where $G=B_n$ and $M=D_n.2$ is a $\C_1$-subgroup. Here $p \neq 2$ and the conditions on the highest weight $\l=\sum_{i=1}^{n}a_i\l_i$ are given in part (b) of Remark \ref{r:main4}(v). Since $H$ is a decomposition subgroup, we may assume that $a_i \neq 0$ for some $i<n$.

\item[(f)] In case (vi) in Table \ref{tab:geom2} we have $G=C_n$ and $M=C_m^2.2$ is a $\C_2$-subgroup, where $n=2m$. Moreover, $\l=\l_{n-1}+a\l_n$, where $0 \leqs a<p$ and $2a+3 \equiv 0 \imod{p}$. In particular, $p \ne 2$.

\item[(g)] In cases (xi), (xii) and (xiii) we record $M$ and $V$ up to ${\rm Aut}(G)$-conjugacy. For instance, in case (xi), if $\tilde{M}$ denotes the image of $M$ under an appropriate triality graph automorphism of $G$ then $(G,\tilde{M},V_G(\l_1+\l_3))$ is an irreducible triple.
\end{itemize}
\end{remk} 

The main result of this section is the following.

\begin{thm}\label{t:geom0}
Let $G$, $H$ and $V$ be given as in Hypothesis \ref{h:our}, and assume $H<M<G$ where $M$ is a connected geometric maximal subgroup of $G$. Then $V|_{H}$ is reducible.
\end{thm}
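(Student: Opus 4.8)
Assume, for a contradiction, that $V|_{H}$ is irreducible. Since $H<M$ this forces $V|_{M}$ to be irreducible as well, so the triple $(G,M,V)$ is one of the six cases recorded in Table \ref{tab:geom1}. In every one of these cases $M$ is a \emph{simple} algebraic group of type $B_m$ or $C_m$ and $p\neq 2$, and using the restrictions $\l|_{M}$ recorded in Table \ref{t:maint3} together with Lemma \ref{l:ten} one checks that $V|_{M}$ is a nontrivial, $p$-restricted, tensor indecomposable $KM$-module which is not ${\rm Aut}(M)$-conjugate to the natural module for $M$ or to its dual. Explicitly, up to ${\rm Aut}(M)$-conjugacy the module $V|_{M}$ is one of $V_{C_m}(k\o_1)$ with $k\geqs 2$, $V_{C_m}(a\o_k+b\o_{k+1})$ with $a+b=p-1$, $V_{B_m}(\o_k)$ with $1<k<m$, $V_{B_m}(k\o_m)$ with $k\geqs 2$, or $V_{B_m}(a\o_k+b\o_m)$ with $ab\neq 0$. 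In particular $(M,-,V|_{M})$ satisfies the hypotheses imposed on $(G,-,V)$ in Hypothesis \ref{h:our} (with $M$ in the role of $G$).

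Next I would pass to a maximal overgroup of $H$ inside $M$. Since $H$ is a proper closed positive-dimensional subgroup of $M$, we may (appealing to Theorem \ref{t:ls}, applied to $H\leqs M$, and the classification of maximal closed subgroups of a simple algebraic group) choose a maximal closed subgroup $M_1$ of $M$ with $H\leqs M_1<M$; as $\dim M_1\geqs\dim H>0$, the subgroup $M_1$ is positive-dimensional. Now $V|_{M_1}$ is irreducible, so by the argument of Lemma \ref{l:hred} the subgroup $M_1$ normalizes no nontrivial connected unipotent subgroup of $M$; hence $R_u(M_1)=1$ and $M_1$ is reductive (in particular, non-parabolic). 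Thus $(M,M_1,V|_{M})$ is an irreducible triple in which $M$ is a simple classical group and $M_1$ is a maximal positive-dimensional closed subgroup, possibly disconnected.

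The heart of the proof is then to eliminate all such triples. If $M_1$ is connected we apply \cite[Theorem 1]{Seitz2}, and if $M_1$ is disconnected we apply the main theorems of \cite{BGMT,BGT}. Running through cases (i)--(vi) of Table \ref{tab:geom1} in turn, I would verify in each that the module $V|_{M}$ identified above does not occur in any of these classifications for the relevant group $M$: for the bulk of the configurations this is immediate from the shape of the highest weight (none of $k\o_1$ with $k\geqs 2$, $a\o_k+b\o_{k+1}$, $\o_k$ with $1<k<m$, $k\o_m$ with $k\geqs 2$, or $a\o_k+b\o_m$ with $ab\neq 0$ appears as the highest weight of the large module in the Seitz or BGMT/BGT tables for the corresponding type). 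The remaining borderline configurations occur only for small rank, where those tables carry sporadic entries; these are dispatched by a dimension count, using that the dimension of the $KM_1^0$-composition factor of $V|_{M}$ afforded by the highest weight of an irreducible constituent must divide $\dim V$, supplemented where necessary by an explicit computation of the restrictions of $T$-weights to a maximal torus of $M_1$, exactly as in the analysis of Propositions \ref{p:dn2_1} and \ref{p:dn2_2}. Since in every case $(M,M_1,V|_{M})$ fails to be an admissible triple, we contradict the irreducibility of $V|_{M_1}$, and therefore $V|_{H}$ is reducible.

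The step I expect to be the main obstacle is this last one: carrying out the elimination uniformly across the five module families and over all maximal positive-dimensional subgroups $M_1<M$, and in particular being careful with the low-rank exceptions in the Seitz and BGMT/BGT classifications and confirming that the dimension and weight arguments really do rule each of them out.
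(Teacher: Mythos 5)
Your outline matches the paper's proof: assuming $V|_{H}$ is irreducible, you pass to a maximal closed subgroup $J$ of $M$ containing $H$ and compare the resulting triple $(M,J,V|_{M})$ with the classifications in \cite{Seitz2,BGMT,BGT}. The gap lies in how you propose to eliminate the candidates this produces. Your claim that none of the five highest-weight shapes occurs in those tables, so that only small-rank sporadic entries remain and these can be dispatched by dimension counts or weight-restriction computations, fails precisely in the case needing the most care, namely case (vi) of Table \ref{tab:geom1} ($M=B_{n-1}$, $V|_{M}=V_{M}(b\eta_k+a\eta_{n-1})$ with $ab\neq 0$). Two entries of the tables do match this shape: the case labelled III$_1'$ in \cite[Table 1]{Seitz2}, where $J=G_2<B_3$ acts irreducibly on $V_{B_3}(b\eta_2+a\eta_3)$ whenever $a+2b+2\equiv 0 \imod{p}$, and Ford's configuration $J=D_{n-1}.2<B_{n-1}$ (the case labelled ${\rm U}_2$ in \cite[Table II]{Ford1}, cf.\ Remark \ref{r:main4}(v)(b)), which occurs in every rank — so it is neither sporadic nor small rank — and is irreducible exactly on weights of this shape with spin coefficient $1$, subject to a congruence modulo $p$ on the coefficients. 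Neither candidate can be excluded by a dimension count or by computing restrictions of $T$-weights, because for suitable weights these genuinely are irreducible triples.

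What actually eliminates them is arithmetic: membership of $(G,M,V)$ in Table \ref{tab:geom1} forces the congruence $a+b+n-1-k\equiv 0 \imod{p}$ of \eqref{e:cond}, and the paper shows this is incompatible with the congruence attached to the candidate triple $(M,J,V|_{M})$, using that $\l$ is $p$-restricted (for $G_2<B_3$ the two congruences give $p\mid b+1$ and hence $p\mid a$, which is impossible; for $D_{n-1}.2$ one gets a similar clash after forcing $a=1$). A comparison of the same kind, rather than a dimension argument, is also what disposes of the matching entry $C_{m/2}^2.2<C_m$ in your family (ii) when $k=m-1$ and $m$ is even, where $a=1$ and $a+b=p-1$ force $2b+3\equiv -1 \imod{p}$. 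Without this congruence-clash step the candidates $G_2<B_3$ and $D_{n-1}.2<B_{n-1}$ survive your filters, and the proof does not close.
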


\begin{proof}
Write $V=V_G(\l)$. The possibilities for $(G,M,V)$ are given in Table \ref{tab:geom1}; in each case $M$ is a simple group of rank $m$. Let $\{\eta_1, \ldots, \eta_m\}$ be a set of fundamental dominant weights for $M$. Seeking a contradiction, let us assume that $V|_{H}$ is irreducible.

First consider case (i) in Table \ref{tab:geom1} (this is the case labelled I$_1$ in \cite[Table 1]{Seitz2}). Here $G=A_{n}$ and $M=C_m$ is a $\C_6$-subgroup of $G$, where $n=2m-1$, $m \geqs 2$ and $p \neq 2$ (this is the natural embedding ${\rm Sp}(W) < {\rm SL}(W)$). Note that $\l=k\l_1$, $k \geqs 2$ and $V|_{M} = V_M(k\eta_1)$ (see \cite[Table 1]{Seitz2}). Let $J$ be a maximal subgroup of $M$ containing $H$, so
$$H \leqs J <M<G.$$
We consider the irreducible triple $(M,J,V_M(k\eta_1))$. If $V|_{J^0}$ is irreducible then the triple  $(M,J^0,V_M(k\eta_1))$ has to be in \cite[Table 1]{Seitz2}, but it is easy to check that  there are no compatible examples. Therefore $J$ is disconnected and  $V|_{J^0}$  is reducible. In this situation, $(M,J,V_M(k\eta_1))$ must be one of the triples arising in the main theorems of \cite{BGMT,BGT}, but once again we find that there are no such triples. We conclude that $V|_{H}$ is reducible in case (i).

The other cases in Table \ref{tab:geom1} are very similar, although some extra care is required in case (vi). Here $G=D_n$, $M=B_{n-1}$ is a $\C_1$-subgroup (we can view $M$ as the stabilizer in $G$ of a $1$-dimensional 
non-degenerate subspace of $W$), and $\l=b\l_k+a\l_{n-1}$ satisfies the following conditions:
\begin{equation}\label{e:cond}
1 \leqs k < n-1, \;\; a,b \neq 0, \;\; a+b+n-1-k \equiv 0 \imod{p}
\end{equation}
(see case IV$_1'$ in \cite[Table 1]{Seitz2}). We note that $V|_{M} = V_M(b\eta_k+a\eta_{n-1})$. As before, let $J$ be a maximal subgroup of $M$ containing $H$, and consider the irreducible triple $(M,J,V_M(b\eta_k+a\eta_{n-1}))$. 

If $V|_{J^0}$ is irreducible then by inspecting \cite[Table 1]{Seitz2} we see that the only possibility is the case labelled III$_1'$, where $M=B_3$ (so $n=4$), $J=G_2$, $k=2$ and $a+2b+2 \equiv 0 \imod{p}$.  By \eqref{e:cond}, we also have $a+b+1 \equiv 0 \imod{p}$, so $p$ divides $b+1$, and thus $p$ divides $a$, which is a contradiction since the highest weight $\l = b\l_k+a\l_{n-1}$ is $p$-restricted. Therefore $J$ is disconnected and $V|_{J^0}$ is reducible. We are now in a position to apply the main theorems in \cite{BGMT,BGT}. We deduce that the only possibility is the configuration found by Ford, with $J=D_{n-1}.2$ (see the case labelled ${\rm U}_2$ in \cite[Table II]{Ford1}; also see Remark \ref{r:geom}(d)). Since the highest weight of $V|_{M}$ is $b\eta_k+a\eta_{n-1}$, we must have $a=1$ and $2a \equiv -2(n-1-k)-1 \imod{p}$. But it is easy to see that this congruence condition is incompatible with the congruence condition in \eqref{e:cond}.
\end{proof}

\section{Geometric subgroups: The disconnected case}\label{s:geom2}

The main result of this section is the following, which completes the proof of Theorem \ref{t:main} when $H$ is contained in a maximal geometric subgroup of $G$.

\begin{thm}\label{t:geom1}
Let $G$, $H$ and $V=V_G(\l)$ be given as in the statement of Theorem \ref{t:main}, and assume that $H<M<G$ where $M$ is a disconnected geometric maximal subgroup of $G$. Then $V|_{H}$ is irreducible if and only if $(G,H,V)$ is one of the cases recorded in Table \ref{tab:geom}. 
\end{thm}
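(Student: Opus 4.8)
The plan is to proceed case by case through the possibilities for the disconnected geometric maximal subgroup $M$, using the lists in Table \ref{tab:geom2} together with Remark \ref{r:geom}, and for each such triple $(G,M,V)$ to analyze the closed positive-dimensional subgroups $H<M$ for which $V|_H$ remains irreducible. The starting point is that $V|_H$ irreducible forces $V|_M$ irreducible, so $(G,M,V)$ must already appear in Table \ref{tab:geom2}; we then work inside $M$. Since $M$ is disconnected, we either have $V|_{M^0}$ irreducible, in which case $H$ must contain (a conjugate of) $M^0$ and we are in the Seitz-list situation described after Theorem \ref{t:main}, or $V|_{M^0}$ is reducible, in which case Clifford theory (Section \ref{ss:irred}, Remark \ref{r:semisimple}) tells us that the $KJ$-composition factors of $V|_J$, with $J=[M^0,M^0]$, are transitively permuted by $M/M^0$, and the same transitivity must already hold for $H/H^0$ acting on the $KH^0$-composition factors. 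This severely constrains $H$: the image of $H$ in the permutation action on the simple factors of $J$ (or on the blocks of the orthogonal/tensor decomposition) must still act transitively with the appropriate multiple-transitivity, and $H^0$ must project onto the relevant factors.

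The key steps, in order: (1) reduce to $V|_{M^0}$ reducible, handling the irreducible case by citing \cite{Seitz2} and checking that no new triples arise beyond part (b) of Theorem \ref{t:main}; (2) for each row of Table \ref{tab:geom2}, record the structure of $M^0$, the socle series / composition factors of $V|_{M^0}$, and the permutation action of $M/M^0$; (3) for a prospective $H$, write $H^0$ as a connected subgroup of $M^0$ and apply Proposition \ref{p:useful} (and Lemmas \ref{l:basecase}, \ref{l:isog}) to control subdirect products and diagonal embeddings in the cases where $M^0$ is a product of isogenous simple factors (this is exactly what happens in the $C_1^t.X$ rows, cases (x)--(xiii), and the $C_m^2.2$, $(D_m^2.2).2$ rows); (4) in each case determine which $H^0$ still have the property that the composition factors of $V|_{H^0}$ are $(H/H^0)$-conjugate — typically $H^0=M^0$ itself, or a maximal torus (giving the $T_n.X$ rows), or a diagonal subgroup — and read off the transitivity condition on $X$; (5) cross-check against the parenthetical numerical conditions (e.g. $\dim$ of a composition factor must divide $\dim V$) to eliminate stray possibilities, exactly as in the proofs of Propositions \ref{p:dn2_1} and \ref{p:dn2_2}; (6) verify, for each surviving $(G,H,V)$, that $V|_H$ is genuinely irreducible (the "if" direction), by exhibiting the permutation action realizing the transitivity and invoking Clifford theory in reverse.

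Several rows require bespoke arguments rather than the generic machinery. The $A_n$ cases (i), (ii), (iii) of Table \ref{tab:geom2} feed into row (i) of Table \ref{tab:main}: here $M$ is $N_G(T)$, $A_m^2.2$, or $D_m.2$, and for the $N_G(T)$ row one shows $H$ must contain $T$ and $H/T\le{\rm Sym}_{n+1}$ must be $\min\{k,n+1-k\}$-transitive, using that the $T$-weights of $V_G(\l_k)$ are a single $W$-orbit permuted like $k$-subsets of an $(n+1)$-set. The $D_8$ rows (xii)--(xiii) with $\l=\l_7$ a half-spin module, and the $C_4$/$B_4$/$D_4$ rows with triality, need the explicit description of the spin/tensor restriction recorded in Remark \ref{r:main}, together with the $p\ne 3$ (resp. $p\ne 5$, $p\ne 2,3$) conditions coming from when the tensor product $\o_1\otimes\o_1\otimes\cdots$ is irreducible or has the right composition factors (Lemma \ref{l:ten}). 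I expect the main obstacle to be the bookkeeping in the cases where $M^0$ is a product of several small classical factors and $H$ can sit diagonally across subsets of them: one must enumerate all intermediate subgroups $H^0$ via Proposition \ref{p:useful}, compute $\l|_{H^0}$ and the resulting composition factors for each, and check the Clifford-conjugacy condition — this is where a prospective $H$ most easily slips through or gets wrongly excluded, and where the delicate $p$-divisibility and congruence conditions in Table \ref{tab:geom} (equivalently Table \ref{tab:main}) are pinned down.
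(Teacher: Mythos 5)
Your overall strategy -- working row by row through Table \ref{tab:geom2}, using Clifford theory to force the composition factors of $V|_{H^0}$ to be $H$-conjugate, controlling $H^0$ inside product-type $M^0$ via subdirect products and diagonal embeddings (Proposition \ref{p:useful}, Lemmas \ref{l:basecase} and \ref{l:steinberg}), ruling out homogeneity with Lemma \ref{l:ten}, and reducing the $N_G(T)$ row to a multiple-transitivity condition -- is essentially the route the paper takes. However, your step (1) contains a genuine error that would derail precisely the hardest cases. It is not true that $V|_{M^0}$ irreducible forces $H$ to contain a conjugate of $M^0$: irreducibility of $V|_{M^0}$ says nothing about which smaller subgroups of $M$ (or of $M^0$) still act irreducibly. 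The cases (iii)(b),(c), (iv) and, crucially, (vii) of Table \ref{tab:geom2} are exactly of this shape, and in case (vii) the surviving subgroups are $H=C_1^3.Z_3$ and $C_1^3.{\rm Sym}_3$ sitting far below $M^0=D_4$ inside $M=D_4.2<C_4$ with $p=2$; these give the $C_4$ (and, via isogeny, $B_4$) rows of Table \ref{tab:geom}, which your reduction would miss. Conversely, in cases (iii)(b),(c) and (iv) one must prove that \emph{no} proper $H$ works, and this cannot be done by citing \cite{Seitz2} for $(G,M^0,V)$; it requires descending through maximal subgroups $J$ with $H\leqs J<M^0$ or $H\leqs J<M={\rm GO}(W)=D_n.2$ and classifying the resulting irreducible triples at the next level down.

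That descent is where the paper invests most of its machinery and where your plan has no substitute: when $H\not\leqs M^0$ and $M=D_n.2$, one needs the list of positive-dimensional non-parabolic geometric subgroups of ${\rm GO}(W)$ not contained in ${\rm SO}(W)$ (Proposition \ref{p:dn2} and Table \ref{tab:dn2}), the reducibility statements for the relevant highest weights on each of them (Propositions \ref{p:dn2_1} and \ref{p:dn2_2} -- which you cite only as a numerical cross-check, not as the tool that kills these configurations), and an appeal to \cite[Theorem 3]{BGMT} to dispose of non-geometric subgroups of $D_n.2$; in case (iv) the elimination ultimately rests on an incompatibility between Ford's congruence conditions and those of case IV$_1'$ of \cite[Table 1]{Seitz2}, not on a transitivity count. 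A smaller but real gap of the same kind occurs in your treatment of the $N_G(T)$ row: you assert "$H$ must contain $T$" but give no mechanism; the paper needs a separate argument (regular-torus reduction via a $\C_2$-overgroup, one-dimensionality of the $S$-weight spaces on $V$ by a lexicographic-ordering argument, and the fact that the stabilizer $\mathcal{W}(S)$ of $S^{\perp}_{\mathbb{R}}$ normalizes a parabolic subgroup of ${\rm Sym}_{n+1}$ and hence is not $2$-transitive) to exclude the possibility that $H^0$ is a proper subtorus of $T$. Without these ingredients the "only if" half of the theorem is not established.
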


\renewcommand{\arraystretch}{1.1}
\begin{table}
\begin{center}
$$\begin{array}{llll} \hline
G & H & \l & \mbox{Conditions} \\ \hline
A_n & T.X & \l_k,\, 1<k<n & \mbox{$X < {\rm Sym}_{n+1}$ is $\ell$-transitive, $\ell = \min\{k,n+1-k\}$} \\
B_4 & B_1^3.X & \l_3 & \mbox{$p=2$, $X=Z_3$ or ${\rm Sym}_3$} \\
C_4 & C_1^3.X & \l_3 & \mbox{$p=2$, $X=Z_3$ or ${\rm Sym}_3$} \\
C_4 & C_1^3.Z_3 & \l_2,\, \l_3 & p \neq 2 \;\; \mbox{($p \neq 2,3$ if $\l=\l_3$)} \\ 
D_4 & C_1^3.Z_3 & \l_1+\l_4,\, \l_3+\l_4 & p=2 \\ 
D_8 & C_1^4.X & \l_7 & \mbox{$p \neq 3$, $X<{\rm Sym}_4$ is transitive} \\ \hline
\end{array}$$
\caption{The irreducible triples $(G,H,V)$ in Theorem \ref{t:geom1}}
\label{tab:geom}
\end{center}
\end{table}
\renewcommand{\arraystretch}{1}

\begin{remk}\label{r:case7}
Suppose that $H<M<G$, where $(G,M,V)$ is the case labelled (vii) in Table \ref{tab:geom2}, so $G=C_n$, $M=D_n.2$ and $p=2$. In Proposition \ref{p:geom6} we deduce that $V|_H$ is irreducible if and only if $(n,p)=(4,2)$, $\l=\l_3$ and $H=C_1^3.X$ with $X=Z_3$ or ${\rm Sym}_3$, and so this establishes Theorem \ref{t:geom1} in this situation. The result for case (vii) will be obtained from the result for (v) via an isogeny. That is, $V|_{H}$ is irreducible if and only if $(n,p)=(4,2)$, $\l=\l_3$ and $H=B_1^3.Z_3$ or $B_1^3.{\rm Sym}_3$. Therefore, for the remainder of this section we will exclude case (v) in Table \ref{tab:geom2} from our analysis.
\end{remk}

We begin with a couple of preliminary lemmas. Our first result will play an important role in the analysis of cases (vii), (x), (xi) and (xii) in Table \ref{tab:geom2}.

\begin{lem}\label{l:tens}
Let $G$ be a simple classical algebraic group with natural module $W$, and let $H$ be a closed positive-dimensional subgroup of $G$ such that $W|_{H^0}$ is reducible. Then there exists a geometric maximal subgroup $M$ of $G$ such that
\begin{itemize}
\item[{\rm (i)}] $H \leqs M$, and
\item[{\rm (ii)}] $M$ does not normalize any decomposition of the form
$W = W_1 \otimes \cdots \otimes W_t$, where $t \geqs 3$ and the $W_i$ are equidimensional.
\end{itemize}
\end{lem}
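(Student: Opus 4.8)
The strategy is to apply the Liebeck--Seitz reduction theorem (Theorem \ref{t:ls}) to the subgroup $H$. Since $W|_{H^0}$ is reducible, $H^0$ (hence $H$) acts reducibly on $W$, so $H$ cannot satisfy alternative (ii) of Theorem \ref{t:ls}: indeed in case (ii) the quasisimple subgroup $E(H)=H^0$ is irreducible on $W$, which fails here. Therefore $H$ is contained in a member of the geometric collection $\C = \bigcup_i \C_i$. Discarding the finite subgroups in $\C_5$ (which is permissible since $H$ is positive-dimensional, though strictly one only needs that $H$ is contained in \emph{some} geometric member, and one can then enlarge to a geometric \emph{maximal} subgroup $M$), we obtain a geometric maximal subgroup $M$ with $H \leqs M$. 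This gives (i). The remaining work is to show that, among the geometric maximal overgroups of $H$, one can be chosen satisfying condition (ii) --- i.e. avoiding the ``fully equidimensional tensor'' subgroups in $\C_4(ii)$ with $t \geqs 3$ equal-dimensional factors.

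The key point for (ii) is that these problematic subgroups have a very rigid structure: a $\C_4(ii)$-subgroup $M$ normalizing $W = W_1 \otimes \cdots \otimes W_t$ with $\dim W_i = a$ for all $i$ has $M^0 = X_1 \cdots X_t$ a central product of $t$ isomorphic simple groups, each of rank roughly $a$, permuted transitively by $M/M^0$ acting through a transitive subgroup of ${\rm Sym}_t$. I would argue that if the only geometric maximal subgroup containing $H$ were of this type, then $H^0$ would have to project nontrivially onto (and in fact surject onto, after taking closures of projections) each of the $t$ tensor factors --- otherwise $H$ would stabilise a coarser geometric structure (a subspace, or a tensor decomposition with fewer factors, or a $\C_2$-type orthogonal/direct-sum decomposition), giving a different geometric overgroup. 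One then uses the fact that $W|_{H^0}$ is reducible: if $H^0$ has the same orbit structure on the tensor factors as $M^0$ and acts on $W = \bigotimes W_i$ as a genuine tensor product (each factor faithful and irreducible), Steinberg's tensor product theorem forces $W|_{H^0}$ to be \emph{irreducible}, contradicting the hypothesis. Hence the hypothesis $W|_{H^0}$ reducible is precisely what rules out this degenerate configuration, and a different geometric overgroup --- typically a $\C_1$-subspace stabiliser, or a $\C_2$- or $\C_3$-subgroup, or a $\C_4(ii)$-subgroup with fewer factors --- must contain $H$.

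I expect the main obstacle to be the bookkeeping needed to show that \emph{whenever} $H$ is forced into an equidimensional $\C_4(ii)$-subgroup, the reducibility of $W|_{H^0}$ produces an alternative non-equidimensional or lower-rank geometric overgroup; in particular one must handle the case where $H^0$ already surjects onto each tensor factor but $H$ nonetheless acts reducibly (which, by the tensor product theorem argument, cannot happen unless some projection is non-surjective, reducible on its factor, or trivial --- each of which yields a coarser invariant structure, hence a different geometric maximal overgroup). A clean way to organise this is: take the \emph{finest} $H$-invariant tensor decomposition $W = \bigotimes_{j} U_j$ refining any $H$-invariant direct-sum or subspace structure; if all the $U_j$ have equal dimension and $t \geqs 3$, derive a contradiction with the reducibility of $W|_{H^0}$ via Steinberg; otherwise read off $M$ directly as the stabiliser of the relevant (coarser or unequal) structure. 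One should also note the routine point that passing from ``$H$ lies in a geometric member of $\C$'' to ``$H$ lies in a geometric \emph{maximal} subgroup'' only enlarges the subgroup and cannot destroy property (ii) once the finest invariant decomposition has been used to select the structure being stabilised.
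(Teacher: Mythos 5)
Your part (i) is fine (modulo the slip ``$H^0$ reducible hence $H$ reducible on $W$'', which is false in general but not needed: one only needs that case (ii) of Theorem \ref{t:ls} requires $E(H)=H^0$ to be irreducible on $W$). The genuine gap is in your argument for (ii). The pivotal claim -- that if $H^0$ surjects onto each tensor factor and acts as a tensor product of faithful irreducible representations, then Steinberg's tensor product theorem forces $W|_{H^0}$ to be irreducible -- is wrong. Steinberg's theorem gives irreducibility only when the factors are restricted modules twisted by \emph{pairwise distinct} Frobenius morphisms; an untwisted diagonal embedding violates this. Concretely, take $p>3$, $G={\rm Sp}_{8}(K)$, $M=C_1^3.{\rm Sym}_3$ the equidimensional $\C_4(ii)$-subgroup, and $H=H^0\cong {\rm SL}_{2}(K)$ embedded diagonally in $M^0$: every projection is surjective, yet $W|_{H^0}=U\otimes U\otimes U$ is reducible. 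So the configuration you dismiss as impossible (``cannot happen unless some projection is non-surjective, reducible or trivial'') is exactly the delicate case, and your proposed contradiction never materialises; likewise the ``finest $H$-invariant tensor decomposition refining any subspace structure'' is not a well-defined object to induct on.

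What actually closes this case (and is the content of the proofs of Lemmas 3.1--3.3 of \cite{LS}, which is all the paper's proof consists of citing) is a Clifford-theoretic argument on $W$ rather than a Steinberg argument: if $W|_{H}$ is reducible, then $H$ stabilizes a totally singular or non-degenerate subspace and lies in a $\C_1$-subgroup; if $W|_{H}$ is irreducible but $W|_{H^0}$ is reducible, then $H$ permutes the homogeneous components of $W|_{H^0}$, giving either an $H$-invariant direct sum decomposition (so $H$ lies in a $\C_2$- or $\C_3$-subgroup) or, when $W|_{H^0}$ is homogeneous with multiplicity $m\geqs 2$, a two-factor tensor decomposition $W\cong U\otimes K^m$ (so $H$ lies in a $\C_4$-subgroup with $t=2$). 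One then checks that each of these maximal overgroups, containing a full classical group on each constituent, cannot normalize an equidimensional tensor decomposition with $t\geqs 3$ factors, which is what condition (ii) requires. In the example above this is visible directly: the diagonal ${\rm SL}_{2}(K)$ lies in the $\C_1$-subgroup ${\rm Sp}_{4}(K)\times {\rm Sp}_{4}(K)$. Your sketch is missing this Clifford step, and the step you substitute for it does not hold.
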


\begin{proof}
This follows from the proof of \cite[Theorem 1$'$]{LS}. In particular, we refer the reader to the proofs of Lemmas 3.1, 3.2 and 3.3 in \cite{LS}.
\end{proof}

We will also need the following lemma when dealing with cases (ii) and (viii) in Table \ref{tab:geom2}. In the statement, $G=A_n$ or $D_n$, and $\gamma$ is an involutory graph automorphism of $G$, which induces a natural action on the weight lattice of $G$.

\begin{lem}\label{l:mu}
Let $G=A_n$ or $D_n$, and set $\mu_2 = \gamma(\mu_1)$, where $\mu_1=\sum_{i}a_i\l_i$ is a $T$-weight of $G$ and $\gamma$ is an involutory graph automorphism of $G$. If we write $\mu_2-\mu_1 = \sum_{i}c_i\a_i$, then each $c_i$ is non-negative if and only if $\mu_1 = \mu_2$.
\end{lem}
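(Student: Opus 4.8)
The claim is symmetric in nature: one direction is trivial (if $\mu_1=\mu_2$ then $c_i=0$ for all $i$), so the content is the forward implication. The plan is to analyse the structure of $\mu_2-\mu_1$ using the explicit action of the involutory graph automorphism $\gamma$ on the simple roots, and to exploit the fact that $\gamma$ permutes the set of \emph{simple} roots (it sends $\a_i$ to $\a_{\sigma(i)}$ for the appropriate diagram automorphism $\sigma$ of order $\leqs 2$). First I would write $\mu_1=\sum_i a_i\l_i$ and compute $\mu_2 = \gamma(\mu_1) = \sum_i a_i \l_{\sigma(i)} = \sum_i a_{\sigma(i)}\l_i$, so that $\mu_2-\mu_1 = \sum_i (a_{\sigma(i)}-a_i)\l_i$. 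Expressing the fundamental weights in terms of the simple roots via the inverse Cartan matrix, one gets $\mu_2-\mu_1 = \sum_i c_i \a_i$ with the $c_i$ determined linearly by the differences $d_i := a_{\sigma(i)}-a_i$.

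The key observation is that $\mu_2-\mu_1$ is itself fixed up to sign by $\gamma$: applying $\gamma$ swaps $\mu_1$ and $\mu_2$, hence $\gamma(\mu_2-\mu_1) = \mu_1-\mu_2 = -(\mu_2-\mu_1)$. In terms of the root coordinates this says $c_{\sigma(i)} = -c_i$ for all $i$; in particular $c_i = 0$ whenever $\sigma(i)=i$, and the $c_i$ come in pairs of opposite sign on the orbits of $\sigma$ of size $2$. Now suppose, for contradiction, that $\mu_1\ne\mu_2$ but every $c_i\geqs 0$. Combined with $c_{\sigma(i)}=-c_i\geqs 0$ this forces $c_i\leqs 0$ as well, hence $c_i=0$ for all $i$, i.e. $\mu_2-\mu_1=0$, contradicting $\mu_1\ne\mu_2$. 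Thus $c_i\geqs 0$ for all $i$ already implies $\mu_1=\mu_2$, which is exactly the nontrivial direction.

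The main point requiring care — and the step I expect to be the only real obstacle — is justifying the coordinate identity $\gamma(\sum c_i\a_i) = \sum c_i\a_{\sigma(i)} = \sum c_{\sigma(i)}\a_i$, i.e. checking that $\gamma$ genuinely acts on the root lattice by permuting the simple roots according to $\sigma$. For $A_n$ the graph automorphism sends $\a_i\mapsto\a_{n+1-i}$, and for $D_n$ it swaps $\a_{n-1}\leftrightarrow\a_n$ and fixes $\a_i$ for $i\leqs n-2$; in both cases this is the standard description of the (non-inner) diagram automorphism, and it is an isometry of the weight lattice carrying $\l_i$ to $\l_{\sigma(i)}$. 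One should also note that the argument is purely about the $\Z$-lattice decomposition $\mu = \nu - \sum c_i\a_i$ and does not use any positivity of the $a_i$ themselves, so it applies to an arbitrary $T$-weight $\mu_1$ as stated. With that identification in hand, the rest is the short sign argument above, so the proof is essentially immediate once the action of $\gamma$ on the root coordinates is recorded.
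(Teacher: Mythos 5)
Your proof is correct, and it takes a genuinely different route from the paper. The paper proves the nontrivial direction by brute force: for $A_n$ it expands each $\l_i$ in terms of the simple roots via the inverse Cartan matrix, computes the coefficients $c_j$ of $\mu_2-\mu_1$ explicitly, and reads off the identity $c_j+c_{n+1-j}=0$ (together with $c_{\ell+1}=0$ for $n$ odd); for $D_n$ it simply computes $\mu_2-\mu_1=(a_n-a_{n-1})(\l_{n-1}-\l_n)$ and concludes directly. You obtain exactly the same antisymmetry relation $c_{\sigma(i)}=-c_i$, but conceptually: since $\gamma$ is an involution, $\gamma(\mu_2-\mu_1)=-(\mu_2-\mu_1)$, and since $\gamma$ permutes the simple roots according to the diagram symmetry $\sigma$ (which holds for the simply-laced types $A_n$ and $D_n$, as you note), linear independence of the $\a_i$ forces $c_{\sigma(i)}=-c_i$, whence nonnegativity of all $c_i$ gives $c_i=0$. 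What your approach buys is uniformity and brevity — no case division between $A_n$ and $D_n$, no inverse Cartan matrix, and the argument works verbatim for any involutory diagram automorphism permuting the simple roots; what the paper's computation buys is the explicit closed form of the $c_j$, which is not needed elsewhere, so nothing is lost. The one step you flag as requiring care — that $\gamma$ acts on the root and weight lattices by the permutation $\sigma$ of indices — is indeed the standard description of the graph automorphism in these simply-laced types and is exactly what the paper also assumes at the outset, so your proof is complete as planned.
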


\begin{proof}
First assume $G=A_n$. We may assume that $\gamma$ interchanges the fundamental dominant weights $\l_i$ and $\l_{n+1-i}$ ($1 \leqs i \leqs n/2$), so 
$\mu_2= \sum_{i} a_{n+1-i} \lambda_i$. Set $\ell=\lfloor  n/2 \rfloor$. Since
$$\lambda_i=\frac{1}{n+1}\left( \sum_{j=1}^{i-1}j(n+1-i)\alpha_j+\sum_{j=i}^n i(n+1-j)\alpha_j\right)$$
(see \cite[Table 1]{Humphreyslie}, for example) it follows that
\begin{align*}
\mu_2-\mu_1  =  & \; \frac{1}{n+1}\sum_{i=1}^n (a_{n+1-i}-a_i)\left( \sum_{j=1}^{i-1}j(n+1-i)\alpha_j+\sum_{j=i}^n i(n+1-j)\alpha_j\right) \\
= & \; \frac{1}{n+1}\left[\sum_{j=1}^{n}(n+1-j)\alpha_j \sum_{i=1}^j i(a_{n+1-i}-a_i) + \sum_{j=1}^{n-1}j\alpha_j \sum_{i=j+1}^n (n+1-i)(a_{n+1-i}-a_i)\right]
\end{align*}
so for $1\leqs j \leqs \ell$ we have 
$$c_j= \frac{1}{n+1}(n+1-2j)\sum_{i=1}^{j-1} i(a_{n+1-i}-a_i)+\frac{1}{n+1}j\sum_{i=j}^\ell(n+1-2i)(a_{n+1-i}-a_i)$$
and if $\ell+1 \leqs j \leqs n$ then 
$$  c_j= \frac{1}{n+1}(n+1-2j)\sum_{i=1}^{n-j} i(a_{n+1-i}-a_i)+\frac{1}{n+1}(n+1-j)\sum_{i=n+1-j}^\ell(n+1-2i)(a_{i}-a_{n+1-i}).$$ 
We deduce that $c_j+c_{n+1-j}=0$ for all $1 \leqs j \leqs \ell$. In addition, if $n$ is odd then $c_{\ell+1}=0$. The result follows.

The case $G=D_n$ is very similar. Here we may assume that $\gamma$ interchanges $\l_{n-1}$ and $\l_n$, so $\mu_2-\mu_1=(a_n-a_{n-1})(\lambda_{n-1}-\lambda_n)$ and the desired result quickly follows.
\end{proof}

For the remainder of this section, we will assume that $(G,H,V)$ is given as in the statement of Theorem \ref{t:main}, so the conditions in Hypothesis \ref{h:our} are satisfied. We will deal with each of the cases in Table \ref{tab:geom2} in turn, excluding case (v) as explained in Remark \ref{r:case7}. 

\subsection{Proof of Theorem \ref{t:geom1}, Part I}\label{ss:part1}

In this section, we will establish Theorem \ref{t:geom1} in the case where $H<M<G$ and 
$(G,M,V)$ is one of the cases labelled (i), (ii), (iii), (iv), (vi), (viii), (ix) or (xiii) in Table 
\ref{tab:geom2}. The remaining cases, labelled (vii), (x), (xi) and (xii), will be handled in Section \ref{ss:part2}.

\begin{prop}\label{p:geom1}
Suppose $H<M<G$ and $(G,M,V)$ is the case labelled (i) in Table \ref{tab:geom2}, so $G=A_n$, $M=N_G(T)=T.{\rm Sym}_{n+1}$ is the normalizer of a maximal torus $T$ of $G$, and $V=V_G(\l_k)$ with $1<k<n$. Then $V|_{H}$ is irreducible if and only if $H=T.X$ and $X < {\rm Sym}_{n+1}$ is $\ell$-transitive, where $\ell = \min\{k,n+1-k\}$.
\end{prop}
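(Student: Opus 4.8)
The plan is to analyze the restriction of $V = V_G(\l_k)$ to subgroups $H$ with $H < M = T.{\rm Sym}_{n+1}$, using the natural weight-space decomposition of $V$. Since $V|_H$ irreducible forces $H^0$ reductive (Lemma \ref{l:hred}) and forces $V|_{H^0}$ to be a sum of $H$-conjugate irreducibles, the first step is to pin down $H^0$. As $H \leqs M$ and $M^0 = T$ is a maximal torus, $H^0 \leqs T$; but if $H^0$ is a proper subtorus then $V|_{H^0}$ has a trivial composition factor (a weight vanishing on $H^0$) while some nonzero weight does not vanish, contradicting that the composition factors are $H$-conjugate (hence all trivial or all nontrivial — and they cannot all be trivial since $V$ is a nontrivial module with $H/H^0$ finite acting on finitely many weight spaces). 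So $H^0 = T$, and writing $H = T.X$ with $X \leqs {\rm Sym}_{n+1}$, the question becomes purely combinatorial: when does $X$ act transitively on the set of $T$-weights of $V_G(\l_k)$?

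The second step is to identify that weight set. For $G = A_n = {\rm SL}_{n+1}$, the module $V_G(\l_k)$ is the $k$-th exterior power $\bigwedge^k W$ of the natural module (here $1 < k < n$ so $p$-restrictedness and tensor-indecomposability are automatic, and $V \neq W, W^*$ holds), with $T$-weight vectors $e_{i_1} \wedge \cdots \wedge e_{i_k}$ indexed by the $k$-subsets of $\{1,\dots,n+1\}$, each weight space $1$-dimensional, and the Weyl group ${\rm Sym}_{n+1}$ permuting these weight vectors exactly as it permutes the $k$-subsets. Thus $V|_{T.X}$ decomposes as a sum of $1$-dimensional $T$-modules permuted by $X$ according to the action of $X$ on $k$-subsets of $\{1,\dots,n+1\}$; distinct $k$-subsets give distinct (non-$T$-isomorphic) weight vectors. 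Consequently $V|_{T.X}$ is irreducible if and only if $X$ acts transitively on $k$-subsets of $\{1,\dots,n+1\}$ — equivalently, by the standard fact that a permutation group is transitive on $k$-subsets iff it is transitive on $(n+1-k)$-subsets, iff $X$ is $\ell$-homogeneous where $\ell = \min\{k, n+1-k\}$.

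The third step is to upgrade "$\ell$-homogeneous" to "$\ell$-transitive" as stated. Here I would invoke the classical Livingstone–Wagner theorem: for $2 \leqs \ell \leqs m/2$, any $\ell$-homogeneous permutation group of degree $m$ is $\ell$-transitive (indeed $(\ell-1)$-transitive always, and $\ell$-transitive except for a short explicit list in low degree which should be checked not to produce genuinely new triples, or absorbed into the statement). Since $1 < k < n$ gives $2 \leqs \ell$, and $\ell \leqs (n+1)/2$, this applies and yields the clean formulation in the proposition. One should also note for completeness that $X$ must be transitive on $\{1,\dots,n+1\}$ for $V|_H$ even to be a sum of conjugate irreducibles in the Clifford-theoretic sense, which is subsumed by $\ell$-transitivity.

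The main obstacle I anticipate is the passage from homogeneity to transitivity: the combinatorial heart (weights of $\bigwedge^k W$ and the Weyl-group action) is routine, but correctly citing and applying Livingstone–Wagner — and verifying that the handful of $\ell$-homogeneous-but-not-$\ell$-transitive exceptions (e.g. ${\rm AGL}_1(8) \leqs {\rm Sym}_8$ which is $3$-homogeneous, or certain groups in degrees $\leqs 6$) either don't satisfy our hypotheses or are already recorded — requires care. Everything else is a direct unwinding of Clifford theory together with the explicit weight combinatorics, so I would keep that part brief and concentrate the write-up on the homogeneous-versus-transitive dichotomy and on confirming $H^0 = T$.
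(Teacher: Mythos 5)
There is a genuine gap at the very first step, the reduction to $H^0=T$. You claim that if $H^0=S$ is a \emph{proper} subtorus of $T$ then some $T$-weight of $V=\Lambda^k W$ restricts trivially to $S$, so that $V|_{S}$ has a trivial composition factor alongside nontrivial ones, contradicting Clifford theory. This is false: take $G=A_3$, $k=2$ and the one-dimensional subtorus $S=\{{\rm diag}(t,t,t,t^{-3})\}\leq T$; the six weights $\epsilon_i+\epsilon_j$ of $\Lambda^2W$ all restrict to $t\mapsto t^{\pm 2}$ on $S$, so no weight vanishes and your dichotomy gives no contradiction. Ruling out proper subtori is in fact where almost all of the work in the paper's proof lies, and it is not a soft argument: assuming $V|_{N_G(S)}$ irreducible, one first shows the $S$-weight spaces on $W$ are permuted transitively, places $N_G(S)$ in a $\mathcal{C}_2$-subgroup and invokes the main theorem of \cite{BGT} to force these weight spaces to be $1$-dimensional (so $S$ is a regular torus and $N_G(S)\leq N_G(T)$); next a lexicographic-ordering argument shows the $S$-weight spaces on $V$ are also $1$-dimensional, so $\mathcal{W}(S)=N_G(S)/T$ would have to be $k$-transitive; finally, by \cite[Corollary A.29]{MT} the pointwise stabilizer in ${\rm Sym}_{n+1}$ of $S^{\perp}_{\mathbb{R}}$ is a proper parabolic (hence intransitive) subgroup normalized by $\mathcal{W}(S)$, forcing $\mathcal{W}(S)$ to be intransitive or imprimitive, contradicting $2$-transitivity. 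None of this machinery is present in your proposal, so the case $H^0<T$ is simply not handled.

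Your remaining steps are essentially the paper's: once $H^0=T$, the weight vectors $w_{i_1}\wedge\cdots\wedge w_{i_k}$ span $1$-dimensional pairwise distinct weight spaces permuted by $X$ as $k$-subsets, so irreducibility of $V|_{T.X}$ is equivalent to transitivity of $X$ on $k$-subsets. Your observation that this is a priori $\ell$-homogeneity rather than $\ell$-transitivity, to be upgraded via Livingstone--Wagner, is a fair point of care (the paper passes over this silently); but note that the exceptional $\ell$-homogeneous, non-$\ell$-transitive groups (e.g.\ ${\rm AGL}_1(8)<{\rm Sym}_8$ with $k=3$) really do act irreducibly on the corresponding wedge module, so they cannot be dismissed as producing no new triples --- they must either be absorbed into a statement phrased in terms of homogeneity or listed separately. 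That issue is secondary; the essential missing content is the proof that $H^0$ cannot be a proper subtorus.
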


\begin{proof}
Here $V=V_G(\l_k)=\L^k(W)$ is the $k$-th wedge of the natural $KG$-module $W$, and by duality we may assume that $1<k \leqs (n+1)/2$. Let $\mathcal{W}(G)=N_G(T)/T = {\rm Sym}_{n+1}$ be the Weyl group of $G$.

Set $S=H^0$ and note that $S \leqs T$ is a subtorus. Let $\L_S(W)$ and $\L_S(V)$ be the set of $S$-weights of $W$ and $V$, respectively, so 
$$W|_{S} = \bigoplus_{\mu \in \L_S(W)} W_{\mu},\;\; V|_{S} = \bigoplus_{\mu \in \L_S(V)} V_{\mu}$$
where $W_{\mu}$ is the $\mu$-weight space of $W$, and similarly $V_{\mu}$ is the $\mu$-weight space of $V$.
There is a natural action of $N_G(S)$ on $\L_S(W)$ and $\L_S(V)$ given by  
$(x \cdot \mu)(s) = \mu( xsx^{-1})$.
In particular, $N_G(S)$ permutes the $S$-weight spaces on $W$ and $V$.

First assume $S=T$. Here the $S$-weight spaces on $W$ and $V$ are $1$-dimensional, and $V|_{H}$ is irreducible if and only if $H/T \leqs \mathcal{W}(G)$ acts transitively on $\L_S(V)$. This is equivalent to the condition that $H/T$ is a $k$-transitive subgroup of $\mathcal{W}(G)={\rm Sym}_{n+1}$. Indeed, we note that the $S$-weight vectors on $V$ are of the form  $w_{i_1} \wedge \cdots \wedge w_{i_k}$, where the $i_j$ are distinct and $\{w_1, \ldots, w_{n+1}\}$ is a basis of $W$ consisting of $S$-weight vectors. This gives the desired result when $S=T$, so for the remainder let us assume that $S$ is a proper subtorus of $T$.

Seeking a contradiction, suppose $V|_{H}$ is irreducible. Now $H \leqs N_G(S)$ (since $S=H^0$) and thus $V|_{N_G(S)}$ is irreducible. In particular, $N_G(S)$ acts irreducibly on $W$ (otherwise $N_G(S)$ lies in a parabolic subgroup of $G$, which would imply $V|_{H}$ is reducible), so $N_G(S)$ must transitively permute the set of $S$-weight spaces on $W$. Therefore, these $S$-weight spaces are equidimensional, whence $N_G(S) \leqs J<G$, where $J$ is a $\C_2$-subgroup of $G$. More precisely, $J$ is the normalizer in $G$ of the direct sum decomposition
$\bigoplus_{\mu \in \L_S(W)}W_{\mu}$. If we now consider the irreducible triple $(G,J,V)$ then the main theorem of \cite{BGT} implies that the $S$-weight spaces on $W$ are $1$-dimensional, so $S$ is a \emph{regular torus}. In particular, 
$$S \leqs C_G(S)=T \leqs N_G(S) \leqs N_G(T)=M$$
and we define
$$\mathcal{W}(S) := N_G(S)/C_G(S) = N_G(S)/T \leqs N_G(T)/T = \mathcal{W}(G).$$

As noted above, $\mathcal{W}(S)$ permutes the $S$-weight spaces $V_{\mu}$, and the irreducibility of $V|_{N_G(S)}$ implies that this action is transitive. In particular, the $S$-weight spaces on $V$ are equidimensional. In fact, we claim that they are $1$-dimensional. To see this, let $d$ denote the dimension of $S$ and fix a basis $\{w_1, \ldots, w_{n+1}\}$ of $W$ comprising $S$-weight vectors. Then there exist integers $c_{i,j}$, $1 \leqs i \leqs d$, $1 \leqs j \leqs n+1$ such that
$$(s_1, \ldots, s_d) \cdot w_j = \left(s_1^{c_{1,j}}s_2^{c_{2,j}} \cdots s_d^{c_{d,j}}\right)w_j$$
for all $(s_1, \ldots, s_d) \in (K^*)^d \cong S$ and all $1 \leqs j \leqs n+1$. Without loss of generality, we may assume that the $w_j$ are ordered so that the $d$-tuples
\begin{equation}\label{e:tuples}
(c_{1,1}, \ldots, c_{1,d}), \ldots, (c_{n+1,1}, \ldots, c_{n+1,d})
\end{equation}
are in lexicographic order. (Note that these $d$-tuples are distinct since the $S$-weight spaces on $W$ are $1$-dimensional.) Then $w_1 \wedge \cdots \wedge w_k \in V$ is an $S$-weight vector of weight
$$s_1^{\sum_{i=1}^{k}c_{1,i}}s_2^{\sum_{i=1}^{k}c_{2,i}} \cdots s_d^{\sum_{i=1}^{k}c_{d,i}}.$$
In view of the lexicographic ordering of the tuples in \eqref{e:tuples}, it follows that this $S$-weight has multiplicity $1$, and this justifies the claim.
 
As previously observed, the irreducibility of $V|_{N_G(S)}$ now implies that $\mathcal{W}(S) \leqs {\rm Sym}_{n+1}$ is $k$-transitive, so to complete the proof of the proposition, it suffices to show that $\mathcal{W}(S)$ is not $2$-transitive. 

To see this, first let $c$ be the codimension of $S$ in $T$ and let $X(T) \cong \mathbb{Z}^n$ and $X(S) \cong \mathbb{Z}^{n-c}$ be the corresponding character groups. The sublattice $S^{\perp}$ is defined by
$$S^{\perp}=\{\gamma \in X(T) \mid \gamma|_{S}=1\} \cong \mathbb{Z}^c$$
and we set 
$$X(T)_{\mathbb{R}}  = X(T) \otimes_{\mathbb{Z}} \mathbb{R},\;\; S^{\perp}_{\mathbb{R}} = S^{\perp} \otimes_{\mathbb{Z}} \mathbb{R}.$$ 
Now $\mathcal{W}(G)$ acts faithfully on $X(T)_{\mathbb{R}}$, and $\mathcal{W}(S)=N_G(S)/T$ stabilizes the $c$-dimensional subspace $S^{\perp}_{\mathbb{R}}$. Let $P$ be 
the pointwise stabilizer of $S^{\perp}_{\mathbb{R}}$ in $\mathcal{W}(G)$. By \cite[Corollary A.29]{MT}, $P$ is a parabolic subgroup of $\mathcal{W}(G)={\rm Sym}_{n+1}$, so it is a direct product of smaller degree symmetric groups. In particular, $P$ is intransitive. Finally, we observe that $\mathcal{W}(S)$ normalizes $P$ (since it stabilizes $S^{\perp}_{\mathbb{R}}$), so the intransitivity of $P$ implies that $\mathcal{W}(S)$ is either intransitive, or transitive and imprimitive. In particular, $\mathcal{W}(S)$ is not $2$-transitive.
\end{proof}

\begin{prop}\label{p:geom4}
Suppose $H<M<G$ and $(G,M,V)$ is the case labelled (iv) in Table \ref{tab:geom2}. Then $V|_{H}$ is reducible.
\end{prop}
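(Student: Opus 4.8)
We are in case (iv) of Table~\ref{tab:geom2}: $G=B_n$, $M=D_n.2$ is the $\C_1$-subgroup stabilising a non-degenerate $1$-dimensional subspace of the natural module $W$ (so $p\neq 2$), and $\l=\sum_{i=1}^{n}a_i\l_i$ satisfies the conditions recorded in Remark~\ref{r:main4}(v)(b); in particular $a_n=1$ and, since $H$ is assumed \emph{not} to be a decomposition subgroup, we may assume $a_i\neq 0$ for some $i<n$ (see Remark~\ref{r:geom}(e)). The plan is to show $V|_H$ is reducible by a dimension/orbit argument combined with the classification of irreducible triples one level down.

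**The approach.** Since $H<M$, if $V|_H$ were irreducible then $V|_M$ would be irreducible, which it is by hypothesis. Write $M^0=D_n=\mathrm{SO}(W^\perp)$ where $W=U\perp U^\perp$ with $\dim U=1$. The key observation is that $V_{G^0}(\l)|_{M^0}$ is reducible (by \cite{Seitz2}, since we have excluded the decomposition-subgroup configuration), so by Clifford theory the $KM^0$-composition factors of $V$ are transitively permuted by $M/M^0\cong Z_2$; thus there are exactly two of them, with highest weights $\l|_{M^0}$ and $(\gamma\cdot\l)|_{M^0}$, where $\gamma$ is the graph automorphism of $M^0$. Now I would let $J<M$ be a maximal subgroup of $M$ containing $H$, and split into the two standard sub-cases. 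If $V|_{J^0}$ is irreducible, then $(M^0,J^0,V|_{M^0}$-factor$)$ must appear in \cite[Table~1]{Seitz2}; I would run through that list and check that the congruence condition forced on $\l$ by appearing there is incompatible with the condition in Remark~\ref{r:main4}(v)(b) (this is exactly the style of argument used at the end of the proof of Theorem~\ref{t:geom0} for case~(vi)). If instead $V|_{J^0}$ is reducible, then $(M,J,V|_M)$ is one of the triples in the main theorems of \cite{BGMT,BGT}; again one checks that none of those are compatible with $\l$ having $a_n=1$ together with $a_i\neq 0$ for some $i<n$.

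**An alternative / complementary route.** Rather than descending through $J$, one can argue directly as in the proofs of Propositions~\ref{p:dn2_1} and~\ref{p:dn2_2}. With $M^0=M_1M_2$ replaced here by the single factor $D_n$ (and the extra reflection $x\in M\setminus M^0$), one uses that the two $KM^0$-factors of $V$ have highest weights $\l|_{M^0}$ and $(\gamma\cdot\l)|_{M^0}$. If some $a_i\neq 0$ with $i<n$, pick the minimal such $i$; then $\mu=\l-\a_i-\a_{i+1}-\cdots-\a_n$ (or a similar telescoping sum dictated by the embedding $D_n<B_n$) affords the highest weight of a further $KM^0$-composition factor, and one verifies using Lemma~\ref{l:mu} (applied to $G=D_n$, where $\gamma$ interchanges $\l_{n-1},\l_n$) that $\mu|_{M^0}$ is $M$-conjugate neither to $\l|_{M^0}$ nor to $(\gamma\cdot\l)|_{M^0}$. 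This produces a third composition factor, contradicting $|M:M^0|=2$. The technical heart is identifying the correct $\mu$ and computing $\mu|_{M^0}$ and $(\gamma\cdot\l)|_{M^0}$ explicitly in terms of the $\o_i$'s, exactly as in Case~1 of Proposition~\ref{p:dn2_1}.

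**Main obstacle.** The real work is the bookkeeping in the $V|_{J^0}$-irreducible sub-case: one must go through every line of \cite[Table~1]{Seitz2} with $G$ of type $D_n$ that could host $J^0$, extract the congruence/numerical constraint each imposes on the highest weight, and confront it with the two congruences in Remark~\ref{r:main4}(v)(b) (the condition $2a_i\equiv -2(n-i)-1 \imod p$ on the largest $i<n$ with $a_i\neq 0$, together with the "gap" conditions $a_i+a_j\equiv i-j\imod p$). Showing incompatibility in each line — in particular ruling out the triality-type entries and the case $(M^0,J^0)=(D_4,A_2)$, $(D_4,B_3)$ — is where care is needed; the reducible sub-case, by contrast, is quick since the $\C_i$-subgroups of $D_n$ appearing in \cite{BGMT,BGT} with a spin-type constituent are very constrained.
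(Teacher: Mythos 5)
Your primary route is, in outline, the same as the paper's: assume $V|_{H}$ is irreducible, use that $V|_{M^0}=V_1\oplus V_2$ has exactly two composition factors (Ford's case ${\rm U}_2$), descend to a maximal subgroup one level down, apply the classification of irreducible triples, and eliminate the one surviving configuration, namely $B_{n-1}<D_n$ (case IV$_1'$ of \cite[Table 1]{Seitz2}), by clashing its congruence with $2a_k\equiv -2(n-k)-1 \imod{p}$ --- exactly the computation the paper performs (if $p$ divides both $a_k+n-k$ and $2a_k+2(n-k)+1$, then $p$ divides $1$). However, two steps as you have written them would fail. First, you take $J$ maximal in the \emph{disconnected} group $M=D_n.2$ and, in your second sub-case, invoke the main theorems of \cite{BGMT,BGT} for the triple $(M,J,V|_{M})$; those theorems classify triples whose ambient group is a simple connected algebraic group, so they do not apply with $D_n.2$ on top (this is precisely why the paper has to prove Propositions \ref{p:dn2}, \ref{p:dn2_1} and \ref{p:dn2_2} by hand for other cases). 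The paper's proof of Proposition \ref{p:geom4} avoids this: set $H_1=H\cap M^0$, note $H=H_1.2$ so the irreducibility of $V|_{H}$ forces $V_1|_{H_1}$ and $V_2|_{H_1}$ to be irreducible, and take $J$ maximal in the connected group $M^0=D_n$ containing $H_1$; then $(M^0,J,V_1)$ is covered by \cite{Seitz2} if $J$ is connected and by \cite{BGMT,BGT} if $J$ is disconnected. (Relatedly, your first sub-case ``$V|_{J^0}$ irreducible'' is literally vacuous, since $V|_{M^0}$ is already reducible and $J^0\leqs M^0$; what you mean is irreducibility of the factors $V_i|_{J^0}$.)

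Second, your ``alternative/complementary route'' cannot succeed: exhibiting a third $KM^0$-composition factor of $V$ not $M$-conjugate to the other two would show that $V|_{M}$ itself is reducible, contradicting the hypothesis that $(G,M,V)$ is the irreducible triple of case (iv) (Ford's theorem). Under the congruences of Remark \ref{r:main4}(v)(b), $V|_{M^0}$ has exactly two composition factors, swapped by the graph automorphism, so the weight $\mu=\l-\a_i-\cdots-\a_n$ you propose does not afford a new one; the arguments of Propositions \ref{p:dn2_1} and \ref{p:dn2_2} live one level down (modules for $D_n$ restricted to maximal subgroups of $D_n.2$) and do not transfer to $M$ here. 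The contradiction must exploit $H<M$ properly, as in the descent above. Finally, note that the decisive bookkeeping you defer as the ``main obstacle'' is in fact short once set up correctly: since $a_n=1$ and $a_i\neq 0$ for some $i<n$, the constraints on the highest weights of $V_1$ and $V_2$ rule out everything in \cite{BGMT,BGT} and everything in \cite[Table 1]{Seitz2} except IV$_1'$ with $a=1$, and that case dies on the congruence clash recorded above.
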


\begin{proof}
Here $G=B_n$ and $M=D_n.2$, where $n \geqs 3$ and $p \neq 2$. We have
$$H < M=D_n.2 < G = B_n$$ 
and $V=V_G(\l)$, where the highest weight $\l = \sum_{i}a_i\l_i$ satisfies the conditions recorded in Remark \ref{r:geom}(e). This is the case labelled ${\rm U}_2$ in \cite[Table II]{Ford1}. In particular, we note that $a_n=1$ and $V|_{M^0}$ has exactly two composition factors, say $V|_{M^0} = V_1 \oplus V_2$, where $V_i$ has highest weight $\mu_i$, and 
$$\mu_1 = \sum_{i=1}^{n-2}a_i\eta_i + a_{n-1}\eta_{n-1}+(a_{n-1}+1)\eta_n,\;\; \mu_2 = \sum_{i=1}^{n-2}a_i\eta_i + (a_{n-1}+1)\eta_{n-1}+a_{n-1}\eta_n.$$
(with respect to fundamental dominant weights $\{\eta_1, \ldots, \eta_n\}$ for $M^0=D_n$). As noted in Remark \ref{r:geom}(e), we may assume that $a_i \neq 0$ for some $i<n$. 

Seeking a contradiction, let us assume that $V|_{H}$ is irreducible, so $H \not\leqs M^0$ since $V|_{M^0}$ is reducible. Set $H_1 = H \cap M^0$ and let $J$ be a maximal subgroup of $M^0$ that contains $H_1$. Then $H=H_1.2$ and the irreducibility of $V|_{H}$ implies that $V_1|_{H_1}$ and $V_2|_{H_1}$ are irreducible, so $V_1|_{J}$ and $V_2|_{J}$ are also irreducible.

We can now consider the irreducible triple $(M^0,J,V_1)$, which must be one of the cases recorded in \cite{BGMT,BGT,Seitz2}. Given the conditions on $\l$ (in particular, the fact that $a_n=1$ and $a_i \neq 0$ for some $i<n$), it is easy to see that there are no compatible examples in \cite{BGMT,BGT}. The only possible example in \cite[Table 1]{Seitz2} is the case labelled IV$_1'$, with $J=B_{n-1}$, $a=1$ and $b \neq 0$. However, we claim that the conditions in this configuration are incompatible with those that are given in Remark \ref{r:geom}(e). Indeed, we have $a_{n-1}=0$ and there is a unique $k<n-1$ with $a_k \neq 0$. In case IV$_1'$ we have 
$a_k+n-k \equiv 0 \imod{p}$ and the conditions in Remark \ref{r:geom}(e) yield 
$2a_k \equiv -2(n-k)-1 \imod{p}$.
If both conditions hold, then $p$ divides $a_k+n-k$ and $1+2a_k+2(n-k)$, so $p$ divides $1+a_k+n-k$. Clearly, this is impossible. 
\end{proof}

\begin{prop}\label{p:geom5}
Suppose $H<M<G$ and $(G,M,V)$ is the case labelled (vi) in Table \ref{tab:geom2}. Then $V|_{H}$ is reducible.
\end{prop}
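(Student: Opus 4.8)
The plan is to carry out the same kind of argument used in the proof of Proposition \ref{p:geom4}, but now in the setting of case (vi) of Table \ref{tab:geom2}, where $G=C_n$, $M=C_m^2.2$ is a $\C_2$-subgroup with $n=2m$, and $\l=\l_{2m-1}+a\l_{2m}$ with $0\leqs a<p$, $2a+3\equiv 0\imod{p}$ (so $p\neq 2$), subject to $(m,a)\neq(1,0)$. First I would record the structure of $M$: we have $M^0=M_1M_2$ with each $M_i$ of type $C_m$, embedded via the decomposition $W=W_1\oplus W_2$ of the natural symplectic module into non-degenerate $2m$-spaces, and $M=M^0\la\s\ra$ where $\s$ interchanges $M_1$ and $M_2$ (and swaps the two summands). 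By \cite{Seitz2} (the case labelled S$_7$ in \cite[Table 1]{Seitz2}, or directly from the main theorem of \cite{BGT}), $V|_{M^0}$ is reducible with exactly two composition factors $V_1,V_2$, interchanged by $\s$. In fundamental dominant weights $\{\eta_1,\ldots,\eta_m\}$ for $M_1$ and $\{\theta_1,\ldots,\theta_m\}$ for $M_2$, these factors have highest weights $\mu_1=(a+1)\eta_m\otimes(\theta_{m-1}+a\theta_m)$ and $\mu_2=(\eta_{m-1}+a\eta_m)\otimes(a+1)\theta_m$ (the data in the row for $C_{2m}$ in Table \ref{t:maint3} and Remark \ref{r:main4}(v)(c)).

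Next, seeking a contradiction, assume $V|_H$ is irreducible. Since $V|_{M^0}$ is reducible, $H\not\leqs M^0$, so $H=H_1.2$ with $H_1=H\cap M^0$, and the irreducibility of $V|_H$ forces $V_1|_{H_1}$ and $V_2|_{H_1}$ to be irreducible. Let $J$ be a maximal subgroup of $M^0$ containing $H_1$; then $V_1|_J$ and $V_2|_J$ are both irreducible. Now $M^0=M_1\times M_2$ (modulo the central $Z(G)$-identification), and $V_1=A_1\otimes B_1$, $V_2=A_2\otimes B_2$ are genuine tensor products of nontrivial irreducible modules for the two simple factors (using $(m,a)\neq(1,0)$ to see both tensor factors are nontrivial; when $m=1$ one of $\eta_{m-1}$-type weights degenerates, which is exactly why that case is excluded). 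A subgroup $J<M_1\times M_2$ acting irreducibly on a nontrivial tensor product $A\otimes B$ of this form must project onto each factor — otherwise $J$ lies in $J'\times M_2$ or $M_1\times J''$ for proper $J'<M_1$, and the tensor structure together with Lemma \ref{l:ten} forces reducibility unless the relevant $C_m$-module is trivial, which it is not. So the projections of $J$ to $M_1$ and to $M_2$ are surjective, i.e.\ $J$ is a subdirect product, and the only maximal such possibilities are $J=M_1\times M_2=M^0$ itself (impossible, as $V|_{M^0}$ is reducible) or $J$ diagonally embedded in $M_1\times M_2$ (using Lemma \ref{l:basecase}), which requires $M_1\cong M_2$ as they are — this is the relevant maximal-subgroup configuration.

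So we reduce to the case $J\cong C_m$ diagonally embedded in $M_1\times M_2$. Then $V_1|_J=A_1\otimes B_1$ restricted to $J$ is, up to a bijective morphism (possibly a Frobenius twist, but $V_1,V_2$ are $p$-restricted and by Hypothesis \ref{h:our} so is $\l$, so after adjusting we may take the morphisms to be isomorphisms or the relevant special isogenies, which do not arise in type $C_m$ for $p\neq 2$), the tensor product $V_{C_m}((a+1)\eta_m)\otimes V_{C_m}(\theta_{m-1}+a\theta_m)$. By Lemma \ref{l:ten}(ii), since $p\neq 2$ this tensor product of $p$-restricted irreducibles is either irreducible only in the excluded $(B_n,C_n)$ characteristic-$2$ situation — which does not apply here — or it has non-isomorphic composition factors; in the latter case it is reducible, contradicting $V_1|_J$ irreducible. (One must check $(a+1)\eta_m$ and $\theta_{m-1}+a\theta_m$ are both nonzero: $a+1\geqs 1$ always, and $\theta_{m-1}+a\theta_m$ is nonzero for $m\geqs 2$; for $m=1$ the weight $\eta_{m-1}$ is vacuous, which again is why $(m,a)=(1,0)$ and more generally the small-$m$ behaviour must be handled, but for $m=1$ the factor $(a+1)\eta_1$ with $2a+3\equiv 0$ gives $a\geqs 1$ so $(a+1)\geqs 2$, and one argues separately there that $C_1\times C_1$ has no proper subdirect product acting irreducibly on the $4$-dimensional-or-larger tensor module.) This contradiction shows $V|_H$ is reducible.

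The main obstacle I expect is the bookkeeping in the reduction to the diagonal $J$: one must rule out $H_1$ lying inside a proper non-subdirect subgroup of $M^0$ (e.g.\ inside $M_1\times J''$), and this genuinely uses the tensor-indecomposability of each $V_i$ as a module for one $C_m$-factor together with Lemma \ref{l:ten}; and one must handle the possibility that the projections of $J$ to the two factors are only bijective morphisms rather than isomorphisms, which is where Lemmas \ref{l:isog} and \ref{l:basecase} do the work and where the hypothesis $p\neq 2$ is essential. The small-rank case $m=1$ (so $G=C_2$) also needs a direct check, since the weight-labelling conventions degenerate there, but this is routine. Once $J$ is pinned down to the diagonal copy of $C_m$, the final contradiction via Lemma \ref{l:ten} is immediate.
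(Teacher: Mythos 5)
Your overall strategy (reduce to a diagonal $C_m$ inside $M^0=M_1M_2$ and then contradict irreducibility using Lemma \ref{l:ten}) is the same as the paper's, but two essential steps are missing or wrongly justified. First, your argument for surjectivity of the projections is not valid: if $H_1 \leqs J'\times M_2$ with $J'<M_1$ proper, irreducibility of $V_1|_{H_1}$ and $V_2|_{H_1}$ does \emph{not} fail ``by the tensor structure together with Lemma \ref{l:ten}'' --- Lemma \ref{l:ten} concerns tensor products of two modules for the \emph{same} group and says nothing here. What irreducibility actually gives is that $\pi_1(H_1)$ acts irreducibly on both $V_{M_1}((a+1)\o_{1,m})$ and $V_{M_1}(\o_{1,m-1}+a\o_{1,m})$, and a proper subgroup could in principle do this; ruling it out requires checking, as the paper does, that the classifications in \cite{Seitz2,BGMT,BGT} contain no compatible irreducible triple $(C_m,J_1,\cdot)$ for both of these highest weights. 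You must also treat the possibility that a projection of $H_1$ is \emph{finite}, which your proposal never addresses; in the paper this case needs its own argument (injectivity of $\pi_2|_{H^0}$, surjectivity of $\pi_2$ via the triple classification, and then homogeneity of $V|_{H^0}$).

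Second, and more seriously, your dismissal of Frobenius twists in the diagonal embedding is incorrect. Lemma \ref{l:basecase} only gives $H^0=\{(\tau_1(x),\tau_2(x))\}$ with the $\tau_i$ \emph{bijective morphisms}, and by Lemma \ref{l:steinberg} these have the form $t_{x_i}\sigma_{q_i}$; the $p$-restrictedness of $\l$ does not force $q_1=q_2$, and special isogenies are irrelevant to this issue (the obstruction is a genuine field twist, available for every prime $p$, and here $p>0$ since $2a+3\equiv 0 \imod{p}$). If, say, $q_1>1$ (normalizing $q_2=1$), then $V_1$ restricts to the diagonal as $V_{H^0}((a+1)\eta_m)^{(q_1)}\otimes V_{H^0}(\eta_{m-1}+a\eta_m)$, which is \emph{irreducible} by Steinberg's tensor product theorem, so your final appeal to Lemma \ref{l:ten} collapses. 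The paper closes this gap with Clifford theory: since $N_G(H^0)=H^0C_G(H^0)$ (type $C_m$ admits no outer algebraic automorphism here), $V|_{H^0}$ must be homogeneous, so the highest weights $\eta_{m-1}+((a+1)q_1+a)\eta_m$ and $q_1\eta_{m-1}+(aq_1+a+1)\eta_m$ of the two composition factors must coincide, forcing $q_1=1$; only after that, using $a<p-1$ so both factors are $p$-restricted and nontrivial, does Lemma \ref{l:ten} produce the contradiction. Without the classification check for the projections and the homogeneity argument eliminating the twist, your proof is incomplete.
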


\begin{proof}
Here $G= C_n$, $M=C_m^2.2$ is a $\C_2$-subgroup and $\l=\l_{n-1}+a\l_n$, where $n=2m$,  $0\leqs a<p$ and $2a+3 \equiv 0 \imod{p}$. In particular, note that $p \neq 2$ and $a<p-1$. Seeking a contradiction, let us assume that $V|_{H}$ is irreducible. Set $M^0=C_m^2=M_1M_2$ (a direct product of two simply connected groups of type $C_m$) and let $\{\o_{i,1}, \ldots, \o_{i,m}\}$ be fundamental dominant weights for $M_i$. 

As recorded in \cite[Table 4.2]{BGT}, we have
$V|_{M^0} = V_1 \oplus V_2$
where
\begin{align*}
V_1 & = V_{M_1}((a+1)\o_{1,m}) \otimes V_{M_2}(\o_{2,m-1}+a\o_{2,m}) \\
V_2 & = V_{M_1}(\o_{1,m-1}+a\o_{1,m}) \otimes V_{M_2}((a+1)\o_{2,m}).
\end{align*}
Set $H_1 = H \cap M^0$ and note that $H_1^0 = H^0$ and $H=H_1.2$. In particular, since $V|_{H}$ is irreducible it follows that $V|_{H_1}$ has exactly two composition factors, namely 
$$V|_{H_1} = V_1|_{H_1} \oplus V_2|_{H_1}.$$
Note that $H^0<M^0$ since we are assuming that $H$ is disconnected and non-maximal.
Let 
$\pi_i : H_1 \to M_i$
be the $i$-th projection map and note that $\ker(d\pi_1) \cap \ker(d\pi_2) = 0$ since $H_1$ is a closed positive-dimensional subgroup of $M^0$.

\vs

\noindent \emph{Claim.} $\pi_1(H_1)$ and $\pi_2(H_1)$ are infinite. 

\vs

Seeking a contradiction, suppose that $\pi_1(H_1)$ is finite, in which case $\pi_2(H_1)$ is infinite since $H$ is positive-dimensional. The finiteness of $\pi_1(H_1)$ implies that $d\pi_1 = 0$, so $\ker(d\pi_2)=0$ since $\ker(d\pi_1) \cap \ker(d\pi_2) = 0$. Also note that $\ker(\pi_1)$ is a closed subgroup of finite index in $H_1$, so $H_1^0 \leqs \ker(\pi_1)$ and thus $\pi_2|_{H^0}: H^0 \to M_2$ is injective.

Next we claim that $\pi_2$ is surjective. Suppose otherwise. Then there exists a positive-dimensional maximal subgroup $J_2$ of $M_2$ such that $\pi_2(H_1) \leqs J_2< M_2$.  The irreducibility of $V_1|_{H_1}$ and $V_2|_{H_1}$ implies that $\pi_2(H_1)$ acts irreducibly on the $KM_2$-modules with highest weights $\omega_{2,m-1}+a\omega_{2,m}$ and $(a+1)\omega_{2,m}$, so we can consider the irreducible triples 
$$(M_2,J_2, V_{M_2}(\omega_{2,m-1}+a\omega_{2,m})),\;\; (M_2,J_2, V_{M_2}((a+1)\omega_{2,m})).$$  
By inspecting \cite[Table 1]{Seitz2} we see that there are no compatible examples with $J_2$ connected. Similarly, by applying the main theorems in \cite{BGMT,BGT}, there are no examples with $J_2$ disconnected. This is a contradiction, hence $\pi_2$ is surjective. 

It follows that $\pi_2(H_1^0)=M_2$, so $\pi_2|_{H^0}: H^0 \to M_2$ is a bijective morphism. Moreover, $\ker(d(\pi_2|_{H^0}))=0$ since $\ker(d\pi_2)=0$, so $d(\pi_2|_{H^0})$ is an isomorphism of Lie algebras and thus $\pi_2|_{H^0}$ is an isomorphism of algebraic groups. In particular, $H^0$ is a simply connected group of type $C_m$. By Lemma \ref{l:steinberg} we may write $\pi_2|_{H^0}=t_{x}$ for some $x\in H^0$, where $t_x$ is an inner automorphism (conjugation by $x$). In addition, note that $H \leqs N_G(H^0)=H^0C_G(H^0)$ and thus $V|_{H^0}$ is homogeneous. 

Let $\{\eta_1,\ldots, \eta_m\}$ be a set of fundamental dominant  weights for $H^0$. Then $V|_{H^0}$ has composition factors isomorphic to $V_{H^0}(\eta_{m-1}+a\eta_m)$ and  
$V_{H^0}((a+1)\eta_m)$, which contradicts the fact that 
$V|_{H^0}$ is homogeneous. We conclude that $\pi_1(H_1)$ is infinite, and similarly 
$\pi_2(H_1)$ is also infinite.

\vs

\noindent \emph{Claim.} $\pi_1$ and $\pi_2$ are surjective.  

\vs

Seeking a contradiction, suppose $\pi_1$ is not surjective. Since $\pi_1(H_1)$ is infinite, there exists a positive-dimensional maximal subgroup $J_1$ of $M_1$ such that $\pi_1(H_1) \leqs J_1<M_1$ and we can consider the irreducible triples 
$$(M_1,J_1, V_{M_1}((a+1)\o_{1,m})), \;\; (M_1,J_1, V_{M_1}(\o_{1,m-1}+a\o_{1,m})).$$  
As before, we find that there are no compatible examples, which is a contradiction and thus $\pi_1$ is surjective. An entirely similar argument shows that $\pi_2$ is also surjective. 

\vs

By the previous claim, it follows that $\pi_i(H^0)=M_i$ for $i=1,2$, so $H^0$ is a subdirect product of the direct product $M^0=M_1M_2$. By applying Lemma \ref{l:basecase}, noting that $H^0 <M^0$, we deduce that $H^0 \cong M_1$ is diagonally embedded in $M_1M_2$, so we may write
$$H^0=\{(\tau_1(x), \tau_2(x)) \mid x \in {\rm Sp}_{2m}(K)\},$$ 
where $\tau_i: {\rm Sp}_{2m}(K)\to M_i$ is a bijective morphism. By appealing to Lemma \ref{l:steinberg}, we may write $\tau_i=t_{x_i}\sigma_{q_i}$ for some $x_i \in H^0$ and $p$-power $q_i$ (where $\sigma_{q_i}$ is a standard field automorphism), and once again we note that $V|_{H^0}$ is homogeneous. Note that at least one $q_i$ is equal to $1$ (since $H^0$ is a closed subgroup $M^0$); without loss of generality we will assume $q_2=1$.

Let $\{\eta_1,\dots, \eta_m\}$ be a set of fundamental dominant weights for $H^0$. 
Then  
$$V_1|_{H^0} = V_{H^0}((a+1)\eta_m)^{(q_1)} \otimes V_{H^0}(\eta_{m-1}+a\eta_{m})$$
and
$$V_2|_{H^0} = V_{H^0}(\eta_{m-1}+a\eta_{m})^{(q_1)} \otimes V_{H^0}((a+1)\eta_m),$$
so $V|_{H^0}$ has composition factors with highest weights
$$\eta_{m-1}+((a+1)q_1+a)\eta_m,\;\; q_1\eta_{m-1}+(aq_1+a+1)\eta_m.$$
Since $V|_{H^0}$ is homogeneous, these highest weights must be equal and thus $q_1=1$. Now $p<a-1$ so the modules $V_{H^0}((a+1)\eta_m)$ and $V_{H^0}(\eta_{m-1}+a\eta_{m})$ are $p$-restricted and thus Lemma \ref{l:ten} implies that $V|_{H^0}$ is not homogeneous. This is a contradiction.
\end{proof}

\begin{prop}\label{p:geom7}
Suppose $H<M<G$ and $(G,M,V)$ is the case labelled (viii) in Table \ref{tab:geom2}. Then $V|_{H}$ is reducible.
\end{prop}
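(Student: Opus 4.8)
The plan is to mirror the structure of the proof of Proposition \ref{p:geom5} (case (vi)), replacing the internal tensor-product argument at the end with a direct comparison of composition factor highest weights via Lemma \ref{l:mu}. Here $G=D_n$, $M=(D_m^2.2).2$ is a $\C_2$-subgroup with $n=2m$, $m\geqs 3$ odd, $p=2$, and $\l=\l_1+\l_{n-i}$ with $i\in\{0,1\}$; from Table \ref{t:maint3} we know $V|_{M^0}$ has exactly four $KM^0$-composition factors when $V|_M$ is irreducible. Seeking a contradiction, I would assume $V|_H$ is irreducible. Set $M^0=M_1M_2$ with $M_1,M_2$ of type $D_m$, put $H_1=H\cap M^0$ (so $H^0\leqs H_1$, $H_1^0=H^0$, and $|H:H_1|$ divides $|M:M^0|$), and let $\pi_i:H_1\to M_i$ be the projection maps, with $\ker(d\pi_1)\cap\ker(d\pi_2)=0$ since $H^0$ is positive-dimensional in $M^0$.

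First I would run the two ``claims'' exactly as in Proposition \ref{p:geom5}: that $\pi_1(H_1)$ and $\pi_2(H_1)$ are infinite, and that $\pi_1,\pi_2$ are surjective. In each case the argument is: if some $\pi_j(H_1)$ is finite then $d\pi_j=0$, forcing $\pi_k|_{H^0}$ injective on the differential for $k\ne j$; and whenever $\pi_j$ fails to be surjective, its image lies in a positive-dimensional maximal subgroup $J_j<M_j$ acting irreducibly on the $KM_j$-modules appearing as the ``$M_j$-parts'' of the composition factors $V_1,\dots,V_4$ (these are the spin-related modules $\o_1+\o_m$, $\o_{m-i}$, etc., read off from the $\l|_{H^0}$ column of Table \ref{t:maint3}), and inspection of \cite[Table 1]{Seitz2} together with the main theorems of \cite{BGMT,BGT} yields no compatible triple $(M_j,J_j,\cdot)$. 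In the finiteness case one also uses, as in Proposition \ref{p:geom5}, that $H\leqs N_G(H^0)=H^0C_G(H^0)$ forces $V|_{H^0}$ homogeneous, contradicting that its composition factors have distinct highest weights. Hence $H^0$ is a connected subdirect product of $M_1\times M_2$, and by Lemma \ref{l:basecase} (with $H^0<M^0$) $H^0\cong M_1$ is diagonally embedded, so $H^0=\{(\tau_1(x),\tau_2(x))\mid x\in{\rm Spin}_{2m}(K)\}$ with each $\tau_j=t_{x_j}\sigma_{q_j}$ by Lemma \ref{l:steinberg}, and WLOG $q_2=1$.

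Now the endgame: let $\{\eta_1,\dots,\eta_m\}$ be fundamental dominant weights for $H^0$. Restricting the four $KM^0$-composition factors through the diagonal embedding, I compute their $H^0$-highest weights as sums of a Frobenius-twisted $M_1$-contribution and an untwisted $M_2$-contribution; since $H\leqs N_G(H^0)=H^0C_G(H^0)$, $V|_{H^0}$ is homogeneous, so all four highest weights must coincide. Comparing the $\eta_1$-coefficient (which is $q_1$ in two of them and $0$ or something else in the others, owing to the asymmetric shape $(\o_1+\o_m)\oplus\o_{m-i}$ of $\l|_{H^0}$) forces $q_1=1$; then $\tau_1$ and $\tau_2$ are both inner, and homogeneity fails outright because $V_{H^0}(\o_1+\o_m)$ and $V_{H^0}(\o_{m-i})$ — equivalently the distinct constituents forced by Lemma \ref{l:mu} applied to the graph automorphism of $M_2=D_m$ acting on the spin node — are non-isomorphic. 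This contradiction completes the proof. The main obstacle I anticipate is the bookkeeping in the non-surjectivity steps: one must correctly identify, from Table \ref{t:maint3} and the geometry of the $\C_2$-embedding $D_m^2<D_{2m}$, precisely which $KM_j$-modules occur as factors of the restricted composition factors, and then verify case-by-case that no maximal $J_j<M_j=D_m$ acts irreducibly on all of them simultaneously; handling $p=2$ and the spin modules (where dimension arguments are delicate) is where care is needed, and one may have to invoke Proposition \ref{p:niso} or a dimension/divisibility argument rather than a clean weight comparison in the tightest sub-cases.
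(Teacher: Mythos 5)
Your overall strategy does match the paper's (which indeed adapts Proposition \ref{p:geom5} to this case), but there are genuine gaps, and the central one is your repeated appeal to $N_G(H^0)=H^0C_G(H^0)$ to conclude that $V|_{H^0}$ is homogeneous, so that ``all four highest weights must coincide''. That equality is not available here: $H^0$ (resp.\ $R^0$) is of type $D_m$, and elements of $H\leqs N_G(H^0)$ coming from $M\setminus M^0$ can induce involutory \emph{graph} automorphisms of $H^0$. All one can assert is the weaker dichotomy the paper records as \eqref{e:hom}: either $V|_{R^0}$ is homogeneous, or its homogeneous components are conjugate under an involutory graph automorphism of $R^0$. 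Your contradiction in the finiteness step (the four weights $\eta_{m-1},\,\eta_m,\,\eta_1+\eta_m,\,\eta_1+\eta_{m-1}$) does survive, but only because these fall into two distinct graph-orbits, which must be checked against the weaker property rather than deduced from full homogeneity; likewise the deduction $q_1=1$ must be made from \eqref{e:hom}. Moreover, for the diagonal embedding Lemma \ref{l:steinberg} gives $\tau_i=t_{x_i}\sigma_{q_i}\gamma^{k_i}$, and you have dropped the graph twists $k_i\in\{0,1\}$: the paper must tabulate the four cases $(k_1,k_2)$ before concluding $q_1=1$. Finally, your endgame (``$\tau_1,\tau_2$ inner, homogeneity fails outright'') is not a proof against \eqref{e:hom}; the paper closes the argument by combining Lemma \ref{l:ten} (with $q_1=1$, $V_1|_{R^0}$ is a tensor product of restricted modules, so it has a multiplicity-one top factor $\mu_1$ and any other factor $\nu\neq\mu_1$ satisfies $\nu\preccurlyeq\mu_1$) with Lemma \ref{l:mu} (a weight strictly under $\mu_1$ cannot be its image under the graph automorphism), which is the step your sketch gestures at but does not actually set up.

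A second, more technical gap: since $G$ is simply connected of type $D_{2m}$, $M^0=M_1M_2$ is a \emph{central} product of spin groups, so your projection maps $\pi_i:H_1\to M_i$ are not well defined as stated. The paper lifts $W$ to a representation of the direct product $L=L_1\times L_2$ of simply connected groups of type $D_m$, writes $M^0=L/Y$, $H_1=R_1/Y$, $H^0=R/Y$, and works with the projections $R_1\to L_i$; the four summands of $V|_{M^0}$ also lift to $L$, which is what makes the irreducibility statements for $\pi_i(R_1)$ on the various spin-type modules meaningful. This is routine to repair but must be done. Your non-surjectivity step is fine in outline, though note that excluding the candidate IV$_1'$ of \cite[Table 1]{Seitz2} there uses the hypothesis that $m$ is odd.
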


\begin{proof}
Here $G=D_n$ and $M=(D_m^2.2).2$ is a $\C_2$-subgroup, where $n=2m$, $m\geqs 3$ is odd and $p=2$. Moreover, $V=V_G(\l)$ where $\l=\l_1+\l_{n-1}$ or $\l_1+\l_n$ (see Table \ref{tab:geom2}); without loss of generality, we will fix $\l = \l_1+\l_{n-1}$.  Seeking a contradiction, let us assume that $V|_{H}$ is irreducible. 

Write $M^0=D_m^2=M_1M_2$ and let $\{\o_{i,1}, \ldots, \o_{i,m}\}$ be fundamental dominant weights for $M_i$. Then  \cite[Table 4.2]{BGT} indicates that
$V|_{M^0} = V_1 \oplus V_2 \oplus V_3 \oplus V_4$, 
where 
\begin{align*}
V_1 & = V_{M_1}(\o_{1,1}+\o_{1,m}) \otimes V_{M_2}(\o_{2,m-1}) \\
V_2 & = V_{M_1}(\o_{1,1}+\o_{1,m-1}) \otimes V_{M_2}(\o_{2,m}) \\
V_3 & = V_{M_1}(\o_{1,m-1}) \otimes V_{M_2}(\o_{2,1}+\o_{2,m}) \\
V_4 & = V_{M_1}(\o_{1,m}) \otimes V_{M_2}(\o_{2,1}+\o_{2,m-1})
\end{align*}
Set $H_1 = H \cap M^0$ and note that $H_1^0 = H^0$ and $|H:H_1|=4$. Indeed, $H/H_1$ is isomorphic to a subgroup of $M/M^0$ and thus $|H:H_1| \leqs 4$, but $V|_{H_1}$ has at least four composition factors and thus the irreducibility of $V|_H$ implies that $|H:H_1|=4$. Therefore, $V|_{H_1}$ has exactly four composition factors, namely
$$V|_{H_1}=V_1|_{H_1}\oplus V_2|_{H_1}\oplus V_3|_{H_1}\oplus V_4|_{H_1}.$$

In order to proceed as in the proof of the previous proposition, we need to slightly modify our set-up. Indeed, $G$ is the simply connected group of type $D_n$, so $M^0=M_1M_2$ is a central product of spin groups of type $D_m$. Since the $KM^0$-module $W$ lifts to a representation $\rho: L \to {\rm GL}(W)$, where $L=L_1L_2$ is the direct product of two simply connected groups of type $D_m$, we have $M^0 = L/Y$ where $Y = \ker(\rho)$. In particular, there exist subgroups $R \leqs R_1 \leqs L$ such that $H_1=R_1/Y$ and $H^0=R/Y$. Note that $H_1/H^0 \cong R_1/R$ and $R^0=R_1^0$. Let 
$\pi_i: R_1 \rightarrow L_i$
be the $i$-th projection map and note that $\ker(d\pi_1) \cap \ker(d\pi_2) = 0$ since $R_1$ is a closed positive-dimensional subgroup of $L$.

The $KM^0$-module $V_i$ lifts to a representation 
$\rho_i: L\to {\rm GL}(V_i)$, so we can consider $V_i|_{R_1}$.  The irreducibility of $V_i|_{H_1}$ implies that $V_i|_{R_1}$ is also irreducible, whence $\pi_1(R_1)$ is irreducible on each of the $KL_1$-modules 
\begin{equation}\label{e:mods}
V_{L_1}(\o_{1,1}+\o_{1,m}),\;\; V_{L_1}(\o_{1,1}+\o_{1,m-1}),\;\; V_{L_1}(\o_{1,m-1}),\;\; V_{L_1}(\o_{1,m}),
\end{equation}
and similarly $\pi_2(R_1)$ is irreducible on the $KL_2$-modules
\begin{equation}\label{e:modsbis}
V_{L_2}(\o_{2,m-1}),\;\; V_{L_2}(\o_{2,m}),\;\; V_{L_2}(\o_{2,1}+\o_{2,m}),\;\; V_{L_2}(\o_{2,1}+\o_{2,m-1}).
\end{equation}

\vs

\noindent \emph{Claim.} $\pi_1(R_1)$ and $\pi_2(R_1)$ are infinite.  

\vs

We proceed as in the proof of Proposition \ref{p:geom5}. Suppose $\pi_1(R_1)$ is finite. Then $\pi_2(R_1)$ is infinite, $\ker(d\pi_2)=0$ and $R_1^0 \leqs \ker(\pi_1)$, so $\pi_2|_{R^0}: R^0\to L_2$ is injective.

Suppose $\pi_2$ is not surjective. Then there exists a positive-dimensional maximal subgroup $J_2$ of $L_2$ such that $\pi_2(R_1) \leqs J_2 <L_2$, and we can consider the irreducible triples $(L_2,J_2,U)$ for the four $KL_2$-modules $U$ in \eqref{e:modsbis}. By applying the main theorems of  \cite{BGMT,BGT,Seitz2} we find that there are no compatible examples (note that in the case labelled IV$_1'$ in \cite[Table 1]{Seitz2}, we require the parameters to be $a=b=k=1$, hence the given congruence condition implies that $m$ is even, which is false).
This is a contradiction, hence $\pi_2$ is surjective.

It follows that $\pi_2(R_1^0)=L_2$ and $\pi_2|_{R^0}: R^0\to L_2$ is a bijective morphism. Furthermore, $\ker(d(\pi_2|_{R^0}))=0$ so $d(\pi_2|_{R^0})$ is an isomorphism and thus $\pi_2|_{R^0}$ is an isomorphism of algebraic groups. In particular, $R^0$ is simply connected of type $D_m$. By Lemma \ref{l:steinberg}, we may write $\pi_2|_{R^0}=t_{x}\gamma^{k}$ for some $x \in R^0$ and integer $k \in \{0,1\}$, where 
$\gamma$ is an involutory graph automorphism (note that $m \neq 4$, so a triality automorphism does not arise here). Now $H \leqs N_G(H^0)$ induces algebraic group automorphisms of $H^0$ that permute the $KR^0$-composition factors of $V$, so 
$V|_{R^0}$ has the following homogeneity property:

\vspace{-2mm}

\begin{equation}\label{e:hom}
\begin{array}{l}
\mbox{\emph{Either $V|_{R^0}$ is homogeneous, or the homogeneous components of $V|_{R^0}$ are}} \\
\mbox{\emph{conjugate under an involutory graph automorphism of $R^0$.}}
\end{array}
\end{equation}
  
Let $\{\eta_1,\ldots, \eta_m\}$ be fundamental dominant weights for $R^0$. Then $V|_{R^0}$ has composition factors isomorphic to 
$V_{R^0}(\eta_{m-1})$, $V_{R^0}(\eta_{m})$, $V_{R^0}(\eta_1 + \eta_{m})$ and $V_{R^0}(\eta_1 + \eta_{m-1})$, but this is incompatible with \eqref{e:hom}. Therefore $\pi_1(R_1)$ is infinite, and similarly $\pi_2(R_1)$ is also infinite.

\vs

\noindent \emph{Claim.} $\pi_1$ and $\pi_2$ are surjective.  

\vs

Suppose $\pi_1$ is not surjective. Then there exists a positive-dimensional maximal subgroup $J_1$ of $L_1$ such that $\pi_1(R_1) \leqs J_1<L_1$ and we can consider the irreducible triples $(L_1,J_1, U)$  for the four $KL_1$-modules $U$ in \eqref{e:mods}. We have already noted that there are no compatible examples and thus $\pi_1$ is surjective. Similarly, $\pi_2$ is surjective.  

\vs

We have $\pi_i(R^0)=L_i$ for $i=1,2$, so $R^0$ is a subdirect product of $L=L_1L_2$ and thus  Lemma \ref{l:basecase} implies that either $R^0 =L$, or $R^0\cong L_1$ is diagonally embedded in $L_1L_2$. If $R^0=L$ then $H^0=M^0$ and the irreducibility of $V|_{H}$ implies that $H=M$, which is false. Therefore $R^0$ is diagonally embedded, so
$$R^0 = \{(\tau_1(x), \tau_2(x)) \mid x \in {\rm Spin}_{2m}(K)\}$$
and $\tau_i: {\rm Spin}_{2m}(K)\to L_i$ is a bijective morphism. In particular, we may write 
$\tau_i=t_{x_i}\sigma_{q_i}\gamma^{k_i}$ for some $x_i \in R^0$, $p$-power $q_i$ and $k_i\in \{0,1\}$ (see Lemma \ref{l:steinberg}). Again, we observe that \eqref{e:hom} holds. Since $R^0$ is a closed subgroup of $L$, it follows that at least one $q_i$ is equal to $1$. We may assume $q_2=1$.

Let $\{\eta_1,\ldots, \eta_m\}$ be a set of fundamental dominant  weights for $R^0$. By considering the restriction of $V$ to $R^0$, we deduce that $V_i|_{R^0}$ has a composition factor with highest weight $\mu_i$ as follows:
$$\begin{array}{ccccc} \hline
(k_1,k_2) & \mu_1 & \mu_2  & \mu_3 & \mu_4 \\ \hline
(0,0) & q_1\eta_{1}+\eta_{m-1}+q_1\eta_m   &  q_1\eta_{1}+q_1\eta_{m-1}+\eta_m & \eta_{1}+q_1\eta_{m-1}+\eta_m  & \eta_{1}+\eta_{m-1}+q_1\eta_m \\
(1,0) & q_1\eta_{1}+(q_1+1)\eta_{m-1}  &  q_1\eta_{1}+(q_1+1)\eta_m & \eta_{1}+(q_1+1)\eta_m  & \eta_{1}+(q_1+1)\eta_{m-1} \\
(0,1) & q_1\eta_{1}+(q_1+1)\eta_m  &  q_1\eta_{1}+(q_1+1)\eta_{m-1} & \eta_{1}+(q_1+1)\eta_{m-1}  & \eta_{1}+(q_1+1)\eta_m \\
(1,1) & q_1\eta_{1}+q_1\eta_{m-1}+\eta_m  &  q_1\eta_{1}+\eta_{m-1}+q_1\eta_m & \eta_{1}+\eta_{m-1}+q_1\eta_m  & \eta_{1}+q_1\eta_{m-1}+\eta_m \\ \hline
\end{array}$$  

In view of \eqref{e:hom}, we deduce that $q_1=1$ in all four cases. By applying Lemma \ref{l:ten} it follows that $V|_{R^0}$ is not homogeneous. More precisely, $\mu_1$ affords the highest weight of a composition factor of $V_1|_{R^0}$ and if $\nu$ is the highest weight of any other composition factor of $V_1|_{R^0}$, then $\nu \neq \mu_1$ and $\nu \preccurlyeq \mu_1$. However, in view of Lemma \ref{l:mu}, this is incompatible with \eqref{e:hom}.
\end{proof}

\begin{prop}\label{p:geom2}
Suppose $H<M<G$ and $(G,M,V)$ is the case labelled (ii) in Table \ref{tab:geom2}. Then $V|_{H}$ is reducible.
\end{prop}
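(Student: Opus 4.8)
The plan is to assume, for contradiction, that $V|_H$ is irreducible and run the machine used in Propositions~\ref{p:geom5} and~\ref{p:geom7}. Here $G=A_n=\mathrm{SL}(W)$ with $\dim W=(m+1)^2$, and $M=A_m^2.2$ stabilizes a tensor decomposition $W=W_1\otimes W_2$ with $\dim W_i=m+1$; thus $M^0=M_1M_2$ with $M_i=\mathrm{SL}(W_i)$, and $M=M^0\langle\sigma\rangle$ where $\sigma$ interchanges the two factors. After applying a graph automorphism of $G$ we may take $\l=\l_2$, so $V=\Lambda^2(W)$, and since $p\neq2$ we have $V|_{M^0}=V_1\oplus V_2$ with $V_1=\Lambda^2W_1\otimes S^2W_2$ and $V_2=S^2W_1\otimes\Lambda^2W_2$, interchanged by $\sigma$. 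Setting $H_1=H\cap M^0$, the irreducibility of $V|_H$ forces $|H:H_1|=2$ and yields exactly two composition factors $V_1|_{H_1}$, $V_2|_{H_1}$, both irreducible and swapped by $H/H_1$.

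First I would pass to the simply connected cover $L=L_1\times L_2$ of $M^0$ (so $M^0=L/Y$ with $Y$ finite central), lift $H_1$ and $H^0$ to $R_1$ and $R^0$, and show, via the arguments of Propositions~\ref{p:geom5} and~\ref{p:geom7}, that both projections $\pi_i\colon R_1\to L_i$ are surjective with infinite image. The crucial input is that no positive-dimensional maximal subgroup of $\mathrm{SL}_{m+1}$ acts irreducibly on both the exterior square $\Lambda^2$ and the symmetric square $S^2$ of the natural module; this I would read off from \cite{Seitz2} (connected overgroups) and \cite{BGMT,BGT} (disconnected ones), using Proposition~\ref{p:niso} or the inequality $\dim V(\o_2)\neq\dim V(2\o_1)$ to dispose of the possibility that some $\pi_i$ has finite image. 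Granting this, $R^0$ is a connected subdirect product of $L_1\times L_2$, so Lemma~\ref{l:basecase} gives either $R^0=L$ — forcing $H^0=M^0$ and hence $H=M$, a contradiction — or $R^0\cong L_1$ diagonally embedded, say $R^0=\{(\tau_1(x),\tau_2(x)):x\in\mathrm{SL}_{m+1}(K)\}$ with $\tau_i=t_{x_i}\sigma_{q_i}\gamma^{k_i}$ by Lemma~\ref{l:steinberg}, where at least one $q_i=1$.

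It then remains to eliminate each pair $(k_1,k_2)\in\{(0,0),(1,1),(0,1),(1,0)\}$; let $U$ denote the natural $(m+1)$-dimensional module for $A_m$ and normalize so that $q_2=1$. When $k_1=k_2$ one computes with Steinberg's tensor product theorem that $W|_{H^0}=U^{(q_1)}\otimes U$ (up to dualizing) and hence obtains $V_1|_{H^0}$, $V_2|_{H^0}$ explicitly: if $q_1=1$ then $V_1|_{H^0}\cong V_2|_{H^0}$, so $V|_{H^0}$ is homogeneous, and since $Z(M^0)$ acts by scalars on $V$ we may take $H_1=H^0$, so that $H$ is a cyclic extension of $H^0$ and Proposition~\ref{p:niso} gives a contradiction; if $q_1>1$ then $V_1|_{H^0}$ and $V_2|_{H^0}$ are non-isomorphic irreducibles whose highest weights lie in distinct $\mathrm{Aut}(A_m)$-orbits (a comparison of supports and coefficients, as in the arguments using Lemma~\ref{l:mu}), contradicting that $H/H^0$ permutes the composition factors transitively. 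When $k_1\neq k_2$ and $q_1=1$ we get $W|_{H^0}\cong\mathfrak{sl}_{m+1}\oplus K$, so $W^{H^0}$ is a proper nonzero $H$-invariant subspace and $H$ lies in a proper parabolic subgroup of $G$, contrary to Lemma~\ref{l:hred}; when $k_1\neq k_2$ and $q_1>1$, Steinberg's theorem makes $W|_{H^0}$ irreducible while $V_1|_{H^0}$, $V_2|_{H^0}$ are again non-isomorphic irreducibles lying in distinct $\mathrm{Aut}(A_m)$-orbits, the final contradiction. The hard part will be the first step — determining precisely which maximal subgroups of $\mathrm{SL}_{m+1}$ can act irreducibly on $\Lambda^2$ or on $S^2$ of the natural module, and checking that no single subgroup does both — which requires careful bookkeeping with the tables of \cite{Seitz2,BGMT,BGT}.
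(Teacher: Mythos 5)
Your overall route is the same as the paper's: lift to the simply connected cover $L=L_1\times L_2$, show both projections of $R_1$ are surjective with infinite image by checking that no positive-dimensional maximal subgroup of ${\rm SL}_{m+1}(K)$ acts irreducibly on both $\Lambda^2$ and $S^2$ of the natural module (this is precisely the paper's inspection of \cite{Seitz2,BGMT,BGT}), then apply Lemmas \ref{l:basecase} and \ref{l:steinberg} and analyse the diagonal embedding case by case in $(k_1,k_2,q_1)$. Your handling of the finite-image step (dimension comparison of $V(\o_2)$ and $V(2\o_1)$) and of the mixed cases $k_1\neq k_2$ (the fixed-space/parabolic argument when $q_1=1$, and the weight comparison when $q_1>1$) are sound variants of what the paper does.

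There is, however, a genuine gap in the subcase $k_1=k_2$, $q_1=1$. You argue that $V_1|_{H^0}\cong V_2|_{H^0}$, hence ``$V|_{H^0}$ is homogeneous'', and that ``since $Z(M^0)$ acts by scalars we may take $H_1=H^0$'', so that $H$ is a cyclic extension of $H^0$ and Proposition \ref{p:niso} applies. The reduction to $H_1=H^0$ is unjustified: $H_1=H\cap M^0$ may properly contain $H^0$ and its elements need not act as scalars, so $H/H^0$ need not be cyclic and Proposition \ref{p:niso} cannot be invoked. Note also that each $V_i|_{H^0}\cong\Lambda^2 U\otimes S^2 U$ is reducible, so $V|_{H^0}$ is not homogeneous in the sense of Section \ref{ss:irred}; what your observation really yields is that every irreducible Clifford constituent of $V|_{H^0}$ occurs at least twice, which contradicts Proposition \ref{p:niso} only under the unproven cyclicity hypothesis. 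The paper closes this case without any cyclicity assumption: elements of $H$ induce inner or involutory graph automorphisms of $R^0$, so either $V|_{R^0}$ is homogeneous or its homogeneous components are interchanged by a graph automorphism (property \eqref{e:hom}); Lemma \ref{l:ten} gives a multiplicity-one composition factor of $V_1|_{R^0}$ of highest weight $\mu_1$ with every other factor strictly under $\mu_1$, and Lemma \ref{l:mu} then rules out the graph-swapping alternative. Since you already use exactly this kind of weight comparison in your $q_1>1$ cases, substituting it here repairs the argument; apart from this subcase, your proposal matches the paper's proof.
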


\begin{proof}
Here $G=A_n$ and $M=A_m^2.2$ is a $\C_4(ii)$-subgroup, where $n=m(m+2)$, $p \neq 2$ and $m \geqs 2$. Moreover, $V=V_G(\l)$ and $\l=\l_2$ or $\l_{n-1}$. By duality, we may assume that $V=V_G(\l_2) = \L^2(W)$. Seeking a contradiction, let us assume that $V|_{H}$ is irreducible. 

Write $M^0 = A_m^2 = M_1M_2$ and note that this is a central product. Let $\{\o_{i,1}, \ldots, \o_{i,m}\}$ be fundamental dominant weights for $M_i$. As recorded in \cite[Table 6.2]{BGT}, we have
$V|_{M^0} = V_1 \oplus V_2$,
where
$$V_1 = V_{M_1}(\o_{1,2}) \otimes V_{M_2}(2\o_{2,1}),\;\; V_2 = V_{M_1}(2\o_{1,1}) \otimes V_{M_2}(\o_{2,2}).$$
Set $H_1 = H \cap M^0$ and note that $H_1^0 = H^0$ and $H=H_1.2$. Since $V|_{H}$ is irreducible it follows that $V|_{H_1}$ has exactly two composition factors, namely 
$$V|_{H_1} = V_1|_{H_1} \oplus V_2|_{H_1}.$$
Note that $H^0<M^0$ since $H$ is disconnected and non-maximal.

As in the proof of Proposition \ref{p:geom7}, we need to modify this initial set-up in order to proceed as we did in the proof of Proposition \ref{p:geom5} (the main difference here is the fact that $M^0$ is a central product, rather than a direct product). Since $W$ is a $KM^0$-module, it lifts to a representation $\rho: L \to {\rm GL}(W)$, where $L=L_1L_2$ is the direct product of two simply connected groups $A_m = {\rm SL}_{m+1}(K)$. Then $M^0 = L/Y$, where $Y = \ker(\rho)$, and so there exist subgroups $R \leqs R_1 \leqs L$ such that $H_1 = R_1/Y$ and $H^0=R/Y$. Note that $H_1/H^0 \cong R_1/R$ is finite, so $R^0=R_1^0$. Let 
$\pi_i : R_1 \to L_i$ 
be the $i$-th projection map and observe that $\ker(d\pi_1) \cap \ker(d\pi_2) = 0$.

Since the $KM^0$-module $V_i$ lifts to a representation 
$\rho_i : L \to {\rm GL}(V_i)$, we can consider the restriction of $V_i$ to $R_1$. The irreducibility of $V_1|_{H_1}$ and $V_2|_{H_1}$ implies that $V_1|_{R_1}$ and $V_2|_{R_1}$ are also irreducible, whence $\pi_1(R_1)$ is irreducible on the $KL_1$-modules $V_{L_1}(\o_{1,2})$ and $V_{L_1}(2\o_{1,1})$, and $\pi_2(R_1)$ acts irreducibly on $V_{L_2}(2\o_{2,1})$ and $V_{L_2}(\o_{2,2})$.

\vs

\noindent \emph{Claim.} $\pi_1(R_1)$ and $\pi_2(R_1)$ are infinite.  

\vs

We proceed as in the proof of Proposition \ref{p:geom5}; the details are very similar. Suppose that $\pi_1(R_1)$ is finite, so $\ker(d\pi_2)=0$. Then $\pi_2(R_1)$ has to be infinite since $H$ (and thus $H_1$, and also $R_1$) is infinite. Since $\ker(\pi_1) \leqs R_1$ has finite index, it follows that $R_1^0 \leqs \ker(\pi_1)$ and thus $\pi_2|_{R^0}: R^0 \to L_2$ is injective. 

Suppose $\pi_2$ is not surjective. Then there exists a positive-dimensional maximal subgroup $J_2$ of $L_2$ such that $\pi_2(R_1) \leqs J_2< L_2$.  As noted above, $\pi_2(R_1)$ acts irreducibly on $V_{L_2}(2\omega_{2,1})$ and 
$V_{L_2}(\omega_{2,2})$, so we may consider the irreducible triples 
$(L_2,J_2, V_{L_2}(2\omega_{2,1}))$ and $(L_2,J_2, V_{L_2}(\omega_{2,2}))$.  
In the usual way, by inspecting \cite{BGMT,BGT,Seitz2}, we deduce that there are no compatible examples, whence $\pi_2$ is surjective. 

Therefore $\pi_2(R_1^0)=L_2$ and thus $\pi_2|_{R^0}: R^0 \to L_2$ is a bijective morphism. Moreover, $\ker(d(\pi_2|_{R^0}))=0$ so $d(\pi_2|_{R^0})$ is an isomorphism and thus $\pi_2|_{R^0}$ is an isomorphism of algebraic groups. By Lemma \ref{l:steinberg}, we can write 
$\pi_2|_{R^0}=t_{x}\gamma^{k}$ for some $x \in R^0$ and integer $k\in \{0,1\}$, where $\gamma$ is a graph automorphism. Note that \eqref{e:hom} holds.
  
Let $\{\eta_1,\ldots, \eta_m\}$ be a set of fundamental dominant weights for $R^0$.  Then $V|_{R^0}$ has composition factors isomorphic to $V_{R^0}(2\eta_1)$ and $V_{R^0}(\eta_2)$ if $k=0$, and $V_{R^0}(2\eta_m)$ and $V_{R^0}(\eta_{m-1})$ if $k=1$. But the corresponding highest weights are incompatible with \eqref{e:hom}, so we have reached a contradiction. We conclude that $\pi_1(R_1)$ is infinite, and similarly $\pi_2(R_1)$ is also infinite.

\vs

\noindent \emph{Claim.} $\pi_1$ and $\pi_2$ are surjective.  

\vs

Suppose $\pi_1$ is not surjective. Then there exists a positive-dimensional maximal subgroup $J_1$ of $L_1$ such that $\pi_1(R_1) \leqs J_1<L_1$ and we can consider the irreducible triples 
$(L_1,J_1, V_{L_1}(2\o_{1,1}))$ and $(L_1,J_1, V_{L_1}(\o_{1,2}))$.  
As above, there are no compatible examples and thus $\pi_1$ is surjective. An entirely similar argument shows that $\pi_2$ is also surjective. 

\vs

Now $\pi_i(R^0)=L_i$ for $i=1,2$, so $R^0$ is a subdirect product of $L=L_1L_2$ and thus  Lemma \ref{l:basecase} implies that $R^0 \cong L_1$ is diagonally embedded in $L_1L_2$ (note that $H^0<M^0$, so $R^0 < L$). Therefore
$$R^0=\{(\tau_1(x), \tau_2(x)) \mid x \in {\rm SL}_{m+1}(K)\},$$ 
where $\tau_i: {\rm SL}_{m+1}(K)\to L_i$ is a bijective morphism. As before, we may write 
$\tau_i=t_{x_i}\sigma_{q_i}\gamma^{k_i}$ for some $x_i \in R^0$, $p$-power $q_i$ and $k_i\in \{0,1\}$.  Note that \eqref{e:hom} holds. As before, we may assume that $q_2=1$.

Let $\{\eta_1,\dots, \eta_m\}$ be a set of fundamental dominant weights for $R^0$. Now $V|_{R^0} = V_1|_{R^0} \oplus V_2|_{R^0}$ and we calculate that $V|_{R^0}$ has composition factors with the following highest weights $\mu_1$ and $\mu_2$:
$$\begin{array}{ccc} \hline
(k_1,k_2) & \mu_1 & \mu_2 \\ \hline
(0,0) & 2\eta_{1}+q_1\eta_2 & 2q_1\eta_{1}+\eta_2 \\
(1,0) & 2\eta_{1}+q_1\eta_{m-1} & \eta_{2}+2q_1\eta_m \\
(0,1) & q_1\eta_{2}+2\eta_m & 2q_1\eta_{1}+\eta_{m-1} \\
(1,1) & q_1\eta_{m-1}+2\eta_m & \eta_{m-1}+2q_1\eta_m \\ \hline
\end{array}$$
In all four cases, \eqref{e:hom} implies that $q_1=1$. 

Now $V|_{R^0}$ is non-homogeneous by Lemma \ref{l:ten}. More precisely, $V_1|_{R^0}$ has a composition factor of highest weight $\mu_1$ as in the table (with $q_1=1$), occurring with multiplicity $1$. If $\nu$ denotes the highest weight of any other composition factor of $V_1|_{R^0}$, then $\nu \neq \mu_1$ and $\nu \preccurlyeq \mu_1$ (so $\mu_1 - \nu = \sum_{i}c_i\a_i$ for some $c_i \in \mathbb{N}_0$). Therefore, Lemma \ref{l:mu} implies that $V|_{R^0}$ does not satisfy the homogeneity condition in \eqref{e:hom} and this final contradiction completes the proof of the proposition.
\end{proof}

\begin{prop}\label{p:geom3}
Suppose $H<M<G$ and $(G,M,V)$ is the case labelled (iii) in Table \ref{tab:geom2}. Then $V|_{H}$ is reducible.
\end{prop}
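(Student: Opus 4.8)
The plan is to combine the geometric subgroup structure of ${\rm GO}(W)$ developed in Section \ref{ss:dn2} with the known classification of irreducible triples for smaller classical groups. Throughout, write $W$ for the natural $KG$-module, so $G={\rm SL}(W)$ with $\dim W=2m$, $M={\rm GO}(W)=D_m.2$, $M^0={\rm SO}(W)=D_m$, and $V=V_G(\l_k)=\L^k(W)$ with $1<k<2m-1$ and $p\neq 2$. Since $\L^k(W)\cong(\L^{2m-k}(W))^{*}$ and, as in Table \ref{tab:geom2}, $\l_k$ is recorded up to conjugacy by a graph automorphism of $G$, I may assume $2\leqs k\leqs m$. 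Writing $\{\eta_1,\dots,\eta_m\}$ for fundamental dominant weights of $M^0$, one computes $V|_{M^0}=V_{M^0}(\eta_k)$ if $k\leqs m-2$, $V|_{M^0}=V_{M^0}(\eta_{m-1}+\eta_m)$ if $k=m-1$, and $V|_{M^0}=V_{M^0}(2\eta_{m-1})\oplus V_{M^0}(2\eta_m)$ if $k=m$, the two summands being interchanged by any element of $M\setminus M^0$. Seeking a contradiction, I assume $V|_{H}$ is irreducible; then $H$ is not contained in a parabolic subgroup of $G$ (a subspace stabiliser acts reducibly on $\L^k(W)$), hence not in a parabolic subgroup of $M$.

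Since Theorem \ref{t:ls} is available for $M={\rm GO}(W)$ (see Section \ref{ss:dn2}), either $H$ lies in a geometric subgroup of $M$, or $H$ is non-geometric, meaning $H$ is almost simple modulo scalars with $E(H)=H^0$ a simple algebraic group acting irreducibly (and tensor indecomposably) on $W$. Consider the non-geometric case. Then $H\leqs N_G(H^0)$, and $V|_{H^0}$ is semisimple (Clifford theory) with composition factors transitively permuted by $H/H^0$; since $H$ is almost simple with socle $H^0/Z(H^0)$, the number of these composition factors is bounded by $|{\rm Out}(H^0)|$ up to a factor of $2$, which is small. Thus $\L^k(W)|_{H^0}$ decomposes into very few pieces, and the triple $(G,N_G(H^0),V)$ — with $N_G(H^0)^0=H^0$ simple irreducible on $W$ — must occur in the classification of irreducible triples for almost simple subgroups of ${\rm SL}(W)$ (Seitz \cite{Seitz2}, Ford, and \cite{BGMT,BGT}). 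Inspecting that list for $G=A_n$, $V=V_G(\l_k)$ with $2\leqs k\leqs m$ and $V|_{H^0}$ reducible, the only surviving possibility is $H^0=D_m$ with $k=m$, which forces $H=N_G(H^0)=D_m.2=M$, contradicting the non-maximality of $H$. (Conceptually, the reason no non-geometric example survives is that conjugation by elements of $N_G(H^0)$ induces only inner and diagram automorphisms of the simple group $H^0$, with no Frobenius twist, and such automorphisms preserve the dominance order $\preccurlyeq$ on highest weights — an essentially fatal constraint on a transitive action on the composition factors of $\L^k(W)|_{H^0}$, one of which dominates the others unless $V|_{H^0}$ is homogeneous, and homogeneity is excluded as in Proposition \ref{p:niso}.)

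It remains to treat the case where $H$ lies in a geometric subgroup of $M$; enlarging it, $H$ is contained either in $M^0=D_m$ or in one of the positive-dimensional non-parabolic geometric subgroups $M'$ of $D_m.2$ listed in Table \ref{tab:dn2} (with $n=m$ there). Suppose first $H\leqs M'$ with $M'$ from Table \ref{tab:dn2}. If $k\leqs m-1$, then $\l|_{M^0}$ is $\l_{n-1}+\l_n$ (if $k=m-1$) or $\l_k$ with $1<k<n-1$ (if $k\leqs m-2$), so Proposition \ref{p:dn2_1} shows $V|_{M'}$ is reducible, hence $V|_{H}$ is reducible, a contradiction. If $k=m$, then already $V|_{M'^0}$ is reducible, being the restriction of $V_{M^0}(2\eta_{m-1})\oplus V_{M^0}(2\eta_m)$; as the restriction to $M'^0$ of a single $M^0$-composition factor is itself reducible in every case of Table \ref{tab:dn2}, and $|M':M'^0|$ is too small to permute the resulting composition factors transitively, $V|_{M'}$ is reducible, again a contradiction. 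Now suppose $H\leqs M^0=D_m$. If $k=m$ then $V|_{H}=V_{M^0}(2\eta_{m-1})|_{H}\oplus V_{M^0}(2\eta_m)|_{H}$ is visibly reducible. If $k\leqs m-1$, then $V|_{M^0}=V_{D_m}(\l')$ with $\l'\in\{\eta_k,\eta_{m-1}+\eta_m\}$ is an irreducible $D_m$-module that is neither the natural module nor a spin module, and $H$ is a positive-dimensional subgroup of $D_m$, disconnected modulo $Z(D_m)$, with $V|_{H^0}$ reducible; if $H$ is maximal in $D_m$ this is impossible by \cite{BGMT,BGT}, and otherwise it is impossible by induction on the rank via Theorem \ref{t:main} applied to $D_m$ (no entry of Table \ref{tab:main} with underlying group $D_m$ and $p\neq 2$ has highest weight $\eta_k$ or $\eta_{m-1}+\eta_m$). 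In every case $V|_H$ is reducible, which is the required contradiction.

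The main obstacle is the non-geometric case: the assertion that no simple irreducible subgroup $H^0$ of ${\rm SO}(W)$ has $\L^k(W)|_{H^0}$ reducible with composition factors permuted transitively by the small group $H/H^0$ rests on the full classification of irreducible triples for almost simple subgroups of classical groups rather than on a short intrinsic argument. A secondary point requiring separate care is the boundary case $k=m$, where $V|_{M^0}$ is already reducible so Proposition \ref{p:dn2_1} does not apply directly and one must descend into $D_m$ by hand.
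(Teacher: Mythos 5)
Your handling of the range $2 \leqs k \leqs m-1$ essentially reproduces the paper's argument (reduce to Table \ref{tab:dn2} and Proposition \ref{p:dn2_1} when $H \not\leqs M^0$, and a maximal-subgroup check inside $M^0$ otherwise), but your treatment of the boundary case $k=m$ has genuine gaps. The claim that for every $M'$ in Table \ref{tab:dn2} the restriction to $M'^0$ of a single $M^0$-composition factor of $\L^m(W)$ is reducible is false: for the $\C_1$-subgroup with $M'^0=B_{m-1}$ (row (ii) with $l=0$), Seitz's case IV$_1$ gives $V_{D_m}(2\eta_m)|_{B_{m-1}}=V_{B_{m-1}}(2\xi_{m-1})$ irreducible, and the paper's own proof of this very case exploits exactly that fact. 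The back-up assertion that $|M':M'^0|$ is ``too small to permute the resulting composition factors transitively'' is not a valid criterion when the factors are isomorphic: Clifford theory only requires transitivity on homogeneous components, and a two-dimensional multiplicity space can carry an irreducible projective representation of a small $2$-group, so counting does not rule out irreducibility. What actually settles the $\C_1$-type rows is, for instance, that the stabilizer of $W=W_1\perp W_2$ preserves each summand $\L^{i}(W_1)\wedge\L^{m-i}(W_2)$ of $\L^m(W)$; the paper instead works throughout with $H_1=H\cap M^0$ inside $M^0$, forces $J=B_{m-1}$, excludes $H_1=J$ via Proposition \ref{p:niso} and then performs a second descent. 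You also miss $m=2$ (i.e.\ $G=A_3$, $\l=\l_2$) entirely: Section \ref{ss:dn2} assumes $\dim W\geqs 6$, so neither Table \ref{tab:dn2} nor Theorem \ref{t:ls} for ${\rm GO}(W)$ is available there, and the paper needs a separate ad hoc argument showing $H^0=A_1$ and that $N_G(H^0)$ is reducible on $W$.

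The non-geometric branch has a scoping problem as well, which you partly acknowledge. You apply the almost-simple classification over ${\rm SL}(W)$ directly to $H$, but here $H$ fixes a quadratic form on $W$, so it falls outside the class described in Theorem \ref{t:ls}(ii) for ${\rm SL}(W)$, and in this paper the results of \cite{BGMT} are invoked either for subgroups not contained in any geometric subgroup of $G$ (Theorem \ref{t:smain}) or, in the form of \cite[Theorem 3]{BGMT}, over the ambient group $M=D_m.2$ -- and the latter only when $V|_{M^0}$ is irreducible, which fails precisely when $k=m$. Ruling out almost simple subgroups of ${\rm GO}(W)$ that are non-maximal in ${\rm SL}(W)$ is part of what this proposition itself must establish, so your citation does not carry the $k=m$ non-geometric subcase without further argument. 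A smaller point: your appeal to Theorem \ref{t:main} for $D_m$ ``by induction on the rank'' would require the whole proof to be organized as such an induction; the paper avoids this by checking the possible maximal overgroups $J<M^0$ against \cite{Seitz2,BGMT,BGT} directly (which also makes the $H\leqs M^0$ step independent of the theorem being proved).
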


\begin{proof}
Here $G=A_n$, $V=V_G(\l_k)$ with $1 < k < n$, and $M=D_m.2$ is a $\C_6$-subgroup with $n=2m-1$, $m \geqs 2$ and $p \neq 2$. Let $\{\eta_1, \ldots, \eta_m\}$ be a set of fundamental dominant weights for $M^0=D_m$. There are three separate cases to deal with here, depending on the value of $k$ (by duality, we may assume that $2 \leqs k \leqs m$):
\begin{itemize}\addtolength{\itemsep}{0.2\baselineskip}
\item[(a)] $k=m$: $V|_{M^0}=V_1 \oplus V_2$ is reducible, where $V_1$ and $V_2$ have highest weights $2\eta_{m-1}$ and $2\eta_m$, respectively (see \cite[Table 3.2]{BGT}).
\item[(b)] $k=m-1$: $V|_{M^0}$ is irreducible, with highest weight $\eta_{m-1}+\eta_m$ (see case I$_5$ in \cite[Table 1]{Seitz2}). Note that $m \geqs 3$.
\item[(c)] $2 \leqs k<m-1$: $V|_{M^0}$ is irreducible, with highest weight $\eta_{k}$ (see case I$_4$ in \cite[Table 1]{Seitz2}). Note that $m \geqs 4$.
\end{itemize}
Seeking a contradiction, let us assume that $V|_{H}$ is irreducible.

First assume that (a) holds. Note that $H \not\leqs M^0$ since $V|_{M^0}$ is reducible. To begin with, let us assume  $m=2$. Here $G=A_3$ and $H^0<M^0=A_1A_1$, so 
$$H^0 \in \{T_2, A_1, A_1T_1\}$$ 
(recall that $H^0$ is reductive; see Lemma \ref{l:hred}). Also note that $V=\L^2(W)$, where $W$ is the natural $KG$-module. We claim that $H^0=A_1$. To see this, suppose $S \leqs H^0$ is  a central torus. Then $H \leqs N_G(S)$ and thus the set of fixed points of $S$ on $V$ is $H$-invariant. But $S$ lies in a maximal torus of $M^0$, which has nontrivial fixed points on $V$, so this contradicts the irreducibility of $V|_{H}$. This justifies the claim. Therefore 
$$H \leqs N_G(H^0) =  H^0C_G(H^0)$$ and thus $V|_{H^0}$ is homogeneous. 

By considering the possible embeddings of $H^0$ in $M^0$, it follows that $W|_{H^0}$ is the two-fold tensor product $U\otimes U$,
 where $U$ is the natural $KH^0$-module. Hence, $W|_{H^0} = W_1\oplus W_2$, where 
$W_1 = V_{H^0}(2\omega)$ and $W_2 = V_{H^0}(0\omega)$ is the trivial irreducible $KH^0$-module (here $\omega$ is the fundamental dominant weight for $H^0$). Since $\dim W_1 \ne \dim W_2$ we deduce that $W|_{N_G(H^0)}$ is reducible and thus $N_G(H^0)$ lies in a parabolic subgroup of $G$. This contradicts the irreducibility of $V|_{H}$.

Now assume $m \geqs 3$. Set $H_1 = H \cap M^0$ and note that $H = H_1.2$, so $V|_{H_1}$ has exactly two composition factors, namely $V_1|_{H_1}$ and $V_2|_{H_1}$. Note that $H_1<M^0$ since we are assuming that $H$ is disconnected and non-maximal. Let $J$ be a maximal subgroup of $M^0$ that contains $H_1$, so we have
$$H_1 = H \cap M^0 \leqs J < M^0 = D_m.$$
We consider the irreducible triples $(M^0,J,V_1)$ and $(M^0,J,V_2)$, where $V_1 = V_{M^0}(2\eta_{m-1})$ and $V_2 = V_{M^0}(2\eta_{m})$. By inspecting \cite[Table 1]{Seitz2}, and using the main theorems of \cite{BGMT,BGT}, we deduce that $J=B_{m-1}$ is the only possibility and
$$V_1|_{J} = V_2|_{J} = V_{B_{m-1}}(2\xi_{m-1})$$
(where $\{\xi_1, \ldots, \xi_{m-1}\}$ are fundamental dominant weights for $B_{m-1}$) -- see case IV$_1$ in \cite[Table 1]{Seitz2}. Note that if $H_1 = J = B_{m-1}$ then the two $KH^0$-composition factors of $V|_{H^0}$ (namely $V_1|_{H^0}$ and $V_2|_{H^0}$) are isomorphic, but this is ruled out by Proposition \ref{p:niso}. Therefore $H_1$ is a proper subgroup of $J$, so let $L$ be a maximal subgroup of $J$ that contains $H_1$, in which case
$$H_1 = H \cap M^0 \leqs L < J = B_{m-1} < M^0=D_m.$$
We now consider the irreducible triple $(J,L,V_{B_{m-1}}(2\xi_{m-1}))$. In the usual way, by inspecting \cite{BGMT, BGT,Seitz2}, we deduce that there are no compatible configurations and this completes the analysis of case (a). 

Next consider case (b). First assume $H \leqs M^0$. Since we are assuming $H$ is disconnected and non-maximal, it follows that $H \leqs J<M^0$ for some maximal subgroup $J$ of $M^0$, and we may consider the irreducible triple $(M^0,J,V_{M^0}(\eta_{m-1}+\eta_m))$. By inspecting \cite{BGMT, BGT, Seitz2}, it is easy to check that there are no compatible examples. In the same way, we deduce that $H \not\leqs M^0$ in case (c). 

Finally, let us consider cases (b) and (c), with $H \not\leqs M^0$. Let $J$ be a maximal subgroup of $M = D_m.2$ such that
$$H \leqs J < M = D_m.2 = {\rm GO}(W).$$
Note that $J$ is disconnected, and $J$ is either geometric or non-geometric (as described in Section \ref{ss:dn2}). Given the highest weight of $V|_{M^0}$, we can rule out the latter possibility by applying \cite[Theorem 3]{BGMT}, so we may assume $J$ is geometric. (Note that we can appeal to \cite[Theorem 3]{BGMT} since $V|_{M^0}$ is irreducible.) The possibilities for $J$ are determined in Proposition \ref{p:dn2} and they are listed in Table \ref{tab:dn2}. We now apply Proposition \ref{p:dn2_1}, which implies that $V|_{J}$ is reducible. This final contradiction completes the proof of the proposition.
\end{proof}

\begin{prop}\label{p:geom13}
Suppose $H<M<G$ and $(G,M,V)$ is one of the cases labelled (ix) or (xiii) in Table \ref{tab:geom2}. Then $V|_{H}$ is reducible.
\end{prop}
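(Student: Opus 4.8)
The plan is to treat cases (ix) and (xiii) simultaneously, following the template of the proofs of Propositions \ref{p:geom2}, \ref{p:geom5} and \ref{p:geom7}. In both cases $M = M^0.\la\tau\ra$, where $M^0 = M_1M_2$ is a central product of two isomorphic simply connected simple groups — of type $B_1$ in case (ix) and of type $C_2$ in case (xiii) — embedded in $G$ via a tensor decomposition $W = W_1 \otimes W_2$ of the natural module, with $\tau$ interchanging the two factors. The first step is to record $V|_{M^0}$, say from \cite[Table 6.2]{BGT} (or by a direct weight computation in the spin module): in each case $V|_{M^0} = V_1 \oplus V_2$ with $V_i = V_{M_1}(\mu_i) \otimes V_{M_2}(\mu_i')$, the two summands non-isomorphic and interchanged by $\tau$. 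For (ix) one finds $\{\mu_i,\mu_i'\} = \{\o,3\o\}$; the hypothesis $p \neq 3$ is precisely what guarantees this description, since for $p=3$ the module $V_{A_1}(3\o)$ collapses and extra composition factors appear (and similarly $p \neq 5$ in (xiii)).

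Now assume for contradiction that $V|_H$ is irreducible. Since $V|_{M^0}$ is reducible we get $H \not\leqs M^0$, hence $H = H_1.2$ with $H_1 = H \cap M^0$, $V|_{H_1} = V_1|_{H_1} \oplus V_2|_{H_1}$ and both summands irreducible; also $H^0 < M^0$, for otherwise $H = M$ is maximal. Exactly as in Proposition \ref{p:geom7} we pass to the direct product cover $L = L_1 \times L_2$ of $M^0$ and to the preimages $R \leqs R_1$ of $H^0 \leqs H_1$, and set up the projections $\pi_i$, noting $\ker(d\pi_1) \cap \ker(d\pi_2) = 0$. Running the argument of Proposition \ref{p:geom5}, we show each $\pi_i$ has infinite image and is in fact surjective: otherwise its image lies in a proper positive-dimensional maximal subgroup $J_i$ of $M_i$ acting irreducibly on both $V_{M_i}(\mu_i)$ and $V_{M_i}(\mu_i')$, and one checks using \cite{Seitz2,BGMT,BGT} that no such $J_i$ exists. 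In case (ix), where $M_i = A_1$, this is elementary: the only infinite proper closed subgroup of $A_1$ irreducible on the $2$-dimensional module $V_{A_1}(\o)$ is the normalizer of a maximal torus, and that subgroup is reducible on $V_{A_1}(3\o)$. Hence $H^0$ is a subdirect product of $M^0$, so by Lemma \ref{l:basecase} together with $H^0 < M^0$ it is diagonally embedded, $H^0 = \{(\tau_1(x),\tau_2(x)) : x \in M_1\}$ with $\tau_i = t_{x_i}\s_{q_i}\gamma^{k_i}$ by Lemma \ref{l:steinberg}, and the inclusion $H \leqs N_G(H^0)$ imposes the homogeneity restriction \eqref{e:hom} on $V|_{H^0}$.

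To finish, we compute the highest weights of the composition factors of $V_1|_{H^0}$ and $V_2|_{H^0}$ from $\mu_i, \mu_i'$ and the $\tau_i$ (after normalising $q_2 = 1$); condition \eqref{e:hom} forces $q_1 = 1$ in every case, and then Lemma \ref{l:ten} exhibits a composition factor of $V_1|_{H^0}$ whose highest weight is strictly below that of $V_1|_{H^0}$, which by Lemma \ref{l:mu} is incompatible with \eqref{e:hom} — a contradiction. In case (ix), since $3\o$ fails to be $p$-restricted when $p=2$, Lemma \ref{l:ten} does not apply directly; instead, with $q_1 = 1$ the module $V_1|_{H^0}$ is a Clebsch--Gordan product $V_{A_1}(\o) \otimes V_{A_1}(3\o)$, hence reducible with non-isomorphic composition factors, again contradicting \eqref{e:hom}. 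I expect the genuine work to be concentrated in the first step — pinning down $V|_{M^0}$ precisely (the spin module makes the weight combinatorics delicate, and for (xiii) one must also verify there are exactly two composition factors when $p \neq 5$) — together with the verification, inside the "$\pi_i$ surjective" step, that no proper positive-dimensional subgroup of $M_i$ is simultaneously irreducible on the two modules $V_{M_i}(\mu_i)$, $V_{M_i}(\mu_i')$; everything downstream is a direct transcription of the cited proofs.
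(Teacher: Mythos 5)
Your overall strategy is exactly the paper's: decompose $V|_{M^0}$ via \cite[Table 6.2]{BGT}, pass to the direct-product cover $L=L_1L_2$, prove the projections of $R_1$ are infinite and then surjective by ruling out irreducible triples over proper maximal subgroups of the $L_i$, apply Lemma \ref{l:basecase} and Lemma \ref{l:steinberg} to get a diagonally embedded $R^0$, and contradict the homogeneity of $V|_{R^0}$ forced by $N_G(H^0)=H^0C_G(H^0)$. For case (ix) your argument is essentially complete (your elementary treatment of the surjectivity step for $A_1$ is a pleasant shortcut over citing the tables), and note that the $p=2$ caveat you add there is vacuous: case (ix) only occurs with $p\neq 2,3$, since for $(B_n,2)$ the only disconnected geometric configurations are the $\C_2$- and $D_n.2$-cases.

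The genuine gap is in case (xiii) when $p=2$ (which is allowed there, as the only condition is $p\neq 5$), where your closing step ``Lemma \ref{l:ten} plus Lemma \ref{l:mu} contradicts \eqref{e:hom}'' does not go through. First, Lemma \ref{l:mu} is stated and proved only for $G=A_n$ and $D_n$, so it cannot be invoked for $R^0$ of type $C_2$; moreover the relevant restriction here is plain homogeneity of $V|_{R^0}$ (every algebraic automorphism of $C_2$ is inner, even in characteristic $2$, since the exceptional graph morphism is not an isomorphism of algebraic groups), not the weaker condition \eqref{e:hom}. Second, in characteristic $2$ the Steinberg factorization $\tau_i=t_{x_i}\sigma_{q_i}\gamma^{k_i}$ genuinely involves the exceptional graph morphism $\gamma$ of $C_2$, and its effect must be tracked through the highest weights of the composition factors of $V_1|_{R^0}$ and $V_2|_{R^0}$: one first deduces $k_1=k_2$ and $q_1=1$, and then in the configuration $(k_1,k_2)=(1,1)$ one has $V_1|_{R^0}\cong V_{R^0}(2\eta_2)\otimes V_{R^0}(\eta_1+2\eta_2)$, whose tensor factors are not $2$-restricted, so Lemma \ref{l:ten} is inapplicable; instead one must decompose by hand via Steinberg's tensor product theorem (e.g. $V_{R^0}(\eta_2)^{(2)}\otimes V_{R^0}(\eta_1)\otimes V_{R^0}(\eta_2)^{(2)}$), obtaining composition factors of highest weights $\eta_1+4\eta_2$ and $5\eta_1$ and hence the contradiction with homogeneity. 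This characteristic-$2$ analysis for (xiii) is where the real work lies, and it is missing from your outline; you placed the non-restrictedness concern in case (ix), where it never arises.
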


\begin{proof}
First consider the case labelled (xiii). Here $G=D_8$, $M=C_2^2.2$ is a $\C_4(ii)$-subgroup, $p \neq 5$ and $V=V_G(\l_7)$. Seeking a contradiction, let us assume that $V|_{H}$ is irreducible. We proceed as in the proof of Proposition \ref{p:geom7}. 

Write $M^0=M_1M_2$, which is a central product of two simply connected groups of type $C_2$, and let $\{\o_{1,1},\o_{1,2}\}$ and $\{\o_{2,1},\o_{2,2}\}$ be fundamental dominant weights for $M_1$ and $M_2$, respectively. As recorded in \cite[Table 6.2]{BGT}, we have $V|_{M^0} = V_1 \oplus V_2$, where
$$V_1 = V_{M_1}(\o_{1,1}) \otimes V_{M_2}(\o_{2,1}+\o_{2,2}),\;\; V_2 = V_{M_1}(\o_{1,1}+\o_{1,2}) \otimes V_{M_2}(\o_{2,1}).$$
Set $H_1 = H \cap M^0$, so $H=H_1.2$ and $H_1^0 = H^0$. Since  $V|_{H}$ is irreducible it follows that $V|_{H_1}$ has exactly two composition factors, namely 
$V_1|_{H_1}$ and $V_2|_{H_1}$.

The $KM^0$-module $W$ lifts to a representation $\rho:L \to {\rm GL}(W)$, where $L = L_1L_2$ is a direct product of two simply connected groups of type $C_2$, so $M^0 = L/Y$ where $Y = \ker(\rho)$. Since $H^0 \leqs H_1 \leqs M^0$, there exist subgroups $R \leqs R_1 \leqs L$ such that $H_1 = R_1/Y$ and $H^0 = R/Y$. Note that $R^0 = R_1^0$. Let 
$\pi_i : R_1 \to L_i$ 
be the $i$-th projection map and observe that $\ker(d\pi_1) \cap \ker(d\pi_2) = 0$.
Since the $KM^0$-module $V_i$ lifts to a representation $\rho_i:L \to {\rm GL}(V_i)$, we can consider $V_{i}|_{R_1}$. The irreducibility of $V_i|_{H_1}$ implies that $V_i|_{R_1}$ is also irreducible, so we deduce that $\pi_1(R_1)$ acts irreducibly on the $KL_1$-modules $V_{L_1}(\o_{1,1})$ and $V_{L_1}(\o_{1,1}+\o_{1,2})$, and similarly, $\pi_2(R_1)$ is irreducible on $V_{L_2}(\o_{2,1})$ and $V_{L_2}(\o_{2,1}+\o_{2,2})$.

\vs

\noindent \emph{Claim.} $\pi_1(R_1)$ and $\pi_2(R_1)$ are infinite.

\vs

We proceed as in the previous cases. Suppose $\pi_1(R_1)$ is finite. Then $\pi_2(R_1)$ is infinite, $\ker(d\pi_2)=0$ and $R_1^0 \leqs \ker(\pi_1)$, so $\pi_2|_{R^0}: R^0 \to L_2$ is injective.

Suppose $\pi_2$ is not surjective. Then there exists a positive-dimensional maximal subgroup $J_2$ of $L_2$ such that $\pi_2(R_1) \leqs J_2< L_2$, and we can consider the irreducible triples 
$(L_2,J_2, V_{L_2}(\omega_{2,1}))$ and $(L_2,J_2, V_{L_2}(\o_{2,1}+\omega_{2,2}))$. 
By inspecting \cite{BGMT,BGT,Seitz2} we find that there are no compatible examples, which is a contradiction and thus $\pi_2$ is surjective. Therefore $\pi_2(R_1^0)=L_2$ and thus 
$\pi_2|_{R^0}: R^0 \to L_2$ is a bijective morphism. Since $\ker(d(\pi_2|_{R^0}))=0$ we deduce that $\pi_2|_{R^0}$ is an isomorphism of algebraic groups, so by Lemma \ref{l:steinberg} we can write $\pi_2|_{R^0}=t_{x}$ for some $x \in R^0$. If $\{\eta_1,\eta_2\}$ is a set of fundamental dominant  weights for $R^0$, then  
$V|_{R^0}$ has composition factors isomorphic to $V_{R^0}(\eta_1)$ and $V_{R^0}(\eta_1+\eta_2)$, but $V|_{R^0}$ is homogeneous since $N_G(H^0) = H^0C_G(H^0)$, so this is a  contradiction. We conclude that $\pi_1(R_1)$ is infinite, and similarly $\pi_2(R_1)$ is infinite. 

\vs

\noindent \emph{Claim.} $\pi_1$ and $\pi_2$ are surjective.

\vs
 
Suppose $\pi_1$ is not surjective. Then there exists a positive-dimensional maximal subgroup $J_1$ of $L_1$ such that $\pi_1(R_1) \leqs J_1<L_1$, and we can consider the irreducible triples 
$(L_1,J_1, V_{L_1}(\o_{1,1}))$ and $(L_1,J_1, V_{L_1}(\o_{1,1}+\o_{1,2}))$. 
As noted above, there are no compatible examples, so $\pi_1$ must be surjective and an entirely similar argument shows that $\pi_2$ is also surjective. 

\vs

We have $\pi_i(R^0)=L_i$ for $i=1,2$, so $R^0$ is a subdirect product of $L=L_1L_2$ and thus  Lemma \ref{l:basecase} implies that either $R^0=L$, or $R^0 \cong L_1$ is simply connected and diagonally embedded in $L$. If $R^0=L$ then $H^0=M^0$ and thus $H=M$ (since $H$ is disconnected), which is false. Therefore, $R^0 \cong L_1$ is diagonally embedded and thus
$$R^0=\{(\tau_1(x), \tau_2(x)) \mid x \in {\rm Sp}_{4}(K) \}$$
where each $\tau_i: {\rm Sp}_{4}(K) \to L_i$ is a bijective morphism.  By Lemma \ref{l:steinberg} we may write 
$\tau_i=t_{x_i}\sigma_{q_i}\gamma^{k_i}$ for some $x_i \in R^0$, $p$-power $q_i$ and  $k_i\in \{0,1\}$, where $\gamma$ is a graph automorphism of $C_2$ if $p=2$, otherwise $\gamma=1$. We may assume that $q_2=1$. Since $N_G(H^0) =H^0C_G(H^0)$, it follows that $V|_{R^0}$ is homogeneous.  

As above, let $\{\eta_1,\eta_2\}$ be a set of fundamental dominant  weights for $R^0$. First assume that $p \neq 2$, so $(k_1,k_2) = (0,0)$. Then $V|_{R^0}$ has composition factors with highest weights $(q_1+1)\eta_1+\eta_2$ and $(q_1+1)\eta_1+q_1\eta_2$,
and the homogeneity of $V|_{R^0}$ implies that $q_1=1$. But Lemma \ref{l:ten} implies that 
$V|_{R^0}$ is non-homogeneous, so we have reached a contradiction.

Now assume $p=2$. Here $V|_{R^0}$ has composition factors with highest weights $\mu_1$ and $\mu_2$ as follows:
$$\begin{array}{ccc} \hline
(k_1,k_2) & \mu_1 & \mu_2 \\ \hline
(0,0) & (q_1+1)\eta_1+\eta_2 & (q_1+1)\eta_1+q_1\eta_2 \\
(1,0) &   \eta_1+2(q_1+1)\eta_2 &  (q_1+1)\eta_1+q_1\eta_2 \\
(0,1) & (q_1+1)\eta_1+\eta_2 &  q_1\eta_1+2(q_1+1)\eta_2 \\
(1,1) & \eta_1+2(q_1+1)\eta_2  &  q_1\eta_1+2(q_1+1)\eta_2\\ \hline
\end{array}$$
Since $V|_{R^0}$ is homogeneous, we deduce that $k_1=k_2$ and $q_1=1$.

If $(k_1,k_2)=(0,0)$ then $V_1|_{R^0}  =   V_{R^0}(\eta_1)\otimes V_{R^0}(\eta_1+\eta_2)$ and thus Lemma \ref{l:ten} contradicts the homogeneity of 
$V|_{R^0}$. Similarly, if $(k_1,k_2) = (1,1)$ then 
\begin{align*}
V_1|_{R^0} = V_{R^0}(2\eta_2)\otimes V_{R^0}(\eta_1+2\eta_2) & \cong   V_{R^0}(\eta_2)^{(2)} \otimes V_{R^0}(\eta_1)\otimes V_{R^0}(\eta_2)^{(2)} \\
 & \cong  (V_{R^0}(\eta_2)\otimes V_{R^0}(\eta_2))^{(2)}\otimes V_{R^0}(\eta_1)
\end{align*}
Now $V_{R^0}(\eta_2)\otimes V_{R^0}(\eta_2)$ has composition factors with highest weights $2\eta_2$ and $2\eta_1$, whence $V_1|_{R^0}$ has composition factors with highest weights 
$\eta_1+4\eta_2$ and $5\eta_1$. This final contradiction completes the analysis of case (xiii) in Table \ref{tab:geom2}.

The case labelled (ix) is similar (and easier). Here $G=B_4$, $V=V_G(\l_4)$ and $M=B_1^2.2$ is a $\C_4(ii)$-subgroup, where $p \neq 2,3$. The connected component $M^0=M_1M_2$ is a central product of two simply connected groups of type $B_1$, and we note that 
$$V|_{M^0}=V_1 \oplus V_2 = (V_{M_1}(\omega_1) \otimes V_{M_2}(3\omega_2)) \oplus (V_{M_1}(3\omega_1) \otimes V_{M_2}(\omega_2))$$ 
(see \cite[Table 6.2]{BGT}), where $\omega_1$ and $\omega_2$ are fundamental dominant weights for $M_1$ and $M_2$, respectively. We leave the details to the reader. 
\end{proof}

\subsection{Proof of Theorem \ref{t:geom1}, Part II}\label{ss:part2}

In order to complete the proof of Theorem \ref{t:geom1}, it remains to deal with the cases labelled (vii), (x), (xi) and (xii) in Table \ref{tab:geom2}. 

\begin{remk}\label{r:irre}
Suppose that $V|_{H}$ is irreducible, where $H<M<G$ and $(G,M,V)$ is one of the cases (x), (xi) or (xii). Here $M$ is the normalizer of a tensor product decomposition $W = W_1 \otimes \cdots \otimes W_t$ of the natural $KG$-module, with $t=3$ or $4$. Therefore, by combining  Lemma \ref{l:tens} with our earlier work in Sections \ref{s:geom1} and \ref{ss:part1}, we deduce that $W|_{H^0}$ is irreducible. Indeed, if $W|_{H^0}$ is reducible then Lemma \ref{l:tens} implies that we may replace $M$ by some other geometric maximal subgroup of $G$ that does not normalize such a decomposition, in which case our earlier work implies that $V|_{H}$ is reducible.
\end{remk}

In order to deal with cases (x) and (xi), we first establish some preliminary reductions.

\begin{lem}\label{l:case9_10_1}
Let $G=C_4$ and let $H<G$ be a closed positive-dimensional subgroup that is contained in a $\C_4(ii)$-subgroup $M=C_1^3.{\rm Sym}_3$ of $G$. Set $V=V_G(\l)$, where $\l = \lambda_2$ and $p \neq 2$, or $\l=\lambda_3$ and $p \neq 3$. If $V|_{H}$ is irreducible, then $H^0$ is a subdirect product of $M^0=C_1^3$.
\end{lem}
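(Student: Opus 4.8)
The plan is to deduce the statement from the irreducibility of $W|_{H^0}$, where $W$ denotes the natural $KG$-module. First I would fix notation: since $H$ is positive-dimensional we have $H^0 \leqs M^0$, and $M^0 = C_1^3$ is embedded in $G = C_4$ as the stabilizer of a tensor product decomposition $W = W_1 \otimes W_2 \otimes W_3$, where $W_i$ is the $2$-dimensional natural module for the $i$-th factor $M_i \cong \mathrm{SL}_2(K)$. As in the proofs of Propositions \ref{p:geom7} and \ref{p:geom13}, I would lift the $KM^0$-module $W$ to a representation $\rho : L \to \mathrm{GL}(W)$, where $L = L_1 \times L_2 \times L_3$ is a direct product of three copies of $\mathrm{SL}_2(K)$ and $M^0 = L/\ker(\rho)$; let $R$ be the preimage of $H^0$ in $L$, and let $\pi_i : L \to L_i$ be the $i$-th projection. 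Then $\ker(\rho) \leqs Z(L)$ is finite, $R \geqs \ker(\rho)$, and the restriction $L \to M^0$ maps $R^0$ onto $H^0$ with finite kernel, so $W|_{R^0}$ is irreducible if and only if $W|_{H^0}$ is. Moreover, by the definition of a subdirect product in Section \ref{ss:diag}, the statement to be proved is precisely that $\pi_i(R) = L_i$ for $i = 1, 2, 3$, and since each $L_i$ is connected this is equivalent to $\pi_i(R^0) = L_i$ for all $i$.

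Next I would invoke Remark \ref{r:irre}: since $V|_H$ is irreducible and $(G,M,V)$ is the case labelled (x) in Table \ref{tab:geom2}, the natural module $W$ restricts irreducibly to $H^0$, and hence also to $R^0$. This covers $\lambda = \lambda_2$ with $p \ne 2$ and $\lambda = \lambda_3$ with $p \ne 2, 3$; in the remaining case $p = 2$, $\lambda = \lambda_3$, one argues separately using Lemma \ref{l:tens}: either $W|_{H^0}$ is again irreducible, or $H$ lies in a geometric maximal subgroup of $G$ not normalizing an equidimensional triple tensor product decomposition, and then the analysis of the remaining geometric cases — in particular case (vii) via Proposition \ref{p:geom6} — forces $H^0 = M^0$, so that the conclusion is immediate.

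The core of the argument is then short. Fix $i \in \{1, 2, 3\}$ and suppose, for a contradiction, that $\pi_i(R^0) \ne L_i$. Since $R^0$ is connected, $\pi_i(R^0)$ is a proper closed connected subgroup of $\mathrm{SL}_2(K)$, hence is contained in a Borel subgroup and so fixes a $1$-dimensional subspace $U_i$ of $W_i$. As $R^0$ acts on $W$ through $\pi_1 \times \pi_2 \times \pi_3$, the subspace $U_i \otimes \bigotimes_{j \ne i} W_j$ is then a proper nonzero $KR^0$-submodule of $W$, contradicting the irreducibility of $W|_{R^0}$. Hence $\pi_i(R^0) = L_i$ for every $i$, which is exactly the assertion that $H^0$ is a subdirect product of $M^0 = C_1^3$.

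Once the irreducibility of $W|_{H^0}$ is available this argument is essentially immediate, so the only delicate points are organisational: passing to the direct-product cover $L$ so that \emph{subdirect product} is read in the sense of Section \ref{ss:diag}, and confirming that $W|_{H^0}$ is irreducible for every characteristic allowed by the statement. I expect this last verification — particularly the case $p = 2$, $\lambda = \lambda_3$ — to be the only genuine obstacle.
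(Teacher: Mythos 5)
Your main argument is correct and is essentially the paper's: the same lifting of $W$ to a representation of the direct product $L=L_1L_2L_3$ with $H^0=R/Y$, the same key input (irreducibility of $W|_{H^0}$ via Remark \ref{r:irre}), and then surjectivity of the projections. The paper reaches that last point after a short case analysis on $J=[R^0,R^0]$ (using Lemma \ref{l:hred} to see $R^0$ is reductive, so $J\in\{C_1^3,C_1^2,C_1,1\}$, and then observing that irreducibility of $W|_{J}$ forces the projections to be surjective); your Borel/line-stabilizer argument is a slightly more direct version of exactly that observation, and it is fine, as is your remark that surjectivity of $\pi_i$ on $R$ and on $R^0$ are equivalent.

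The genuine problem is your treatment of the case $p=2$, $\l=\l_3$. You propose to invoke Proposition \ref{p:geom6} to force $H^0=M^0$, but Proposition \ref{p:geom6} is proved via Proposition \ref{p:geom11}, whose proof rests on Lemma \ref{l:case9_10_3}, whose proof (modelled on Lemma \ref{l:case9_10_2}) begins by applying the present lemma with $\l=\l_3$ and $p\neq 3$ — including $p=2$. So your argument for this sub-case is circular. The paper does not route through Proposition \ref{p:geom6}: its proof appeals only to the Remark \ref{r:irre} mechanism, i.e.\ to Lemma \ref{l:tens} together with the geometric analysis of Sections \ref{s:geom1} and \ref{ss:part1}, all of which sit upstream of this lemma. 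If you want to handle $p=2$, $\l=\l_3$ separately, you must do so with upstream results only: for instance, if $W|_{H^0}$ were reducible then Lemma \ref{l:tens} and the earlier analysis place $H$ inside the $\C_6$-subgroup $D_4.2$ of $C_4$, and then Lemma \ref{l:geom6} (not Proposition \ref{p:geom6}) puts $H$ inside a $C_1^3.{\rm Sym}_3<D_4$ with $V|_{D_4}=V_{D_4}(\eta_3+\eta_4)$, i.e.\ case (xi) of Table \ref{tab:geom2}, where Remark \ref{r:irre} applies (for $G=D_4$, $p=2$, none of the excluded cases (v), (vii) occur) and yields the irreducibility of $W|_{H^0}$ after all, a contradiction; with that repair your core argument then goes through in every case of the statement.
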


\begin{proof}
Here $M^0 = C_1^3=M_1M_2M_3$ is a central product of three simply connected groups of type $C_1$. Let $\o_i$ be the fundamental dominant weight for $M_i$, and note that 
\begin{equation}\label{e:v123}
V|_{M^0} = V_1 \oplus V_2 \oplus V_3,
\end{equation}
where 
\begin{align}\label{e:vmi}
\begin{split}
V_1 & = V_{M_1}(2\o_{1}) \otimes V_{M_2}(2\o_{2})\otimes V_{M_3}(0\o_3)\\
V_2 & = V_{M_1}(2\o_{1}) \otimes V_{M_2}(0\o_{2})\otimes V_{M_3}(2\o_3)\\
V_3 & = V_{M_1}(0\o_{1}) \otimes V_{M_2}(2\o_{2})\otimes V_{M_3}(2\o_3)
\end{split}
\end{align}
if $\l=\l_2$, and
\begin{align}\label{e:vmi2}
\begin{split}
V_1 & = V_{M_1}(\o_{1}) \otimes V_{M_2}(\o_{2})\otimes V_{M_3}(3\o_3)\\
V_2 & = V_{M_1}(\o_{1}) \otimes V_{M_2}(3\o_{2})\otimes V_{M_3}(\o_3)\\
V_3 & = V_{M_1}(3\o_{1}) \otimes V_{M_2}(\o_{2})\otimes V_{M_3}(\o_3)
\end{split}
\end{align}
if $\l=\l_3$ (see \cite[Table 6.2]{BGT}). As noted in Remark \ref{r:irre}, the irreducibility of $V|_{H}$ implies that $W|_{H^0}$ is irreducible, where $W$ denotes the natural $KG$-module. 

As a $KM^0$-module, $W$ lifts to a representation $\rho: L \to {\rm GL}(W)$, where $L=L_1L_2L_3$ is the direct product of three simply connected groups of type $C_1$. Then $M^0 = L/Y$, where $Y = \ker(\rho)$, and there exists a subgroup $R$ of $L$ such that $H^0=R/Y$. 

We need to show that $R^0$ is a subdirect product of $L$. Note that the irreducibility of $W|_{H^0}$ implies that $W|_{R^0}$ is also irreducible. Set $J=[R^0,R^0]$ and recall that $H^0$ is reductive (see Lemma \ref{l:hred}), so $R^0$ is reductive and thus 
$$J \in \{C_1^3, C_1^2,C_1,1\}.$$ 

If $J=C_1^3$ then we are done, so assume otherwise. If $J=1$ then $R^0$ is a torus, contradicting the irreducibility of $W|_{R^0}$. Finally, suppose $J=C_1^2$ or $C_1$.   
Since $R^0$ is the product of $J$ and a central torus, the irreducibility of $W|_{R^0}$ implies that $W|_{J}$ is irreducible. This immediately implies that the projection maps $\pi_i:J \to L_i$ are surjective, so $R^0$ is a subdirect product of $L$ as required.
\end{proof}

\begin{lem}\label{l:case9_10_2}
Suppose $H<M<G$ and $(G,M,V)$ is the case labelled (x) in Table \ref{tab:geom2}, where $V=V_G(\l_2)$ and $p \neq 2$. If $V|_{H}$ is irreducible, then $H^0=M^0$.
\end{lem}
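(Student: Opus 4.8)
The plan is to use Lemma \ref{l:case9_10_1} to pin down the possible structure of $H^0$ and then eliminate everything except $H^0 = M^0$ by a Clifford-theoretic analysis of the $KH^0$-composition factors of $V$. By that lemma, $H^0$ is a subdirect product of $M^0 = C_1^3 = M_1M_2M_3$; lifting to the direct product $L = L_1L_2L_3$ of three copies of $\mathrm{SL}_2$ with $M^0 = L/Y$ and $H^0 = R/Y$, the connected group $R^0$ is a closed connected subdirect product of $L$. I would then apply Proposition \ref{p:useful} to write $R^0 = J_1\cdots J_r$ with $r \in \{1,2,3\}$, each $J_i \cong \mathrm{SL}_2$ diagonally embedded in a subproduct of the $L_j$. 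If $r = 3$ then $R^0 = L$ and $H^0 = M^0$, so it remains to rule out $r = 1$ and $r = 2$. In both cases I would use two facts: first, by Remark \ref{r:irre}, the irreducibility of $V|_H$ forces $W|_{H^0}$, and hence $W|_{R^0}$, to be irreducible; second, by Clifford theory (Section \ref{ss:irred}) the finite group $H/H^0$ permutes the $KH^0$-composition factors of $V$ transitively.

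For $r = 1$: here $R^0 \cong \mathrm{SL}_2$ is diagonally embedded in $L$, so by Lemma \ref{l:steinberg} the three projections have the form $\tau_i = t_{x_i}\sigma_{q_i}$ with $q_i$ a $p$-power (the graph automorphism of $A_1$ being trivial). Since $W|_{R^0} = V(\eta)^{(q_1)} \otimes V(\eta)^{(q_2)} \otimes V(\eta)^{(q_3)}$ is irreducible and $p \neq 2$, the $q_i$ must be pairwise distinct, as otherwise a factor $V(\eta) \otimes V(\eta) = V(2\eta) \oplus V(0)$ would appear; say $q_1 < q_2 < q_3$. Restricting the summands in \eqref{e:v123} using \eqref{e:vmi} and Steinberg's tensor product theorem (no carrying occurs since $2 < p$), one finds that $V_1|_{R^0}$, $V_2|_{R^0}$, $V_3|_{R^0}$ are irreducible of dimension $9$ with pairwise distinct highest weights $(2q_1 + 2q_2)\eta$, $(2q_1 + 2q_3)\eta$, $(2q_2 + 2q_3)\eta$ (with $9 + 9 + 9 = 27 = \dim V_{C_4}(\l_2)$ as a consistency check). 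But $H^0$ is of type $A_1$, which has no nontrivial graph automorphisms, so $N_G(H^0) = H^0C_G(H^0)$ and $V|_{H^0}$ is homogeneous — contradicting the non-isomorphism of the three factors.

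For $r = 2$: after relabelling the $M_i$ using the $\mathrm{Sym}_3$ above $M^0$, I may assume one simple factor $A$ of $R^0$ is a diagonal $\mathrm{SL}_2 < L_1 \times L_2$ and the other, $B$, equals $L_3$; thus $H^0$ has type $A_1 \times A_1$. Restricting the summands in \eqref{e:v123}, the summand $V_1|_{R^0}$ — which is trivial on $M_3$, hence on $B$ — has all composition factors trivial on the $B$-factor, while $V_2|_{R^0}$ and $V_3|_{R^0}$ are irreducible and nontrivial on both simple factors of $H^0$. Since every automorphism of $H^0$ induced by $N_G(H^0)$ either fixes or interchanges the two $A_1$-factors, the $H/H^0$-orbit of a composition factor of $V_1|_{R^0}$ consists only of modules trivial on one $A_1$-factor, so it cannot reach $V_2|_{R^0}$ or $V_3|_{R^0}$; this contradicts transitivity. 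Hence $r = 2$ is impossible, and therefore $H^0 = M^0$.

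I expect the main obstacle to be the bookkeeping in the $r = 1$ and $r = 2$ cases: correctly computing each $V_i|_{R^0}$ while tracking the Frobenius twists implicit in the bijective morphisms $\tau_i$, and then extracting the right invariant (isomorphism type of the highest weight for $A_1$; the set of simple factors acting nontrivially for $A_1 \times A_1$) to show that the $KH^0$-composition factors fall into more than one $N_G(H^0)$-orbit.
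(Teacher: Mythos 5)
Your proposal is correct, and it follows the paper's skeleton (Lemma \ref{l:case9_10_1}, then Proposition \ref{p:useful}, then elimination of the $C_1$ and $C_1^2$ possibilities for $R^0$), but the two eliminations are carried out by genuinely different means. For $R^0$ of type $C_1$, the paper normalizes one twist to $1$, uses homogeneity of $V|_{R^0}$ (from $N_G(H^0)=H^0C_G(H^0)$) to force $q_1=q_2=1$, and then invokes Lemma \ref{l:ten} to contradict homogeneity; you instead exploit Remark \ref{r:irre} at this stage, arguing that irreducibility of $W|_{R^0}\cong V(\eta)^{(q_1)}\otimes V(\eta)^{(q_2)}\otimes V(\eta)^{(q_3)}$ forces the three $p$-powers to be pairwise distinct (which in particular kills $p=0$ outright), so the three summands of \eqref{e:v123} restrict with pairwise distinct highest weights $(2q_i+2q_j)\eta$ and homogeneity fails immediately. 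For $R^0$ of type $C_1^2$, the paper compares the explicit highest weights \eqref{e:wts1} against the ``homogeneous or swapped by an involutory automorphism'' property, whereas you use a support argument: every composition factor of $V_1|_{R^0}$ is trivial on one simple factor, while those of $V_2|_{R^0}$ and $V_3|_{R^0}$ are nontrivial on both, and since automorphisms induced by $N_G(H^0)$ can only fix or swap the two $A_1$-factors, Clifford transitivity fails. This is cleaner in that it avoids tracking the twists $q_i$ altogether (you never need the observation, used by the paper, that closedness forces some $q_i=1$). Two small points of precision: in the $C_1^2$ case your phrase ``trivial on one $A_1$-factor'' should read ``trivial on at least one factor'', since for equal twists $V_1|_{R^0}$ can have a trivial composition factor; and the relabelling placing the diagonal factor in $L_1L_2$ should be justified by conjugating $H$ by an element of $M$ permuting the factors (or by symmetry of the configuration), exactly as the paper's ``without loss of generality''. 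Neither affects the argument.
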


\begin{proof}
As in the previous lemma, $G=C_4$ and $M^0 = C_1^3=M_1M_2M_3$ is a central product of  simply connected groups of type $C_1$. Define $L=L_1L_2L_3$, $Y$ and $R$ as above, so $Y \leqs Z(L)$, $M^0=L/Y$ and $H^0 = R/Y$. Note that $V|_{M^0} = V_1 \oplus V_2 \oplus V_3$, where the $V_i$ are given in \eqref{e:vmi}. Also recall that the irreducibility of $V|_{H}$ implies that $W|_{H^0}$ is also irreducible (see Remark \ref{r:irre}).

By Lemma \ref{l:case9_10_1}, $R$ is a subdirect product of  $L_1L_2L_3$, so Proposition \ref{p:useful} implies that $R^0$ is isomorphic to a commuting product of simple groups of type $C_1$. If $R^0$ is of type $C_1^3$ then $H^0=M^0$ and we are done, so let us assume that $R^0$ is of type $C_1$ or $C_1^2$.

Suppose $R^0$ is of type $C_1$. Let $\eta_1$ be the fundamental dominant weight for $R^0$.  By Proposition \ref{p:useful}, $R^0 \cong L_1$ is simply connected and diagonally embedded in $L$, so we may write 
$$R^0=\{(\tau_1(x), \tau_2(x), \tau_3(x)) \mid x \in {\rm Sp}_{2}(K)\}$$
where $\tau_i: {\rm Sp}_{2}(K)\to L_i$ is a bijective morphism. By Lemma \ref{l:steinberg}, 
$\tau_i=t_{x_i}\sigma_{q_i}$ for some $x_i \in R^0$ and $p$-power $q_i$, and we may assume that $q_3=1$ (since $R^0$ is a closed subgroup of $L$). Then $V|_{R^0}$ 
has composition factors with highest weights
$(2q_1+2q_2)\eta_1$, $(2q_1+2)\eta_1$ and $(2q_2+2)\eta_1$.
Since $V|_{R^0}$ is homogeneous (note that $N_G(H^0) = H^0C_G(H^0)$), it follows that 
$q_1=q_2=1$. But now Lemma \ref{l:ten} contradicts the homogeneity of $V|_{R^0}$.

Finally, let us assume that $R^0  = R_1R_2$ is of type $C_1^2$. Let $\{\eta_{1}, \eta_{2}\}$ be  fundamental dominant weights for $R^0$.  Once again, Proposition \ref{p:useful} implies that $R_1$ and $R_2$ are simply connected groups of type $C_1$, and without loss of generality we may assume that 
$$R_1=\{(\tau_{1}(x),1,1) \mid x \in {\rm Sp}_2(K)\},\;\; R_2=\{(1,\tau_{2}(x),\tau_{3}(x)) \mid  x \in {\rm Sp}_2(K)\}$$ 
where $\tau_{i}: {\rm Sp}_2(K)\to L_i$ is a bijective morphism. As before, we may write 
$\tau_{i}=t_{x_{i}}\sigma_{q_{i}}$, so $V|_{R^0}$ has composition factors with highest weights
\begin{equation}\label{e:wts1}
2q_{1}\eta_1+2q_{2}\eta_2, \;\; 2q_{1}\eta_1+2q_{3}\eta_2, \;\; 2(q_{2}+q_{3})\eta_2.
\end{equation}
Since $R^0$ is a closed subgroup of $L$, at least one $q_i$ is equal to $1$. By considering $N_G(H^0)$, it follows that $V|_{R^0}$ is either homogeneous, or the homogeneous components of $V|_{R^0}$ are conjugate under an involutory automorphism of $R^0$ interchanging $R_1$ and $R_2$. However, this observation is incompatible with the weights recorded in \eqref{e:wts1}. This is a contradiction.
\end{proof}

The next lemma gives an analogous reduction for $V=V_G(\l_3)$ in case (x) in Table \ref{tab:geom2}. Note that we include the additional case $p=2$, which will be need in Propositions \ref{p:geom11} and \ref{p:geom6}.

\begin{lem}\label{l:case9_10_3}
Suppose $H<M<G$ with $G=C_4$, $M = C_1^3.{\rm Sym}_3$ and $p \neq 3$. Set $V=V_G(\l_3)$ and assume that $V|_{H}$ is irreducible. Then $H^0=M^0$.  
\end{lem}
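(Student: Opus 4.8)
The plan is to follow the template established in the proofs of Lemmas~\ref{l:case9_10_1} and~\ref{l:case9_10_2}. First I would pass to the simply connected cover: write $M^0=C_1^3=M_1M_2M_3$, a central product of copies of ${\rm SL}_2(K)$, and lift the $KM^0$-modules $W$ and $V_1,V_2,V_3$ of~\eqref{e:v123} and~\eqref{e:vmi2} to representations of the direct product $L=L_1L_2L_3={\rm SL}_2(K)^3$, so that $M^0=L/Y$ with $Y=\ker\rho$ finite and $H^0=R/Y$, where $R\leqs L$ is closed and $R^0$ surjects onto $H^0$. Since $p\neq 3$, Lemma~\ref{l:case9_10_1} applies and $R^0$ is a subdirect product of $L$, while Remark~\ref{r:irre} gives that $W|_{R^0}$ is irreducible. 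By Proposition~\ref{p:useful} we may then write $R^0=R_1\cdots R_r$, each $R_i\cong {\rm SL}_2(K)$ diagonally embedded in a subproduct of the $L_j$, the subproducts partitioning $\{1,2,3\}$ into $r$ blocks. If $r=3$ then $R^0=L$, hence $H^0=M^0$, and we are done; so the content is to rule out $r\in\{1,2\}$. By Lemma~\ref{l:steinberg} (no graph automorphism occurs, the type being $A_1$) each projection of $R^0$ to a factor $L_j$ has the form $t_{x_j}\sigma_{q_j}$ with $q_j$ a $p$-power.

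For $r=1$: here $H^0$ is simple of type $A_1$, so ${\rm Out}(H^0)=1$; combined with $C_G(H^0)\leqs Z(G)$ (Schur's lemma, as $W|_{H^0}$ is irreducible) this gives $N_G(H^0)=H^0Z(G)$, whence $V|_{H^0}$ is homogeneous. Writing the embedding as $x\mapsto(t_{x_1}\sigma_{q_1}(x),t_{x_2}\sigma_{q_2}(x),t_{x_3}\sigma_{q_3}(x))$ and letting $\eta_1$ be the fundamental dominant weight of $R^0$, the modules $V_1|_{R^0},V_2|_{R^0},V_3|_{R^0}$ have composition factors of highest weights $(q_1+q_2+3q_3)\eta_1$, $(q_1+3q_2+q_3)\eta_1$ and $(3q_1+q_2+q_3)\eta_1$ (take the image of a maximal vector; note that $V_{M_i}(3\o_i)$ still restricts with highest weight $3q_i\eta_1$ for every $p\neq 3$, including $p=2$). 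Homogeneity forces these three weights to coincide, giving $q_1=q_2=q_3=:q$; but then $W|_{R^0}=V_{R^0}(q\eta_1)^{\otimes 3}$, which is not irreducible (it is $8$-dimensional and has a composition factor of highest weight $3q\eta_1$ and dimension $4$). This contradicts Remark~\ref{r:irre}.

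For $r=2$: after conjugating $H$ by a suitable element of ${\rm Sym}_3\leqs M$ (which leaves the hypotheses intact) we may assume $R^0=R_1R_2$ with $R_1$ mapping by a bijective morphism onto $L_1$ and trivially to $L_2,L_3$, while $R_2$ maps by bijective morphisms $\tau_j=t_{x_j}\sigma_{q_j}$ onto $L_j$ for $j\in\{2,3\}$; then $R^0=R_1\times R_2$. With $\eta_1,\eta_2$ the fundamental weights of $R_1,R_2$, we get $W|_{R^0}=V_{R_1}(q_1\eta_1)\boxtimes\big(V_{R_2}(q_2\eta_2)\otimes V_{R_2}(q_3\eta_2)\big)$, and for ${\rm SL}_2(K)$ the factor $V_{R_2}(q_2\eta_2)\otimes V_{R_2}(q_3\eta_2)$ (a tensor product of two $2$-dimensional modules with $p$-power highest weights) is irreducible precisely when $q_2\neq q_3$ (Steinberg's tensor product theorem), so irreducibility of $W|_{R^0}$ forces $q_2\neq q_3$. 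On the other hand $N_G(H^0)/H^0C_G(H^0)$ embeds in ${\rm Out}(H^0)\cong Z_2$, whose nontrivial element interchanges $R_1$ and $R_2$; hence the composition factors of $V|_{R^0}$ lie in at most two isotypic classes interchanged by the automorphism swapping $\eta_1\leftrightarrow\eta_2$. But the two summands whose ``$3\o$'' lies in the block $\{2,3\}$ have composition factors of highest weights $q_1\eta_1+(q_2+3q_3)\eta_2$ and $q_1\eta_1+(3q_2+q_3)\eta_2$; since these share the same $\eta_1$-coefficient, being equal or being swapped each forces $q_2=q_3$ --- a contradiction.

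The main obstacle I anticipate is the bookkeeping with Frobenius twists: checking that the relevant exponents are genuine $p$-powers, that a tensor product of twisted natural ${\rm SL}_2$-modules is irreducible exactly when the twists differ, and --- crucially when $p=2$ --- that $V_{M_i}(3\o_i)$ (no longer a restricted module) still contributes highest weight $3q_i\eta_\bullet$ to the relevant restrictions, so that the weight comparisons above remain valid. Apart from these points, the argument is a direct transcription of the $\l_2$ case.
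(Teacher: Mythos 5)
Your proposal is correct and follows essentially the same route as the paper, whose own proof simply states that the argument of Lemma \ref{l:case9_10_2} carries over (lift to $L=L_1L_2L_3$, apply Lemma \ref{l:case9_10_1} and Proposition \ref{p:useful}, describe the projections via Lemma \ref{l:steinberg}, and compare highest weights against the homogeneity forced by $N_G(H^0)$). The only variation is how you close the subcases $r=1,2$ --- using the irreducibility of $W|_{R^0}$ (so $q_2\neq q_3$, resp.\ reducibility of the triple tensor product) instead of invoking Lemma \ref{l:ten} --- which is a harmless and in fact convenient substitute at $p=2$, where the factor $V_{M_i}(3\o_i)$ is non-restricted and Lemma \ref{l:ten} would need the small adjustment the paper notes in the proof of Proposition \ref{p:geom12}.
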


\begin{proof}
This is entirely similar to the proof of Lemma \ref{l:case9_10_2}, and we omit the details. In  particular, we note that there are no additional difficulties when $p=2$. 
\end{proof}

We are now in a position to settle cases (x) and (xi).

\begin{prop}\label{p:geom9}
Suppose $H<M<G$ and $(G,M,V)$ is the case labelled (x) in Table \ref{tab:geom2}, so $G=C_4$, $M=C_1^3.{\rm Sym}_3$ and $V=V_G(\l)$, where $\l=\l_2$ or $\l_3$. Then $V|_{H}$ is irreducible if and only if $H=C_1^3.Z_3$. 
\end{prop}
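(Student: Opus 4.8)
The plan is to combine the two reduction lemmas just proved (Lemmas \ref{l:case9_10_2} and \ref{l:case9_10_3}) with a short Clifford-theoretic argument. First I would recall from \eqref{e:v123}, \eqref{e:vmi} and \eqref{e:vmi2} the decomposition $V|_{M^0}=V_1\oplus V_2\oplus V_3$, where $M^0=C_1^3=M_1M_2M_3$, and record two elementary facts about the summands. One: the $V_i$ are pairwise non-isomorphic $KM^0$-modules, since (for $\l=\l_2$) $V_i$ is the unique summand on which exactly one of the simple factors $M_j$ acts trivially, and (for $\l=\l_3$) $V_i$ is the unique summand on which exactly one $M_j$ acts with highest weight $3\o_j$; in either case the three summands have pairwise distinct ``types''. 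Two: the natural permutation action of $M/M^0={\rm Sym}_3$ on the tensor factors $W_1,W_2,W_3$ induces on $\{V_1,V_2,V_3\}$ the standard degree-$3$ permutation action of ${\rm Sym}_3$ (one reads off from the formulae that $\sigma\in{\rm Sym}_3$ sends the summand indexed by ``$M_j$ special'' to the one indexed by ``$M_{\sigma(j)}$ special''). Both are immediate from the explicit descriptions.

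For the forward direction, suppose $V|_H$ is irreducible. By Lemma \ref{l:case9_10_2} (if $\l=\l_2$) or Lemma \ref{l:case9_10_3} (if $\l=\l_3$) we have $H^0=M^0$, so $M^0=H^0\leqs H\leqs M$ and $H/M^0$ is a proper subgroup of $M/M^0={\rm Sym}_3$ (proper because $H<M$). Since $V|_{H^0}=V|_{M^0}$ is reducible, Clifford theory (Section \ref{ss:irred}) forces $H/M^0$ to permute the composition factors $\{V_1,V_2,V_3\}$ transitively; by the second fact above this says $H/M^0$ is a transitive subgroup of ${\rm Sym}_3$ in its natural action, and the only proper such subgroup is the cyclic group $Z_3$. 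Hence $H/M^0=Z_3$, which (as ${\rm Sym}_3$ has a unique subgroup of order $3$) determines $H=C_1^3.Z_3$.

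For the converse, suppose $H=C_1^3.Z_3$. By the second fact the group $Z_3\leqs{\rm Sym}_3$ acts on $\{V_1,V_2,V_3\}$ as a $3$-cycle, hence transitively. Since the $V_i$ are pairwise non-isomorphic irreducible $KM^0$-modules each occurring with multiplicity $1$ in $V|_{M^0}$, every $KM^0$-submodule of $V$ is a direct sum of a subset of $\{V_1,V_2,V_3\}$; so if $U$ is a nonzero $KH$-submodule of $V$ then $U|_{M^0}$ is such a direct sum, and the $H$-invariance of $U$ together with the transitivity of $H/M^0$ forces $U=V$. Therefore $V|_H$ is irreducible, completing the proof.

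The geometric substance of this proposition is entirely contained in Lemmas \ref{l:case9_10_1}--\ref{l:case9_10_3}; the remaining work here is bookkeeping. The only point that needs care is the identification of the $M/M^0$-action on the three composition factors (the second fact above), together with the standard Clifford sufficiency argument in the converse direction — namely that transitive permutation of multiplicity-one, pairwise non-isomorphic summands yields irreducibility — so I expect no genuine obstacle.
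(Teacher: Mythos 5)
Your proof is correct and follows essentially the same route as the paper: invoke Lemmas \ref{l:case9_10_2} and \ref{l:case9_10_3} to get $H^0=M^0$, then use Clifford theory to force $H/M^0$ to be a proper transitive subgroup of ${\rm Sym}_3$, i.e. $Z_3$. The only difference is that you spell out the converse (transitive permutation of the pairwise non-isomorphic multiplicity-one summands $V_1,V_2,V_3$ gives irreducibility), which the paper leaves implicit; this is a sound and welcome addition but not a different argument.
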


\begin{proof}
If $V|_{H}$ is irreducible, then Lemmas \ref{l:case9_10_2} and \ref{l:case9_10_3} imply that $H^0=M^0$. Therefore, $H$ transitively permutes the $V_i$ in \eqref{e:v123}, so $H/H^0 \leqs {\rm Sym}_3$ is transitive. Since $H<M$, we conclude that $H=C_1^3.Z_3$ is the only possibility.
\end{proof}

\begin{prop}\label{p:geom11}
Suppose $H<M<G$ and $(G,M,V)$ is the case labelled (xi) in Table \ref{tab:geom2}, so $G=D_4$, $M=C_1^3.{\rm Sym}_3$, $p=2$ and $V=V_G(\l)$, where $\l \in \{\l_1+\l_4, \l_3+\l_4\}$.  Then $V|_{H}$ is irreducible if and only if $H=C_1^3.Z_3$. 
\end{prop}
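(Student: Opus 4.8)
The plan is to mimic the proof of Proposition \ref{p:geom9}, the crux being to reduce to the statement $H^0 = M^0$. First I would dispose of the reverse implication. Since $(G,M,V)$ is the case labelled (xi) in Table \ref{tab:geom2}, $V|_M$ is irreducible and $V|_{M^0}=V_1\oplus V_2\oplus V_3$, where $M^0=M_1M_2M_3$ is a central product of simply connected groups of type $C_1$ with fundamental dominant weights $\o_j$ for $M_j$, and (by \cite[Table 6.2]{BGT}) $V_1=V_{M_1}(\o_1)\otimes V_{M_2}(\o_2)\otimes V_{M_3}(3\o_3)$, with $V_2,V_3$ obtained from $V_1$ by permuting the tensor factors; in particular $V_1,V_2,V_3$ are pairwise non-isomorphic. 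Since $Z_3\leqs{\rm Sym}_3=M/M^0$ permutes $\{V_1,V_2,V_3\}$ cyclically, Clifford theory shows that $V|_{H}$ is irreducible for $H=M^0.Z_3$.

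For the forward implication, suppose $V|_H$ is irreducible. As in Proposition \ref{p:geom9}, it suffices to prove $H^0=M^0$: then $H^0$ is reductive (Lemma \ref{l:hred}), $H/H^0\leqs{\rm Sym}_3$ permutes the $V_i$ transitively, and $H<M$ forces $H=C_1^3.Z_3$. To establish $H^0=M^0$ I would adapt Lemmas \ref{l:case9_10_1}, \ref{l:case9_10_2} and \ref{l:case9_10_3} to this setting. Since $(G,M,V)$ is case (xi) of Table \ref{tab:geom2}, Remark \ref{r:irre} gives that $W|_{H^0}$ is irreducible, where $W$ is the natural $KG$-module. Lift $W$ to a representation $\rho\colon L\to{\rm GL}(W)$ with $L=L_1L_2L_3$ a direct product of simply connected groups of type $C_1$ and $W|_L$ the tensor product of the three natural $2$-dimensional modules, so $M^0=L/Y$ with $Y=\ker\rho$ finite central; write $H^0=R/Y$ for a closed $R\leqs L$. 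The proof of Lemma \ref{l:case9_10_1} then applies verbatim (it uses only the irreducibility of $W|_{R^0}$ and Lemma \ref{l:hred}), so $R^0$ is a subdirect product of $L$, and by Proposition \ref{p:useful} we may write $R^0=R_1\cdots R_r$ with each $R_i$ a simply connected group of type $C_1$ diagonally embedded in a sub-product of $L_1L_2L_3$, for some $r\in\{1,2,3\}$. If $r=3$ then $R^0=L$ and $H^0=M^0$, as desired.

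It remains to eliminate $r=1$ and $r=2$, which I would do as in the proofs of Lemmas \ref{l:case9_10_2} and \ref{l:case9_10_3}. Write each $R_i$ as a diagonally embedded copy of ${\rm SL}_2$ via morphisms $\tau_j=t_{x_j}\sigma_{q_j}$ (no graph automorphism appears, $A_1$ having no diagram symmetry), where within each diagonal block some $q_j$ equals $1$, since the inclusion $R^0\into L$ has injective differential while $d\sigma_q=0$ for $q>1$. When $r=1$, ${\rm Out}(A_1)=1$ gives $N_G(H^0)=H^0C_G(H^0)$, so $V|_{R^0}$ is homogeneous; when $r=2$, $R^0\cong A_1^2$ and $N_G(H^0)$ induces automorphisms of $R^0$ that fix or interchange the two simple factors, so $V|_{R^0}$ satisfies the homogeneity dichotomy \eqref{e:hom}, the factor-swapping involution of $A_1^2\cong D_2$ playing the role of the graph automorphism. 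I would then compute the highest weights of the composition factors of $V_1|_{R^0},V_2|_{R^0},V_3|_{R^0}$ in terms of the $q_j$ — first rewriting the non-$p$-restricted factors using $V(3\o)=V(\o)\otimes V(\o)^{(2)}$ (valid as $p=2$) — and use the homogeneity constraint to force the remaining $q_j$ to be $1$; at that point Lemma \ref{l:ten}, applied to the $p$-restricted tensor factors, yields a composition factor of $V_1|_{R^0}$ of highest weight $\nu$ with $\nu\ne\mu_1$ and $\nu\preccurlyeq\mu_1$, where $\mu_1$ is the highest weight of $V_1|_{R^0}$. This contradicts the homogeneity of $V|_{R^0}$ when $r=1$, and contradicts \eqref{e:hom} when $r=2$ by the argument of Lemma \ref{l:mu}. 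Hence $r=3$, $H^0=M^0$, and $H=C_1^3.Z_3$. The two weights $\l_1+\l_4$ and $\l_3+\l_4$ are handled simultaneously, since by \cite[Table 6.2]{BGT} (or by applying a triality graph automorphism of $G$, which sends $M$ to a conjugate subgroup) the modules $V_1,V_2,V_3$ have the same shape in both cases. I expect the $r=2$ case to be the main obstacle, as it requires enumerating all placements of the Frobenius twists $q_j$ together with the two ways the factor-swapping involution can act — exactly the bookkeeping carried out for case (xiii) in the proof of Proposition \ref{p:geom13}.
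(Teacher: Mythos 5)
Your proposal is correct, but it takes a different route from the paper. The paper's actual proof of this proposition is a two-line transfer argument: since $p=2$ one has $H<M<G=D_4<N=C_4$, and $V_G(\l_3+\l_4)$ is precisely the restriction to $G$ of the $KN$-module $V_N(\xi_3)$ (the case labelled ${\rm MR}_4$ in \cite[Table 1]{Seitz2}); hence Lemma \ref{l:case9_10_3} -- which was deliberately proved with the extra case $p=2$ included for exactly this purpose -- applies verbatim and gives $H^0=M^0$, after which $H=C_1^3.Z_3$ follows as in Proposition \ref{p:geom9}, with the case $\l=\l_1+\l_4$ treated by an entirely similar argument. You instead redo the reduction intrinsically in $D_4$: Remark \ref{r:irre} (which does cover case (xi)) gives irreducibility of $W|_{H^0}$, the lifting and subdirect-product step of Lemma \ref{l:case9_10_1} carries over, and Proposition \ref{p:useful} plus the homogeneity/Frobenius-twist bookkeeping eliminates $r=1,2$. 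I checked that this bookkeeping does close: for $r=1$ triviality of ${\rm Out}(A_1)$ forces homogeneity, the three top weights force all twists to be $1$, and then (arguing on the restricted tensor factors, as in the $p=2$ remark in the proof of Proposition \ref{p:geom12}) one gets non-isomorphic composition factors; for $r=2$ the three top weights either give three pairwise distinct isomorphism classes, or (when the twists inside the diagonal block coincide, hence equal $1$) two classes that are not interchanged by the factor-swapping involution, contradicting the analogue of \eqref{e:hom} in either case. So your argument is sound and self-contained in $D_4$, at the cost of repeating the case analysis that the paper's $D_4<C_4$ identification was designed to avoid; the paper's route is shorter but relies on recognizing $V$ as the restriction of the $C_4$-module $V_N(\xi_3)$. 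Your explicit treatment of the reverse implication (pairwise non-isomorphic summands permuted cyclically by $Z_3$, plus Clifford theory) is fine and supplies a detail the paper leaves implicit.
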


\begin{proof}
Write $M^0=C_1^3=M_1M_2M_3$ and let $\o_i$ be the fundamental dominant weight for $M_i$. Note that 
$$H < M = C_1^3.{\rm Sym}_3 < G=D_4< N = C_4.$$ 
Suppose that $V|_{H}$ is irreducible. 

First consider the case $\l=\l_3+\l_4$. Here $V|_{M^0} = V_1 \oplus V_2 \oplus V_3$ and \eqref{e:vmi2} holds 
(see \cite[Table 6.2]{BGT}). Let  $\{\xi_1,\xi_2,\xi_3,\xi_4\}$ be fundamental dominant weights for $N$. Since $V$ is the restriction of the $KN$-module $V_N(\xi_3)$ to $G$ (see the case labelled ${\rm MR}_{4}$ in \cite[Table 1]{Seitz2}), Lemma \ref{l:case9_10_3} implies that $H^0=M^0$ and thus $H=C_1^3.Z_3$ is the only possibility.  An entirely similar argument applies if $\l=\l_1+\l_4$, and once again we deduce that $H=C_1^3.Z_3$.
\end{proof}

To complete the proof of Theorem \ref{t:geom1} it remains to consider cases (vii) and (xii) in Table \ref{tab:geom2}. First we establish an important reduction for case (vii).

\begin{lem}\label{l:geom6}
Suppose $H<M<G$ and $(G,M,V)$ is the case labelled (vii) in Table \ref{tab:geom2}, so $G=C_n$, $M=D_n.2$, $p=2$ and $\l=\sum_{i=1}^{n-1}a_i\l_i$. If $V|_{H}$ is irreducible, then $n=4$, $\l = \l_3$ and $H < M^0$.
\end{lem}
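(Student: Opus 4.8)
The plan is to reduce the statement to our classification of irreducible triples for the group $M^0=D_n$, which has strictly smaller dimension than $G=C_n$.

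\emph{Step 1: the restriction $V|_{M^0}$.} First I would record that, since $p=2$, the subgroup $M^0=D_n$ is the maximal-rank special subgroup ${\rm SO}_{2n}<{\rm Sp}_{2n}=G$, and that by \cite{Seitz2} the module $V=V_G(\l)$ with $\l=\sum_{i=1}^{n-1}a_i\l_i$ restricts irreducibly to $M^0$, with highest weight $\bar\l=\sum_{i=1}^{n-2}a_i\eta_i+a_{n-1}(\eta_{n-1}+\eta_n)$ (here $\{\eta_1,\dots,\eta_n\}$ are fundamental dominant weights for $M^0$). Since $\bar\l$ is fixed by the graph automorphism of $M^0$, the module $V$ extends to $M=M^0.2$ and $V|_M$ is irreducible; moreover $\bar\l$ is $p$-restricted and nontrivial, $\bar\l\neq\eta_1$ since $\l\neq\l_1$ by H1, and the coefficients of $\eta_{n-1}$ and $\eta_n$ in $\bar\l$ agree, so $\bar\l$ is not a spin module for $M^0$ (nor, when $n=4$, does it lie in the triality orbit of $\eta_1$). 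I would also dispose of small $n$: if $n=2$ then $\l=\l_1=W$ is excluded by H1, and if $n=3$ then $M^0=A_3$ and a direct check using \cite{Seitz2,BGMT,BGT} shows that no subgroup $H$ as in Hypothesis \ref{h:our} arises in case (vii) (the only relevant irreducible triples have $H=G_2$ connected). Thus I may assume $n\geqs4$.

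\emph{Step 2: showing $H<M^0$.} Suppose $H\not\leqs M^0$, and choose a maximal subgroup $\bar M$ of $M=D_n.2$ with $H\leqs\bar M$; then $\bar M\not\leqs M^0$. If $\bar M$ is parabolic then $H$ normalizes the nontrivial connected unipotent subgroup $R_u(\bar M)$, contradicting Lemma \ref{l:hred}, so $\bar M$ is non-parabolic. If $\bar M$ is geometric then Proposition \ref{p:dn2} identifies it with one of the subgroups in Table \ref{tab:dn2}, and since $V|_{M^0}=V_{M^0}(\bar\l)$ has the form required in Proposition \ref{p:dn2_2} and is nontrivial with $V|_{M^0}\neq W,W^*$, that result gives $V|_{\bar M}$ reducible. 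If $\bar M$ is non-geometric then, using the irreducibility of $V|_{M^0}$, I would apply \cite[Theorem 3]{BGMT}: the module $V_{M^0}(\bar\l)$ does not occur in the resulting classification of non-geometric irreducible triples, so $V|_{\bar M}$ is again reducible. Either way this contradicts the irreducibility of $V|_{\bar M}$, which is inherited from $V|_H$; hence $H\leqs M^0$, and since $M^0$ is connected while $H$ is disconnected, $H<M^0$.

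\emph{Step 3: pinning down $n$ and $\l$.} Now $(M^0,H,V|_{M^0})=(D_n,H,V_{M^0}(\bar\l))$ is an irreducible triple with $V|_{H^0}$ reducible, and after passing if necessary to the simply connected cover of $M^0$ it satisfies the hypotheses of Corollary \ref{t:main4}: indeed $\bar\l$ is $p$-restricted and nontrivial, $V_{M^0}(\bar\l)$ is tensor indecomposable by Lemma \ref{l:ten}, and it is neither the natural module nor a twist of it nor a spin module. Since $\dim M^0<\dim G$, this triple is covered by the inductive hypothesis in the proof of Theorem \ref{t:main}, so its possibilities can be read off from Table \ref{t:maint3}. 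Running through the rows with $G$ of type $D_n$ and $p=2$, and using that the coefficients of $\eta_{n-1}$ and $\eta_n$ in $\bar\l$ coincide, I expect every row to be eliminated except the row $D_4$, $H=C_1^3.X$, $\l_3+\l_4$: the two rows with subgroup $B_{n-1}$ either force $p\neq2$ once $p$-restrictedness is imposed, or give a weight whose coefficients on the last two nodes differ, and the $(D_m^2.2).2$ row has weight $\l_1+\l_{n-i}$, again with unequal coefficients on the last two nodes. This yields $n=4$, $\bar\l=\eta_3+\eta_4$, hence $\l=\l_3$, as required. The two points needing care are the imported facts in Steps 1--2, namely the irreducibility of $V|_{M^0}$ for the special embedding $D_n<C_n$ in characteristic $2$, and the appeal in Step 3 to the classification for $D_n$; the latter is legitimate precisely because $\dim D_n<\dim C_n$, so it forms part of the inductive hypothesis and the reasoning is not circular.
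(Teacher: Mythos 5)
Your Steps 1 and 2 are essentially the paper's argument: the restriction $V|_{M^0}=V_{M^0}(\bar\l)$ is irreducible (case MR$_4$ of \cite[Table 1]{Seitz2}), and if $H\not\leqs M^0$ one takes a maximal overgroup of $H$ in $M=D_n.2={\rm GO}(W)$, rules out parabolics via Lemma \ref{l:hred}, rules out non-geometric subgroups via \cite[Theorem 3]{BGMT} using the irreducibility of $V|_{M^0}$, and kills the geometric candidates of Table \ref{tab:dn2} with Proposition \ref{p:dn2_2}. Where you diverge is the final step. The paper does \emph{not} invoke its own classification for $D_n$: it takes a maximal subgroup $J$ of $M^0$ containing $H$ and classifies the triple $(M^0,J,V_{M^0}(\bar\l))$ using only the external results \cite{Seitz2,BGMT,BGT} for maximal (connected or disconnected) subgroups, which already forces $n=4$, $J=C_1^3.{\rm Sym}_3$ and $\bar\l=\eta_3+\eta_4$, hence $\l=\l_3$. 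You instead apply Corollary \ref{t:main4}/Table \ref{t:maint3} directly to $(M^0,H,V_{M^0}(\bar\l))$, which buys a slightly stronger conclusion (it essentially pins down $H$ itself) at the cost of appealing to the theorem currently under proof, and you justify this by an ``induction on $\dim G$''.

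That justification is the one point to tighten. The paper's proof is not organized as an induction on dimension, and a literal dimension induction is delicate: the $D_4$ row of Table \ref{t:maint3} that survives your elimination is produced by Proposition \ref{p:geom11}, whose proof goes through the embedding $D_4<C_4$ and Lemma \ref{l:case9_10_3}, i.e.\ through a statement about a group of \emph{larger} dimension than $D_4$. The appeal is nevertheless salvageable, because what you actually need is that the $D_n$ cases of Theorem \ref{t:main} (and the auxiliary Lemma \ref{l:case9_10_3}) are established independently of case (vii) of Table \ref{tab:geom2} for $C_n$ -- which is true, since none of Propositions \ref{p:geom7}, \ref{p:geom11}, \ref{p:geom12}, \ref{p:geom13} or Theorem \ref{t:smain} uses the present lemma. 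So your argument is correct in substance, but the non-circularity should be argued via this dependency check rather than by dimension; alternatively, replacing $H$ by a maximal overgroup $J\leqs M^0$ and quoting only \cite{Seitz2,BGMT,BGT}, as the paper does, avoids the issue entirely (and the same remark applies to your parenthetical disposal of $n=3$, which should be phrased as a check against the external classifications for subgroups of $D_3\cong A_3$, not against the full classification for $C_3$).
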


\begin{proof}
Here $M=D_n.2$ is a $\C_6$-subgroup of $G$, where $G=C_n$, $n \geqs 2$ and $p=2$. Set $V=V_G(\l)$ and let $\{\eta_1, \ldots, \eta_n\}$ be a set of fundamental dominant weights for $M^0=D_n$. We have $\l=\sum_{i=1}^{n-1}a_i\l_i$ and $V|_{M^0}$ is irreducible with highest weight 
$$\mu = \sum_{i=1}^{n-2}a_i\eta_i+a_{n-1}(\eta_{n-1}+\eta_n)$$ 
(see case MR$_4$ in \cite[Table 1]{Seitz2}).

Note that $n \geqs 3$ since we are assuming that $\l$ is nontrivial, $p$-restricted and $V \neq W$. Suppose that $V|_{H}$ is irreducible.

First assume $H \not\leqs M^0$. As in the proof of Proposition \ref{p:geom3}, let $J$ be a maximal subgroup of $M=D_n.2={\rm GO}(W)$ containing $H$. Then $J$ is disconnected, 
$(M,J,V)$ is an irreducible triple, and $V|_{M^0} = V_{M^0}(\mu)$. By applying \cite[Theorem 3]{BGMT}, we deduce that $J$ is a geometric subgroup of $M$, so the possibilities for $J$ are listed in Table \ref{tab:dn2}. By applying Proposition \ref{p:dn2_2}, we conclude that $V|_{J}$ is reducible, which is a contradiction.

Now suppose $H \leqs M^0$. Let $J$ be a maximal subgroup of $M^0$ containing $H$ (note that $H <M^0$ since we are assuming $H$ is disconnected). Then $V|_{J}$ is irreducible and we can consider the possibilities for the irreducible triple $(M^0,J,V|_{M^0})$. By inspecting \cite{BGMT,BGT,Seitz2}, given the highest weight of $V|_{M^0}$, we quickly deduce that $n=4$ is the only possibility (note that if $n=3$ then the highest weight of $V|_{M^0}$ has at least two non-zero coefficients and it is easy to check that there are no compatible examples), $J=C_1^3.{\rm Sym}_3$ is a $\C_4(ii)$-subgroup of $M^0=D_4$ and $V|_{M^0} = V_{M^0}(\eta_3+\eta_4)$, so $\l=\l_3$. 
\end{proof}

\begin{prop}\label{p:geom6}
Suppose $H<M<G$ and $(G,M,V)$ is the case labelled (vii) in Table \ref{tab:geom2}, so $G=C_n$, $M=D_n.2$, $p=2$ and $\l=\sum_{i=1}^{n-1}a_i\l_i$. Then $V|_{H}$ is irreducible if and only if $n=4$, $\l=\l_3$ and $H= C_1^3.Z_3$ or $C_1^3.{\rm Sym}_3$.
\end{prop}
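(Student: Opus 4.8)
The plan is to derive Proposition \ref{p:geom6} from Lemma \ref{l:geom6}, Proposition \ref{p:geom11}, and the fact that case (xi) of Table \ref{tab:geom2} is a genuine irreducible triple, so that essentially no new calculation is required. First I would handle the ``only if'' direction. Assuming $V|_{H}$ is irreducible, Lemma \ref{l:geom6} already forces $n=4$, $\l=\l_3$ and $H<M^0=D_4$, and its proof shows in addition that $V|_{M^0}=V_{M^0}(\eta_3+\eta_4)$ for a suitable set $\{\eta_1,\ldots,\eta_4\}$ of fundamental dominant weights of $M^0$ (alternatively this follows at once from case ${\rm MR}_4$ in \cite[Table 1]{Seitz2} by setting $n=4$ and $\l=\l_3$ in the formula for $V|_{M^0}$). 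This reduces the problem to classifying the disconnected subgroups of $D_4$ on which a fixed module is irreducible.

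Next I would pick a maximal subgroup $J$ of $M^0$ containing $H$. Since $V|_{J}$ is then irreducible, rerunning the inspection of \cite{BGMT,BGT,Seitz2} carried out at the end of the proof of Lemma \ref{l:geom6} (now for the module of highest weight $\eta_3+\eta_4$) forces $J=C_1^3.{\rm Sym}_3$, the $\C_4(ii)$-subgroup of $D_4$ appearing in case (xi) of Table \ref{tab:geom2}; hence $H\leqs C_1^3.{\rm Sym}_3$. If $H=C_1^3.{\rm Sym}_3$ we are in one of the stated cases. Otherwise $H<C_1^3.{\rm Sym}_3<D_4$, and I would invoke Proposition \ref{p:geom11}, applied with the triple $(G,M,V)$ in its statement replaced by $(D_4,\,C_1^3.{\rm Sym}_3,\,V_{D_4}(\l_3+\l_4))$ --- which is legitimate because in this reduced setting $H$ is still disconnected and non-maximal, and $V_{D_4}(\eta_3+\eta_4)$ is nontrivial, $2$-restricted, tensor indecomposable and not of the form excluded by condition H1 --- to conclude $H=C_1^3.Z_3$.

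For the converse I would just check that both candidate subgroups give irreducible restrictions. When $H=C_1^3.{\rm Sym}_3$ this is precisely the assertion that case (xi) of Table \ref{tab:geom2} is an irreducible triple, so $V_{D_4}(\eta_3+\eta_4)|_{H}$ is irreducible; as $V|_{M^0}=V_{M^0}(\eta_3+\eta_4)$, the restriction $V|_{H}$ is irreducible. When $H=C_1^3.Z_3$ the same conclusion follows from the ``if'' direction of Proposition \ref{p:geom11}.

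I do not expect a substantial obstacle here, since the real content --- the reduction to $(n,\l)=(4,\l_3)$ with $H<M^0$, the identification of $V|_{M^0}$, and the analysis of the subgroup lattice below $C_1^3.{\rm Sym}_3$ --- is already available in Lemma \ref{l:geom6} and Proposition \ref{p:geom11}. The only care required is bookkeeping: passing between the ambient group $C_n$ and its subgroup $D_4$, verifying that Hypothesis \ref{h:our} is satisfied by $H$ relative to $D_4$ (in particular that $H$ is disconnected and non-maximal in $D_4$), and noting that no triality ambiguity arises because $\eta_3+\eta_4$ is literally one of the two highest weights treated in Proposition \ref{p:geom11}.
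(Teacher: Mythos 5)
Your proposal is correct and follows essentially the same route as the paper: the paper likewise extracts $n=4$, $\l=\l_3$, $V|_{M^0}=V_{M^0}(\eta_3+\eta_4)$ and $H\leqs C_1^3.{\rm Sym}_3<M^0=D_4$ from the proof of Lemma \ref{l:geom6}, and then concludes by reducing to case (xi) and citing Proposition \ref{p:geom11}. Your extra bookkeeping (verifying Hypothesis \ref{h:our} in the reduced $D_4$ setting and spelling out the converse via case (xi) and the ``if'' direction of Proposition \ref{p:geom11}) simply makes explicit what the paper leaves implicit.
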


\begin{proof}
Suppose that $V|_{H}$ is irreducible. By the proof of the previous lemma, $n=4$, $\l = \l_3$ and 
$$H \leqs J = C_1^3.{\rm Sym}_3 < M^0 = D_4 < G = C_4.$$
In addition, if $\{\eta_1, \ldots, \eta_4\}$ are fundamental dominant weights for $M^0 = D_4$, then $V|_{M^0} = V_{M^0}(\eta_3+\eta_4)$. Therefore, we have now reduced the problem to the case numbered (xi) in Table \ref{tab:geom2}, which was handled in Proposition \ref{p:geom11}. In particular, we conclude that $H=C_1^3.Z_3$ or $C_1^3.{\rm Sym}_3$, as required.
\end{proof}

\begin{remk}\label{r:geom6}
As noted in Remark \ref{r:case7}, Proposition \ref{p:geom6} implies that $V|_{H}$ is irreducible in case (v) if and only if $G=B_4$, $p=2$, $\l=\l_3$ and $H=B_1^3.Z_3$ or $B_1^3.{\rm Sym}_3$.
\end{remk}

To complete the proof of Theorem \ref{t:geom1}, it remains to deal with the case labelled (xii) in Table \ref{tab:geom2}.

\begin{prop}\label{p:geom12}
Suppose $H<M<G$ and $(G,M,V)$ is the case labelled (xii) in Table \ref{tab:geom2}, so $G=D_8$, $M=C_1^4.{\rm Sym}_4$, $p \neq 3$ and $V=V_G(\l)$, where $\l=\l_7$. Then $V|_{H}$ is irreducible if and only if $H=C_1^4.X$, where $X < {\rm Sym}_4$ is transitive. 
\end{prop}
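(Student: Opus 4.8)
The plan is to prove both implications, following closely the method used for the companion cases (x) and (xi) in Propositions \ref{p:geom9} and \ref{p:geom11}; the fourth tensor factor is the only genuinely new feature. Write $M^0 = C_1^4 = M_1M_2M_3M_4$, a central product of simply connected groups of type $C_1$ with fundamental dominant weight $\o_i$ for $M_i$. By \cite[Table 6.2]{BGT}, $V|_{M^0} = V_1 \oplus V_2 \oplus V_3 \oplus V_4$, where $V_i$ has highest weight $3\o_i + \sum_{j \ne i}\o_j$ (so $V_{M_i}(3\o_i)$ is $4$-dimensional, using $p \ne 3$); in particular the $V_i$ are pairwise non-isomorphic irreducible $KM^0$-modules, transitively permuted by $M/M^0 = {\rm Sym}_4$. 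For the converse ($\Leftarrow$): if $H = C_1^4.X$ with $X < {\rm Sym}_4$ transitive, the stabiliser $H_1 \leqs H$ of the subspace $V_1$ is $M^0.X_1$, with $X_1$ a point stabiliser in $X$, which by the pairwise non-isomorphism of the $V_i$ is exactly the inertia group in $H$ of $V_1$; since $M^0 \leqs H_1$ already acts irreducibly on $V_1$, Clifford theory yields $V \cong {\rm Ind}_{H_1}^{H}(V_1)$, an irreducible $KH$-module.

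For the forward direction ($\Rightarrow$), assume $V|_{H}$ is irreducible. By Remark \ref{r:irre}, $W|_{H^0}$ is irreducible, where $W$ is the natural module. As in the proof of Proposition \ref{p:geom11}, lift the $KM^0$-module $W$ to $\rho \colon L \to {\rm GL}(W)$ with $L = L_1L_2L_3L_4$ a direct product of four simply connected groups of type $C_1$, so that $M^0 = L/Y$ with $Y = \ker(\rho) \leqs Z(L)$, $H^0 = R/Y$ for a closed $R \leqs L$, and each $V_i$ also lifts to $L$. The irreducibility of $W|_{R^0}$ and the argument of Lemma \ref{l:case9_10_1} give that $R^0$ is a subdirect product of $L$, so by Proposition \ref{p:useful} we may write $R^0 = R_1 \cdots R_r$ with $1 \leqs r \leqs 4$: the set $\{1,2,3,4\}$ is partitioned into blocks $B_1,\dots,B_r$ and $R_j \cong {\rm SL}_2$ is diagonally embedded in $\prod_{k \in B_j}L_k$, each projection $R_j \to L_k$ ($k \in B_j$) having the shape $x \mapsto t_{x_{jk}}\sigma_{q_{jk}}(x)$ for a $p$-power $q_{jk}$ (Lemma \ref{l:steinberg}; no graph automorphism arises for type $A_1$), with at least one $q_{jk} = 1$ in each block since $R_j$ is positive-dimensional.

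The crux is to show $r = 4$. Writing weights of $R^0$ in coordinates indexed by the factors $R_1,\dots,R_r$, the module $V_i|_{R^0}$ has a composition factor of highest weight $\mu^{(i)}$ whose $j$-th coordinate is $\sum_{k \in B_j}a^{(i)}_k q_{jk}$, where $a^{(i)}_k = 3$ if $k = i$ and $a^{(i)}_k = 1$ otherwise; hence the coordinate-sum of $\mu^{(i)}$ is $C + 2q_{j(i),i}$, where $C = \sum_{j,k}q_{jk}$ and $j(i)$ is the block containing $i$. Since $V|_H$ is irreducible, $H/H^0$ permutes the $KR^0$-composition factors of $V$ transitively; as every automorphism of $R^0$ is inner composed with a permutation of its simple factors, this action permutes coordinates and so preserves coordinate-sums. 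Applying this to $\mu^{(1)},\dots,\mu^{(4)}$ forces $q_{j(i),i}$ to be independent of $i$, whence all $q_{jk} = 1$. If $r < 4$, some block $B_j$ has $|B_j| \geqs 2$; taking $k \in B_j$, the module $V_k|_{R^0}$ restricts on its $j$-th factor $R_j$ to $V_{R_j}(3\o) \otimes V_{R_j}(\o)^{\otimes (|B_j|-1)}$, which is reducible for $p \ne 3$, so $V_k|_{R^0}$ has a composition factor strictly below $\mu^{(k)}$ and hence of strictly smaller coordinate-sum, which cannot be permuted to any $\mu^{(i)}$ --- contradicting transitivity. Therefore $r = 4$, every block is a singleton, every twist is trivial, and $R^0 = L$, i.e. $H^0 = M^0$. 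With $H^0 = M^0$ and the $V_i$ pairwise non-isomorphic, Clifford theory then forces $H/H^0 \leqs {\rm Sym}_4$ to act transitively on $\{V_1,\dots,V_4\}$; as $H < M$, this gives $H = C_1^4.X$ with $X < {\rm Sym}_4$ transitive.

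I expect the main obstacle to be the weight bookkeeping in this last step: controlling, for every admissible block partition of $\{1,2,3,4\}$ and every Frobenius-twist pattern, both the highest weights $\mu^{(i)}$ and the lower composition factors of the $V_i|_{R^0}$, and confirming that the coordinate-sum invariant genuinely rules out transitivity in each case (in particular, checking that $\mu^{(i)}$ really is a composition-factor highest weight of $V_i|_{R^0}$, and that $V_{R_j}(3\o) \otimes V_{R_j}(\o)^{\otimes m}$ is reducible for every $m \geqs 1$ when $p \ne 3$). A secondary point needing care is extracting the precise structure of $V|_{M^0}$ from \cite[Table 6.2]{BGT}, on which all the weight computations rest.
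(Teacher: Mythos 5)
Your proposal is correct, and its skeleton is the one the paper uses: Remark \ref{r:irre} to get irreducibility of $W|_{H^0}$, the lift to the direct product $L=L_1L_2L_3L_4$, the subdirect-product reduction via the argument of Lemma \ref{l:case9_10_1}, Proposition \ref{p:useful} to write $R^0$ as a product of diagonally embedded ${\rm SL}_2$'s, and Lemma \ref{l:steinberg} to describe the projections as $t_{x}\sigma_{q}$. Where you genuinely diverge is the decisive step forcing $H^0=M^0$. The paper runs a case analysis over the possible types of $R^0$ (namely $C_1$, the two block shapes of $C_1^2$, and $C_1^3$), in each case listing the highest weights of the top composition factors of the $V_i|_{R^0}$ and contradicting the homogeneity property of $V|_{R^0}$ coming from $N_G(H^0)$ (homogeneous, or homogeneous up to a permutation of the simple factors), with Lemma \ref{l:ten} supplying the final non-homogeneity -- including a special remark needed when $p=2$ because one tensor factor is non-restricted. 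You instead extract a single numerical invariant: Clifford transitivity of $H/H^0$ on the $KR^0$-composition factors, together with the fact that algebraic automorphisms of $R^0\cong {\rm SL}_2^r$ only permute the simple factors, forces all composition-factor highest weights to have equal coordinate-sums; this at once equalizes all the twists (hence makes them $1$, given the standard normalization that each block carries a trivial twist -- your parenthetical reason ``positive-dimensional'' should really be the reparametrization/closedness point the paper uses, but the normalization itself is the same), and then the explicit reducibility of $V_{R_j}(3\o)\otimes V_{R_j}(\o)^{\otimes(|B_j|-1)}$ for $p\neq 3$ produces a composition factor of strictly smaller coordinate-sum whenever some block has size at least two, killing all of $r=1,2,3$ uniformly. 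This buys a shorter, case-free argument and incidentally avoids the $p=2$ caveat attached to Lemma \ref{l:ten}, at the price of having to verify the reducibility statement by hand (your check is right: for $p\neq 2,3$ compare $\dim=8$ with $\dim V(4\o)\leqs 5$, and for $p=2$ use $V(3\o)\cong V(\o)\otimes V(\o)^{(2)}$ and the reducibility of $V(\o)^{\otimes 2}$). Your explicit induced-module argument for the converse is also fine; the paper treats that direction as immediate from the shape of $V|_{M^0}$.
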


\begin{proof}
Write $M^0 = C_1^4=M_1M_2M_3M_4$, which is a central product of simply connected groups of type $C_1$, and let $\o_{i}$ be the fundamental dominant weight for $M_i$. Then  \cite[Table 6.2]{BGT} indicates that
$V|_{M^0} = V_1 \oplus V_2 \oplus V_3\oplus V_4$,
where
\begin{align}
\begin{split}
V_1 & = V_{M_1}(\o_{1}) \otimes V_{M_2}(\o_{2})\otimes V_{M_3}(\o_3)\otimes V_{M_4}(3\o_{4})\\
V_2 & = V_{M_1}(\o_{1}) \otimes V_{M_2}(\o_{2})\otimes V_{M_3}(3\o_3)\otimes V_{M_4}(\o_{4})\\
V_3 & = V_{M_1}(\o_{1}) \otimes V_{M_2}(3\o_{2})\otimes V_{M_3}(\o_3)\otimes V_{M_4}(\o_{4})\\
V_4 & = V_{M_1}(3\o_{1}) \otimes V_{M_2}(\o_{2})\otimes V_{M_3}(\o_3)\otimes V_{M_4}(\o_{4})
\end{split}
\end{align}
Assume that $V|_{H}$ is irreducible, so $W|_{H^0}$ is also irreducible (see Remark \ref{r:irre}), where $W$ is the natural $KG$-module. 

Since $W$ is a $KM^0$-module, it lifts to a representation $\rho: L \to {\rm GL}(W)$, where $L=L_1L_2L_3L_4$ is the direct product of simply connected groups of type $C_1$. Then $M^0 = L/Y$, where $Y = \ker(\rho)$, and there exists a subgroup $R$ of $L$ such that $H^0=R/Y$. 

\vs

\noindent \emph{Claim.} $H^0$ is a subdirect product of $M^0$.

\vs

We need to show that $R^0$ is a subdirect product of $L$. To do this, we proceed as in the proof of Lemma \ref{l:case9_10_1}; the argument is very similar (using the irreducibility of $W|_{H^0}$) and we omit the details.

\vs

\noindent \emph{Claim.} $H^0=M^0$

\vs

Since $R^0$ is a subdirect product of $L$, Proposition \ref{p:useful} implies that $R^0$ is isomorphic to a commuting product of simply connected groups of type $C_1$. If $R^0$ is of type $C_1^4$ then $H^0=M^0$, so we may assume that $R^0$ is of type $C_1, C_1^2$ or $C_1^3$.

Suppose $R^0$ is of type $C_1$. Let $\eta$ be the fundamental dominant weight for $R^0$. By Proposition \ref{p:useful}, we may write
$$R^0=\{(\tau_1(x), \tau_2(x), \tau_3(x), \tau_4(x)) \mid x \in {\rm Sp}_{2}(K) \},$$ 
where $\tau_i: {\rm Sp}_{2}(K) \to L_i$ is a bijective morphism. As before, Lemma \ref{l:steinberg} implies that $\tau_i=t_{x_i}\sigma_{q_i}$ for some $x_i \in R^0$ and $p$-power $q_i$, and we may assume that $q_4=1$. Then $V|_{R^0}$ has composition factors with highest weights
$$(q_1+q_2+q_3+3)\eta, \;\; (q_1+q_2+3q_3+1)\eta, \;\; (q_1+3q_2+q_3+1)\eta, \;\; (3q_1+q_2+q_3+1)\eta.$$
Since $N_G(H^0) =  H^0C_G(H^0)$, it follows that $V|_{R^0}$ is homogeneous, so $q_i=1$ for all $i$. But Lemma \ref{l:ten} implies that $V|_{R^0}$ is non-homogeneous, so we have reached a contradiction. Note that if $p=2$ then one of the tensor factors in $V_i|_{R^0}$ is non-restricted, but we can still argue as in the proof of Lemma \ref{l:ten} by considering the tensor product of the restricted factors.

Next suppose $R^0=R_1R_2$, where each $R_i \cong C_1$ is simply connected. Let $\{\eta_{1}, \eta_{2}\}$ be fundamental dominant weights for $R^0$. In view of Proposition \ref{p:useful}, we may assume that either 
\begin{equation}\label{e:r11}
R_1=\{(\tau_{1}(x),1,1,1) \mid x \in {\rm Sp}_2(K)\},\;\; R_2=\{(1,\tau_{2}(x),\tau_{3}(x),\tau_{4}(x)) \mid x \in {\rm Sp}_2(K)\}
\end{equation}
or
\begin{equation}\label{e:r12}
R_1=\{(\tau'_{1}(x),\tau'_{2}(x),1,1) \mid x \in {\rm Sp}_2(K)\},\;\; R_2=\{(1,1,\tau'_{3}(x),\tau'_{4}(x)) \mid x \in {\rm Sp}_2(K)\}
\end{equation}
where $\tau_i, \tau_i'$ are bijective morphisms from ${\rm Sp}_{2}(K)$ to $L_i$. Note that 
$V|_{R^0}$ is either homogeneous, or the homogeneous components of $V|_{R^0}$ are conjugate under an involutory automorphism of $R^0$ that interchanges $R_1$ and $R_2$. 

First assume \eqref{e:r11} holds. As above, we may write $\tau_{i}=t_{x_{i}}\sigma_{q_{i}}$, so $V|_{R^0}$ has composition factors with highest weights
$q_{1}\eta_1+ (q_{2}+q_{3}+3q_{4})\eta_2$, $q_{1}\eta_1+ (q_{2}+3q_{3}+q_{4})\eta_2$,  $q_{1}\eta_1+(3q_{2}+q_{3}+q_{4})\eta_2$
and $3q_{1}\eta_1+ (q_{2}+q_{3}+q_{4})\eta_2$, 
and at least one $q_i$ is equal to $1$. But this contradicts the above homogeneity properties of $V|_{R^0}$. Similarly, suppose \eqref{e:r12} holds and write $\tau_{i}'=t_{x_{i}'}\sigma_{q_{i}'}$, where at least one $q_i'$ is equal to $1$. Then $V|_{R^0}$ has composition factors with highest weights
$(q'_{1}+q'_{2})\eta_1 +(q'_{3}+3q'_4)\eta_2$, $(q'_{1}+q'_{2})\eta_1+ (3q'_{3}+q'_4)\eta_2$,  $(q'_{1}+3q'_{2})\eta_1+ (q'_{3}+q'_4)\eta_2$ and 
$(3q'_{1}+q'_{2})\eta_1+ (q'_{3}+3q'_4)\eta_2$,
and we deduce that $q'_i=1$ for all $i$. Then $V|_{R^0}$ has composition factors with highest weights $2\eta_1+4\eta_2$ and $2\eta_1+ 2\eta_2$, but once again this is incompatible with the observed homogeneity properties of $V|_{R^0}$.  

Finally, let us assume $R^0 = R_1R_2R_3$ is of type $C_1^3$. Let $\{\eta_{1}, \eta_{2},\eta_3\}$ be  fundamental dominant weights for $R^0$. In view of Proposition \ref{p:useful}, we may assume that 
\begin{align*}
R_1 & = \{(\tau_{1}(x),\tau_{2}(x),1,1) \mid x \in {\rm Sp}_2(K)\} \\
R_2 & = \{(1,1,\tau_{3}(x),1) \mid x \in {\rm Sp}_2(K)\} \\
R_3 & = \{(1,1,1,\tau_{4}(x)) \mid x \in {\rm Sp}_2(K)\}
\end{align*}
where $\tau_{i}=t_{x_{i}}\sigma_{q_{i}}$ as above. Note that $V|_{R^0}$ is either homogeneous, or any two homogeneous components of $V|_{R^0}$ are conjugate via an automorphism of $R^0$ of order $2$ or $3$, induced by a suitable permutation of the three factors of $R^0$. However, $V|_{R^0}$ has composition factors with  highest weights
$(q_{1}+q_{2})\eta_1+ q_{3}\eta_2+ 3q_{4}\eta_3$, $(q_{1}+q_{2})\eta_1+ 3q_{3}\eta_2+ q_{4}\eta_3$, 
$(q_{1}+3q_{2})\eta_1+ q_{3}\eta_2+ q_{4}\eta_3$ and $(3q_{1}+q_{2})\eta_1+ q_{3}\eta_2+ q_{4}\eta_3$, so $V|_{R^0}$ does not have the stated homogeneity property. This is a contradiction.

\vs

We have now reduced to the case $H^0=M^0=C_1^4$. From the above description of $V|_{M^0}$ it is clear that $V|_{H}$ is irreducible if and only if $H=C_1^4.X$, where $X < {\rm Sym}_4$ is transitive. The result follows. 
\end{proof}

This completes the proof of Theorem \ref{t:geom1}.

\section{Non-geometric subgroups}\label{s:sfamily}

In order to complete the proof of Theorem \ref{t:main}, it remains to determine the irreducible triples $(G,H,V)$ satisfying Hypothesis \ref{h:our}, where 
$V|_{H^0}$ is reducible and $H$ is \emph{not} contained in a geometric subgroup of $G$. 
Here the latter condition implies that $H$ is one of the \emph{non-geometric} subgroups that arise in part (ii) of Theorem \ref{t:ls} in Section \ref{s:ss}. In particular, $W|_{H^0}$ is irreducible and tensor indecomposable, so we can apply the main theorem of \cite{BGMT}. (Note that if $(G,p)=(C_n,2)$ and $H$ fixes a non-degenerate quadratic form on $W$, then $H$ is contained in a geometric $\C_6$-subgroup $D_n.2<C_n$, which is a situation we dealt with in Proposition \ref{p:geom6}.)

\begin{thm}\label{t:smain}
Let $G, H$ and $V=V_G(\l)$ be given as in Hypothesis \ref{h:our}, and assume that $H$ is not contained in a geometric subgroup of $G$. Then $V|_{H}$ is reducible. 
\end{thm}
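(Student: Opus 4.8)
The plan is to argue by contradiction. Suppose $V|_H$ is irreducible. Since $H$ is positive-dimensional it is infinite, and since $H$ is not contained in a geometric subgroup of $G$, part (ii) of Theorem \ref{t:ls} applies: modulo scalars, $H$ is almost simple, $E(H)=H^0$ is a simple algebraic group, and $H^0$ acts irreducibly and tensor indecomposably on the natural module $W$; moreover, if $G={\rm SL}(W)$ then $H^0$ does not fix a non-degenerate form on $W$. Note also that the exceptional configuration in which $(G,p)=(C_n,2)$ and $H^0$ fixes a non-degenerate quadratic form on $W$ does not arise here, since in that situation $H$ would be contained in the geometric $\C_6$-subgroup $D_n.2<G$, contrary to hypothesis (this case was settled in any event in Proposition \ref{p:geom6}).

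With $W|_{H^0}$ irreducible and tensor indecomposable, the triple $(G,H,V)$ now lies within the scope of the main theorem of \cite{BGMT}: $G$ is classical, $H^0$ is simple and acts irreducibly and tensor indecomposably on $W$ (fixing no non-degenerate form when $G={\rm SL}(W)$), $V$ is nontrivial, $p$-restricted, tensor indecomposable and rational with $V\neq W^\tau$, and $V|_H$ is irreducible; hence $(G,H,V)$, together with the data recording the number and highest weights of the $KH^0$-composition factors of $V$, must occur in the classification established there. (If one prefers to invoke \cite{BGMT} only for maximal subgroups, one instead fixes a maximal $M$ with $H<M<G$, which is necessarily non-geometric by hypothesis and hence almost simple with $M^0$ simple by Theorem \ref{t:ls}(ii); one reads off $(G,M,V)$ from \cite{BGMT} and analyses the embedding $H<M$ inside the almost simple group $M$ using Seitz's theorem \cite{Seitz2} and the classifications in \cite{BGMT,BGT}, the recursion terminating as the rank strictly drops.) Recall that in the present setting we are assuming $V|_{H^0}$ is reducible, so by Clifford theory and Remark \ref{r:semisimple} the highest weights of the $KH^0$-composition factors of $V$ are permuted transitively by the (necessarily outer, graph- or Frobenius-type) automorphisms of $H^0$ induced by $H/H^0$; this is precisely the degenerate regime isolated in \cite{BGMT}, and only a short list of triples $(G,H,V)$ survives.

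The final step is the verification: I would run through each triple $(G,H,V)$ in the classification of \cite{BGMT} for which $V|_{H^0}$ is reducible and $V\neq W^\tau$, and check in every case that $HZ(G)/Z(G)$ is a \emph{maximal} subgroup of $G/Z(G)$. This contradicts condition H2 of Hypothesis \ref{h:our}, which requires $HZ(G)/Z(G)$ to be non-maximal; hence no such triple exists, and $V|_H$ must be reducible. The main obstacle is the bookkeeping in this last step: one must extract from \cite{BGMT} the complete sublist of irreducible triples with $H^0$ simple, $W|_{H^0}$ irreducible and tensor indecomposable, and $V|_{H^0}$ reducible, and confirm the maximality of $HZ(G)/Z(G)$ in each, taking some care over the exceptional isogeny identifications (types $B_n$ and $C_n$ in characteristic $2$, and triality for $D_4$) so that the notion of being non-geometric in $G$ is used consistently throughout.
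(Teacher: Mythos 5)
Your proposal follows the paper's proof essentially verbatim: invoke Theorem \ref{t:ls}(ii) to see that $W|_{H^0}$ is irreducible and tensor indecomposable, apply \cite[Theorem 1]{BGMT} to reduce to a short list (in fact the single case $(G,H,\l)=(C_{10},A_5.2,\l_3)$ with $p \neq 2,3$), and then contradict condition H2 by showing $H$ is maximal. The maximality check you defer to ``bookkeeping'' is carried out in the paper exactly as in your parenthetical remark: any maximal overgroup $M$ of $H$ is again non-geometric, so $M^0$ is simple and irreducible on $W$, and the resulting irreducible triple $(M^0,H^0,W)$ is ruled out by \cite[Table 1]{Seitz2}.
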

 
\begin{proof}
By \cite[Theorem 1]{BGMT}, the only possibility is the case $(G,H,\l)=(C_{10},A_5.2, \l_3)$ with  $p \neq 2,3$. However, we claim that $H$ is a maximal subgroup, so this example does not satisfy the conditions in Hypothesis \ref{h:our}. To see this, let $\{\eta_1, \ldots, \eta_5\}$ be a set of fundamental dominant weights for $H^0$ and note that $W=V_{H^0}(\eta_3)$ (where  $W$ is the natural $KG$-module). Suppose $H$ is non-maximal, say $H<M<G$ with $M$ maximal. Then $M$ is non-geometric, so $M^0$ is a simple group that acts irreducibly on $W$ and thus $(M^0,H^0,W)$ is an irreducible triple. By inspecting \cite[Table 1]{Seitz2}, we see that there are no compatible examples. This is a contradiction, so $H$ is maximal as claimed.
\end{proof}

\vs

In view of Theorems \ref{t:geom0}, \ref{t:geom1} and \ref{t:smain}, the proof of Theorem \ref{t:main} is complete.

\section{Spin modules}\label{s:spin}

In this section, we briefly consider the special case arising in part (b) of Theorem \ref{t:main}, where $G$ is a simply connected group of type $B_n$ or $D_n$ (or type $C_n$ if $p=2$), $V$ is a spin module and $H$ is a \emph{decomposition subgroup} of $G$, as defined in the introduction. Recall that $H$ normalizes an orthogonal decomposition 
$$W = W_1 + \cdots + W_t$$
of the natural $KG$-module $W$, where the $W_i$ are pairwise orthogonal subspaces. 

Our goal here is simply to highlight the difference between this very specific situation and the general case we have studied in Sections \ref{s:geom1}, \ref{s:geom2} and \ref{s:sfamily}. We will do this by establishing a preliminary result (see Proposition \ref{p:spin1}); a detailed analysis of spin modules and decomposition subgroups will be given in a forthcoming paper.

Let $G$ be a simply connected simple algebraic group of type $B_n$ or $D_n$ over an algebraically closed field $K$ of characteristic $p$. For convenience, we will assume that $p \neq 2$. Let $W$ be the natural $KG$-module. As before, fix a set of simple roots $\{\a_1, \ldots,\a_n\}$ and fundamental dominant weights $\{\l_1, \ldots, \l_n\}$ for $G$. We will assume that $n \geqs 3$ if $G=B_n$ and $n \geqs 5$ if $G=D_n$ (note that the spin modules for $D_4$ are excluded in Hypothesis \ref{h:our}; see Remark \ref{r:hyp}). We may write $\overline{G} = G/Z = {\rm SO}(W)$, where $Z \leqs Z(G)$. Similarly, for a subgroup $J$ of $G$ we set $\overline{J} = JZ/Z \leqs \overline{G}$.

Let $V$ be a spin module for $G$. In terms of highest weights, either $V=V_{G}(\l_n)$, or $G=D_n$ and $V=V_G(\l_{n-1})$ (in the latter case, note that $V_G(\l_{n-1})=V_G(\l_{n})^{\tau}$, where $\tau$ is a graph automorphism of $G$). The next result is well known (see \cite[Lemma 2.3.2]{BGT} for a proof).

\begin{lem}\label{l:dmspin}
$\dim V_{B_n}(\l_n)=2^n$ and $\dim V_{D_n}(\l_{n-1}) = \dim V_{D_n}(\l_{n}) = 2^{n-1}$.
\end{lem}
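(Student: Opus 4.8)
The plan is to identify the spin modules concretely, compute their dimensions via the theory of Clifford algebras, and use the fact that for $D_n$ the highest weights involved are minuscule so that no characteristic subtleties arise.

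First I would treat $G$ of type $D_n$, so $\overline{G} = \mathrm{SO}(W)$ with $\dim W = 2n$. In the standard coordinates the fundamental weights are $\l_n = \tfrac12(\varepsilon_1 + \cdots + \varepsilon_{n-1} + \varepsilon_n)$ and $\l_{n-1} = \tfrac12(\varepsilon_1 + \cdots + \varepsilon_{n-1} - \varepsilon_n)$, and I would show that the weights of $V_G(\l_n)$ (respectively $V_G(\l_{n-1})$) form precisely a single Weyl-group orbit, namely the $2^{n-1}$ vectors $\tfrac12(\pm\varepsilon_1 \pm \cdots \pm \varepsilon_n)$ with an even (respectively odd) number of minus signs, each occurring with multiplicity one. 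Since a Weyl module with minuscule highest weight is irreducible over every field, this yields $\dim V_G(\l_{n-1}) = \dim V_G(\l_n) = 2^{n-1}$ immediately. (Equivalently, one may read this off from the decomposition $C^0(W) \cong M_{2^{n-1}}(K) \oplus M_{2^{n-1}}(K)$ of the even Clifford algebra of $W$ over the algebraically closed field $K$.)

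Next I would handle $G$ of type $B_n$, that is, $\mathrm{Spin}_{2n+1}$. Let $W'$ be the $(2n+1)$-dimensional orthogonal space. Over $K$ with $p \neq 2$ the even Clifford algebra $C^0(W')$ is isomorphic to the matrix algebra $M_{2^n}(K)$, and $G = \mathrm{Spin}(W')$ embeds in $C^0(W')^{\times}$; the unique simple $C^0(W')$-module $S$ therefore has dimension $2^n$. I would check that the $K$-linear span of $G$ inside $C^0(W')$ is all of $C^0(W')$, so that $S$ is irreducible as a $KG$-module, and then verify by a weight computation that $S \cong V_G(\l_n)$, the weights being exactly $\tfrac12(\pm\varepsilon_1 \pm \cdots \pm \varepsilon_n)$, each with multiplicity one. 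This gives $\dim V_{B_n}(\l_n) = 2^n$. (Alternatively one could deduce the $B_n$ statement from the $D_{n+1}$ statement by restricting a half-spin module of $D_{n+1}$ to the stabilizer of type $B_n$ of a non-singular vector, but this requires separately knowing that the restriction remains irreducible.)

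The step I expect to be the main obstacle is the last one: confirming irreducibility of the spinor module as a module for the algebraic group $G$, rather than merely for the Clifford algebra, in positive characteristic. For $D_n$ this is automatic from minusculeness, but for $B_n$ it rests on the density of $\mathrm{Spin}(W')$ in $C^0(W')$, and this is exactly where the hypothesis $p \neq 2$ enters. Everything else reduces to a routine bookkeeping of weights and their multiplicities. In practice, of course, the cleanest course is simply to cite \cite[Lemma 2.3.2]{BGT}, where all of this is carried out.
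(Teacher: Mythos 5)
Your argument is correct, but it is a genuinely different route from the paper's: the paper offers no proof at all, simply declaring the result well known and citing \cite[Lemma 2.3.2]{BGT} (the option you mention in your final sentence). Your sketch is a sound standalone proof: for $D_n$ the two half-spin weights are minuscule, the weights $\tfrac12(\pm\varepsilon_1\pm\cdots\pm\varepsilon_n)$ with a fixed parity of minus signs form a single Weyl orbit with one-dimensional weight spaces, and minuscule Weyl modules are irreducible in every characteristic, giving $2^{n-1}$; for $B_n$ the Clifford-algebra argument (irreducibility of the $2^n$-dimensional spinor module via surjectivity of $K[\mathrm{Spin}(W')]\to C^0(W')\cong M_{2^n}(K)$, then identification of the highest weight as $\l_n$) is also fine, and the hypothesis $p\neq 2$ that it needs is harmless here since the paper assumes $p\neq 2$ throughout Section \ref{s:spin}. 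One simplification worth noting: $\l_n$ is minuscule for $B_n$ as well, with weights exactly $\tfrac12(\pm\varepsilon_1\pm\cdots\pm\varepsilon_n)$ forming a single orbit of size $2^n$, so the same one-line minuscule argument handles both types uniformly, in all characteristics, and lets you dispense with the Clifford-algebra density step entirely. What your approach buys is a self-contained, characteristic-free proof; what the paper's citation buys is brevity, the computation having already been carried out in \cite{BGT}.
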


Let $W=W_1 \perp W_2$ be an orthogonal decomposition, where $W_1$ and $W_2$ are non-degenerate subspaces with $\dim W_i \geqs 3$. Let $H$ be the stabilizer in $G$ of this decomposition, so $H = H^0.2$ and $H^0$ is a central product of two simply connected orthogonal groups. More precisely, 
$$\overline{H}  = \overline{G}_{W_1} \cap \overline{G}_{W_2} = ({\rm GO}(W_1) \times {\rm GO}(W_2)) \cap \overline{G} = \overline{H}^0.2$$
and $\overline{H}^0 = {\rm SO}(W_1) \times {\rm SO}(W_2)$ is semisimple. 

\begin{prop}\label{p:spin}
Let $V$ be a spin module for $G$ and let $H$ be the stabilizer in $G$ of an orthogonal decomposition $W = W_1 \perp W_2$ as above. Then $V|_{H}$ is irreducible. Moreover, each $KH^0$-composition factor of $V$ is a tensor product of spin modules for both orthogonal factors of $H^0$. 
\end{prop}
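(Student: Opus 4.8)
The plan is to exploit the classical branching rule for spin modules under an orthogonal decomposition, which is completely explicit and well understood once $p\neq 2$. First I would fix the setup: write $\dim W_1 = d_1$, $\dim W_2 = d_2$, with $d_1+d_2 = \dim W$, and recall from Lemma \ref{l:dmspin} the dimensions of the spin modules. The key structural fact is that the spin (Clifford-algebra) module $V$ for $\mathrm{SO}(W)$ restricts to $\mathrm{SO}(W_1)\times \mathrm{SO}(W_2)$ as a tensor product (or a sum of two tensor products, depending on parities) of the spin modules for the two factors. Concretely, the Clifford algebra $C(W)$ with $W = W_1 \perp W_2$ satisfies $C(W) \cong C(W_1)\,\widehat{\otimes}\,C(W_2)$ as a $\mathbb{Z}/2$-graded algebra, and the even parts give $\mathrm{Spin}(W_1)\cdot\mathrm{Spin}(W_2)$ acting on the tensor product of the spinor modules. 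The $KH^0$-composition factors of $V$ are therefore exactly the tensor products $S_1\otimes S_2$ where $S_i$ is a spin module for $\mathrm{SO}(W_i)$; this immediately gives the ``moreover'' clause, and a quick dimension count ($2^{\lfloor d_1/2\rfloor}\cdot 2^{\lfloor d_2/2\rfloor}$ summed over the relevant factors equals $\dim V$) confirms that these are the only composition factors.

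Next I would verify irreducibility of $V|_H$ via Clifford theory (Section \ref{ss:irred}). Since $|H:H^0| = 2$, by Proposition \ref{p:niso} the composition factors of $V|_{H^0}$, if there is more than one, are pairwise non-isomorphic, and it suffices to show that the element of $H\setminus H^0$ (an isometry swapping a vector of $W_1$ with one of $W_2$, suitably normalized, or a reflection of odd-dimensional type) transitively permutes them. When $d_1$ and $d_2$ are both even, $V|_{H^0}$ is already a single tensor product $S_1\otimes S_2$ (each $S_i$ irreducible of dimension $2^{d_i/2}$), hence irreducible on $H^0$, and a fortiori on $H$. When the parities force two composition factors — this happens precisely when one or both of the $d_i$ is odd — the two factors $S_1\otimes S_2^{+}$ and $S_1\otimes S_2^{-}$ (or the analogous pair) are interchanged by an element of $H$ inducing the graph automorphism on the appropriate $D$-type factor, since that automorphism swaps the two half-spin modules. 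This is exactly the kind of Clifford-theoretic argument used repeatedly in Section \ref{s:geom2}, run in reverse: there we showed homogeneity obstructs irreducibility; here the branching is genuinely non-homogeneous in precisely the way Clifford theory demands.

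The main obstacle, and the step requiring the most care, is the bookkeeping of parities: the structure of the Clifford algebra tensor decomposition, and hence the number and isomorphism types of the spin composition factors, depends delicately on whether $d_1, d_2$, and $\dim W$ are even or odd (equivalently on whether $G$ is $B_n$ or $D_n$, and whether each $\mathrm{SO}(W_i)$ is of type $B$ or $D$). There are essentially three cases — both $d_i$ even; one even, one odd; both odd — and in each I must identify the correct branching rule, count composition factors, and exhibit the element of $H\setminus H^0$ that permutes them transitively. A clean way to organize this is to reduce to the case $\dim W$ odd (type $B_n$, as in the proposition's hypothesis) and handle the sub-cases $d_1, d_2$ both odd versus one odd one even. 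I would cite a standard reference for the Clifford-algebra branching (e.g. \cite{Seitz2} or a Lie-theory text) rather than reprove it, and then the remaining work is the short Clifford-theory argument together with the dimension check from Lemma \ref{l:dmspin}. Finally, I note the hypothesis $p\neq 2$ is used only to keep $V$ genuinely irreducible and the forms non-degenerate in the naive sense; the general case is deferred to the forthcoming paper, consistent with the remark preceding the proposition.
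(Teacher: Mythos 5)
Your overall strategy (Clifford-algebra branching for $W=W_1\perp W_2$ plus Clifford theory and a dimension count against Lemma \ref{l:dmspin}) is viable and is essentially how the paper argues the more general Proposition \ref{p:spin1}; for Proposition \ref{p:spin} itself the paper instead just quotes known results, splitting into cases according to the type of $G$ and the parity of $\dim W_1$: for $G=B_n$, and for $G=D_n$ with $\dim W_1$ even and $\dim W_1\neq n$, $H$ (mod scalars) is a disconnected $\C_1$-subgroup and \cite[Proposition 3.1.1]{BGT} applies; for $G=D_n$ with both $W_i$ odd-dimensional the image of $H$ is connected and the cases ${\rm IV}_1$, ${\rm IV}_2$ of \cite[Table 1]{Seitz2} give that $V|_{H^0}$ is already an irreducible tensor product of spin modules; and for $\dim W_1=\dim W_2=n$ the proof of \cite[Lemma 3.2.3]{BGT} is invoked. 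However, your execution of the branching has the parities backwards, which is a genuine gap. When $d_1$ and $d_2$ are both even (so $G=D_n$), the half-spin module does \emph{not} restrict to $H^0$ as a single tensor product: one gets two composition factors, $(S_1^{+}\otimes S_2^{+})\oplus(S_1^{-}\otimes S_2^{-})$ (up to matching of signs), each of dimension $2^{d_1/2-1}\cdot 2^{d_2/2-1}$; your claimed single factor of dimension $2^{d_1/2}\cdot 2^{d_2/2}=2^{n}$ already exceeds $\dim V=2^{n-1}$, so the dimension count you propose would have flagged this. In this case irreducibility of $V|_H$ is not ``a fortiori'': it requires exhibiting the outer element of $H$ (a pair of reflections) acting as a graph automorphism \emph{simultaneously} on both $D$-type factors, swapping the two summands. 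Conversely, when $d_1$ and $d_2$ are both odd (again $G=D_n$), $V|_{H^0}=S_1\otimes S_2$ is a single irreducible tensor product, not two factors as you assert. Your ``one odd, one even'' case ($G=B_n$) is the only one you state correctly.

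Two further points. Your proposed organization, ``reduce to the case $\dim W$ odd (type $B_n$, as in the proposition's hypothesis) and handle the sub-cases $d_1,d_2$ both odd versus one odd one even,'' is incoherent: the proposition covers $G=D_n$ as well, so no such reduction is available, and if $\dim W$ is odd then $d_1,d_2$ cannot both be odd. Finally, Proposition \ref{p:niso} is stated for the decomposition \eqref{e:vx}, i.e.\ under the standing assumption that $V|_{H}$ is irreducible, so you cannot invoke it to obtain non-isomorphism of the $KH^0$-factors on the way to proving irreducibility; fortunately this is not needed, since in each reducible case the two factors have distinct highest weights and are therefore non-isomorphic, after which the standard argument (the outer element swaps the two non-isomorphic summands, so no proper nonzero $H^0$-submodule is $H$-stable) closes the proof. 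With the branching corrected as above, your approach does yield both the irreducibility and the ``moreover'' clause, but as written the two $D_n$ sub-cases fail.
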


\begin{proof}
If $G=B_n$ then $H/Z(G)$ is a disconnected subgroup in the collection $\C_1$ of geometric maximal subgroups of $G$, and the result follows immediately from \cite[Proposition 3.1.1]{BGT}. Now assume $G=D_n$. If $W_1$ is odd-dimensional, then $H/Z(G)$ is connected and Seitz's main theorem \cite[Theorem 1]{Seitz2} implies that $V|_{H^0}$ is an irreducible tensor product of appropriate spin modules (see the cases labelled ${\rm IV}_{1}$ (with $k=1$) and ${\rm IV}_{2}$ in \cite[Table 1]{Seitz2}). 
Finally, suppose $W_1$ is even-dimensional. If $\dim W_1 \neq n$, then $H/Z(G)$ is a disconnected $\C_1$-subgroup and \cite[Proposition 3.1.1]{BGT} applies. If $\dim W_1 = n$ is even then $H$ is properly contained in a $\C_2$-subgroup of $G$ (namely, the full normalizer in $G$ of the orthogonal decomposition); the proof of \cite[Lemma 3.2.3]{BGT} goes through unchanged, and the result follows.
\end{proof}

Now consider an orthogonal decomposition of the form
\begin{equation}\label{eq:w}
W = W_1 \perp \ldots \perp W_t,
\end{equation}
where $t \geqs 2$ and each $W_i$ is a non-degenerate subspace with $\dim W_i \geqs 3$. Let $H$ be the stabilizer in $G$ of this decomposition, so 
$$\overline{H}  = \bigcap_{i=1}^{t}\overline{G}_{W_i}  = ({\rm GO}(W_1) \times \cdots \times {\rm GO}(W_t)) \cap \overline{G} = \overline{H}^0.2^{t-1}$$
If $t=2$ then $V|_{H}$ is irreducible by Proposition \ref{p:spin}, so let us assume $t \geqs 3$. We claim that $V|_{H}$ is still irreducible. To see this, we first handle the special case where the $W_i$ are equidimensional.

\begin{lem}\label{l:spinc2}
If $\dim W_i = d \geqs 3$ for all $i$, then $V|_{H}$ is irreducible.
\end{lem}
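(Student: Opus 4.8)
The statement to prove is Lemma~\ref{l:spinc2}: if $W = W_1 \perp \cdots \perp W_t$ with all $\dim W_i = d \geqs 3$ and $H$ is the stabilizer in $G$ of this decomposition, then $V|_H$ is irreducible, where $V$ is a spin module for $G$ (and $p \neq 2$). The natural approach is an induction on $t$, using Proposition~\ref{p:spin} as the base case $t=2$ together with a ``coarsening'' argument that regroups the summands. First I would reduce the problem to understanding $KH^0$-composition factors: by Clifford theory (Section~\ref{ss:irred}), since $\overline{H} = \overline{H}^0.2^{t-1}$, it suffices to show that the $KH^0$-composition factors of $V$ are (i) each irreducible on restriction to $H^0$ and all of the same dimension, and (ii) transitively permuted by $H/H^0 = 2^{t-1}$.

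\textbf{Key steps.} The first step is to describe $V|_{H^0}$ explicitly. Grouping $W = (W_1 \perp W_2) \perp W_3 \perp \cdots \perp W_t$ and applying Proposition~\ref{p:spin} repeatedly (i.e. inducting on $t$, using that the stabilizer of the coarser decomposition contains $H$), I get that each $KH^0$-composition factor of $V$ is a tensor product $S_1 \otimes \cdots \otimes S_t$, where $S_i$ is a spin module for the $i$-th orthogonal factor ${\rm SO}(W_i)$ — and which spin module depends on parities. Concretely: if $d$ is odd, each ${\rm SO}(W_i) = B_{(d-1)/2}$ has a unique spin module of dimension $2^{(d-1)/2}$, so $V|_{H^0}$ is homogeneous with composition factors all isomorphic to the fixed tensor product, and the number of composition factors is forced by dimension count via Lemma~\ref{l:dmspin}: $\dim V = 2^{\lfloor \dim W/2\rfloor}$ versus $\prod_i 2^{(d-1)/2}$, which gives exactly $2^{t-1}$ factors when $\dim W = td$ (using that $\lfloor td/2 \rfloor = (td-1)/2$ when $td$ is odd, and one checks the $D_n$ case similarly). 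If $d$ is even, each ${\rm SO}(W_i) = D_{d/2}$ has two half-spin modules $S_i^{\pm}$ of dimension $2^{d/2-1}$, and the composition factors of $V|_{H^0}$ are those tensor products $S_1^{\varepsilon_1} \otimes \cdots \otimes S_t^{\varepsilon_t}$ satisfying a parity constraint on $\prod \varepsilon_i$ (coming from which spin module of $G$ we started with); again a dimension count pins down that each occurs once and there are $2^{t-1}$ of them.

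\textbf{Transitivity.} The second main step is to show $H/H^0 \cong 2^{t-1}$ transitively permutes these composition factors. An element of $H$ reversing the orientation on $W_i$ (and fixing the other $W_j$) acts on ${\rm SO}(W_i)$ as an outer automorphism when $d$ is even, swapping $S_i^+ \leftrightarrow S_i^-$, hence acts on the set of composition factors by flipping the sign $\varepsilon_i$; in the odd-$d$ case one checks via explicit highest-weight computations (or the known structure of the Clifford algebra / spin representation under orthogonal sums) that these elements still permute the (now isomorphic) homogeneous pieces transitively, e.g. by tracking the action on a maximal torus. Since the $2^{t-1}$ generators flip the signs $\varepsilon_1, \ldots, \varepsilon_{t-1}$ independently (subject to the global parity constraint, which is why the group is $2^{t-1}$ and not $2^t$), the action on the $2^{t-1}$ composition factors is regular, hence transitive. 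Combining with Step 1, Clifford theory yields irreducibility of $V|_H$.

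\textbf{Main obstacle.} The hardest part is making the inductive coarsening argument airtight: Proposition~\ref{p:spin} as stated handles a decomposition into \emph{two} non-degenerate summands of dimension $\geqs 3$, so when I regroup $W_1 \perp (W_2 \perp \cdots \perp W_t)$ I must know that restricting the spin module for the big orthogonal factor ${\rm SO}(W_2 \perp \cdots \perp W_t)$ down to its own decomposition stabilizer again behaves as expected — this is the inductive hypothesis, but one must be careful that the ``spin module'' I get on the big factor is genuinely a spin (not half-spin issues causing the wrong parity bookkeeping) and that the $d$ even/odd cases interact correctly when $t-1$ summands of dimension $d$ sum to something of the opposite parity behaviour. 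Keeping track of exactly which half-spin modules appear, and verifying the dimension count closes (so that ``$\leqs 2^{t-1}$ composition factors, each irreducible over $H^0$'' together with ``$\geqs 2^{t-1}$ by the Clifford orbit'' forces equality), is where the real care is needed; the algebra-group automorphism action on half-spin modules is standard and should present no serious difficulty.
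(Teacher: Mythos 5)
Your treatment of the even-dimensional case ($d=2l$) is essentially the paper's argument: one composition factor of $V|_{H^0}$ is a tensor product of half-spin modules, the involutions in $H/H^0 \cong 2^{t-1}$ act as graph automorphisms of the $D_l$ factors and produce $2^{t-1}$ pairwise distinct, $H$-conjugate factors, and the dimension count $2^{t-1}\cdot 2^{t(l-1)} = 2^{tl-1} = \dim V$ (Lemma \ref{l:dmspin}) closes the argument. The gap is in the odd-dimensional case. There your bookkeeping is wrong: each factor ${\rm SO}(W_i)=B_l$ ($d=2l+1$) has a \emph{unique} spin module, of dimension $2^l$, so every $KH^0$-composition factor of $V$ is isomorphic to the same tensor product of dimension $2^{tl}$, and the correct multiplicity is $2^{(t-1)/2}$ if $t$ is odd (where $G=B_{(td-1)/2}$, $\dim V = 2^{(td-1)/2}$) and $2^{t/2-1}$ if $t$ is even (where $G=D_{td/2}$ and $\dim V = 2^{td/2-1}$, not $2^{\lfloor \dim W/2\rfloor}$), never $2^{t-1}$.

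More seriously, once $V|_{H^0}$ is homogeneous there are no signs $\varepsilon_i$ to flip and no non-isomorphic factors to permute, so the Clifford-theoretic step "transitive permutation of the composition factors implies $V|_H$ irreducible" is vacuous and the deduction fails: a single isotypic component is automatically $H$-stable, and irreducibility of $V|_H$ instead requires showing that the group $H$ acts irreducibly on the multiplicity space of dimension $2^{(t-1)/2}$ (resp. $2^{t/2-1}$). This is where the real content lies in the odd case: the elementary abelian group $2^{t-1}$ of $\pm 1$'s on the summands does not lift to an abelian subgroup of the simply connected group $G$; its preimage is a $2$-group of extraspecial type whose projective action on the multiplicity space is irreducible, and this must be verified (note that Proposition \ref{p:niso} does not apply, as $H/H^0$ is not cyclic for $t\geqs 3$). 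The paper avoids this entirely by observing that for odd $d$ the subgroup $H$ sits inside the $\C_2$-subgroup $(2^t\times B_l^t).{\rm Sym}_t$ and that the proof of \cite[Lemma 4.3.2]{BGT} goes through verbatim without the ${\rm Sym}_t$ factor; your proposal either needs such a citation or an explicit analysis of this projective representation, and as written it would not establish the odd case.
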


\begin{proof}
If $d=2l+1$ is odd, then $H = 2^{t-1} \times B_l^t$ and $H.{\rm Sym}_t$ is a $\C_2$-subgroup of $G$. Here the proof of \cite[Lemma 4.3.2]{BGT} goes through unchanged (the symmetric group ${\rm Sym}_t$ in the $\C_2$-subgroup plays no role in the argument) and we deduce that $V|_{H}$ is irreducible. 

Now assume $d=2l$ is even, so $H=D_l^t.2^{t-1}$ and we may write $H^0 = X_1 \cdots X_t$, where each $X_i = D_l$ is simply connected. Here the elementary abelian $2$-group $2^{t-1}$ is generated by involutions $z_1, \ldots, z_{t-1}$, where $z_i$ acts as a graph automorphism on $X_i$ and $X_{i+1}$, and centralizes the remaining factors of $H^0$. Now $V_1 \otimes \cdots \otimes V_t$ is a composition factor of $V|_{H^0}$, where each $V_i$ is a spin module for $X_i$. By 
repeatedly applying the $z_i \in H$ to conjugate this composition factor, we deduce that $V|_{H^0}$ has at least $2^{t-1}$ distinct, $H$-conjugate $KH^0$-composition factors. Since  
$$2^{t-1} \cdot \dim (V_1 \otimes \cdots \otimes V_t) = 2^{t-1} \cdot 2^{t(l-1)}  = 2^{tl-1} = \dim V$$
(see Lemma \ref{l:dmspin}) we conclude that $V|_{H}$ is irreducible.
\end{proof}

We can now establish our main result for spin modules and decomposition subgroups.

\begin{prop}\label{p:spin1}
Let $H$ be the stabilizer in $G$ of the decomposition in \eqref{eq:w}, and assume $\dim W_i \geqs 3$ for each $i$. Then $V|_{H}$ is irreducible.
\end{prop}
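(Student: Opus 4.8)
The plan is to prove Proposition \ref{p:spin1} by induction on $t$, the number of summands in the orthogonal decomposition \eqref{eq:w}, using Proposition \ref{p:spin} (the case $t=2$) as the base case and Lemma \ref{l:spinc2} to dispose of the situation where all summands have the same dimension. First I would record the key structural fact about spin modules: if $W = U_1 \perp U_2$ with $U_1, U_2$ non-degenerate of dimension $\geqs 3$, then by Proposition \ref{p:spin} the restriction of the spin module $V$ to $G_{U_1} \cap G_{U_2} = (\mathrm{SO}(U_1)\times \mathrm{SO}(U_2))\cap G$ (lifted to the simply connected cover) is irreducible, and moreover each $K H^0$-composition factor is a tensor product of spin modules for the two orthogonal factors. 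I would use the dimension formula in Lemma \ref{l:dmspin} throughout to count: a spin module for a group of type $B_l$ (so natural module of dimension $2l+1$) has dimension $2^l$, and for type $D_l$ (natural module of dimension $2l$) has dimension $2^{l-1}$; the point is that dimensions of spin modules are multiplicative across an orthogonal decomposition up to a power of $2$ accounting for the Clifford-theoretic multiplicity.

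Next I would set up the induction. Assume $t \geqs 3$ and that the result holds for all decompositions into fewer than $t$ non-degenerate summands of dimension $\geqs 3$. Write $W = W_1 \perp W'$ where $W' = W_2 \perp \cdots \perp W_t$; note $W'$ is non-degenerate of dimension $\geqs 3$. Let $H_1$ be the stabilizer in $G$ of the coarse decomposition $W = W_1 \perp W'$, so that $H \leqs H_1$ and $\overline{H_1} = (\mathrm{GO}(W_1)\times \mathrm{GO}(W'))\cap \overline G$. By Proposition \ref{p:spin}, $V|_{H_1}$ is irreducible and each $KH_1^0$-composition factor is $V_{W_1} \otimes V_{W'}$, a tensor product of a spin module $V_{W_1}$ for the first factor and a spin module $V_{W'}$ for $\mathrm{SO}(W')$ (suitably lifted). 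Now inside $\mathrm{SO}(W')$, the group $\mathrm{SO}(W_2)\times \cdots \times \mathrm{SO}(W_t)$ is (the connected part of) the stabilizer of the finer decomposition of $W'$, and by the inductive hypothesis applied to $W'$ and its spin module $V_{W'}$, the restriction of $V_{W'}$ to this stabilizer (call it $H'$, the stabilizer in the spin cover of $\mathrm{SO}(W')$) is irreducible. The key remaining point is to combine these: $H$ sits inside $H_1$ with $H^0 = X_1 \cdot H'^0$ where $X_1$ is the spin factor over $W_1$, and the component group of $H$ is generated by involutions swapping/acting as graph automorphisms on adjacent factors together with whatever component group $H'$ contributes.

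To carry out the combination I would argue by a dimension count exactly as in Lemma \ref{l:spinc2}. A $KH^0$-composition factor of $V$ is obtained by restricting $V_{W_1}\otimes V_{W'}|_{H_1^0}$ further to $H^0$; since $V_{W'}|_{H'^0}$ is irreducible by induction, and tensoring with the (irreducible) $X_1$-module $V_{W_1}$ preserves this, a $KH^0$-composition factor of $V$ has dimension $\dim V_{W_1}\cdot \dim(\text{a composition factor of } V_{W'}|_{H'^0}) = \dim V_{W_1}\cdot 2^{-(s-1)}\dim V_{W'}$ where $s = t-1$ is the number of summands of $W'$ (using the inductive knowledge that $V_{W'}|_{H'}$ has exactly $2^{s-1}$ conjugate composition factors when all the relevant factors are of type $D$, and fewer — indeed $1$ — when type $B$ factors are present; one must track the precise power of $2$ via Lemma \ref{l:dmspin} case by case on the parities of the $\dim W_i$). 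On the other hand, the component group $\overline H / \overline H^0 \cong 2^{t-1}$ acts on the set of these composition factors, and the orbit has size exactly $\dim V / (\text{composition factor dimension})$; comparing with Lemma \ref{l:dmspin} for $\dim V$ shows the action is transitive, hence $V|_H$ is irreducible by Clifford theory (Section \ref{ss:irred}). The main obstacle I anticipate is the bookkeeping of the powers of $2$: the number of distinct $H$-conjugate composition factors depends delicately on how many of the $W_i$ are even-dimensional versus odd-dimensional (each odd-dimensional $W_i$ contributes a type $B$ factor whose spin module is fixed by the ambient graph automorphisms in a way that collapses the orbit), and one must verify that in every case the product of the composition-factor dimension with the orbit size equals $2^n$ (for $B_n$) or $2^{n-1}$ (for $D_n$). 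I would handle this by separating into the cases according to the parity pattern of $(\dim W_1, \ldots, \dim W_t)$, but the cleanest route is probably to reduce to Lemma \ref{l:spinc2} and Proposition \ref{p:spin} as black boxes and let the induction on $t$ absorb the general parity pattern, grouping summands of equal dimension first.
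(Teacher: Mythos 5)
Your overall skeleton --- induction over the summands, with Proposition \ref{p:spin} as the base case and Lemma \ref{l:spinc2} handling equidimensional blocks --- is essentially the paper's (the paper inducts on the number of \emph{distinct} summand dimensions, splitting off the block of largest dimension, with Lemma \ref{l:spinc2} as the base case). The gap is in how you propose to close the inductive step. First, you invoke the inductive hypothesis as ``$V_{W'}|_{H'^0}$ is irreducible by induction'': the induction only gives irreducibility over the full \emph{disconnected} stabilizer $H'$, and over $H'^0$ the spin module of $W'$ generally has many composition factors --- indeed in the same sentence you speak of a composition factor of dimension $2^{-(s-1)}\dim V_{W'}$, contradicting the irreducibility you just asserted. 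Second, the composition-factor census underlying your dimension count is wrong, and with it the criterion ``orbit size times composition-factor dimension equals $\dim V$''. For example, with $G=D_n$ and four odd-dimensional summands one gets $V|_{H^0} \cong (S_1\otimes S_2\otimes S_3\otimes S_4)^{\oplus 2}$, a single isomorphism class of dimension $2^{n-2}$ with multiplicity two; the component group's orbit on isomorphism classes has size one, so your product is $2^{n-2}\neq 2^{n-1}=\dim V$ even though $V|_H$ is irreducible. With one odd- and two even-dimensional summands in type $B$ there are four distinct composition factors, not one. So the claim that a type $B$ factor forces a single factor is false, and a global dimension count cannot by itself deliver the needed transitivity, nor cope with the homogeneous (repeated-factor) situations just described.

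The paper's inductive step avoids this bookkeeping entirely: it passes to the two-block stabilizer $M$ of $W=U_1\perp U_2$, lifts $M^0$ to the direct product $L=L_1\times L_2$ of simply connected orthogonal groups, and observes that $H\cap M^0$ lifts to $R_1=\pi_1(R_1)\times\pi_2(R_1)$, where $\pi_i(R_1)$ is the full disconnected stabilizer of the induced decomposition of $U_i$. Since $V|_{M^0}$ has at most two constituents, each an outer tensor product of (half-)spin modules, the inductive hypothesis --- applied to the disconnected stabilizers, and to \emph{both} half-spin modules when a block is even-dimensional --- makes each constituent irreducible over $R_1$; in the two-constituent cases an explicit element of $H$ outside $M^0$ (a suitable product of reflections, inducing a graph automorphism on the relevant factor or factors) interchanges them, and Clifford theory concludes. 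Your induction on $t$, peeling off a single summand $W_1$, can be repaired by running exactly this local argument at each step (in which case Lemma \ref{l:spinc2} is only needed if, as in the paper, one prefers to group equidimensional summands first); but as written, the passage from the two-block restriction to the irreducibility of $V|_H$ is exactly the content that is missing, and the parity bookkeeping you propose in its place starts from an incorrect count.
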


\begin{proof}
Let $d_1, \ldots, d_s$ be the distinct dimensions of the summands in \eqref{eq:w}, and let $a_i$ be the number of summands of dimension $d_i$. If $s>1$ then we may assume that $d_i < d_{i+1}$ for all $1 \leqs i <s$. We may re-order and re-label the subspaces in \eqref{eq:w} so that
\begin{equation}\label{eq:w2}
W = (W_{1,1} \perp \ldots \perp W_{1,a_1}) \perp \ldots \perp (W_{s,1} \perp \ldots \perp W_{s,a_s}),
\end{equation}
where $\dim W_{i,j} = d_i$ for all $i,j$. Then
$$\overline{H} = ({\rm GO}(W_{1,1}) \times \cdots \times {\rm GO}(W_{1,a_1}) \times \cdots \times {\rm GO}(W_{s,1}) \times \cdots \times {\rm GO}(W_{s,a_s})) \cap \overline{G}.$$
We proceed by induction on $s$.

The base case $s=1$ was handled in Lemma \ref{l:spinc2}, so let us assume $s>1$. Set 
$$U_1 = (W_{1,1} \perp \ldots \perp W_{1,a_1}) \perp \ldots \perp (W_{s-1,1} \perp \ldots \perp W_{s-1,a_{s-1}})$$
and
$$U_2 = W_{s,1} \perp \ldots \perp W_{s,a_s},$$
so $W = U_1 \perp U_2$ and $H \leqs M$, where 
$$\overline{M} = ({\rm GO}(U_1) \times {\rm GO}(U_2)) \cap \overline{G} = \overline{M}^0.2.$$
Write $M^0 = M_1M_2$, where $M_1$ and $M_2$ are simply connected orthogonal groups with natural modules $U_1$ and $U_2$, respectively. 
Let $H_1 = H \cap M^0$ and note that $H=H_1.2 \not\leqs M^0$. 

Here $M^0=M_1M_2$ is a central product. The $KM^0$-module $W$ lifts to a representation $\rho: L \to {\rm GL}(W)$, where $L=L_1 \times L_2$ is the direct product of two simply connected orthogonal groups with $L_i \cong M_i$, so $M^0 = L/Y$ where $Y = \ker(\rho)$. In particular, there exist subgroups $R \leqs R_1 \leqs L$ such that $H_1=R_1/Y$ and $H^0=R/Y$. Note that $H_1/H^0 \cong R_1/R$ and $R^0=R_1^0$ (since $H^0 = H_1^0$). Let 
$\pi_i: R_1 \rightarrow L_i$
be the $i$-th projection map and note that $R_1 = \pi_1(R_1) \times \pi_2(R_1)$. There are several cases to consider.

First assume $U_1$ and $U_2$ are odd-dimensional. Here \cite[Table 1]{Seitz2} indicates that $V|_{M^0} = V_1 \otimes V_2$ is irreducible, where $V_i$ is the spin module for $M_i$. The $KM^0$-module $V$ lifts to a representation $\varphi:L \to {\rm GL}(V)$, so we can consider $V|_{R_1}$. By induction, $V_i|_{\pi_i(R_1)}$ is irreducible for $i=1,2$, so $V|_{R_1}$ is irreducible and thus $V|_{H_1}$ is also irreducible.  The result follows.

Next suppose $\dim U_1$ is even and $\dim U_2$ is odd, so \cite[Proposition 3.1.1]{BGT} implies that $V|_{M^0}$ has exactly two composition factors, namely
$$V|_{M^0} = (V_1 \otimes V_2) \oplus (V_1' \otimes V_2),$$
where $V_1$ and $V_1'$ are the two spin modules for $M_1$, and $V_2$ is the spin module for $M_2$. Here the $KM^0$-modules $V_1 \otimes V_2$ and $V_1' \otimes V_2$ arise from representations $\varphi : L \to {\rm GL}(V_1 \otimes V_2)$ and $\varphi' : L \to {\rm GL}(V_1' \otimes V_2)$, so we can consider $(V_1 \otimes V_2)|_{R_1}$ and $(V_1' \otimes V_2)|_{R_1}$. By induction, $V_2|_{\pi_2(R_1)}$ is irreducible, and $\pi_1(R_1)$ acts irreducibly on $V_1$ and $V_1'$.
Therefore, $V|_{R_1}$ has precisely two composition factors, which are interchanged by an element in $R_1.2$ that acts as a graph automorphism on $L_1$ and centralizes $L_2$. 
Therefore, $V|_{H}$ is irreducible. An entirely similar argument applies if $\dim U_1$ is odd and $\dim U_2$ is even.

Finally, suppose $U_1$ and $U_2$ are both even-dimensional. Here 
$$V|_{M^0} = (V_1 \otimes V_2) \oplus (V_1' \otimes V_2')$$ 
where $V_i$ and $V_i'$ are the two spin modules for $M_i$, and the inductive hypothesis implies that $\pi_i(R_1)$ acts irreducibly on $V_i$ and $V_i'$, for $i=1,2$. 
As before, it follows that $V|_{R_1}$ has exactly two composition factors, which are interchanged by an element in $R_1.2$ that acts simultaneously as a graph automorphism on both $L_1$ and $L_2$. Therefore, the two $KH_1$-composition factors of $V$ are $H$-conjugate, and thus $V|_{H}$ is irreducible.
\end{proof}

In view of the previous proposition, we can easily construct chains of positive-dimensional closed subgroups
$$H_k < H_{k-1} < \cdots < H_1 <G$$
such that $V|_{H_i}$ is irreducible for all $i$. For instance, take any sequence of successive  refinements of a fixed orthogonal decomposition of $W$ such that each refinement is also an orthogonal decomposition that only contains subspaces of dimension at least three. Then the stabilizers in $G$ of these decompositions form a chain of subgroups with the desired irreducibility property. In particular, the length of such a chain can be arbitrarily long. This is in stark contrast to the general situation, where the length of an irreducible chain is at most five (see Theorem \ref{t:chains}, which will be proved in the next section).

\section{Irreducible chains}\label{s:chains}

In this final section we prove Theorem \ref{t:chains}. Recall that if $G$ is a simple algebraic group and $V=V_G(\l)$ is a nontrivial $p$-restricted irreducible $KG$-module, then we write $\ell=\ell(G,V)$ for the length of the longest
chain of closed positive-dimensional subgroups
$$H_{\ell}<H_{\ell-1} < \cdots < H_2 < H_1 = G$$
such that $V|_{H_{\ell}}$ is irreducible. 

As noted in the previous section, if $G$ is an orthogonal group (or a symplectic group with $p=2$) and $V$ is a spin module, then $\ell(G,V)$ can be arbitrarily large (one can simply take an appropriate chain of decomposition subgroups, for example). Similarly, if $V=W$ or $W^*$ (where $W$ is the natural $KG$-module) then $\ell(G,V)$ is unbounded. For instance, if $G={\rm Sp}_{2n}(K)$ then set $H_{i} = {\rm Sp}_{2}(K) \wr X_i$, where $X_i \leqs {\rm Sym}_n$ is transitive. The transitivity of $X_i$ implies that $W|_{H_i}$ is irreducible, and it is easy to see that if $n$ is sufficiently large then we can find arbitrarily long chains of transitive subgroups 
$$X_i < X_{i-1} < \cdots < X_1 = {\rm Sym}_n.$$ 
In fact, if we choose $n$ appropriately, then we may assume that each $X_i$ is $3$-transitive (see Remark \ref{r:3trans} below). 

Now let us assume that $V \neq W,W^*$, and also assume that $V$ is not a spin module. In this situation, it is natural to ask whether or not $\ell(G,V)$ is bounded above by an absolute constant. Our main theorem is the following, which immediately yields Theorem \ref{t:chains}. (Note that in Table \ref{tab:chains}, $T$ is a maximal torus of $G$ and ${\rm M}_{n+1}$ is the Mathieu group of degree $n+1$.)

\begin{thm}\label{t:chai}
Let $G$ be a simply connected cover of a simple classical algebraic group with natural module $W$. Let $V=V_G(\l)$ be a $p$-restricted irreducible $KG$-module, where $V \neq W,W^*$ and $V$ is not a spin module. Then one of the following holds:
\begin{itemize}\addtolength{\itemsep}{0.2\baselineskip}
\item[{\rm (i)}] $\ell(G,V) \leqs 5$; or 
\item[{\rm (ii)}] $G=A_n$ and $\l \in \{\l_2, \l_3, \l_{n-2}, \l_{n-1}\}$. 
\end{itemize}
More precisely, excluding the cases in (ii), we have $\ell(G,V) \leqs 3$, unless $(G,V)$ is one of the cases listed in Table \ref{tab:chains}.
\end{thm}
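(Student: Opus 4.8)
The plan is to prove Theorem~\ref{t:chai} by reducing, in a finite number of steps, to the classification of irreducible triples already established in Corollary~\ref{t:main4} (equivalently, Theorem~\ref{t:main} together with \cite{Seitz2,BGMT,BGT}), and then to analyse each surviving configuration directly. The overall strategy is: for a chain $H_\ell < \cdots < H_1 = G$ with $V|_{H_\ell}$ irreducible, $V$ restricts irreducibly to \emph{every} term $H_i$, so each successive pair $(H_i, H_{i+1})$ is an irreducible triple in the appropriate sense. In particular $(G, H_2, V)$ appears in our master list (Table~\ref{t:maint3}) or else $H_2$ is maximal with $(G,H_2,V)$ coming from \cite{BGMT,BGT}. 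The key point is that such triples are extremely restrictive: for most of them the candidate weight $\l$ already forces $G$ and $\l$, and typically $H_2$ is (up to the component group) a \emph{simple} group of small rank acting via a known small-dimensional representation, so the problem of bounding $\ell(G,V)$ becomes the problem of bounding irreducible chains inside these small groups.

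First I would dispose of the generic bound $\ell(G,V)\leqs 3$. The idea is: given $(G,H_2,V)$ from Table~\ref{t:maint3}, one reads off $V|_{H_2^0}$ and, crucially, its highest weight(s) for the simple factors of $H_2^0$. Then $H_3 < H_2$ with $V|_{H_3}$ irreducible gives an irreducible triple for $H_2$ (or its component group acts, and one uses Clifford theory as in Section~\ref{ss:irred}), which must again appear in Table~\ref{t:maint3} or \cite{BGMT,BGT}, now for the smaller group $H_2^0$ and the restricted weight. For the overwhelming majority of entries in Table~\ref{t:maint3} this forces $H_3^0$ to be a torus or a rank-$1$ group, or shows that no further proper irreducible subgroup exists, giving $\ell \leqs 3$. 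One records precisely the entries for which the restricted weight is ``big enough'' to support a further irreducible triple; these are exactly the rows that will populate Table~\ref{tab:chains}, together with the $A_n$ exceptions, which arise because $\l_2,\l_3$ (and their duals) are small wedge powers and the relevant chains run through $\ell$-transitive permutation groups $T.X$ of unbounded length (see the $T.Y$ row of Table~\ref{t:maint3}), hence $\ell(G,V)$ is genuinely unbounded there.

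Next I would treat the finitely many surviving families by an explicit, case-by-case descent. For each, $H_2^0$ is a fixed simple group of bounded rank (e.g. $E_6,E_7,D_5,D_6,F_4,C_3,G_2,A_m^2.2,C_m^2.2$, or the various $C_1^t.X$ and $B_1^t.X$ configurations), acting on $W$ and on $V$ via known weights. One then iterates: list the maximal positive-dimensional subgroups of $H_2^0$, intersect with the irreducible-triple classification to see which of them can still act irreducibly on the relevant module, and continue. Because ranks are bounded and the modules are $p$-restricted of bounded dimension, this terminates after at most a couple more steps, yielding $\ell(G,V)\leqs 5$ in every case, with the precise value and the witnessing chain recorded in Table~\ref{tab:chains}. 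The extremal length-$5$ chain $A_1^3.Z_3 < A_1^3.{\rm Sym}_3 < D_4 < D_4.2 < B_4$ (and its $C_4$ analogue) is verified by combining Theorem~\ref{t:geom1}, Proposition~\ref{p:geom11}, Proposition~\ref{p:geom6} and the triality relations among the spin/natural modules for $D_4$; here $p=2$ and $\l=\l_3$, and one checks directly that $V_G(\l_3)$ stays irreducible down each link.

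The main obstacle I anticipate is \textbf{the bookkeeping for the non-simple-core cases}, especially $H_2^0$ of type $A_m^2$, $C_m^2$, $C_1^t$ or $B_1^t$, where $H_2$ involves a nontrivial permutation action on the factors and $V|_{H_2^0}$ is reducible with several $H$-conjugate composition factors. There, continuing the chain requires simultaneously tracking (i) which subgroup of the component group $X$ one passes to, (ii) which subdirect-product or diagonal embedding of the connected part occurs (Proposition~\ref{p:useful} and Lemma~\ref{l:basecase} are the tools), and (iii) whether the homogeneity/Clifford constraints (Proposition~\ref{p:niso}, Lemma~\ref{l:ten}, Lemma~\ref{l:mu}) still permit an irreducible restriction. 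This is exactly the machinery developed in Section~\ref{s:geom2}, so the work reduces to re-running those arguments one level deeper; the delicate point is ensuring no chain of length $6$ can be assembled by mixing a descent in $X$ with a descent in the connected part, which one rules out by a careful dimension/weight-multiplicity count analogous to those in Propositions~\ref{p:geom5}--\ref{p:geom12}. Finally I would separately confirm the parenthetical claim about $3$-transitive chains for $V=W$, $G={\rm Sp}_{2n}$ (Remark~\ref{r:3trans}), which is elementary but needs stating for completeness.
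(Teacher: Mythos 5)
Your plan is sound and rests on the same ingredients as the paper's proof (the classification of irreducible triples from \cite{Seitz2,BGMT,BGT} together with Theorem \ref{t:main}, the multiply transitive analysis for the $T.X$ configurations, and the explicit length-$5$ chain through $D_4<D_4.2$ with $p=2$, $\l=\l_3$), but you organise the induction differently. The paper works entirely relative to the fixed pair $(G,V)$: for a chain $H_\ell<\cdots<H_1=G$, \emph{every} $(G,H_i,V)$ is an irreducible triple for $G$ itself, so all chain members come from one finite list and the whole argument reduces to checking which listed subgroups can be properly nested (for $G=A_n$ this immediately forces either the $T.X_4<T.X_3<T.X_2$ situation, settled by the classification of $k$-transitive groups for $k\geqs 4$, or a connected $H_4$ from cases I$_1$--I$_{12}$ of \cite[Table 1]{Seitz2}, where no nesting exists). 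Your route instead recurses: you pass from $(G,H_2,V)$ to the triple $(H_2,H_3,V|_{H_2})$ and reapply the classification to $H_2^0$ with the restricted weight. That is workable and arguably more uniform, but it buys extra obligations that the paper's bottom-up route sidesteps: at each level you must re-verify the hypotheses of the classification for the restricted module --- that $V|_{H_2^0}$ is $p$-restricted and tensor indecomposable (it is not when $H_2^0$ has several simple factors, which is why you need the subdirect-product and Clifford machinery you cite), and, more importantly, that it is not the natural module or a spin module of $H_2^0$, since those are excluded by Hypothesis \ref{h:our} and admit unbounded chains, so each row of Table \ref{t:maint3} must be checked for this; you should also state explicitly that the CFSG classification of $4$- and $5$-transitive groups is what produces the Mathieu rows of Table \ref{tab:chains} and caps the $T.X$ chains once $\l\notin\{\l_2,\l_3,\l_{n-2},\l_{n-1}\}$. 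With those points made explicit, your proposal delivers the theorem at essentially the same level of detail as the paper, whose published argument is itself only carried out fully for $G=A_n$.
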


\renewcommand{\arraystretch}{1.1}
\begin{table}
\begin{center}
$$\begin{array}{llcll} \hline
G & V & \ell(G,V) & \mbox{Conditions} & \mbox{Chain} \\ \hline
B_4 & \l_3 & 5 & p = 2 & B_1^3.Z_3 < B_1^3.{\rm Sym}_3 < D_4 < D_4.2 < B_4 \\ 
C_4 & \l_3 & 5 & p = 2 & C_1^3.Z_3 < C_1^3.{\rm Sym}_3 < D_4 < D_4.2 < C_4 \\ 
A_{n} & \l_4, \l_{n-3} & 4 & n \in \{10,11,22,23\} & T.{\rm M}_{n+1} < T.{\rm Alt}_{n+1}< T.{\rm Sym}_{n+1} < A_n \\
A_{23} & \l_5, \l_{19} & 4 & & T.{\rm M}_{24} < T.{\rm Alt}_{24}< T.{\rm Sym}_{24} < A_{23} \\ 
B_3 & 2\l_1 & 4 & p = 3 & A_2 < A_2.2 < G_2 < B_3 \\ \hline
\end{array}$$
\caption{The irreducible chains in Theorem \ref{t:chai}}
\label{tab:chains}
\end{center}
\end{table}
\renewcommand{\arraystretch}{1}

\begin{remk}\label{r:3trans}
The cases in part (ii) of Theorem \ref{t:chai} are genuine exceptions; for suitable values of $n$, $\ell(G,V)$ can be arbitrarily large. By duality, we only need to consider the cases $\l=\l_2$ and $\l_3$. Recall that if $H=T_n.X <G$, then $V_G(\l_3)|_{H}$ is irreducible if and only if $X \leqs {\rm Sym}_{n+1}$ is $3$-transitive (and similarly, $X$ has to be $2$-transitive if $\l=\l_2$); see Proposition \ref{p:geom1}. Suppose $n=q=2^e$ for some positive integer $e \geqs 2$. The finite simple group ${\rm PSL}_{2}(q)$ has a faithful $3$-transitive action on the projective line $\mathbb{F}_{q} \cup \{\infty\}$, which extends to a faithful action of its automorphism group ${\rm P\Gamma L}_{2}(q) = {\rm PSL}_{2}(q).Z_e$. Therefore, ${\rm PSL}_{2}(q).d \leqs {\rm P\Gamma L}_{2}(q)$ is a $3$-transitive subgroup of ${\rm Sym}_{n+1}$ for every divisor $d$ of $e$. In particular, by choosing $e$ appropriately we can construct arbitrarily long chains of $3$-transitive subgroups of ${\rm Sym}_{n+1}$, and each of the corresponding subgroups $T_n.X < G$ acts irreducibly on $V_G(\l_2)$ and $V_G(\l_3)$. 
\end{remk}

\begin{proof}[Proof of Theorem \ref{t:chai}]
The proof is a combination of the main theorems in \cite{Seitz2}, \cite{BGT} and \cite{BGMT}, together with Theorem \ref{t:main}. To illustrate the general approach, we will consider the case  $G=A_n$. Set $V = V_G(\l)$. In order to prove the theorem, we may assume that  
\begin{equation}\label{e:lambda}
\l \not\in \{\l_1,\l_2,\l_3,\l_{n-2},\l_{n-1},\l_n\}
\end{equation}
and $\ell(G,V) \geqs 4$, so there is an irreducible chain $H_4 < H_3 < H_2 < H_1 = G$. 

Consider the irreducible triple $(G,H_4,V)$. First assume that $H_4$ is disconnected and $V|_{H_4^0}$ is reducible. Then 
$(G,H_4,V)$ must be one of the irreducible triples arising in Theorem \ref{t:main}, so $H_4 = T.X_4$ and $\l = \l_k$, where $X_4<{\rm Sym}_{n+1}$ is $s$-transitive and $s = \max\{k, n+1-k\} \geqs 4$. Moreover, $H_3 = T.X_3$ and $H_2 = T.X_2$, where $X_3<X_2 \leqs {\rm Sym}_{n+1}$ are also $s$-transitive groups of degree $n+1$. 

Using the classification of finite simple groups, it can be shown that ${\rm Sym}_{n+1}$ and the alternating group ${\rm Alt}_{n+1}$ (for $n \geqs 5$) are the only $4$-transitive groups of degree $n+1$, unless $n \in \{10,11,22,23\}$ in which case the Mathieu group ${\rm M}_{n+1}$ is also $4$-transitive (see \cite[Theorem 4.11]{Cam}, for example). Similarly, if $t \geqs 5$ then ${\rm Sym}_{n+1}$ and ${\rm Alt}_{n+1}$ (for $n \geqs t+1$) are the only $t$-transitive groups of degree $n+1$, with the single exception of ${\rm M}_{24}$ when $t=5$ and $n = 23$. Since $s \geqs 4$, it follows that 
either $s =4$ and $n \in \{10,11,22,23\}$, or $s=5$ and $n=23$. In each case, $X_4 = {\rm M}_{n+1}$, $X_3 = {\rm Alt}_{n+1}$, $X_2 = {\rm Sym}_{n+1}$ and no proper positive-dimensional subgroup of $H_4$ acts irreducibly on $V$. These special cases are recorded in Table \ref{tab:chains}.

To complete the proof of Theorem \ref{t:chai} for $G=A_n$, we may assume that $(G,H_4,V)$ is an irreducible triple with $H_4$ connected.  The possibilities are recorded in \cite[Table 1]{Seitz2}, and in view of \eqref{e:lambda} we see that the relevant cases therein are labelled 
\begin{equation}\label{e:irrcases}
\mbox{I$_1$, I$'_{1}$, I$_2$, I$_3$, I$_4$, I$_5$, I$_{12}$.}
\end{equation}

Consider the irreducible triple $(G,H_3,V)$. If $H_3$ is connected, then $(G,H_3,V)$ also corresponds to one of the cases in \eqref{e:irrcases}, but it is routine to check that this collection of cases does not contain a pair of triples $(G,H_4,V)$ and $(G,H_3,V)$ with $H_4<H_3$. Finally, suppose that $H_3$ is disconnected. The connectivity of $H_4$ implies that $H_4 \leqs H_3^0$, so $V|_{H_3^0}$ is irreducible and we have an irreducible chain
$$H_4 \leqs H_3^0 < H_3 < H_2 < H_1 = G.$$
By the argument above, we have $H_4 = H_3^0$ and thus $H_4 < H_3 \leqs N_G(H_4)$. It is now easy to see that $(G,H_4,V)$ must correspond to the case I$_4$ or I$_5$ in \cite[Table 1]{Seitz2}, so $n$ is odd, $H_4=D_{(n+1)/2}$ and $H_3 = D_{(n+1)/2}.2$. But $D_{(n+1)/2}.2<A_n$ is a maximal subgroup, so we have reached a contradiction.

This completes the proof of Theorem \ref{t:chai} for $G=A_n$. The other cases are similar, and we leave it to the reader to check the details.
\end{proof}  

This completes the proof of Theorem \ref{t:chains}.

\end{document}